\pdfoutput=1
\RequirePackage{silence}
\WarningFilter{remreset}{The remreset package}
\documentclass[a4paper,11pt]{amsart}
\synctex=1
\usepackage[hmarginratio={1:1},vmarginratio={1:1},lmargin=70.0pt,tmargin=70.0pt]{geometry}
\usepackage{lmodern}


\usepackage{ifdraft}
\ifdraft{\usepackage[draft]{showkeys}}{\usepackage[final]{showkeys}}
\usepackage{calligra}
\usepackage[T1]{fontenc}
\usepackage[ugly]{nicefrac}
\linespread{1.15}
\usepackage{mathtools}
\mathtoolsset{mathic}
\usepackage[final]{microtype}
\usepackage{multicol}
\usepackage{tabularx,tabu}
\usepackage{latexsym,exscale,amsfonts,amssymb,mathtools}
\usepackage[shortlabels]{enumitem} 
\usepackage{amsmath,amsthm,amsfonts,amssymb,amscd,textcomp,bbm}
\usepackage{stmaryrd}
\SetSymbolFont{stmry}{bold}{U}{stmry}{m}{n}
\usepackage[normalem]{ulem}
\usepackage{thmtools}
\usepackage{fancybox}
\usepackage[table]{xcolor}
\usepackage{mathrsfs}
\DeclareMathAlphabet{\mathscrbf}{OMS}{mdugm}{b}{n}
\DeclareFontEncoding{LS1}{}{}
\DeclareFontSubstitution{LS1}{stix}{m}{n}
\DeclareMathAlphabet{\mathpzc}{LS1}{stixscr}{m}{n}

\usepackage{caption} 
\captionsetup{belowskip=5pt,aboveskip=5pt}

\usepackage{tabulary}
\usepackage{booktabs}

\overfullrule=1mm
\vbadness=10001
\hbadness=10001


\definecolor{mygray}{gray}{0.6}
\definecolor{mygraydark}{gray}{0.4}
\definecolor{mygraylight}{gray}{0.8}

\definecolor{cherry}{RGB}{222,49,99}
\definecolor{cream}{RGB}{255,253,208}
\definecolor{corn}{RGB}{251,236,93}
\definecolor{citron}{RGB}{190,180,90}

\definecolor{spinach}{RGB}{46,139,87}
\definecolor{tomato}{RGB}{255,99,71}
\definecolor{pumpkin}{RGB}{224,180,80}

\definecolor{orchid}{RGB}{143,40,194}
\definecolor{lava}{RGB}{207,16,32}
\definecolor{mydarkblue}{RGB}{10,10,150}

\definecolor{myorange}{RGB}{225,127,0}
\definecolor{mygreen}{RGB}{0,225,0}
\definecolor{mypurple}{RGB}{128,0,128}
\definecolor{myred}{RGB}{255,0,0}
\definecolor{myblue}{RGB}{0,0,195}
\definecolor{myyellow}{RGB}{210,210,0}

\usepackage[all]{xy}
\SelectTips{cm}{}

\usepackage{tikz}
\tikzstyle{densely dotted}=[dash pattern=on \pgflinewidth off .5pt]
\tikzset{anchorbase/.style={baseline={([yshift=-0.5ex]current bounding box.center)}},
tinynodes/.style={font=\tiny, text height=0.25ex, text depth=0.05ex},
smallnodes/.style={font=\scriptsize, text height=0.75ex, text depth=0.15ex},
ssoergel/.style={line width=1.0,color=spinach},
tsoergel/.style={line width=1.0,color=tomato,densely dotted},
usual/.style={line width=1.0,color=black},
JW/.style={line width=1.0,densely dotted,color=black},
pQJW/.style={line width=1.0,densely dashed,color=black,fill=corn!60},
pJW/.style={line width=1.0,color=black,fill=orchid!70},
}
\usetikzlibrary{cd}
\usetikzlibrary{decorations}
\usetikzlibrary{decorations.markings}
\usetikzlibrary{decorations.pathreplacing}
\usetikzlibrary{decorations.pathmorphing}
\usetikzlibrary{arrows.meta,shapes,positioning,matrix,calc}
\usetikzlibrary{shapes.callouts}
\tikzstyle directed=[postaction={decorate,decoration={markings,
mark=at position #1 with {\arrow[line width=0.15mm, black]{>}}}}]
\tikzstyle odirected=[postaction={decorate,decoration={markings,
mark=at position #1 with {\arrow[line width=0.15mm, myorange]{>}}}}]
\tikzstyle smarked=[postaction={decorate,decoration={markings,
mark=at position #1 with {\fill[spinach] (0,0) circle (.065cm);}}}]
\tikzstyle tmarked=[postaction={decorate,decoration={markings,
mark=at position #1 with {\fill[tomato] (0,0) circle (.065cm);}}}]
\tikzstyle wmarked=[postaction={decorate,decoration={markings,
mark=at position #1 with {\fill[white] (0,0) circle (.1cm);}}}]


\allowdisplaybreaks

\newcommand{\tm}{\text{-}}
\newcommand{\neatfrac}[2]{{\raisebox{.2em}{$#1$}\!\left/\raisebox{-.2em}{$#2$}\right.}}

\newcommand{\ie}{\textsl{i.e. }}
\newcommand{\eg}{\textsl{e.g. }}
\newcommand{\cf}{\textsl{cf. }}

\newcommand{\muta}{\textsl{mutatis mutandis}}
\newcommand{\loccit}{\textsl{loc. cit.}}

\renewcommand{\dots}{\text{...}}


\newcommand{\C}{\mathbb{C}}
\newcommand{\Z}{\mathbb{Z}}
\newcommand{\R}{\mathbb{R}}
\newcommand{\Q}{\mathbb{Q}}
\newcommand{\Ring}{\mathbb{S}}
\newcommand{\N}[1][]{\mathbb{N}_{#1}}


\newcommand{\vcirc}{\circ}
\newcommand{\hcirc}{\otimes}
\newcommand{\munit}{\mathbbm{1}}

\newcommand{\setstuff}[1]{\mathrm{#1}}
\newcommand{\catstuff}[1]{\mathbf{#1}}
\newcommand{\functorstuff}[1]{\mathpzc{#1}}

\newcommand{\obstuff}[1]{\mathtt{#1}}
\newcommand{\morstuff}[1]{\mathrm{#1}}

\newcommand{\idmor}{\mathbbm{1}}

\newcommand{\End}{\setstuff{End}}
\newcommand{\Hom}{\setstuff{Hom}}
\newcommand{\spanFp}[1]{\setstuff{span}_{\F}({#1})}
\newcommand{\spanQ}[1]{\setstuff{span}_{\Q}({#1})}


\newcommand{\mainfunctor}{\functorstuff{F}}
\newcommand{\losp}{losp}


\newcommand{\ppar}{\mathsf{p}}
\newcommand{\F}[1][\ppar]{\mathbb{F}_{\ppar}}
\newcommand{\K}[1][\ppar]{\mathbb{K}}
\newcommand{\ord}[1][\ppar]{\nu_{#1}}
\newcommand{\spe}[1]{\overline{#1}}
\newcommand{\eve}{\setstuff{Eve}}
\newcommand{\pbase}[2]{[#1]_{#2}}
\newcommand{\ancest}[1][v]{\setstuff{A}(#1)}
\newcommand{\supp}[1][v]{\setstuff{supp}(#1)}
\newcommand{\fsupp}[1][v]{\setstuff{fsupp}(#1)}
\newcommand{\mother}[1][v]{\obstuff{m}_{#1}}
\newcommand{\motherr}[2]{\obstuff{m}^{#2}_{#1}}
\newcommand{\generation}[1][v]{\obstuff{g}_{#1}}
\newcommand{\fancest}[2]{a_{#1,#2}}


\newcommand{\zigzag}{\obstuff{Z}}
\newcommand{\zigzagmod}{p\catstuff{Mod}\tm\zigzag}
\newcommand{\zigzagmode}{p\catstuff{Mod}\tm\zigzag_{e{-}1}}

\newcommand{\idemy}[1][v]{e_{#1}}

\newcommand{\tlfunctor}{\functorstuff{D}}


\newcommand{\SLtwo}{\mathrm{SL}_{2}(\K)}
\newcommand{\tilt}{\catstuff{Tilt}}

\newcommand{\tmod}{\obstuff{T}}
\newcommand{\wmod}{\Delta}
\newcommand{\dwmod}{\nabla}
\newcommand{\lmod}{\obstuff{L}}


\newcommand{\TL}[1][\K]{{#1}\catstuff{TL}}
\newcommand{\sbas}{\setstuff{B}}
\newcommand{\sbasel}{\morstuff{X}}
\newcommand{\sbasc}{x}
\newcommand{\dist}{\mathsf{d}}

\newcommand{\idtl}[1][v{-}1]{\idmor_{#1}}

\newcommand{\flip}[1][\phantom{a}]{{#1}^{\star}}
\newcommand{\td}[1]{\setstuff{td}(#1)}

\newcommand{\pTr}{\setstuff{pTr}}

\newcommand{\qjw}[1][v{-}1]{\tilde{\obstuff{e}}_{#1}}
\newcommand{\pqjw}[1][v{-}1]{\overline{\obstuff{e}}_{#1}}
\newcommand{\pjw}[1][v{-}1]{\obstuff{e}_{#1}}

\newcommand{\qjwm}[1][v{-}1]{\mathtt{#1}}
\newcommand{\pqjwm}[1][v{-}1]{\mathtt{#1}}
\newcommand{\pjwm}[1][v{-}1]{\mathtt{#1}}

\newcommand{\Up}[1]{\mathrm{U}_{#1}}
\newcommand{\up}[1]{\mathrm{u}_{#1}}
\newcommand{\upo}[1]{\tilde{\mathrm{u}}_{#1}}
\newcommand{\Down}[1]{\mathrm{D}_{#1}}
\newcommand{\down}[1]{\mathrm{d}_{#1}}
\newcommand{\downo}[1]{\tilde{\mathrm{d}}_{#1}}

\newcommand{\loopdown}[2]{\mathrm{L}^{#1}_{#2}}

\newcommand{\trap}[2]{\tilde{\mathrm{L}}^{{#1}}_{{#2}}}

\newcommand{\ealg}{\zigzag}
\newcommand{\funcf}{\obstuff{f}}
\newcommand{\funcg}{\obstuff{g}}
\newcommand{\funch}{\obstuff{h}}
\newcommand{\funcF}[1][S]{\obstuff{f}_{#1}}
\newcommand{\funcG}[1][S]{\obstuff{g}_{#1}}
\newcommand{\funcH}[1][S]{\obstuff{h}_{#1}}
\newcommand{\hull}[1][S]{\overline{#1}}
\newcommand{\condE}{E}
\newcommand{\condA}{A}
\newcommand{\condO}{O}
\newcommand{\condW}{Z}
\newcommand{\condWm}{Z_{-}}


\newcommand{\tru}[5]{
\draw [JW] (#4+#2/2+#2/4,#5) to (#4+#1,#5) to (#4+#1,#5+#2) to (#4,#5+#2) to (#4+#2/2+#2/4,#5);
\node at (#4+#1/2+#1/16,#5+#2/2-.05) {#3};
}

\newcommand{\trd}[5]{
\draw [JW] (#4+#2/2+#2/4,#5) to (#4+#1,#5) to (#4+#1,#5-#2) to (#4,#5-#2) to (#4+#2/2+#2/4,#5);
\node at (#4+#1/2+#1/16,#5-#2/2-.05) {#3};
}

\newcommand{\tr}[5]{
\draw [JW] (#4+#1,#5) to (#4+#1,#5-#2) 
to (#4,#5-#2) to (#4+#2/2+#2/4,#5) to (#4,#5+#2) to (#4+#1,#5+#2) to (#4+#1,#5);
\node at (#4+#1/2+#1/16,#5-.05) {#3};
}

\newcommand{\TRU}[1]{
\begin{tikzpicture}[anchorbase,tinynodes]
\tru{1.1}{.6}{#1}{0}{0}
\end{tikzpicture}
}

\newcommand{\TRD}[1]{
\begin{tikzpicture}[anchorbase,tinynodes]
\trd{1.1}{.6}{#1}{0}{0}
\end{tikzpicture}
}
\newcommand{\TRUD}[2]{
\begin{tikzpicture}[anchorbase,tinynodes]
\tru{1.1}{.3}{#1}{0}{0}
\trd{1.1}{.3}{#2}{0}{0}
\end{tikzpicture}
}

\newcommand{\TR}[1]{
\begin{tikzpicture}[anchorbase,tinynodes]
\tr{1.1}{.3}{#1}{0}{0}
\end{tikzpicture}
}


\newcommand{\dsoergel}[1][\K]{{#1}\catstuff{S}}
\newcommand{\atypeA}{\tilde{\mathrm{A}}_{1}}

\newcommand{\upop}[1][\ssymbol]{\mathrm{U}_{#1}}
\newcommand{\downop}[1][\ssymbol]{\mathrm{D}_{#1}}

\newtheorem{theoremm}{Theorem}[section]

\declaretheoremstyle[
headfont=\bfseries, 
notebraces={[}{]},
bodyfont=\normalfont\itshape,
headpunct={},
postheadspace=1em,
spacebelow=10pt,
spaceabove=10pt, 
]{ourtheo}

\declaretheoremstyle[
headfont=\normalfont\bfseries,
notefont=\mdseries,
notebraces={(}{)},
bodyfont=\normalfont\slshape,
headpunct={},
postheadspace=1em,
spacebelow=10pt,
spaceabove=10pt, 
]{ourdef}

\declaretheorem[style=ourtheo,name=Theorem,numberlike=theoremm]{theorem}
\declaretheorem[style=ourtheo,name=Lemma,numberlike=theoremm]{lemma}
\declaretheorem[style=ourtheo,name=Proposition,numberlike=theoremm]{proposition}

\declaretheorem[style=ourtheo,name=Lemma,qed=$\square$,numberlike=theoremm]{lemmaqed}
\declaretheorem[style=ourtheo,name=Proposition,qed=$\square$,numberlike=theoremm]{propositionqed}

\declaretheorem[style=ourdef,name=Definition,numberlike=theorem]{definition}
\declaretheorem[style=ourdef,name=Example,numberlike=theorem]{example}
\declaretheorem[style=ourdef,name=Remark,numberlike=theorem]{remark}
\declaretheorem[style=ourdef,name=Convention,numberlike=theorem]{convention}

\declaretheorem[style=ourtheo,name=Theorem]{introtheorem}
\declaretheorem[style=ourtheo,name=Corollary,qed=$\square$]{introcorollary}

\allowdisplaybreaks



\setcounter{tocdepth}{1}
\setcounter{secnumdepth}{3}
\numberwithin{equation}{section}
\usepackage[hypertexnames=false]{hyperref}
\usepackage{cleveref,bookmark}
\hypersetup{
pdftoolbar=true,        
pdfmenubar=true,        
pdffitwindow=false,     
pdfstartview={FitH},    
pdftitle={Quivers for \texorpdfstring{$\mathrm{SL}_{2}$}{SL2} tilting modules},    
pdfauthor={Daniel Tubbenhauer and Paul Wedrich},     
pdfsubject={},   
pdfcreator={Daniel Tubbenhauer and Paul Wedrich},   
pdfproducer={Daniel Tubbenhauer and Paul Wedrich}, 
pdfkeywords={}, 
pdfnewwindow=true,      
colorlinks=true,       
linkcolor=mydarkblue,          
citecolor=teal,        
filecolor=magenta,      
urlcolor=orchid,          
linkbordercolor=lava,
citebordercolor=teal,
urlbordercolor=orchid,  
linktocpage=true
}

\renewcommand{\theequation}{\thesection-\arabic{equation}}
%
%
\let\fullref\autoref
%
\def\makeautorefname#1#2{\expandafter\def\csname#1autorefname\endcsname{#2}}
%
%
\makeautorefname{equation}{Equation}%
\makeautorefname{footnote}{footnote}%
\makeautorefname{item}{item}%
\makeautorefname{figure}{Figure}%
\makeautorefname{table}{Table}%
\makeautorefname{part}{Part}%
\makeautorefname{appendix}{Appendix}%
\makeautorefname{chapter}{Chapter}%
\makeautorefname{section}{Section}%
\makeautorefname{subsection}{Section}%
\makeautorefname{subsubsection}{Section}%
\makeautorefname{paragraph}{Paragraph}%
\makeautorefname{subparagraph}{Paragraph}%
\makeautorefname{theorem}{Theorem}%
\makeautorefname{theo}{Theorem}%
\makeautorefname{thm}{Theorem}%
\makeautorefname{addendum}{Addendum}%
\makeautorefname{addend}{Addendum}%
\makeautorefname{add}{Addendum}%
\makeautorefname{maintheorem}{Main theorem}%
\makeautorefname{mainthm}{Main theorem}%
\makeautorefname{corollary}{Corollary}%
\makeautorefname{corol}{Corollary}%
\makeautorefname{coro}{Corollary}%
\makeautorefname{cor}{Corollary}%
\makeautorefname{lemma}{Lemma}%
\makeautorefname{lemm}{Lemma}%
\makeautorefname{lem}{Lemma}%
\makeautorefname{sublemma}{Sublemma}%
\makeautorefname{sublem}{Sublemma}%
\makeautorefname{subl}{Sublemma}%
\makeautorefname{proposition}{Proposition}%
\makeautorefname{proposit}{Proposition}%
\makeautorefname{propos}{Proposition}%
\makeautorefname{propo}{Proposition}%
\makeautorefname{prop}{Proposition}%
\makeautorefname{property}{Property}
\makeautorefname{proper}{Property}
\makeautorefname{scholium}{Scholium}%
\makeautorefname{step}{Step}%
\makeautorefname{conjecture}{Conjecture}%
\makeautorefname{conject}{Conjecture}%
\makeautorefname{conj}{Conjecture}%
\makeautorefname{question}{Question}
\makeautorefname{questn}{Question}
\makeautorefname{quest}{Question}
\makeautorefname{ques}{Question}
\makeautorefname{qn}{Question}
\makeautorefname{definition}{Definition}%
\makeautorefname{defin}{Definition}%
\makeautorefname{defi}{Definition}%
\makeautorefname{def}{Definition}%
\makeautorefname{dfn}{Definition}%
\makeautorefname{notation}{Notation}
\makeautorefname{nota}{Notation}
\makeautorefname{notn}{Notation}
\makeautorefname{remark}{Remark}%
\makeautorefname{rema}{Remark}%
\makeautorefname{rem}{Remark}%
\makeautorefname{rmk}{Remark}%
\makeautorefname{rk}{Remark}%
\makeautorefname{remarks}{Remarks}%
\makeautorefname{rems}{Remarks}%
\makeautorefname{rmks}{Remarks}%
\makeautorefname{rks}{Remarks}%
\makeautorefname{example}{Example}%
\makeautorefname{examp}{Example}%
\makeautorefname{exmp}{Example}%
\makeautorefname{exam}{Example}%
\makeautorefname{exa}{Example}%
\makeautorefname{algorithm}{Algorithm}%
\makeautorefname{algo}{Algorithm}%
\makeautorefname{alg}{Algorithm}%
\makeautorefname{axiom}{Axiom}%
\makeautorefname{axi}{Axiom}%
\makeautorefname{ax}{Axiom}%
\makeautorefname{case}{Case}%
\makeautorefname{claim}{Claim}%
\makeautorefname{clm}{Claim}%
\makeautorefname{assumption}{Assumption}%
\makeautorefname{assumpt}{Assumption}%
\makeautorefname{conclusion}{Conclusion}%
\makeautorefname{concl}{Conclusion}%
\makeautorefname{conc}{Conclusion}%
\makeautorefname{condition}{Condition}%
\makeautorefname{condit}{Condition}%
\makeautorefname{cond}{Condition}%
\makeautorefname{construction}{Construction}%
\makeautorefname{construct}{Construction}%
\makeautorefname{const}{Construction}%
\makeautorefname{cons}{Construction}%
\makeautorefname{criterion}{Criterion}%
\makeautorefname{criter}{Criterion}%
\makeautorefname{crit}{Criterion}%
\makeautorefname{exercise}{Exercise}%
\makeautorefname{exer}{Exercise}%
\makeautorefname{exe}{Exercise}%
\makeautorefname{problem}{Problem}%
\makeautorefname{problm}{Problem}%
\makeautorefname{probm}{Problem}%
\makeautorefname{prob}{Problem}%
\makeautorefname{solution}{Solution}%
\makeautorefname{soln}{Solution}%
\makeautorefname{sol}{Solution}%
\makeautorefname{summary}{Summary}%
\makeautorefname{summ}{Summary}%
\makeautorefname{sum}{Summary}%
\makeautorefname{operation}{Operation}%
\makeautorefname{oper}{Operation}%
\makeautorefname{observation}{Observation}%
\makeautorefname{observn}{Observation}%
\makeautorefname{obser}{Observation}%
\makeautorefname{obs}{Observation}%
\makeautorefname{ob}{Observation}%
\makeautorefname{convention}{Convention}%
\makeautorefname{convent}{Convention}%
\makeautorefname{conv}{Convention}%
\makeautorefname{cvn}{Convention}%
\makeautorefname{warning}{Warning}%
\makeautorefname{warn}{Warning}%
\makeautorefname{note}{Note}%
\makeautorefname{fact}{Fact}%
\makeautorefname{hope}{Expectation}%
\makeautorefname{hope2}{Conjecture}%

\newcommand{\nnfootnote}[1]{%
\begin{NoHyper}
\renewcommand\thefootnote{}\footnote{#1}%
\addtocounter{footnote}{-1}%
\end{NoHyper}
}

\begin{document}
\title[Quivers for \texorpdfstring{$\mathrm{SL}_{2}$}{SL2} tilting modules]
{Quivers for \texorpdfstring{$\mathrm{SL}_{2}$}{SL2} tilting modules}
\author[Daniel Tubbenhauer and Paul Wedrich]{Daniel Tubbenhauer and Paul Wedrich}

\address{D.T.: Institut f{\"u}r Mathematik, Universit{\"a}t Z{\"u}rich, 
Winterthurerstrasse 190, Campus Irchel, Office Y27J32, CH-8057 Z{\"u}rich, 
Switzerland, \href{www.dtubbenhauer.com}{www.dtubbenhauer.com}}
\email{daniel.tubbenhauer@math.uzh.ch}

\address{P.W.: Mathematical Sciences Institute, The Australian National University, 
Hanna Neumann Building,
Canberra ACT 2601, Australia, \href{http://paul.wedrich.at}{paul.wedrich.at}}
\email{p.wedrich@gmail.com}

\nnfootnote{\textit{Mathematics Subject Classification 2010.} Primary: 20G05, 20C20; Secondary: 16D90, 17B10, 20G40.}
\nnfootnote{\textit{Keywords.} Modular representation theory, tilting modules, 
diagrammatic algebra, generators and relations, Temperley--Lieb, positive characteristic.}

\begin{abstract}
Using diagrammatic methods, we define a quiver with relations depending on a prime $\mathsf{p}$ and 
show that the associated path algebra describes 
the category of tilting modules for 
$\mathrm{SL}_{2}$ in characteristic $\mathsf{p}$.
Along the way we obtain a presentation for 
morphisms between $\mathsf{p}$-Jones--Wenzl projectors.
\end{abstract}

\maketitle

\vspace{-.8cm}

\tableofcontents

\renewcommand{\theequation}{\thesection-\arabic{equation}}

\addtocontents{toc}{\protect\setcounter{tocdepth}{1}}

\vspace{-.8cm}

\section{Introduction}\label{sec:intro}

Let $\K$ denote an algebraically closed field and $\tilt=\tilt\big(\SLtwo\big)$ the additive, 
$\K$-linear category of (left-)tilting modules for the algebraic group $\SLtwo$. 
This category can be described as the full subcategory of $\SLtwo$-modules 
which is monoidally generated by the vector representation $\tmod(1)\cong\K^{2}$, and which is closed under
taking finite direct sums and direct summands.

The purpose of this paper is to give a \emph{generators and relations
presentation} of $\tilt$ by identifying it with the category of projective
modules for the path algebra of an explicitly described quiver with relations.
This quiver can be interpreted as the \emph{semi-infinite Ringel dual} of
$\SLtwo$ in the sense of \cite{BS18}. For $\K$ of characteristic zero this is
trivial as $\tilt$ is semisimple, and the indecomposable tilting modules are
indeed the simple modules. The quantum analog at a complex root of unity is
related to the zigzag algebra with vertex set $\N[0]$ and a starting condition, see
\eg \cite{AnTu-tilting}. 

The focus of this paper is on the case of positive characteristic $\ppar$, for which we represent
$\tilt$ as a quotient $\zigzag=\zigzag_{\ppar}$ of the path algebra of an infinite, fractal-like quiver, 
a truncation of which is illustrated for $\ppar=3$ in \fullref{figure:main-3}.

\begin{figure}[ht]
\includegraphics[scale=.3]{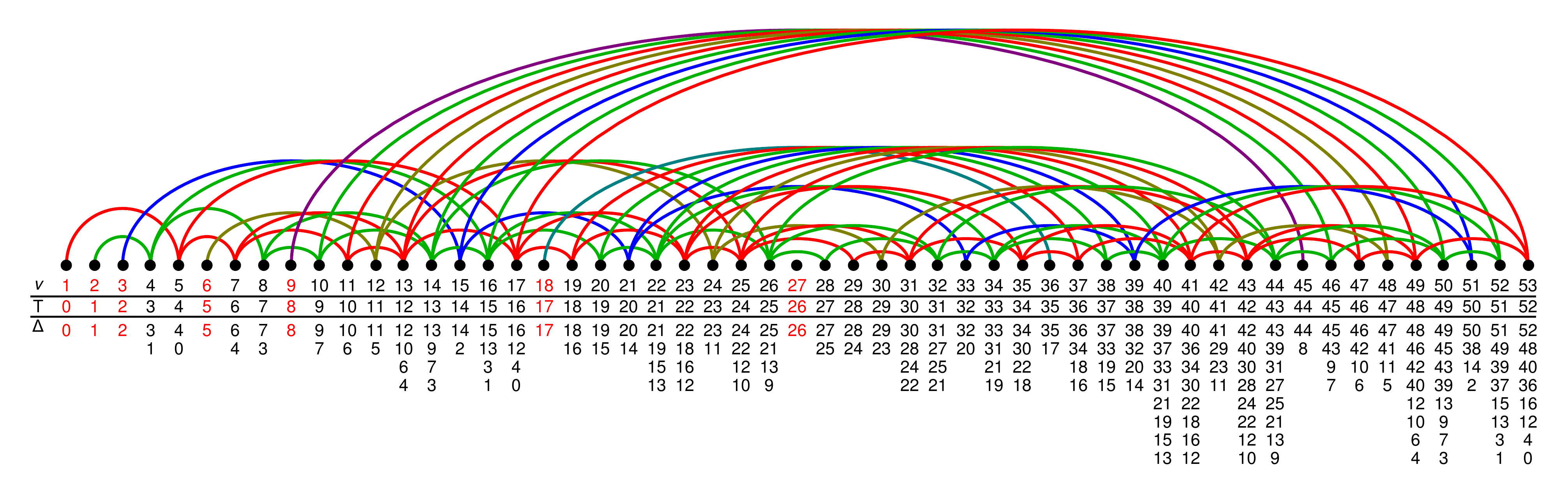}
\caption{The full subquiver containing the first 
$53$ vertices of the quiver underlying $\zigzag_{3}$.}
\label{figure:main-3}
\end{figure}

\subsection*{The main result}

From now on let $\K$ be an algebraically closed field of characteristic $\ppar>0$, and 
let $\SLtwo$ be the corresponding special linear group.
Recall that the indecomposable tilting modules for $\SLtwo$ are classified (up to isomorphism) by their highest weight 
$v-1\in\N[0]$, and we pick a collection of representatives denoted by $\tmod(v-1)$.

\begin{introtheorem}\label{theorem:main}
There is an algebra isomorphism
\begin{gather*}
\mainfunctor\colon\zigzag\xrightarrow{\cong}
{\textstyle\bigoplus_{v,w\in\N}} \Hom_{\tilt}\big(\tmod(v-1),\tmod(w-1)\big),
\end{gather*}
which sends the constant path on the vertex $v-1$ to the idempotent for the summand $\tmod(v-1)$.
\end{introtheorem}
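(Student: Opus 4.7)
The plan is to construct $\mainfunctor$ explicitly on generators of $\zigzag$ and then show it is a bijection by a basis-matching argument. On constant paths, set $\mainfunctor(\idemy[v])$ to be the idempotent cutting out $\tmod(v-1)$ inside an appropriate tensor power of the vector representation $\tmod(1)$, realized concretely as the $\ppar$-Jones--Wenzl projector $\pjw[v-1]$ inside the Temperley--Lieb category $\TL$. On the arrows of the quiver, send up/down generators to the trapezoid morphisms $\trap{}{}$ that implement the standard inclusion/projection between $\ppar$-Jones--Wenzl projectors with comparable labels, and send loop generators $\loopdown{}{}$ to partial-trace type endomorphisms built from $\pjw$. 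Extend $\K$-linearly and multiplicatively to a candidate map $\mainfunctor\colon\zigzag\to\bigoplus_{v,w}\Hom_{\tilt}\big(\tmod(v-1),\tmod(w-1)\big)$.

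Well-definedness amounts to verifying the relations of $\zigzag$ among the chosen images. The idempotent-orthogonality relations $\idemy[v]\idemy[w]=\delta_{v,w}\idemy[v]$ reflect the indecomposability and pairwise non-isomorphism of the $\tmod(v-1)$. The path-incidence relations (a product of two arrows is non-zero only along edges of the quiver) correspond to the vanishing of $\Hom_{\tilt}(\tmod(v-1),\tmod(w-1))$ whenever the labels are incompatible with the $\ppar$-adic fractal structure encoded by $\mother$, $\ancest$ and $\generation$. The remaining zigzag and loop relations are verified by direct Temperley--Lieb computations exploiting the absorption and nilpotency properties of $\pjw$ established earlier.

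Surjectivity follows from the fact that morphisms in $\tilt$ are generated by the Temperley--Lieb category on $\tmod(1)$, so that every morphism $\tmod(v-1)\to\tmod(w-1)$ is a $\K$-linear combination of Temperley--Lieb diagrams sandwiched between $\pjw[v-1]$ and $\pjw[w-1]$. Any such diagram can be rewritten, by iteratively absorbing caps and cups into the projectors, as a combination of compositions of the elementary trapezoids and loops which lie in the image of $\mainfunctor$. For injectivity, we exhibit an explicit $\K$-basis of $\zigzag$ consisting of paths in normal form---a shortest path in the quiver between the prescribed endpoints, decorated at the target vertex by an allowed monomial in loops---and compare its cardinality, vertex by vertex, with $\dim_{\K}\Hom_{\tilt}(\tmod(v-1),\tmod(w-1))$, which is controlled by the known characters of indecomposable tilting modules of $\SLtwo$ and their multiplicities in tensor powers of $\tmod(1)$ (Donkin-type reciprocity).

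The main obstacle I anticipate is the verification of the loop relations: these encode the recursive interaction between $\ppar$-Jones--Wenzl projectors indexed by ancestors of a given weight, and a direct computation becomes rapidly unwieldy because the defining recursion of $\pjw[v-1]$ grows with the $\ppar$-adic length of $v-1$. The tractable route is to reduce each such relation, via the absorption identities for $\pjw$ and the semi-infinite Ringel duality perspective of \cite{BS18}, to a relation between $\ppar$-Jones--Wenzl projectors of strictly smaller $\ppar$-adic length, then proceed by induction on $\generation$. The same inductive scheme should simultaneously match the two bases in step four, so that the surjectivity, injectivity and relation-checking arguments organically fit together.
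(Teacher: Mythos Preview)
Your high-level outline matches the paper's: define $\zigzag$ via generators and relations, map to the Temperley--Lieb model of $\tilt$ via $\ppar$JW projectors, verify the relations, and conclude by a basis/dimension argument.  But two concrete points in your plan would not work as written.

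First, the images of the arrows are misidentified.  The trapezoid morphisms $\trap{S}{v{-}1}$ and the operators $\downo{S},\upo{S}$ are built from the \emph{rational} Jones--Wenzl projectors $\qjw$ and live in $\TL[\Q]$; they are generally not $\ppar$-admissible and do not descend to $\TL[\F]$.  The correct images are the $\ppar$-morphisms $\Down{S}\pjw[v{-}1]=\pjw[{v[S]{-}1}]\down{S}\pjw[v{-}1]$ and $\Up{S}\pjw[v{-}1]$, defined using $\ppar$JW projectors on both sides.  Loops $\loopdown{S}{v{-}1}$ are then compositions $\Up{S}\Down{S}$ and need not be listed as separate generators; the ``partial-trace type endomorphisms'' you describe are again characteristic-zero objects.

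Second, the relation check is substantially harder than you anticipate.  Beyond idempotent, containment and far-commutativity, the presentation involves \emph{adjacency} and \emph{overlap} relations (governing how $\Down{S'}\Down{S}$ behaves when $\dist(S,S')\leq 1$) whose verification is the heart of the paper: these are proved by an induction on $v$ in which five interdependent statements $\condWm(v)$, $\condA(v)$, $\condO(v)$, $\condW(v)$, $\condE(v)$ are established in a fixed order, each relying on the previous ones and on instances for smaller $v$.  An induction on generation alone does not suffice, because applying a zigzag relation at $v[S]$ can produce morphisms reaching $v$ again.  Finally, your injectivity argument via tilting characters and Donkin reciprocity is not wrong for $\SLtwo$, but in the paper's logic it would be circular: the Weyl multiplicities of $\tmod(v{-}1)$ are \emph{derived} from the basis statement.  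The paper instead proves linear independence of the $\ppar$-morphisms $\Up{S}\Down{S'}\pjw[v{-}1]$ directly, by observing that they have pairwise distinct through-degrees, which also bypasses any appeal to \cite{BS18}.
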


Let $\zigzagmod$ denote the
category of finitely-generated, projective (right-)modules for $\zigzag$. By semi-infinite Ringel duality \cite[Section 4]{BS18}, we have the following consequence of \fullref{theorem:main}.

\begin{introcorollary}\label{corollary:main}
There is an equivalence of additive, $\K$-linear categories
\begin{gather*}
\mainfunctor^{\prime}\colon
\tilt\xrightarrow{\cong}\zigzagmod,
\end{gather*}
sending indecomposable tilting modules to indecomposable projectives.
\end{introcorollary}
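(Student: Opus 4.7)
The plan is to deduce the corollary directly from \fullref{theorem:main} by a standard Morita-style argument, which is what \cite[Section 4]{BS18} packages as semi-infinite Ringel duality. First I would note the basic structural facts on the tilting side: $\tilt$ is additive and Karoubian by construction, the Hom spaces $\Hom_{\tilt}(\tmod(v-1),\tmod(w-1))$ are finite-dimensional, each $\tmod(v-1)$ is indecomposable with local endomorphism ring, and consequently Krull--Schmidt holds in $\tilt$ so that every tilting object decomposes uniquely into a finite direct sum of the $\tmod(v-1)$, $v\in\N$.

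Next I would define $\mainfunctor^{\prime}$ on the full subcategory $\tilt_{0}\subset\tilt$ on the objects $\{\tmod(v-1)\}_{v\in\N}$ by sending $\tmod(v-1)$ to the indecomposable projective $\idemy[v-1]\zigzag$, where $\idemy[v-1]$ is the constant path at $v-1$, and by sending a morphism $f$ to right multiplication by $\mainfunctor^{-1}(f)$. Using that \fullref{theorem:main} identifies $\Hom_{\tilt}(\tmod(v-1),\tmod(w-1))$ with $\idemy[w-1]\zigzag\idemy[v-1]$, which equals $\Hom_{\zigzagmod}(\idemy[v-1]\zigzag,\idemy[w-1]\zigzag)$, this restricted functor is fully faithful, and it sends the idempotent $\idmor_{\tmod(v-1)}$ to the identity on $\idemy[v-1]\zigzag$. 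I would then extend $\mainfunctor^{\prime}$ additively to all of $\tilt$; it stays fully faithful.

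For essential surjectivity I would use that $\zigzag$ is locally finite-dimensional with complete primitive orthogonal idempotent decomposition $\{\idemy[v-1]\}_{v\in\N}$, primitivity being inherited from local-ness of $\End_{\tilt}(\tmod(v-1))$ via \fullref{theorem:main}. Thus every finitely-generated projective right $\zigzag$-module is a finite direct sum of the $\idemy[v-1]\zigzag$, so lies in the essential image, and indecomposables are preserved by a fully faithful additive functor. The only real obstacle is the Krull--Schmidt bookkeeping on the $\zigzagmod$ side, which is handled once primitivity of the $\idemy[v-1]$ is known; apart from that the argument is formal.
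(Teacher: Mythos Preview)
Your approach is correct and is exactly the standard Morita-type argument that the paper invokes implicitly by citing \cite[Section~4]{BS18}; the paper gives no further proof beyond that citation, so you are simply unpacking what lies behind it.

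One small slip: a homomorphism of right $\zigzag$-modules $\idemy[v{-}1]\zigzag\to\idemy[w{-}1]\zigzag$ is given by \emph{left} multiplication by an element of $\idemy[w{-}1]\zigzag\idemy[v{-}1]$, not right multiplication (right multiplication is the module action itself). With that correction, $\mainfunctor'(g\circ f)$ is left multiplication by $\mainfunctor^{-1}(g)\cdot\mainfunctor^{-1}(f)=\mainfunctor^{-1}(g\circ f)$, and the functor is covariant as required. You may also want to remark that $\zigzag$ is only locally unital and that $\zigzagmod$ is implicitly built from $\K$-linear extension of scalars from $\F$, but these are conventions the paper already absorbs into its setup.
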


Several classical facts about $\SLtwo$-modules are reflected in the
presentation of the algebra $\zigzag$. For example, a path from $v-1$ to $w-1$
can only be non-zero in $\zigzag$ if $\tmod(v-1)$ and $\tmod(w-1)$ share a
common Weyl factor.
More specifically, if the $\ppar$-adic expansion
$v=\pbase{a_{j},\dots,a_{0}}{\ppar}=\sum_{i=0}^{j}a_{i}\ppar^{i}$ has exactly
$r+1$ non-zero digits, then $\tmod(v-1)$ has exactly $2^{r}$ Weyl factors
$\wmod(w-1)$ where $w$ is obtained by negating some of the non-zero digits $a_i$
for $i<j$. In this case, $\zigzag$ contains $r$ arrows from $v-1$ to those
$w-1<v-1$ that are obtained by negating a single digit.

Our assignment of morphisms to arrows uses the Temperley--Lieb
category. In contrast to other descriptions of morphisms between indecomposable
tilting modules for $\SLtwo$, this presentation of $\zigzag$ is well-adapted to
study $\tilt$ as a monoidal category.

The Weyl factors in
indecomposable tilting modules are illustrated in the lines $v$, $\tmod$,
$\wmod$ in \fullref{figure:main-3}, where the colors distinguish arrows in
different blocks, the connected components of the quiver, and each reddish
number corresponds to the unique simple tilting module in its block.

\subsection*{The algebra \texorpdfstring{$\zigzag$}{Z} in a nutshell}

We define the algebra $\zigzag$ as a quotient of 
the path algebra of an infinite, fractal-like quiver over the prime field $\F\subset\K$. (In particular, we can always extend the algebra $\zigzag$ to an algebra over $\K$.)
We will use this introduction to sketch the main features of $\zigzag$ 
and relegate the precise statement to \fullref{theorem:main-tl-section}.

\begin{enumerate}[label=$\bullet$]

\setlength\itemsep{.15cm}

\item \textit{The underlying quiver.} 
We identify the vertex set with $\N[0]$ and the constant path at the vertex $v-1$ 
will be denoted $\idemy[v{-}1]$ (it corresponds to $\tmod(v-1)$). 
If $v=\pbase{a_{j},\dots,a_{0}}{\ppar}$, then for every 
digit $a_{i}\neq 0$ 
with $i\neq j$ there is a pair of arrows
\begin{gather*}
\downop[{i}]\idemy[v{-}1]\colon
(v-1)\to({v[i]-1}),
\quad\quad
\upop[{i}]\idemy[{v[i]{-}1}]\colon
({v[i]-1})\to (v-1),
\end{gather*}
where $v[i]=\pbase{a_{j},\dots,a_{i+1},-a_{i},a_{i-1},\dots,a_{0}}{\ppar}
=v-2a_{i}\ppar^{i}$.

\item\textit{Some relations.}
Up to some additional rules in special cases
(which we ignore for the sake of this introduction),
there are five types of relations among paths, 
which hold whenever both sides are defined and satisfy certain admissibility conditions.
\smallskip
\begin{enumerate}[label=(\arabic*)]

\setlength\itemsep{.15cm}

\item\emph{Idempotents.}
$\idemy[{v{-}1}]\idemy[{w{-}1}]=\delta_{v,w}\idemy[{v{-}1}]$,
$\idemy[{w{-}1}]\morstuff{F}\idemy[{v{-}1}]=\morstuff{F}\idemy[{v{-}1}]$
and $\idemy[{w{-}1}]\morstuff{F}\idemy[{v{-}1}]=\idemy[{w{-}1}]\morstuff{F}$, 
where $\morstuff{F}$ is a word in the generators starting at $v-1$ and ending at $w-1$.
(Throughout, we use such relations to absorb all but one idempotent in each string of generators.)

\item\emph{Nilpotency.} $\downop[{i}]^{2}\idemy[{v{-}1}]=\upop[{i}]^{2}\idemy[{v{-}1}]=0$.

\item\emph{Far-commutativity.} 
$\downop[{i}]\downop[{j}]\idemy[{v{-}1}]=\downop[{j}]\downop[{i}]\idemy[{v{-}1}]$, 
$\upop[{i}]\downop[{j}]\idemy[{v{-}1}]=\downop[{j}]\upop[{i}]\idemy[{v{-}1}]$, as well as 
$\upop[{i}]\upop[{j}]\idemy[{v{-}1}]=\upop[{j}]\upop[{i}]\idemy[{v{-}1}]$ whenever $|i-j|>1$.

\item\emph{Adjacency relations.}
$\downop[{i+1}]\upop[{i}]\idemy[{v{-}1}]=\downop[{i}]\downop[{i+1}]\idemy[{v{-}1}]$ and 
$\downop[{i}]\upop[{i+1}]\idemy[{v{-}1}]=\upop[{i+1}]\upop[{i}]\idemy[{v{-}1}]$, 
and scaled versions $\downop[{i+1}]\downop[{i}]\idemy[{v{-}1}]=\funcg^{\prime}\upop[{i}]\downop[{i+1}]\idemy[{v{-}1}]$ 
and $\upop[{i}]\upop[{i+1}]\idemy[{v{-}1}]=\funcg^{\prime\prime}\upop[{i+1}]\downop[{i}]\idemy[{v{-}1}]$.

\item\emph{Zigzag.} 
$\downop[{i}]\upop[{i}]\idemy[{v{-}1}]=\funcg\upop[{i}]\downop[{i}]\idemy[{v{-}1}] 
+\funcf\upop[{i+1}]\upop[{i}]\downop[{i}]\downop[{i+1}]\idemy[{v{-}1}]$.

\end{enumerate}

\noindent Here $\funcg$, $\funcg^{\prime}$, $\funcg^{\prime\prime}$ and $\funcf$ are scalars 
that depend on $\ppar$ and the digit $a_{i+1}$.

\item\emph{Hom spaces.} 
For $v,w\in\N$ the $\F$-vector space  
$\idemy[{w{-}1}]\zigzag\idemy[{v{-}1}]$ is spanned by paths of the 
form $\idemy[{w{-}1}]\upop[{j_{k}}]\cdots
\upop[{j_{1}}]\downop[{i_{1}}]\cdots\downop[{i_{l}}]\idemy[{v{-}1}]$ with $
j_{k}>\cdots>j_{1},i_{1}<\cdots<i_{l}$,
\ie paths that descend before ascending again. In particular, we have $\idemy[{v{-}1}]\zigzag\idemy[{v{-}1}]\cong\F$ 
whenever $v=a\ppar^{j}$ for $1\leq a<\ppar$, 
which reflects the fact that the corresponding tilting module $\tmod(v-1)$ is simple.

\item\emph{Endomorphism algebras.} 
Let $v>0$ have $r+1$ non-zero digits with indices $i_{r+1}>\cdots>i_{1}$. Then we have
the following identifications of $\F$-algebras
\begin{gather*}
\begin{gathered}
\idemy[{v{-}1}]\zigzag\idemy[{v{-}1}]
\cong
\F[][\upop[{i_{r}}]\downop[{i_{r}}],\dots,\upop[{i_{1}}]\downop[{i_{1}}]]
\Big/
\big\langle(\upop[{i_{r}}]\downop[{i_{r}}])^{2},\dots,
(\upop[{i_{1}}]\downop[{i_{1}}])^{2}\big\rangle.
\end{gathered}
\end{gather*} 
This leads to a description of the endomorphism algebra of $\tmod(v-1)$ 
which could have been expected from Donkin's tensor product 
theorem \cite[Proposition 2.1]{Do-tilting-alg-groups}.

\end{enumerate} 

We would like to highlight that we will meet a \emph{law of small primes} (\losp) repeatedly. 
By this we mean the appearance of exceptional relations in cases of $\ppar$-adic expansions with digits 
$0$, $1$, $\ppar-2$, or $\ppar-1$. 
These relations are exceptional in the sense that they contrast with the relations shown above, which describe 
the behavior of \emph{generic} $\ppar$-adic expansions for large primes $\ppar$. 
Nevertheless, exceptional relations are relevant for all primes, and for $\ppar=2$ only exceptional relations apply.

\subsection*{A word about the proof of \fullref{theorem:main}}

The basis for our work is the classical fact that the \emph{Temperley--Lieb} algebra controls the 
finite-dimensional representation theory of $\SLtwo$. 
The second main ingredient is an explicit description 
of $\ppar$-Jones--Wenzl projectors \cite{BuLiSe-tl-char-p}, 
which are characteristic $\ppar$ analogs of the classical Jones--Wenzl projectors, 
that diagrammatically encode the projections 
\begin{gather*}
\tmod(1)^{\otimes(v-1)}\to\tmod(v-1)\to\tmod(1)^{\otimes(v-1)}. 
\end{gather*}

The bulk of this paper is devoted to a careful study 
of morphisms between $\ppar$-Jones--Wenzl projectors over $\F$ 
and the linear relations between them. 
This work was supported by extensive computer experimentation using 
Mathematica and SageMATH.

\subsection*{Relations to other work}

To the best of our knowledge, the quiver underlying the tilting category is new:
We study $\tilt$ as a finitely presented category. So our main concern are the 
\emph{relations among composites of generating morphisms}, 
rather than just the \emph{combinatorics of objects} or the 
\emph{dimensions of morphism spaces}, which appear in the classical literature. 

We would like to acknowledge and reinforce that the $\SLtwo$ representation theory is, 
of course, well-understood on the level of the modules, 
see \eg \cite{CaCl-submodule-weyl-a1}, 
\cite{AnJoLa-sl2-projectives}, \cite{Do-tilting-alg-groups},
\cite{ErHe-ringel-schur}, \cite{ErHe-ringel-schur-symmetric-group} and \cite{DoHe-char-p-sl2}. 
Further, various other quivers associated to $\SLtwo$ are known, describing \eg 
rational modules \cite{MiTu-rational-gl2} or the
extension algebra for simple \cite{MiTu-simple-ext-gl2} or Weyl modules 
\cite{MiTu-weyl-ext-gl2}.

\subsection*{A graded extension and translation functors}

It is possible to give a similar quiver description of $\tilt$ as a positively graded module category of 
the diagrammatic Soergel category $\dsoergel$ for the Weyl group of type $\atypeA$, acting by translation functors. 
The first step in such an extension uses the quantum Satake equivalence (at $q=1$) \cite{El-q-satake} to
connect the Temperley--Lieb diagrammatic calculus to $\dsoergel$. 
In fact, $\zigzag$ faithfully describes the degree zero part of the antispherical module category for $\dsoergel$.
The second step uses ideas from \cite{RiWi-tilting-p-canonical} to relate $\dsoergel$ and the principal block $\tilt_{0}\subset\tilt$
as long as $\ppar>2$. Along this route, $\tilt$ also inherits a grading from $\dsoergel$.

In this case, the algebra $\zigzag$ essentially describes the degree zero part 
of the principal block $\tilt_{0}$, 
while the positive degree part is generated by additional degree
$1$ arrows $\upop\colon v\to v+1$ and $\downop\colon v+1\to v$, 
which interact non-trivially with other paths. 
Note another fractal-like structure: $\zigzag$ describes $\tilt$, but also the 
degree zero part of $\tilt_{0}\subset\tilt$.
We will not pursue this extension in the present paper.

\subsection*{Characteristic zero and higher rank cases}

Throughout we could allow the case of characteristic zero, for which $\tilt$ is semisimple. 
In a more interesting variant, one replaces $\SLtwo$ by its quantum group analog at a complex root of unity, 
using the Jones--Wenzl projectors from \cite{GoWe-tl-root-of-unity}. The role of $\zigzag$ 
is then played by the zigzag algebra with vertex set $\N[0]$ 
and a starting condition, and we would recover a result of \cite{AnTu-tilting}. 
In this sense we think of $\zigzag$ as a positive characteristic version of the zigzag algebra.

We also like to highlight that, to the best of our knowledge, a quiver 
underlying tilting modules for higher rank groups is still unknown, even 
for the quantum group analog in characteristic zero, \cf \cite[Section 5C]{MaMaMiTu-trihedral} 
for some first steps in this direction. 

We expect the diagrammatic methods used in this paper to 
generalize to $\mathrm{SL}_{N}(\K)$ and $\mathrm{GL}_{N}(\K)$. This would involve developing characteristic $\ppar$ analogs 
of so-called \emph{clasps}, living in the corresponding web calculus, 
see \eg \cite{CaKaMo-webs-skew-howe} or \cite{TuVaWe-super-howe}, 
defined over $\F$.

\medskip

\noindent\textbf{Acknowledgments.} D.T. likes to thank Henning Haahr Andersen,
Ben Elias, Hankyung Ko, Catharina Stroppel and Geordie Williamson for teaching
him everything he knows about tilting modules (which is basically nothing, but
that is his fault alone). P.W. would like to thank James Borger, Ben Elias,
Alexandra Grant, Anthony Licata and Scott Morrison for useful discussions.
Special thanks to Henning Haahr Andersen, Jieru Zhu and an anonymous referee for valuable
comments on a draft of this paper, to Nicolas Libedinsky for sharing a draft of
the lovely paper \cite{BuLiSe-tl-char-p}, which started this project,
and Noah Snyder for pointing out a missing argument in the proof of
\fullref{proposition:TLtilt}.

Parts of this research were conducted while D.T. visited the Australian National University, 
supported by the Mathematical Sciences Research Visitor Program (MSRVP), 
and while P.W. visited Universit{\"a}t Z{\"u}rich. Their hospitality and support
are gratefully acknowledged. P.W. was supported by Australian Research Council grants `Braid groups and higher representation theory'
DP140103821 and `Low dimensional categories' DP160103479

\section{The Temperley--Lieb calculus}\label{sec:TL} 

Let $\catstuff{C}=(\catstuff{C},\hcirc,\munit_{\catstuff{C}},\hspace*{-.1cm}\flip)$ be a pivotal 
category with (strict) monoidal composition $\hcirc$, unit $\munit_{\catstuff{C}}$, 
and duality $\hspace*{-.1cm}\flip$. 
We usually write $\morstuff{F}\morstuff{G}:=\morstuff{F}\vcirc\morstuff{G}$ for the composition of morphisms.
We read string diagrams for morphisms in $\catstuff{C}$ from bottom to top and left to right, \eg
\begin{gather*}
(\idmor\hcirc\morstuff{G})(\morstuff{F}\hcirc\idmor) 
=
\begin{tikzpicture}[anchorbase,scale=.22,tinynodes]
\draw[thin,black,fill=mygray,fill opacity=.35] (-5.5,2) rectangle (-2,.5);
\draw[thin,black,fill=mygray,fill opacity=.35] (-.5,3) rectangle (3,4.5);
\draw[thick,densely dotted] (-5.5,2.5) node[left,yshift=-2pt]{$\vcirc$} 
to (3.5,2.5) node[right,yshift=-2pt]{$\vcirc$};
\draw[thick,densely dotted] (-1.25,5) 
node[above,yshift=-2pt]{$\hcirc$} to (-1.25,0) node[below]{$\hcirc$};
\draw[usual] (-5,2) to (-5,3.75) node[right,xshift=1pt]{$\dots$} to (-5,5);
\draw[usual] (-2.5,2) to (-2.5,5);
\draw[usual] (0,0) to (0,1.25) node[right,xshift=1pt]{$\dots$} to (0,3);
\draw[usual] (2.5,0) to (2.5,3);
\draw[usual] (-5,0) to (-5,.25) node[right,xshift=1pt]{$\dots$} to (-5,.5);
\draw[usual] (-2.5,0) to (-2.5,.5);
\draw[usual] (2.5,4.5) to (2.5,5);
\draw[usual] (0,4.5) to (0,4.75) node[right,xshift=1pt]{$\dots$} to (0,5);
\node at (-3.75,1.0) {$\morstuff{F}$};
\node at (1.25,3.5) {$\morstuff{G}$};
\end{tikzpicture}
=
\begin{tikzpicture}[anchorbase,scale=.22,tinynodes]
\draw[thin,black,fill=mygray,fill opacity=.35] (-5.5,1.75) rectangle (-2,3.25);
\draw[thin,black,fill=mygray,fill opacity=.35] (-.5,1.75) rectangle (3,3.25);
\draw[usual] (-5,3.25) to (-5,4.125) node[right,xshift=1pt]{$\dots$} to (-5,5);
\draw[usual] (-2.5,3.25) to (-2.5,5);
\draw[usual] (0,0) to (0,.625) node[right,xshift=1pt]{$\dots$} to (0,1.75);
\draw[usual] (2.5,0) to (2.5,1.75);
\draw[usual] (-5,0) to (-5,.625) node[right,xshift=1pt]{$\dots$} to (-5,1.75);
\draw[usual] (-2.5,0) to (-2.5,1.75);
\draw[usual] (2.5,3.25) to (2.5,5);
\draw[usual] (0,3.25) to (0,4.125) node[right,xshift=1pt]{$\dots$} to (0,5);
\node at (-3.75,2.25) {$\morstuff{F}$};
\node at (1.25,2.25) {$\morstuff{G}$};
\end{tikzpicture}
=
\begin{tikzpicture}[anchorbase,scale=.22,tinynodes]
\draw[thin,black,fill=mygray,fill opacity=.35] (5.5,2) rectangle (2,.5);
\draw[thin,black,fill=mygray,fill opacity=.35] (.5,3) rectangle (-3,4.5);
\draw[thick, densely dotted] (5.5,2.5) 
node[right,yshift=-2pt]{$\vcirc$} to (-3.5,2.5) node[left,yshift=-2pt]{$\vcirc$};
\draw[thick, densely dotted] (1.25,5) 
node[above,yshift=-2pt]{$\hcirc$} to (1.25,0) node[below]{$\hcirc$};
\draw[usual] (2.5,2) to (2.5,3.75) node[right,xshift=1pt]{$\dots$} to (2.5,5);
\draw[usual] (5,2) to (5,5);
\draw[usual] (0,0) to (0,3);
\draw[usual] (-2.5,0) to (-2.5,1.25) node[right,xshift=1pt]{$\dots$} to (-2.5,3);
\draw[usual] (2.5,0) to (2.5,.25) node[right,xshift=1pt]{$\dots$} to (2.5,.5);
\draw[usual] (5,0) to (5,.5);
\draw[usual] (-2.5,4.5) to (-2.5,4.75) node[right,xshift=1pt]{$\dots$} to (-2.5,5);
\draw[usual] (0,4.5) to (0,5);
\node at (3.75,1.0) {$\morstuff{G}$};
\node at (-1.25,3.5) {$\morstuff{F}$};
\end{tikzpicture}
=(\morstuff{F}\hcirc\idmor)(\idmor\hcirc\morstuff{G}).
\end{gather*} 
The duality maps are pictured as cup and cap string diagrams, subject to the expected string-straightening relations. 
The pivotal structure additionally allows the rotation of string diagrams 
and guarantees that planar-isotopic diagrams represent the same morphism.

Let $\Ring$ be any commutative and unital ring.
(For us $\Ring$ will usually be $\Q$ or $\F\subset\K$, 
the prime field of $\K$. However, it also makes sense to 
formulate everything for $\Q_{\ppar}$ and $\Z_{\ppar}$.)

The category $\TL[\Ring]$ (see \eg \cite{KaLi-TL-recoupling}) can be described as the pivotal
$\Ring$-linear category with objects indexed by $m\in\N[0]$, and with morphisms
from $m$ to $n$ being $\Ring$-linear combinations of unoriented string diagrams
drawn in a horizontal strip $\R\times[0,1]$ between $m$ marked points on the
lower boundary $\R\times\{0\}$ and $n$ marked points on the upper boundary
$\R\times\{1\}$, considered up to planar isotopy relative to the boundary and
the relation that a circle evaluates to $-2$. The composition and tensor product
operations are as described above.

Particular cases of the isotopy and circle relations are
\begin{gather*}
\begin{tikzpicture}[anchorbase,scale=.25,tinynodes]
\draw[usual] (0,-2) to (0,0) to[out=90,in=180] (1,1) to[out=0,in=90] (2,0);
\draw[usual] (2,0) to[out=270,in=180] (3,-1) to[out=0,in=270] (4,0) to (4,2);
\end{tikzpicture}
=
\begin{tikzpicture}[anchorbase,scale=.25,tinynodes]
\draw[usual] (0,-2) to (0,2);
\end{tikzpicture}
\;,\quad
\begin{tikzpicture}[anchorbase,scale=.25,tinynodes]
\draw[usual] (0,-2) to (0,0) to[out=90,in=0] (-1,1) to[out=180,in=90] (-2,0);
\draw[usual] (-2,0) to[out=270,in=0] (-3,-1) to[out=180,in=270] (-4,0) to (-4,2);
\end{tikzpicture}
=
\begin{tikzpicture}[anchorbase,scale=.25,tinynodes]
\draw[usual] (0,-2) to (0,2);
\end{tikzpicture}
\;,\quad
\begin{tikzpicture}[anchorbase,scale=.25,tinynodes]
\draw[usual] (0,0) to[out=90,in=180] (1,1) to[out=0,in=90] (2,0);
\draw[usual] (0,0) to[out=270,in=180] (1,-1) to[out=0,in=270] (2,0);
\end{tikzpicture}
=-2.
\end{gather*}

In the following we will use 
labeled strands as shorthand notation for bundles of parallel strands:
\begin{gather*}
\begin{tikzpicture}[anchorbase,scale=.25,tinynodes]
\draw[usual] (0,0) to (0,1.5) node[right,xshift=-2pt]{$m$} to (0,3);
\node at (1,-1) {$\phantom{a}$};
\node at (1,4) {$\phantom{a}$};
\end{tikzpicture}
:=
\idtl[m]
=
\begin{tikzpicture}[anchorbase,scale=.25,tinynodes]
\draw[usual] (0,0) to (0,3);
\draw[usual] (2,0) to (2,3);
\node at (1,1.5) {$...$};
\node at (1,-1) {$\phantom{a}$};
\node at (1,4) {$\text{$m$ strands}$};
\end{tikzpicture}
,\quad
\begin{tikzpicture}[anchorbase,scale=.25,tinynodes]
\draw[usual] (0,0) to[out=90,in=180] (1,1) node[above,yshift=-2pt]{$m$} to[out=0,in=90] (2,0);
\node at (1,-1) {$\phantom{a}$};
\node at (1,4) {$\phantom{a}$};
\end{tikzpicture}
:=
\begin{tikzpicture}[anchorbase,scale=.25,tinynodes]
\draw[usual] (0,0) to[out=90,in=180] (1,1) to[out=0,in=90] (2,0);
\draw[usual] (-2,0) to[out=90,in=180] (1,3) to[out=0,in=90] (4,0);
\node at (-1,.5) {$...$};
\node at (3,.5) {$...$};
\node at (1,-1) {$\phantom{a}$};
\node at (1,4) {$\text{$m$ caps}$};
\end{tikzpicture}
,\quad
\begin{tikzpicture}[anchorbase,scale=.25,tinynodes]
\draw[usual] (0,3) to[out=270,in=180] (1,2) node[below]{$m$} to[out=0,in=270] (2,3);
\node at (1,-1) {$\phantom{a}$};
\node at (1,4) {$\phantom{a}$};
\end{tikzpicture}
:=
\begin{tikzpicture}[anchorbase,scale=.25,tinynodes]
\draw[usual] (0,3) to[out=270,in=180] (1,2) to[out=0,in=270] (2,3);
\draw[usual] (-2,3) to[out=270,in=180] (1,0) to[out=0,in=270] (4,3);
\node at (-1,2.5) {$...$};
\node at (3,2.5) {$...$};
\node at (1,-1) {$\phantom{a}$};
\node at (1,4) {$\text{$m$ cups}$};
\end{tikzpicture}
.
\end{gather*}
We even omit these numbers or the lines altogether if no confusion can arise.

The category $\TL[\Ring]$ furthermore admits a contravariant, $\Ring$-linear 
involution which reflects string diagrams in a horizontal line. 
Several arguments in the following will use this up-down symmetry. However, we will usually not have a left-right symmetry.

Recall that
$\Hom_{\TL[\Ring]}(m,n)$ is a free $\Ring$-module 
with a basis $\sbas$ given by crossingless matchings.
The \emph{through-degree} $\td{\sbasel_{i}}$ of $\sbasel_{i}\in\sbas$ is the number of strands connecting the bottom to the top. 
More generally, the through-degree of a general morphism 
$\morstuff{F}=\sum_{\sbasel_{i}\in\sbas}\sbasc_{i}\sbasel_{i}$ 
is defined via $\td{\morstuff{F}}:=\max\{\td{\sbasel_{i}}\mid\sbasc_{i}\neq 0\}$.
Note that $\td{\morstuff{FG}}\leq\min\big(\td{\morstuff{F}},\td{\morstuff{G}}\big)$, and thus, 
$\catstuff{td}_{i}(m,n):=\{\morstuff{F}\in\Hom_{\TL[\Ring]}(m,n)\mid\td{\morstuff{F}}\leq i\}$ form 
a sequence of nested ($\vcirc$-)ideals in $\TL[\Ring]$.

Instead of $m$, the number of strands, let us now use $v=m+1\in\N$, which will be crucial number for everything that follows.

\begin{definition}\label{definition:JW}
For $v\in\N$ the JW projectors $\qjw\in\Hom_{\TL[\Q]}(v{-}1,v{-}1)$ are the morphisms, which are
recursively defined by
\begin{gather}\label{eq:jw-recursion}
\qjw[0]
:=\emptyset\, ,
\quad
\qjw[1]:=
\begin{tikzpicture}[anchorbase,scale=.25,tinynodes]
\draw[usual] (0,0) to (0,2);
\end{tikzpicture}\,
,\quad
\qjw
:=
\begin{tikzpicture}[anchorbase,scale=.25,tinynodes]
\draw[JW] (-1.5,0) rectangle (1.5,2);
\node at (0,.9) {$\qjwm$};
\end{tikzpicture}
:=
\begin{tikzpicture}[anchorbase,scale=.25,tinynodes]
\draw[JW] (-1.5,0) rectangle (1.5,2);
\node at (0,.9) {$\qjwm[{v{-}2}]$};
\draw[usual] (-2,0) to (-2,2);
\end{tikzpicture}
+
\tfrac{v{-}2}{v{-}1}\cdot
\begin{tikzpicture}[anchorbase,scale=.25,tinynodes]
\draw[JW] (-2.5,1) rectangle (2.5,3);
\node at (0,1.9) {$\qjwm[{v{-}2}]$};
\draw[JW] (-.5,-1) rectangle (2.5,1);
\node at (1,-.1) {$\qjwm[{v{-}3}]$};
\draw[usual] (-1.5,1) to[out=270, in=0] (-2.25,.25) to[out=180, in=270] (-3,1) to (-3,3);
\draw[usual] (-1.5,-1) to[out=90, in=0] (-2.25,-.25) to[out=180, in=90] (-3,-1) to (-3,-3);
\draw[JW] (-2.5,-1) rectangle (2.5,-3);
\node at (0,-2.1) {$\qjwm[{v{-}2}]$};
\end{tikzpicture}
\quad
\text{ if }v>2,
\end{gather}
where we use a box with $v-1$ 
bottom and top strands to indicate $\qjw$.
\end{definition}

\begin{lemmaqed}(See \eg \cite[Section
3]{KaLi-TL-recoupling}.)\label{lemma:0-properties} We have
$\flip[{(\qjw)}]=\qjw$ and $\td{\qjw}=v-1$. Furthermore, the following
properties hold, which are best expressed diagrammatically.\\
\noindent
\begin{minipage}{.33\textwidth}
\begin{gather}\label{eq:0absorb}
\begin{tikzpicture}[anchorbase,scale=.25,tinynodes]
\draw[JW] (-.6,0) rectangle (1.6,2);
\draw[usual] (-1,0) to (-1,2);
\draw[usual] (2,0) to (2,2);
\node at (.5,.9) {$\qjwm[w{-}1]$};
\draw[JW] (-1.5,-2) rectangle (2.5,0);
\node at (0.5,-1.1) {$\qjwm$};
\node at (0,2.5) {$\phantom{a}$};
\node at (0,-2.5) {$\phantom{a}$};
\end{tikzpicture}
=
\begin{tikzpicture}[anchorbase,scale=.25,tinynodes]
\draw[JW] (-1.5,-1) rectangle (2.5,1);
\node at (0.5,-.1) {$\qjwm$};
\node at (0,2.5) {$\phantom{a}$};
\node at (0,-2.5) {$\phantom{a}$};
\end{tikzpicture}
=
\begin{tikzpicture}[anchorbase,scale=.25,tinynodes]
\draw[JW] (-.6,0) rectangle (1.6,-2);
\draw[usual] (-1,0) to (-1,-2);
\draw[usual] (2,0) to (2,-2);
\node at (.5,-1.1) {$\qjwm[w{-}1]$};
\draw[JW] (-1.5,0) rectangle (2.5,2);
\node at (0.5,0.9) {$\qjwm$};
\node at (0,2.5) {$\phantom{a}$};
\node at (0,-2.5) {$\phantom{a}$};
\end{tikzpicture},
\end{gather}
\end{minipage}
\begin{minipage}{.27\textwidth}
\begin{gather}\label{eq:0kill}
\quad
\begin{tikzpicture}[anchorbase,scale=.25,tinynodes]
\draw[JW] (-1.2,-1) rectangle (1.2,1);
\node at (0,-.1) {$\qjwm$};
\draw[usual] (-.5,1) to[out=90,in=180] (0,1.5) node[above,yshift=-2pt]{$k$} to[out=0,in=90] (.5,1);
\node at (0,2.5) {$\phantom{a}$};
\node at (0,-2.5) {$\phantom{a}$};
\end{tikzpicture}
=
0
=
\begin{tikzpicture}[anchorbase,scale=.25,tinynodes]
\draw[JW] (-1.2,1) rectangle (1.2,-1);
\node at (0,-.1) {$\qjwm$};
\draw[usual] (-.5,-1) to[out=270,in=180] (0,-1.5) node[below,yshift=-1pt]{$k$} to[out=0,in=270] (.5,-1);
\node at (0,2.5) {$\phantom{a}$};
\node at (0,-2.5) {$\phantom{a}$};
\end{tikzpicture},
\quad
\end{gather}
\end{minipage}
\begin{minipage}{.33\textwidth}
\begin{gather}\label{eq:0trace}
\begin{tikzpicture}[anchorbase,scale=.25,tinynodes]
\draw[JW] (-1.5,-1) rectangle (1,1);
\node at (-.25,-.1) {$\qjwm$};
\draw[usual] (-1,1) to[out=90,in=0] (-1.5,1.5) to[out=180,in=90] 
(-2,1) to (-2,0) node[left,xshift=2pt]{$k$} to (-2,-1) 
to[out=270,in=180] (-1.5,-1.5) to[out=0,in=270] (-1,-1);
\node at (0,2.5) {$\phantom{a}$};
\node at (0,-2.5) {$\phantom{a}$};
\end{tikzpicture}
=(-1)^{k}
\tfrac{v}{v{-}k}\cdot
\begin{tikzpicture}[anchorbase,scale=.25,tinynodes]
\draw[JW] (-1.8,-1) rectangle (1.8,1);
\node at (0,-.1) {$\qjwm[{v{-}1{-}k}]$};
\node at (0,2.5) {$\phantom{a}$};
\node at (0,-2.5) {$\phantom{a}$};
\end{tikzpicture}.
\end{gather}
\end{minipage}

\noindent Here $1\leq w\leq v$ and the projector of thickness $w-1$ in
\eqref{eq:0absorb} can be at any place. Similarly, the cap or cup in
\eqref{eq:0kill} can be at any place and of any thickness.
\end{lemmaqed}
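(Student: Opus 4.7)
The plan is to proceed by induction on $v$, with the base cases $v \in \{0, 1, 2\}$ immediate from \eqref{eq:jw-recursion}. For the inductive step, assume all five assertions hold for $\qjw[w]$ with $w < v - 1$. Self-duality $\flip[{(\qjw)}] = \qjw$ follows at once because the recursion \eqref{eq:jw-recursion} is manifestly invariant under up-down reflection: the first summand $\qjw[v-2] \hcirc \idtl[1]$ is self-dual by induction, and the second summand is a symmetric sandwich of two copies of $\qjw[v-2]$ around a cup-cap pair joined through $\qjw[v-3]$. The through-degree claim $\td{\qjw} = v - 1$ then follows by expanding the recursion: the identity diagram $\idtl[v-1]$ appears in the first summand with coefficient $1$ (since by induction $\idtl[v-2]$ appears in $\qjw[v-2]$ with coefficient $1$), while the second summand has through-degree at most $v - 3$ because of its cup-cap pair.

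The technical heart is the killing relation \eqref{eq:0kill}, from which I plan to derive the remaining two properties. By the already-established self-duality it suffices to treat a single cap composed on top of $\qjw$. If the cap lies on two strands entirely within the $\qjw[v-2]$ portion, both summands of \eqref{eq:jw-recursion} contain a cap landing on a $\qjw[v-2]$ subdiagram (and in the second summand the cup likewise lands on a $\qjw[v-2]$), all of which vanish by the inductive hypothesis. For a cap placed on the two rightmost strands, the first summand reduces, via the inductive partial trace \eqref{eq:0trace} for $\qjw[v-2]$ applied with $k = 1$, to a scalar multiple of $\qjw[v-2]$; a direct diagrammatic computation then shows that this scalar is cancelled exactly by the second summand's contribution, whose coefficient $\tfrac{v-2}{v-1}$ is engineered for precisely this purpose.

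With killing in hand, absorption \eqref{eq:0absorb} follows because $\qjw[w-1]$ expands, by the inductive through-degree statement, as $\idtl[w-1]$ plus terms each containing at least one cap (or cup), all annihilated by the bordering $\qjw$. The partial-trace identity \eqref{eq:0trace} with $k = 1$ is a short direct calculation via the recursion, mirroring the cancellation from the killing step; the general case follows by iterating the $k = 1$ identity and telescoping the resulting product of rational scalars into $(-1)^k \tfrac{v}{v-k}$. The main obstacle is the delicate coefficient bookkeeping in the killing step for the rightmost cap: this is where the recursion's normalization $\tfrac{v-2}{v-1}$ must cancel precisely against the inductive partial-trace scalar, and this single cancellation is the linchpin that propagates all five properties through the induction.
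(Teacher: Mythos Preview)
The paper does not actually prove this lemma: it is stated as a \texttt{lemmaqed} with a citation to \cite{KaLi-TL-recoupling}, so there is no in-paper argument to compare against. Your inductive scheme is the standard one and is sound in outline, but the description of the boundary-cap case is garbled.

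In the paper's recursion \eqref{eq:jw-recursion} the extra identity strand sits on the \emph{left}, so the exceptional cap is the one on the two \emph{leftmost} strands, not the rightmost. More importantly, you have the roles of the two summands reversed. Applying this cap to the first summand $\idtl[1]\hcirc\qjw[v{-}2]$ does \emph{not} produce a partial trace: no loop closes, and the result is the ``hook'' morphism $v{-}1\to v{-}3$ obtained by bending the leftmost top strand of $\qjw[v{-}2]$ down to a new leftmost bottom strand. (In particular it cannot be ``a scalar multiple of $\qjw[v{-}2]$'', which is an endomorphism of $v{-}2$.) It is in the \emph{second} summand that the cap, together with the built-in cup, closes the leftmost strand of the top $\qjw[v{-}2]$ into a loop; the inductive instance of \eqref{eq:0trace} then replaces that top $\qjw[v{-}2]$ by $-\tfrac{v-1}{v-2}\,\qjw[v{-}3]$, absorption collapses the stack, and the prefactor $\tfrac{v-2}{v-1}$ yields exactly $-1$ times the same hook diagram. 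The two summands then cancel, which is the point you intended. Once you swap the attributions and fix the side and the target type, the rest of your plan (absorption from killing, $k=1$ partial trace by direct computation, general $k$ by telescoping) goes through without circularity.
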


\subsection{Characteristic \texorpdfstring{$\ppar$}{p} notions}\label{subsec:quiver-pnotions} 

As already suggested by the recursion \eqref{eq:jw-recursion}, the 
JW projectors have rational coefficients with respect to $\sbas$ and 
typically cannot be defined in $\TL[\F]$. To formalize this,
consider the $\ppar$-adic valuation $\ord\colon\Q\to\Z\cup\{\infty\}$, 
defined for $n\in\Z$ as $\ord(n)=\max\{m\in\N[0]\mid\ppar^{m}|n\}$ 
(including $\ord(0)=\infty$) and for $c=\neatfrac{r}{s}\in\Q$ as $\ord(c):=\ord(r)-\ord(s)$.

\begin{definition}\label{definition:coeffs-2}
For a non-zero $\morstuff{F}=\sum_{\sbasel_{i}\in\sbas}\sbasc_{i}\sbasel_{i}\in\TL[\Q]$ we let 
$\ord(\morstuff{F}):=\min_{i}\{\ord(\sbasc_{i})\}$.
We call such a morphism $\ppar$-admissible if
$\ord(\morstuff{F})\geq 0$.
\end{definition}

To highlight morphisms that might not be $\ppar$-admissible, 
we use $\tilde{\phantom{.}}$ as \eg in \eqref{eq:jw-recursion}.
Note that $\morstuff{F}=\sum_{\sbasel_{i}\in\sbas}\sbasc_{i}\sbasel_{i}\in\TL[\Q]$ 
is $\ppar$-admissible if and only if every coefficient
$\sbasc_{i}$ can be presented as a reduced 
fraction $\neatfrac{r}{s}$ with $\ppar{\hspace{0pt}\not|\hspace{1pt}}s$. 
In this case, $\morstuff{F}$ represents 
an element $\spe{\morstuff{F}}$ of $\TL[\F]$, which is zero if and only if $\ord(\morstuff{F})>0$. 
If we write $\morstuff{F}=\morstuff{F}_{0}+\morstuff{F}_{>0}$ 
with $\ord(\morstuff{F}_{0})=0$ and $\ord(\morstuff{F}_{>0})>0$, then $\spe{\morstuff{F}}=\spe{\morstuff{F}_{0}}$.

\begin{example}\label{example:jw-well-defined}
We have $\ord(\qjw[v{-}1])=0$ for $v\leq\ppar$, which corresponds to the fact that 
the characteristic zero Weyl module $\wmod(v-1)=\tmod(v-1)$ stays simple when reduced modulo $\ppar$. 
However, for $v>\ppar$, one typically has $\ord(\qjw[v{-}1])<0$, 
and in such cases the projectors $\qjw[v{-}1]$ cannot be defined in $\TL[\F]$. 
\end{example}

However, there are alternative idempotents $\pqjw[v{-}1]\in\TL[\Q]$ satisfying 
$\ord(\pqjw[v{-}1])\geq 0$ and we will consider their specializations 
$\pjw[v{-}1]:=\spe{\pqjw[v{-}1]}\in\TL[\F]$. To this end, recall that 
we write $v=\pbase{a_{j},\dots,a_{0}}{\ppar}=\sum_{i}a_{i}p^{i}$ 
for the $\ppar$-adic expansion of $v\in\N$
with digits $0\leq a_{i}<\ppar$ and $a_{j}\neq 0$. (More generally, we also write
$\pbase{b_j,\dots,b_{0}}{\ppar}:=\sum_{i}b_{i}p^{i}$ for any $b_i\in\Z$.) 

\begin{definition}\label{definition:ancestry}
If $v=\pbase{a_{j},\dots,a_{0}}{\ppar}\in\N$ has only 
a single non-zero digit, then $v$ is called an eve. The set of eves is 
denoted by $\eve$. 
If $v\notin\eve$, then the mother $\mother$ of $v$ is 
obtained by setting the rightmost non-zero digit 
of $v$ to zero. 
We will also consider the set 
$\ancest:=\{\mother,\motherr{v}{2}:=\mother[\mother],\dots\}$ 
of (matrilineal) ancestors of $v$, 
whose size $\generation$ is called the generation of $v$.
\end{definition}

Note that $\ancest=\emptyset$ if and only if $v\in\eve$, and for 
$v\notin\eve$ we write $\eve(v)$ for its eve.

\begin{definition}\label{definition:support} 
For $v=\pbase{a_{j},\dots,a_{0}}{\ppar}$, the 
support $\supp\subset \N$ is the set of the $2^{\generation}$ integers of the form
$w=\pbase{a_{j},\pm a_{j-1},\dots,\pm a_{0}}{\ppar}$. The 
integers $v[i]=\pbase{a_{j},\dots,a_{i+1},-a_{i},a_{i-1},\dots,a_{0}}{\ppar}$ 
for $a_i\neq 0$ form the fundamental support $\fsupp\subset\supp$ of $v$.
\end{definition}

\begin{example}\label{example:ancestry}
Let $\ppar=3$. Then $v=23=\pbase{2,1,2}{3}$ has $\generation[23]=2$, 
and $\mother[23]=21=\pbase{2,1,0}{3}$ and $\motherr{23}{2}=\eve(23)=18=\pbase{2,0,0}{3}$. Hence, the ancestry chart of $23$ is
\begin{gather*}
\ancest[23]=
\begin{tikzpicture}[anchorbase,scale=.25,tinynodes]
\node at (10,0) {$18\in\eve$};
\node at (10,2.5) {$\generation[21]=1$};
\node at (10,5) {$\generation[23]=2$};
\draw[thin,directed=1] (-4,0) node[left,xshift=.2pt]{$\motherr{23}{2}$} to (-1,0);
\draw[thin,directed=1] (-4,4) node[left,xshift=.2pt]{$\mother[23]$} to (-1,4) to (.25,3.3);
\draw[ultra thick,mygray,densely dotted] (0,0) to (-3,2.5);
\draw[ultra thick,mygray,densely dotted] (0,0) to (-1,2.5);
\draw[ultra thick,spinach] (0,0) to (1,2.5);
\draw[ultra thick,mygray,densely dotted] (0,0) to (3,2.5);
\draw[ultra thick,mygray,densely dotted] (1,2.5) to (0,5);
\draw[ultra thick,spinach] (1,2.5) to (2,5);
\draw[ultra thick,mygray,densely dotted] (3,2.5) to (4,5);
\draw[ultra thick,mygray,densely dotted] (3,2.5) to (6,5);
\draw[spinach,fill=white] (0,.2) circle (.8cm);
\draw[mygray,fill=mygray!25] (-3,2.7) circle (.8cm);
\draw[mygray,fill=mygray!25] (-1,2.7) circle (.8cm);
\draw[spinach,fill=white] (1,2.7) circle (.8cm);
\draw[mygray,fill=mygray!25] (3,2.7) circle (.8cm);
\draw[mygray,fill=mygray!25] (0,5.2) circle (.8cm);
\draw[spinach,fill=white] (2,5.2) circle (.8cm);
\draw[mygray,fill=mygray!25] (4,5.2) circle (.8cm);
\draw[mygray,fill=mygray!25] (6,5.2) circle (.8cm);
\node[tomato] at (0,0) {$18$};
\node[mygray] at (-3,2.5) {$19$};
\node[mygray] at (-1,2.5) {$20$};
\node[spinach] at (1,2.5) {$21$};
\node[mygray] at (3,2.5) {$24$};
\node[mygray] at (0,5) {$22$};
\node[spinach] at (2,5) {$23$};
\node[mygray] at (4,5) {$25$};
\node[mygray] at (6,5) {$26$};
\end{tikzpicture}
\,.
\end{gather*}
Moreover, $\supp[23]=\{23=\pbase{2,1,2}{3},19=
\pbase{2,1,-2}{3},17=\pbase{2,-1,2}{3},13=\pbase{2,-1,-2}{3}\}$ and 
$\fsupp[23]=\{19,17\}$.
In pictures,
\begin{gather}\label{eq:funny-example}
\begin{tikzpicture}[anchorbase,scale=.25,tinynodes]
\draw[thick,spinach] (2,0) to [out=45,in=135] (10,0);
\draw[thick,spinach] (10,0) to [out=45,in=180] (12,1.15) to [out=0,in=135] (14,0);
\draw[thick,spinach] (14,0) to [out=45,in=135] (22,0);
\draw[thick,spinach] (22,0) to [out=45,in=180] (24,1.15) to [out=0,in=135] (26,0);
\draw[thick,spinach] (26,0) to [out=45,in=135] (34,0);
\draw[thick,spinach] (34,0) to [out=45,in=180] (36,1.15) to [out=0,in=135] (38,0);
\draw[thick,spinach] (38,0) to [out=45,in=135] (46,0);
\draw[thick,myorange,densely dashed] (26,0) to [out=45,in=135] (46,0);
\draw[thick,myorange,densely dashed] (2,0) to [out=45,in=135] (34,0);
\draw[thick,myorange,densely dashed] (10,0) to [out=45,in=135] (26,0);
\draw[thick,spinach] (34,0) to [out=45,in=135] (46,0);
\draw[thick,spinach] (10,0) to [out=45,in=135] (34,0);
\draw[thick,spinach] (14,0) to [out=45,in=135] (26,0);
\draw[thin,myorange,odirected=1] (36,-2.25) to (26,-2.25) to (26,-.75);
\draw[thick,myorange,odirected=1] (40,-2.75) node[below]{$\supp[23]$} to 
(40,-2.25) to (46,-2.25) to (46,-.75);
\draw[thick,myorange,odirected=1] (38,-2.25) to (38,-1);
\draw[thick,myorange,odirected=1] (40,-2.25) to (34,-2.25) to (34,-1);
\draw[thin] (34,-2.75) node[below]{$\fsupp[23]$} to (34,-2.25);
\draw[thin] (34,-2.25) to (38,-2.25);
\draw[thin,directed=1] (38,-2.25) to (38,-.75);
\draw[thin,directed=1] (34,-2.25) to (34,-.75);
\draw[spinach,fill=white] (2,.2) circle (.8cm);
\draw[mygray,fill=mygray!25] (4,.2) circle (.8cm);
\draw[mygray,fill=mygray!25] (6,.2) circle (.8cm);
\draw[mygray,fill=mygray!25] (8,.2) circle (.8cm);
\draw[spinach,fill=white] (10,.2) circle (.8cm);
\draw[mygray,fill=mygray!25] (12,.2) circle (.8cm);
\draw[spinach,fill=white] (14,.2) circle (.8cm);
\draw[mygray,fill=mygray!25] (16,.2) circle (.8cm);
\draw[mygray,fill=mygray!25] (18,.2) circle (.8cm);
\draw[mygray,fill=mygray!25] (20,.2) circle (.8cm);
\draw[spinach,fill=white] (22,.2) circle (.8cm);
\draw[mygray,fill=mygray!25] (24,.2) circle (.8cm);
\draw[spinach,fill=corn!25] (26,.2) circle (.8cm);
\draw[mygray,fill=mygray!25] (28,.2) circle (.8cm);
\draw[mygray,fill=mygray!25] (30,.2) circle (.8cm);
\draw[mygray,fill=mygray!25] (32,.2) circle (.8cm);
\draw[spinach,fill=corn!75] (34,.2) circle (.8cm);
\draw[mygray,fill=mygray!25] (36,.2) circle (.8cm);
\draw[spinach,fill=corn!75] (38,.2) circle (.8cm);
\draw[mygray,fill=mygray!25] (40,.2) circle (.8cm);
\draw[mygray,fill=mygray!25] (42,.2) circle (.8cm);
\draw[mygray,fill=mygray!25] (44,.2) circle (.8cm);
\draw[spinach,fill=corn!75] (46,.2) circle (.8cm);
\node[tomato] at (2,0) {$1$};
\node[tomato] at (4,0) {$2$};
\node[tomato] at (6,0) {$3$};
\node[mygray] at (8,0) {$4$};
\node[spinach] at (10,0) {$5$};
\node[tomato] at (12,0) {$6$};
\node[spinach] at (14,0) {$7$};
\node[mygray] at (16,0) {$8$};
\node[tomato] at (18,0) {$9$};
\node[mygray] at (20,0) {$10$};
\node[spinach] at (22,0) {$11$};
\node[mygray] at (24,0) {$12$};
\node[spinach] at (26,0) {$13$};
\node[mygray] at (28,0) {$14$};
\node[mygray] at (30,0) {$15$};
\node[mygray] at (32,0) {$16$};
\node[spinach] at (34,0) {$17$};
\node[tomato] at (36,0) {$18$};
\node[spinach] at (38,0) {$19$};
\node[mygray] at (40,0) {$20$};
\node[mygray] at (42,0) {$21$};
\node[mygray] at (44,0) {$22$};
\node[spinach] at (46,0) {$23$};
\end{tikzpicture}
,
\;
\begin{tikzpicture}[anchorbase,scale=.25,tinynodes]
\node at (0,7) {$\supp[23]$};
\draw[thin] (-3,1.35) to (3,1.35);
\draw[thin] (-3,4.05) to (3,4.05);
\draw[ultra thick,spinach] (0,0) to (-2.5,2.5);
\draw[ultra thick,spinach] (0,0) to (2.5,2.5);
\draw[ultra thick,spinach] (0,5) to (-2.5,2.5);
\draw[ultra thick,spinach] (0,5) to (2.5,2.5);
\draw[spinach,fill=corn!25] (0,.2) circle (.8cm);
\draw[spinach,fill=corn!75] (-2.5,2.7) circle (.8cm);
\draw[spinach,fill=corn!75] (2.5,2.7) circle (.8cm);
\draw[spinach,fill=corn!75] (0,5.2) circle (.8cm);
\node[spinach] at (0,0) {$13$};
\node[spinach] at (-2.5,2.5) {$17$};
\node[spinach] at (2.5,2.5) {$19$};
\node[spinach] at (0,5) {$23$};
\end{tikzpicture}
,
\end{gather}
where we have highlighted in yellow the support of $23$. The solid green arcs indicate successive 
inclusions in fundamental supports, and dashed orange arcs indicate successive inclusions in non-fundamental supports, all starting from $23$.
\end{example}

To account for \losp~ we need the following admissibility conditions.

\begin{definition}\label{definition:adm}
Let $S\subset\N[0]$ be a finite set. 
We consider partitions $S=\bigsqcup_{i}S_{i}$ of $S$ into subsets $S_{i}$ 
of consecutive integers, which we call \emph{stretches} 
(in the $\ppar$-adic expansion of $v$). For the purpose of this definition, we fix the \emph{coarsest} such partition. 

The set $S$ is called down-admissible for $v=\pbase{a_{j},\dots,a_{0}}{\ppar}$ if:
\begin{enumerate}[label=(\roman*)]

\setlength\itemsep{.15cm}

\item[(d1)] \label{definition:adm-i} $a_{\min(S_{i})}\neq 0$ for every $i$, and

\item[(d2)] \label{definition:adm-ii} if $s\in S$ and $a_{s+1}=0$, then $s+1\in S$.
\end{enumerate}
If $S\subset \N[0]$ is down-admissible for $v=\pbase{a_{j},\dots,a_{0}}{\ppar}$, then we define
\begin{gather*}
v[S]:=\pbase{a_{j},
\epsilon_{j-1}a_{j-1},\dots,\epsilon_{0}\,a_{0}}{\ppar},\quad
\epsilon_{k}
=
\begin{cases}
1 &\text{if }k\notin S,
\\
-1 &\text{if }k\in S.
\end{cases}
\end{gather*}

Conversely, $S$ is 
up-admissible for $v=\pbase{a_{j},\dots,a_{0}}{\ppar}$ if the following conditions are satisfied:
\begin{enumerate}[label=(\roman*)]

\setlength\itemsep{.15cm}

\item[(u1)] \label{definition:adm-iii} $a_{\min(S_{i})}\neq 0$ for every $i$, and

\item[(u2)] \label{definition:adm-iiii} if $s\in S$ and $a_{s+1}=\ppar-1$, then $s+1\in S$.

\end{enumerate}
If $S\subset\N[0]$ is up-admissible for $v=\pbase{a_{j},\dots,a_{0}}{\ppar}$, then we define
\begin{gather*}
v(S):=
\pbase{a_{r(S)}^{\prime},\dots,a_{0}^{\prime}}{\ppar},
\quad
a_{k}^{\prime}= 
\begin{cases}
a_{k} &\text{if } k\notin S, k-1\notin S,
\\
a_{k}+2 &\text{if } k\notin S, k-1\in S,
\\
-a_{k} &\text{if } k\in S,
\end{cases}
\end{gather*}
where we extend the digits of $v$ by $a_{h}=0$ for $h>j$ if necessary.  

If $S$ is up-admissible, then we denote by $\hull\subset\N[0]$ the
\emph{down-admissible hull} of $S$, the smallest down-admissible
set containing $S$, if it exists.  
\end{definition}

\begin{example}\label{example:ad-sets} Let $\ppar=7$. The set $S=\{5,4,3|0\}$
(here and in the following, we use vertical bars to highlight the coarsest
partition into stretches) is down-admissible but not up-admissible for
$v=\pbase{4,5,0,2,0,6,1}{7}$. On the other hand, $S^{\prime}=\{5,4,3|1,0\}$ is
up-admissible, but not down-admissible for $v$, and we get
\begin{gather*}
v[5,4,3|0]=\pbase{4,\underline{5,0,2},0,6,\underline{1\!\!\phantom{,}}}{7}=\pbase{4,-5,0,-2,0,6,-1}{7}
,
\\
v(5,4,3|1,0)=\pbase{4,\uwave{5,0,2\!\!\phantom{,}},0,\uwave{6,1}}{7}=\pbase{6,-5,0,-2,2,-6,-1}{7}.
\end{gather*}
Here we have underlined the stretches of digits in $\underline{S}$ 
and $\uwave{S^{\prime}}$. Furthermore, $\hull[S^{\prime}]=\{5,4,3,2,1,0\}$.
\end{example}

\begin{example}\label{example:ad-sets-0}
We think of the operations $v\mapsto v(S)$ and $v\mapsto v[S]$ as reflecting $v$ down and 
up along $S$, respectively. The admissibility restrictions ensure 
that the down-admissible sets $S$ are in bijection with the elements 
$v[S]\in\supp$ and that reflecting down and up are inverse operations as 
we will see in \fullref{lemma:stretch-lemma}. Explicitly, for $\ppar=3$ and $S=\{1,0\}$ one gets
\begin{gather*}
13(1,0)=\pbase{1,\uwave{1,1}}{3}=\pbase{3,-1,-1}{3}=23
,
\quad
23[1,0]=\pbase{2,\underline{1,2}}{3}=\pbase{2,-1,-2}{3}=13.
\end{gather*} 
See also \eqref{eq:funny-example}.
\end{example}

\begin{definition}\label{definition:adm-next}
For two non-empty sets $A,B\subset\N[0]$ we define
\begin{gather*}
\dist(A,B)=\min(|a-b|\mid a\in A,b\in B).
\end{gather*}
We say $A$ and $B$ are \emph{distant} if $\dist(A,B)>1$, \emph{adjacent} if $\dist(A,B)=1$,
or \emph{overlapping} if $\dist(A,B)=0$.
\end{definition} 

If $S$ and $S^{\prime}$ are down- or up-admissible for $v$ and $S\cap S^{\prime}=\emptyset$, 
then $S\cup S^{\prime}$ will also be down- or up-admissible, respectively. Conversely, 
if $S$ is down- or up-admissible for $v$ and $S^{\prime}\subset S$, then $S^{\prime}$ need not be down- or up-admissible for $v$. 

For down- or up-admissible sets $S$, a central object in the following will be
the \emph{finest partition} into down- or up-admissible subsets
$S=\bigsqcup_{k=0}^{r(S)}S_{k}$ (the number $r(S)+1$ is the size of this
partition), which we order by size of their elements $S_{k}>S_{k{-}1}$. Note
that the elements of $S_{k}$ are necessarily consecutive integers, and that this
partition is typically finer than the partition considered in
\fullref{definition:adm}. We call the $S_{k}$ \emph{minimal down-} or
\emph{up-admissible stretches} of $v$, respectively. It is easy to check that
\begin{gather*}
v[S]=v[S_{r(S)}]\cdots[S_{0}],
\quad
v(S)=v(S_{0})\cdots(S_{r(S)}),
\end{gather*}
for down- or up-admissible $S$, respectively.

\begin{example}\label{example:ad-sets-2} 
For the set $S=\{5,4,3|0\}$
(partitioned into stretches by the bar) and $v$ as in
\fullref{example:ad-sets} the finest down-admissible partition is
$S=\{5|4,3|0\}= S_{2}\cup S_{1}\cup S_{0}$ where
$v[S_{0}],v[S_{1}],v[S_{2}]\in\fsupp$. More generally, the down-admissible sets
$S$ with $v[S]\in\fsupp$ are exactly the minimal down-admissible stretches for
$v$.
\end{example}

If $S^{\prime}$ is also down- or up-admissible and distant from $S$, \ie 
$\dist(S,S^{\prime})>1$, then we have:
\begin{gather}\label{eq:reflectcommute} 
v[S][S^{\prime}]=v[S^{\prime}][S], 
\quad
v(S)(S^{\prime})=v(S^{\prime})(S), 
\quad 
v(S)[S^{\prime}]=v[S^{\prime}](S).
\end{gather}

If $S$ and $T$ are subsets of $\N[0]$, we write $T>S$ to indicate the
requirement that every element in $T$ be strictly greater than every element in
$S$. We have the following equivalences of admissibilities.

\begin{lemma}\label{lemma:new-admissible}
Consider stretches $S^{\prime}>S$ with $\dist(S,S^{\prime})=1$.
\begin{enumerate}[label=(\alph*)]

\setlength\itemsep{.15cm}

\item \label{lemma:new-admissible-a} $S$ is down-admissible 
for $v$ and $S^{\prime}$ is down-admissible for $v[S]$ if and only 
if $S^{\prime}$ is down-admissible for $v$ and $S$ is 
up-admissible for $v[S^{\prime}]$. In this case we have $v[S][S^{\prime}]=v[S^{\prime}](S)$.

\item \label{lemma:new-admissible-b} $S^{\prime}$ is up-admissible 
for $v$ and $S$ is up-admissible for $v(S^{\prime})$ if and only 
if $S$ is down-admissible for $v$ and $S^{\prime}$ is 
up-admissible for $v[S]$. In this case we have $v(S^{\prime})(S)=v[S](S^{\prime})$.
\end{enumerate} 
\end{lemma}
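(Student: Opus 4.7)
The plan is to reduce both equivalences to direct digit-level computations with the $\ppar$-adic expansion $v = \pbase{a_j, \dots, a_0}{\ppar}$. Writing $S = \{s, \dots, t\}$ and $S' = \{t+1, \dots, u\}$ (these forms are forced by the hypotheses that $S, S'$ are stretches with $S' > S$ and $\dist(S,S')=1$), the task is to see how single-stretch reflections change digits and then to compare.

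First, I record the effect of single-stretch reflections. Under $v \mapsto v[S]$, provided $a_s, a_{t+1} \neq 0$, carrying the negative coefficients in the definition of $v[S]$ produces new digits
\begin{gather*}
a'_k = a_k \text{ for } k < s, \quad a'_s = \ppar - a_s, \quad a'_k = \ppar - 1 - a_k \text{ for } s < k \leq t, \quad a'_{t+1} = a_{t+1} - 1,
\end{gather*}
and $a'_k = a_k$ for $k > t+1$. The up-reflection $v \mapsto v(S)$, valid when $a_s \neq 0$ and $a_{t+1} \neq \ppar - 1$, produces the same digits except $a'_{t+1} = a_{t+1} + 1$. Both calculations rely on the admissibility conditions (d2) or (u2) at $\max(S)$ to halt the carry at position $t+1$.

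For part (a), I translate each of the four admissibility hypotheses into digit conditions on $v$. The left-hand side unpacks to $a_s \neq 0$, $a_{t+1} \neq 0$, $a_{t+1} - 1 \neq 0$, and $a_{u+1} \neq 0$, or equivalently $a_s \neq 0$, $a_{t+1} \geq 2$, $a_{u+1} \neq 0$. The right-hand side unpacks via the formulas above to $a_{t+1}, a_{u+1} \neq 0$, $a_s \neq 0$, and $\ppar - a_{t+1} \neq \ppar - 1$, i.e., the same three conditions. Then $v[S][S'] = v[S'](S)$ is checked digit by digit: the only nontrivial position is $t+1$, where both sides produce $\ppar - a_{t+1} + 1$.

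Part (b) is analogous. Both hypotheses in the equivalence unpack to $a_s \neq 0$, $a_{t+1} \geq 2$, and $a_{u+1} \neq \ppar - 1$, and a digit-wise comparison again yields $v(S')(S) = v[S](S')$ with agreement at the sole nontrivial position $t+1$. The only difficulty is careful bookkeeping of carries at the boundary between $S$ and $S'$; edge cases such as $s = 0$ or $u = j$ require no separate treatment thanks to the convention that $a_h = 0$ for $h > j$.
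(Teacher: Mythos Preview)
Your proof is correct and follows essentially the same approach as the paper: both arguments write out the standard $\ppar$-adic digits of the relevant single-stretch reflections, translate each admissibility hypothesis into explicit conditions on $a_s$, $a_{t+1}$ (your notation; the paper's $a_{s'}$), and $a_{u+1}$, observe that the two sides of each equivalence yield the same conditions, and then verify the claimed equality of reflected numbers digit by digit. The organization differs only cosmetically---you first record a general single-stretch digit formula and then instantiate it, while the paper performs the computation inline---but the substance is identical.
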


\begin{proof}
We prove (a). For this 
we write $v=\pbase{a_{j},\dots, a_{0}}{\ppar}$, 
$S=\{s,s+1,\dots,s^{\prime}-1\}$ and $S^{\prime}=\{s^{\prime},s^{\prime}+1,\dots,t-1\}$.

$S$ is down-admissible for $v$ if and only if $a_{s}\neq 0$ and $a_{s^{\prime}}\neq 0$, and we get
\begin{gather*}
v[S]=\pbase{a_{j},\dots, a_{t},a_{t{-}1},
\dots,a_{s^{\prime}{+}1},a_{s^{\prime}}-1,\ppar-a_{s^{\prime}-1}-1,\dots,\ppar-a_{s},a_{s{-}1},\dots,a_{0}}{\ppar}.
\end{gather*}
Now $S^{\prime}$ is down-admissible for $v[S]$ if and only if $a_{s^{\prime}}\neq 1$ and $a_{t}\neq 0$, and we get
\begin{gather*}
v[S][S^{\prime}]{=}\pbase{a_{j},\dots, a_{t}-1,\ppar-a_{t{-}1}-1,\dots,
\ppar-a_{s^{\prime}}+1,\ppar-a_{s^{\prime}-1}-1,\dots,\ppar-a_{s},a_{s{-}1},\dots,a_{0}}{\ppar}.
\end{gather*}
Conversely, $S^{\prime}$ is down-admissible for $v$ if and only if $a_{s^{\prime}}\neq 0$ and $a_{t}\neq 0$, and we get
\begin{gather*}
v[S^{\prime}]=\pbase{a_{j},\dots, a_{t}-1,\ppar-a_{t{-}1}-1,\dots,
\ppar-a_{s^{\prime}},a_{s^{\prime}-1},\dots,a_{s},a_{s{-}1},\dots,a_{0}}{\ppar}.
\end{gather*}
Now $S$ is up-admissible for $S^{\prime}$ if and only if $a_{s}\neq 0$ and $a_{s^{\prime}}\neq 1$. 
This shows the equivalence of admissibilities. Furthermore, by reflecting 
$v[S^{\prime}]$ up along $S$, it is easy to see $v[S^{\prime}](S)=v[S][S^{\prime}]$. The case of (b) is analogous. 
\end{proof} 

\begin{lemma}\label{lemma:stretch-lemma}
Let $v\in\N$ and $S\subset\N[0]$ finite.
\begin{enumerate}[label=(\alph*)]

\setlength\itemsep{.15cm}

\item \label{lemma:stretch-lemma-a} If $S$ is up-admissible for $v$, then $S$ is down-admissible for $w=v(S)$ and $v=w[S]$.

\item \label{lemma:stretch-lemma-b} If $S$ is down-admissible for $v$, then $S$ is up-admissible for $u=v[S]$ and $v=u(S)$.

\end{enumerate}
\end{lemma}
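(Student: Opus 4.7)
The plan is to view (a) and (b) as dual incarnations of the single fact that up- and down-reflections along $S$ are mutually inverse, and to establish each by first reducing to the single-stretch case and then verifying that case by an explicit digit computation.

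For the reduction, I would write $S$ as its coarsest partition $S = S^{(1)} \sqcup \cdots \sqcup S^{(m)}$ into maximal runs of consecutive integers. These runs are pairwise distant in the sense of \fullref{definition:adm-next}, so by \eqref{eq:reflectcommute} the associated reflections commute. Moreover, reflecting along $S^{(i)}$ alters only the digits in positions $\{\min S^{(i)}, \ldots, \max S^{(i)}+1\}$, and the distance condition guarantees that this set is disjoint from the positions relevant to admissibility of any other $S^{(j)}$. Hence admissibility of each remaining stretch is preserved under intermediate reflections, and a short induction reduces both (a) and (b) to the case of a single consecutive stretch $S = \{s, s+1, \ldots, s'-1\}$.

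For the single-stretch case of (b), admissibility simplifies to $a_s, a_{s'} \neq 0$. The proof of \fullref{lemma:new-admissible} already works out the $\ppar$-adic digits $b_k$ of $u := v[S]$: one finds $b_s = \ppar - a_s$, $b_k = \ppar - 1 - a_k$ for $s < k < s'$, $b_{s'} = a_{s'} - 1$, and $b_k = a_k$ otherwise. Since $1 \leq a_s \leq \ppar - 1$, one has $1 \leq b_s \leq \ppar - 1$ and $b_{s'} \leq \ppar - 2$, so (u1) and (u2) of \fullref{definition:adm} hold and $S$ is up-admissible for $u$. Computing $u(S)$ from the signed-digit formula of \fullref{definition:adm} and then substituting these values, the geometric-series identity $\sum_{k=s+1}^{s'-1}(\ppar - 1)\ppar^k = \ppar^{s'} - \ppar^{s+1}$ collapses everything to $u(S) = v$.

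Part (a) is entirely analogous: given $S$ up-admissible for $v$, so that $a_s \neq 0$ and $a_{s'} \neq \ppar - 1$, I would normalize the signed-digit formula for $w := v(S)$ by a single carry propagating from position $s$ to $s'$. This yields $b_s = \ppar - a_s$, $b_k = \ppar - 1 - a_k$ for $s < k < s'$, and $b_{s'} = a_{s'} + 1$, all valid $\ppar$-adic digits precisely because $a_{s'} \neq \ppar - 1$. One then reads off that $S$ is down-admissible for $w$, and applying the down-reflection digit formula to $w$ immediately returns the $a_k$, so $w[S] = v$. The only real obstacle is the bookkeeping at the two endpoints of the stretch, where the admissibility conditions (d2) and (u2) were engineered precisely to keep every digit in the valid range $\{0, \ldots, \ppar - 1\}$; there is no conceptual difficulty beyond this endpoint asymmetry.
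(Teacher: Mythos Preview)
Your proposal is correct and follows essentially the same route as the paper: reduce to a single stretch via \eqref{eq:reflectcommute}, then carry out the explicit digit computation at the two endpoints. The only cosmetic difference is that you treat (b) in detail and call (a) analogous, while the paper does the reverse; your extra justification for why admissibility of the remaining stretches survives intermediate reflections is a welcome elaboration of what the paper leaves implicit.
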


\begin{proof} 
Let $v=\pbase{a_{j},\dots,a_{0}}{\ppar}$. 
By \eqref{eq:reflectcommute} it suffices to consider 
the case where $S=\{s,s+1,\dots,s^{\prime}-1\}$ 
is a single stretch. For \fullref{lemma:stretch-lemma}.(a), 
suppose that $S$ is up-admissible for $v$, \ie 
$a_{s}\neq 0$ and $a_{s^{\prime}}\neq \ppar-1$. We get
\begin{gather*}
\begin{aligned}
w=v(S)
&=\pbase{\dots,a_{s^{\prime}+1},a_{s^{\prime}}+2,
-a_{s^{\prime}-1},\dots,-a_{s+1},-a_{s},a_{s-1},\dots,a_{0}}{\ppar}
\\
&=\pbase{\dots,a_{s^{\prime}+1},a_{s^{\prime}}+1,
\ppar-a_{s^{\prime}-1}-1,\dots,\ppar-a_{s+1}-1,\ppar-a_{s},a_{s-1},\dots,a_{0}}{\ppar}.
\end{aligned}
\end{gather*}
Since $a_{s^{\prime}}+1\neq 0$ and 
$\ppar-a_s\neq 0$, $S$ is down-admissible for $w$ and we have: 
\begin{gather*}
v(S)[S]=\pbase{\dots,a_{s^{\prime}+1},
a_{s^{\prime}}+1,a_{s^{\prime}-1}-\ppar+1,
\dots,a_{s+1}-\ppar+1,a_{s}-\ppar,a_{s-1},\dots,a_{0}}{\ppar}=v.
\end{gather*}
The proof of (b) is completely analogous.
\end{proof}

\subsection{Bookkeeping for caps and cups}\label{subsection:bookkeeping}

For this section, we fix $v=\pbase{a_{j},\dots,a_{0}}{\ppar}$.

\begin{definition}\label{definition:cup-cap-operators} 
For $0\leq i\leq j$ we consider 
$w=\pbase{a_{j},\dots,a_{i+1},-a_{i},0,\dots,0}{\ppar}-1$ 
and $x=\pbase{a_{i-1},\dots,a_{0}}{\ppar}$
to define (down and up) diagrams in $\TL[\Q]$ via
\begin{gather*}
\down{i}\idtl
:=
\idtl[{x{+}w}]\down{i}\idtl
:=
\begin{tikzpicture}[anchorbase,scale=.25,tinynodes]
\draw[usual] (-2,0) to (-2,3) node[above,yshift=-2pt]{$x$};
\draw[usual] (2.5,0) to[out=90,in=0] 
(.5,1.25)node[above,yshift=-1pt]{$a_{i}\ppar^{i}$} to[out=180,in=90] (-1.5,0);
\draw[usual] (3,0) to (3,3) node[above,yshift=-2pt]{$w$};
\node at (0,-.25) {$\phantom{.}$};
\node at (0,3.25) {$\phantom{.}$};
\end{tikzpicture}
,\quad
\idtl\up{i}
:= 
\idtl\up{i}\idtl[{x{+}w}]
:= 
\begin{tikzpicture}[anchorbase,scale=.25,tinynodes]	
\draw[usual] (-2,3) to (-2,0) node[below]{$x$};
\draw[usual] (2.5,3) to[out=270,in=0] 
(.5,1.75) node[below,yshift=-2pt]{$a_{i}\ppar^{i}$} to[out=180,in=270] (-1.5,3);
\draw[usual] (3,3) to (3,0) node[below]{$w$};
\node at (0,-.25) {$\phantom{.}$};
\node at (0,3.25) {$\phantom{.}$};
\end{tikzpicture}
.
\end{gather*}
This includes the case of $a_i=0$, for which we have
$\down{i}\idtl=\idtl\up{i}=\idtl$. Note that we use symbols such
as $\idtl$ to indicate the source or target of these morphisms.

Now, suppose that $S=\{s_{k}>\cdots>s_{1}>s_{0}\}$ 
and $S^{\prime}=\{s_{l}^{\prime}>\cdots>s_{1}^{\prime}>s_{0}^{\prime}\}$ are down-, 
respectively, up-admissible for $v$. Then we set
\begin{gather}\label{eq:idem-notation}
\begin{gathered}
\down{S}\idtl
:=
\idtl[{v[S]{-}1}]\down{S}
:=
\idtl[{v[S]{-}1}]\down{s_{0}}\cdots\down{s_{k}}\idtl
,\\
\up{S^{\prime}}\idtl
:=
\idtl[{v(S){-}1}]\up{S^{\prime}}
:=
\idtl[{v(S){-}1}]\up{s_{l}^{\prime}}\cdots\up{s_{0}^{\prime}}\idtl.
\end{gathered}
\end{gather}
\end{definition}

In \eqref{eq:idem-notation} and in the following we use the usual notation of 
idempotented algebras to drop some of the involved idempotents.
Further, the different orderings of the factors in 
$\down{S}$ and $\up{S^{\prime}}$ ensure that stretches of 
consecutive integers in $S$ and $S^{\prime}$ give rise to nested 
caps and cups, respectively.

\begin{lemma}\label{lemma:isotopies}
For $S^{\prime}>S$ with $\dist(S^{\prime},S)=1$ the following hold.
\begin{enumerate}[label=(\alph*)]

\setlength\itemsep{.15cm}

\item \label{lemma:isotopies-a} $S^{\prime}$ is down-admissible 
for $v$ and $S$ is down-admissible for $v[S^{\prime}]$ if and only 
if $S$ and $S\cup S^{\prime}$ are down-admissible for $v$. In this case we have $\down{S}\down{S^{\prime}}\idtl
=\down{S\cup S^{\prime}}\idtl$.

\item \label{lemma:isotopies-b} $S$ is up-admissible 
for $v$ and $S^{\prime}$ is up-admissible for $v(S)$ if and 
only if $S^{\prime}$ and $S^{\prime}\cup S$ are up-admissible 
for $v$. In this case we have $\up{S^{\prime}}\up{S}\idtl=\up{S^{\prime}\cup S}\idtl$.

\item \label{lemma:isotopies-c} If $S^{\prime}$ is 
up-admissible for $v$ and $S$ is down-admissible for $v(S^{\prime})$, then $S^{\prime}\cup S$ 
is up-admissible for $v$. In this case we have $\down{S}\up{S^{\prime}}\idtl=\up{S^{\prime}\cup S}\idtl$.

\item \label{lemma:isotopies-d} If $S$ is 
up-admissible for $v$ and $S^{\prime}$ is down-admissible for $v(S)$, then
$S\cup S^{\prime}$ is down-admissible 
for $v$. In this case we have $\down{S^{\prime}}\up{S}\idtl=\down{S\cup S^{\prime}}\idtl$.
\end{enumerate} 
\end{lemma}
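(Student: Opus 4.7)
My plan is to reduce (a)–(d) to the case where $S$ and $S'$ are each single stretches of consecutive integers, verify the admissibility equivalences by a direct digit calculation on the $\ppar$-adic expansion of $v$, and then confirm the diagrammatic equalities by unfolding the definitions and invoking planar isotopy. Using \eqref{eq:reflectcommute}, stretches internal to $S$ or $S'$ that are distant from the interface commute past each other in compositions of $\down{\cdot}$ and $\up{\cdot}$, so the only nontrivial content sits at the single position where $S$ meets $S'$. I may therefore take $S = \{s, s+1, \dots, s'-1\}$ and $S' = \{s', s'+1, \dots, t-1\}$ as single stretches meeting at $s'$, and write $v = \pbase{a_{j}, \dots, a_{0}}{\ppar}$.

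For the admissibility equivalences, I compute the digits of $v[S']$, $v[S]$, $v(S')$, and $v(S)$ near positions $s, s'$, and $t$ in parallel with the proof of \fullref{lemma:new-admissible}, and observe that both sides of each \emph{iff} translate to the same digit conditions on $v$. For (a) the relevant conditions are $a_{s}, a_{s'}, a_{t} \neq 0$, which together say that $S$ and $S \cup S'$ are down-admissible for $v$. For (b) one gets the analogous up-admissibility conditions involving non-zero and non-$(\ppar{-}1)$ digits at the boundary. For (c) and (d) the mix of up- and down-reflections flips a sign at the boundary $s'$, and the key check is that reflecting up along $S'$ modifies the digit at $s'$ precisely so that subsequent down-admissibility along $S$ matches up-admissibility of $S \cup S'$ for $v$; this is a short boundary computation of the same flavor as in \fullref{lemma:new-admissible}.

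For the diagrammatic equalities I unfold the definitions. By \eqref{eq:idem-notation}, $\down{S}$ and $\up{S'}$ are ordered products of elementary cap and cup morphisms at consecutive positions, with the order arranged so that adjacent indices produce nested caps or cups. In (a) and (b), the concatenated product on the left is literally the same ordered product as on the right once the intermediate idempotents are reinstated, and the equality is immediate from the admissibility equivalences just established. In (c) and (d), a cup of $\up{S'}$ at position $s'$ sits adjacent to a cap of $\down{S}$ at position $s'-1$ (or symmetrically for (d)); by planar isotopy in the pivotal category $\TL[\Ring]$ this cap-cup pair straightens so that, together with the remaining nested morphisms, the whole composition coincides with a single nested cup configuration $\up{S' \cup S}$, respectively a single nested cap configuration $\down{S \cup S'}$, spanning the positions $\{s, \dots, t-1\}$. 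The main obstacle is careful bookkeeping at the interface $s'$ — tracking how the digit shifts of $v(S'), v[S']$ etc. change strand counts and sources/targets, and checking that the planar isotopy in (c) and (d) proceeds without producing any circle (so no scalars from the circle relation appear) — but once the boundary analysis is in place, the rest is a clean combination of \fullref{lemma:new-admissible}, \eqref{eq:idem-notation}, and planar isotopy.
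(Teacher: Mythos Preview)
Your proposal is correct and takes essentially the same approach as the paper: the admissibility equivalences are reduced to digit computations in the style of \fullref{lemma:new-admissible}, and the diagrammatic equalities are verified via planar isotopy after unfolding the definitions in \eqref{eq:idem-notation}. The paper's proof is in fact much terser than yours, simply pointing to \fullref{lemma:new-admissible} \textit{mutatis mutandis} for the admissibility claims and declaring the equalities to be isotopies.
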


\begin{proof} 
The claims about admissibility are not hard to prove and follow, {\muta}, 
as in the proof of \fullref{lemma:new-admissible} given above. 
Finally, the equalities as \eg $\down{S}\down{S^{\prime}}\idtl=\down{S\cup S^{\prime}}\idtl$ are isotopies.
\end{proof}

\begin{definition}\label{definition:jw-cupscaps}
Using the same notation as in \fullref{definition:cup-cap-operators}, 
we define diagrams in $\TL[\Q]$
\begin{gather*}
\downo{i}\idtl
:=
\idtl[{x{+}w}]\downo{i}\idtl
:=
\begin{tikzpicture}[anchorbase,scale=.25,tinynodes]
\draw[JW] (0,0) rectangle (4,1);
\node at (0,1.85) {$a_{i}\ppar^{i}$};	
\draw[usual] (-2,0) to (-2,3) node[above,yshift=-2pt]{$x$};
\draw[usual] (1.5,1) to(1.5,2) to[out=90,in=0] (0,3) to[out=180,in=90]  (-1.5,2) to (-1.5,0);
\draw[usual] (2,1) to (2,3)node[above,yshift=-2pt]{$w$};
\node at (0,-.25) {$\phantom{.}$};
\node at (0,3.25) {$\phantom{.}$};
\end{tikzpicture}
,\quad
\idtl\upo{i}
:=
\idtl\upo{i}\idtl[{x{+}w}]
:=
\begin{tikzpicture}[anchorbase,scale=.25,tinynodes]
\draw[JW] (0,2) rectangle (4,3);
\node at (0,.9) {$a_{i}\ppar^{i}$};
\draw[usual] (-2,3) to (-2,0) node[below,yshift=-2pt]{$x$};
\draw[usual] (1.5,2) to (1.5,1) to[out=270,in=0] (0,0) to[out=180,in=270] (-1.5,1) to (-1.5,3);
\draw[usual] (2,2) to (2,0)node[below,yshift=-2pt]{$w$};
\node at (0,-.25) {$\phantom{.}$};
\node at (0,3.25) {$\phantom{.}$};
\end{tikzpicture}
.
\end{gather*}
The boxes represent JW projectors of the size implicit in the diagram, namely $w+a_i\ppar^i =v-a_i\ppar^i-x$.
\end{definition}

\begin{definition}\label{definition:jw-cupscaps2}
Suppose that $S=\{s_{k}>\cdots>s_{1}>s_{0}\}$ is down-admissible for $v$ 
and $S^{\prime}=\{s_{l}^{\prime}>\cdots>s_{1}^{\prime}>s_{0}^{\prime}\}$ is 
up-admissible for $v$. Then we define \emph{trapezes} and \emph{standard loops}
\begin{align*}
\TRD{$\qjwm[S]$}&:=
\downo{S}\idtl:=\qjw[{v[S]{-}1}]\downo{s_{0}}\cdots\downo{s_{k}}\idtl 
,\\
\TRU{$\qjwm[S^{\prime}]$}&:=
\upo{S^{\prime}}\idtl:=\idtl[{v(S^{\prime})-1}]\upo{s_{l}^{\prime}}\cdots\upo{s_{0}^{\prime}}\qjw[{v{-}1}],
\\
\TR{$\qjwm[S]$}&:=\trap{S}{v{-}1}:=\upo{S}\qjw[{v[S]{-}1}]\downo{S}.
\end{align*}
\end{definition}
Note that the diagrams defined in \fullref{definition:jw-cupscaps2} are not left-right symmetric. 

\begin{example}\label{example:trapezes}
For $v=\pbase{a,b,c}{\ppar}$ we have: 
\begin{gather*}
\TRD{$\qjwm[\emptyset]$}
=
\begin{tikzpicture}[anchorbase,scale=.25,tinynodes]	
\draw[JW] (-1.5,1) rectangle (1.5,2);
\draw[JW] (-1.5,-1) rectangle (1.5,-2);
\draw[usual] (0,-1) to (0,1);
\node at (0,-2.3) {$\phantom{a}$};
\end{tikzpicture}
,\quad
\TRD{$\qjwm[\{0\}]$}
=
\begin{tikzpicture}[anchorbase,scale=.25,tinynodes]
\draw[JW] (-1.5,1) rectangle (1.5,2);
\draw[JW] (-1.5,-1) rectangle (1.5,-2);
\draw[usual] (0,-1) to (0,1);
\draw[usual] (-1,-1) to[out=90,in=0] (-1.5,-.5) 
to[out=180,in=90] (-2,-1) to (-2,-2) node[below,yshift=0pt]{$c$};
\node at (0,-2.1) {$\phantom{a}$};
\end{tikzpicture}
,\quad
\TRD{$\qjwm[\{1\}]$}
=
\begin{tikzpicture}[anchorbase,scale=.25,tinynodes]
\draw[JW] (-1.5,1) rectangle (1.5,2);
\draw[JW] (-1.5,-1) rectangle (1.5,-2);
\draw[usual] (0,-1) to (0,1);
\draw[usual] (-1,-1) to[out=90,in=0] (-1.5,-.5) 
to[out=180,in=90] (-2,-1) to (-2,-2) node[below,yshift=0pt]{$b\ppar$};
\draw[usual] (-1,1) to[out=270,in=90] (-3,-.5) to (-3,-2) node[below,yshift=0pt]{$c$};
\node at (0,-2.1) {$\phantom{a}$};
\end{tikzpicture}
,\quad
\TRD{\,$\qjwm[\{1,0\}]$}
=
\begin{tikzpicture}[anchorbase,scale=.25,tinynodes]
\draw[JW] (-1.5,1) rectangle (1.5,2);
\draw[JW] (-1.5,-1) rectangle (1.5,-2);
\draw[usual] (1,-1) to (1,1);
\draw[usual] (-1,-1) to[out=90,in=0] (-1.5,-.5) 
to[out=180,in=90] (-2,-1) to (-2,-2) node[below,yshift=0pt]{$b\ppar$};
\draw[usual] (0,-1) to[out=90,in=0] (-1.5,0) 
to[out=180,in=90] (-3,-1) to (-3,-2) node[below,yshift=0pt]{$c$};
\node at (0,-2.1) {$\phantom{a}$};
\end{tikzpicture}
.
\end{gather*}
\end{example}

We record that $\td{\downo{S}\idtl}=v[S]-1$, 
$\td{\upo{S}\idtl}=v-1$, and $\td{\trap{S}{v{-}1}}=v[S]-1$.

\subsection{The \texorpdfstring{$\ppar$}{p}-Jones--Wenzl projectors}

For $v\in\N$ and $s\in \N[0]$ let
$\fancest{v}{s}$ denote the youngest
ancestor of $v$ whose $s$th 
digit is zero. (By convention, $\fancest{v}{{-}1}=v$.) 
For each down-admissible $S$ for $v$ we let 
\begin{gather}\label{eq:the-scalar}
\lambda_{v,S}
:={\textstyle\prod_{s\in S}}\,
(-1)^{a_{s}\ppar^s}\tfrac{\fancest{v}{s{-}1}[S]}{\fancest{v}{s}[S]}
\in\Q.
\end{gather}
Note that $\ord(\lambda_{v,S})=-|S|$.

\begin{example}\label{lemma:the-scalar}
Let $v=\pbase{1,2,6,4,0,6,6}{7}$ and $S=\{5|3,2,1,0\}$. 
Then we have $\sum_{s\in S} a_{s}=18$, so the overall sign is positive. 
The relevant reflected ancestors in the telescoping product \eqref{eq:the-scalar} are
$\fancest{v}{{-}1}[S]=\pbase{1,-2,6,-4,0,-6,-6}{7}$, $\fancest{v}{3}[S]=\pbase{1,-2,6,0,0,0,0}{7}$, 
$\fancest{v}{4}[S]=\pbase{1,-2,0,0,0,0,0}{7}$, and $\fancest{v}{5}[S]=\pbase{1,0,0,0,0,0,0}{7}$. So we get
\begin{gather*}
\lambda_{v,S}=
\tfrac{\pbase{1,-2,0,0,0,0,0}{7}\pbase{1,-2,6,-4,0,-6,-6}{7}}{\pbase{1,0,0,0,0,0,0}{7}\pbase{1,-2,6,0,0,0,0}{7}}
=
\tfrac{485105}{689087}
,
\quad
\ord[7](\lambda_{v,S})=-5.
\end{gather*}
\end{example}

The following is immediate from \eqref{eq:the-scalar}.

\begin{lemmaqed}\label{lemma:lambda}
If $S^{\prime}>S$ are down-admissible for $v$, then
$\lambda_{v,S\cup S^{\prime}}=\lambda_{v[S^{\prime}],S}\lambda_{v,S^{\prime}}$.
\end{lemmaqed}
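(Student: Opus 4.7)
The plan is to expand the product defining $\lambda_{v,S\cup S^{\prime}}$ via \eqref{eq:the-scalar}, separate its factors into those indexed by $s\in S$ and those indexed by $t\in S^{\prime}$, and identify the two groups with $\lambda_{v[S^{\prime}],S}$ and $\lambda_{v,S^{\prime}}$ respectively.

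The $S^{\prime}$-group is the easier half. Since $S^{\prime}>S$, every $t\in S^{\prime}$ satisfies $t-1\geq\max S$, so the ancestors $\fancest{v}{t-1}$ and $\fancest{v}{t}$ have vanishing digits at every position in $S$. The reflection $[S\cup S^{\prime}]$ therefore coincides with $[S^{\prime}]$ on these integers, and the $S^{\prime}$-factors assemble into $\lambda_{v,S^{\prime}}$ directly.

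For the $S$-group the starting observation is that $v[S^{\prime}]=v-2\sum_{k\in S^{\prime}}a_{k}\ppar^{k}$ differs from $v$ by a multiple of $\ppar^{\min S^{\prime}}$, so the standard $\ppar$-adic digits of $v$ and $v[S^{\prime}]$ coincide at every position $s<\min S^{\prime}$---in particular at every $s\in S$, which matches the signs across the two products. The same reasoning yields the integer identity $\fancest{v[S^{\prime}]}{s}=\fancest{v}{s}[S^{\prime}]$ (and analogously for $s-1$), both sides being obtained from $v[S^{\prime}]$ by zeroing positions $\leq s$. Applying a further $[S]$ to both sides, and using that $S$ is disjoint from $S^{\prime}$ together with the vanishing of positions $\leq s$ in $\fancest{v}{s}$, one reduces to $\fancest{v[S^{\prime}]}{s}[S]=\fancest{v}{s}[S\cup S^{\prime}]$. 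Therefore the $S$-factors match $\lambda_{v[S^{\prime}],S}$ term-by-term and the lemma follows.

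The main technical pinch-point I expect is this last identity, which quietly relies on the commutativity $[A][B]=[A\cup B]$ for disjoint reflections: because the bracket notation $\pbase{\cdots}{\ppar}$ permits negative digits, one has to verify that after applying $[S^{\prime}]$ the digits at positions in $S\cap\{k>s\}$ are still the original $a_{k}$, so that the subsequent reflection acts as the formula prescribes. The disjointness $S\cap S^{\prime}=\emptyset$ together with the separation $S^{\prime}>S$ makes this transparent, and what remains is routine bookkeeping in \eqref{eq:the-scalar}.
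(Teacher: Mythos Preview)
Your proposal is correct and is precisely the unpacking of what the paper means by ``immediate from \eqref{eq:the-scalar}''. The paper gives no proof beyond that remark, and your factor-by-factor verification---separating the product over $S\cup S'$ into the $S'$-part (where $\fancest{v}{t}[S\cup S']=\fancest{v}{t}[S']$ since positions in $S$ are already zero) and the $S$-part (where $\fancest{v[S']}{s}[S]=\fancest{v}{s}[S\cup S']$ because $v$ and $v[S']$ agree below $\ppar^{\min S'}$)---is exactly the computation that justifies it.
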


As we will see below, the following definition is a reformulation of \cite[Section 2.3]{BuLiSe-tl-char-p}.

\begin{definition}\label{definition:p-jw-q}
For $v-1\in\N[0]$ the \emph{rational $\ppar$JW projector} $\pqjw[v{-}1]\in\Hom_{\TL[\Q]}(v{-}1,v{-}1)$ is defined to be
\begin{gather}\label{eq:pjwdef}
\begin{tikzpicture}[anchorbase,scale=.25,tinynodes]
\draw[pQJW] (-1.5,0) rectangle (1.5,2);
\node at (0,.9) {$\pqjwm[v{-}1]$};
\end{tikzpicture}
:=
\pqjw[v{-}1]
:=
{\textstyle\sum_{v[S]\in\supp[v]}}\,
\lambda_{v,S}\,\trap{S}{v{-}1}
=
{\textstyle\sum_{v[S]\in\supp[v]}}\,
\lambda_{v,S}\cdot\TR{$\qjwm[S]$}.
\end{gather}  
\end{definition}

\begin{example}\label{example:the-half-projectors} 
For $v=\pbase{a,b,c}{\ppar}$ we have
\begin{gather*}
\begin{tikzpicture}[anchorbase,scale=.25,tinynodes]
\draw[usual,white] (0,0) to (0,2) node[above,yshift=-2pt]{$2$};
\draw[usual,white] (0,-6) to (0,-8) node[below]{$2$};
\draw[pQJW] (-1.5,-4) rectangle (1.5,-2);
\node at (0,-3.1) {$\pqjwm[v{-}1]$};
\end{tikzpicture}
=
\begin{tikzpicture}[anchorbase,scale=.25,tinynodes]
\draw[JW] (-1.5,-1) rectangle (1.5,-2);
\draw[JW] (-1.5,-5) rectangle (1.5,-4);
\draw[JW] (-1.5,-8) rectangle (1.5,-7);
\draw[usual] (-1,-2) to (-1,-4);
\draw[usual] (0,-2) to (0,-4);
\draw[usual] (1,-2) to (1,-4);
\draw[usual] (-1,-5) to (-1,-7)(0,-5) to (0,-7)(1,-5) to (1,-7);
\end{tikzpicture}
+(-1)^{c}\tfrac{\pbase{a,b,{-}c}{\ppar}}{\pbase{a,b,0}{\ppar}}\cdot\hspace*{-.1cm}
\begin{tikzpicture}[anchorbase,scale=.25,tinynodes]
\draw[JW] (-1.5,-1) rectangle (1.5,-2);
\draw[JW] (-1.5,-5) rectangle (1.5,-4);
\draw[JW] (-1.5,-8) rectangle (1.5,-7);
\draw[usual] (0,-2) to (0,-4);
\draw[usual] (1,-2) to (1,-4);
\draw[usual] (-1,-2) to[out=270,in=0] (-1.5,-2.5) 
to[out=180,in=270] (-2,-2) to (-2,-1) node[above,yshift=-2pt]{$c$};
\draw[usual] (0,-7) to (0,-5);
\draw[usual] (1,-7) to (1,-5);
\draw[usual] (-1,-7) to[out=90,in=0] (-1.5,-6.5) 
to[out=180,in=90] (-2,-7) to (-2,-8) node[below]{$c$};
\end{tikzpicture}
+(-1)^{b\ppar}\tfrac{\pbase{a,{-}b,0}{\ppar}}{\pbase{a,0,0}{\ppar}}\cdot\hspace*{-.1cm}
\begin{tikzpicture}[anchorbase,scale=.25,tinynodes]
\draw[JW] (-1.5,-1) rectangle (1.5,-2);
\draw[JW] (-1.5,-5) rectangle (1.5,-4);
\draw[JW] (-1.5,-8) rectangle (1.5,-7);
\draw[usual] (0,-2) to (0,-4);
\draw[usual] (1,-2) to (1,-4);
\draw[usual] (-1,-2) to[out=270,in=0] (-1.5,-2.5) 
to[out=180,in=270] (-2,-2) to (-2,-1) node[above,yshift=-2pt]{$b\ppar$};
\draw[usual] (-1,-4) to[out=90,in=270] (-3,-2) to (-3,-1) node[above,yshift=-2pt]{$c$};
\draw[usual] (0,-7) to (0,-5);			
\draw[usual] (1,-7) to (1,-5);
\draw[usual] (-1,-7) to[out=90,in=0] (-1.5,-6.5) 
to[out=180,in=90] (-2,-7) to (-2,-8) node[below]{$b\ppar$};
\draw[usual] (-1,-5) to[out=270,in=90] (-3,-7) 
to (-3,-8) node[below]{$c$};
\end{tikzpicture}
+(-1)^{b\ppar+c}\tfrac{\pbase{a,{-}b,{-}c}{\ppar}}{\pbase{a,0,0}{\ppar}}\cdot\hspace*{-.1cm}
\begin{tikzpicture}[anchorbase,scale=.25,tinynodes]	
\draw[JW] (-1.5,-1) rectangle (1.5,-2);
\draw[JW] (-1.5,-5) rectangle (1.5,-4);
\draw[JW] (-1.5,-8) rectangle (1.5,-7);
\draw[usual] (1,-2) to (1,-4);
\draw[usual] (-1,-2) to[out=270,in=0] (-1.5,-2.5) 
to[out=180,in=270] (-2,-2) to (-2,-1) node[above,yshift=-2pt]{$b\ppar$};
\draw[usual] (0,-2) to[out=270,in=0] (-1.5,-3.5) 
to[out=180,in=270] (-3,-2) to (-3,-1) node[above,yshift=-2pt]{$c$};
\draw[usual] (1,-7) to (1,-5);
\draw[usual] (-1,-7) to[out=90,in=0] (-1.5,-6.5) 
to[out=180,in=90] (-2,-7) to (-2,-8) node[below]{$b\ppar$};
\draw[usual] (0,-7) to[out=90,in=0] (-1.5,-5.5) 
to[out=180,in=90] (-3,-7) to (-3,-8) node[below]{$c$};
\end{tikzpicture}
.
\end{gather*}
\end{example}

\begin{lemma}\label{lemma:pjw-well-defined}
The elements $\pqjw[v{-}1]$ agree with the 
ones defined in \cite[Section 2.3]{BuLiSe-tl-char-p}.
(Note however that we have mirrored their definition.)
\end{lemma}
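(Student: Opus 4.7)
The plan is to prove the agreement by induction on the generation $\generation$ of $v$, showing that the formula \eqref{eq:pjwdef} satisfies the same recursion that defines the BLS $\ppar$JW projectors in \cite[Section 2.3]{BuLiSe-tl-char-p}. For the base case $\generation=0$, we have $v\in\eve$, so $\supp=\{v\}$ with unique down-admissible set $S=\emptyset$ and $\lambda_{v,\emptyset}=1$. Thus $\pqjw[v-1]=\trap{\emptyset}{v-1}=\qjw$, which matches the BLS convention on eves (where no correction terms appear).

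For the inductive step, fix $v\notin\eve$ with mother $\mother$, obtained by zeroing out the rightmost nonzero digit-stretch of $v$; denote by $S_{0}$ the corresponding minimal down-admissible stretch, so that $v[S_{0}]=\mother$. The first step is to partition the sum \eqref{eq:pjwdef} according to whether $S_{0}\subset S$ or $S\cap S_{0}=\emptyset$ (these exhaust all possibilities by minimality of $S_{0}$). Writing $S=S_{0}\sqcup S'$ in the first case, \fullref{lemma:new-admissible} and \fullref{lemma:stretch-lemma} identify the down-admissible sets $S'$ for $v$ with $S_{0}\cap S'=\emptyset$ bijectively with those for $\mother$. Applying \fullref{lemma:lambda} then yields $\lambda_{v,S_{0}\sqcup S'}=\lambda_{\mother,S'}\lambda_{v,S_{0}}$, and the diagrammatic factorizations in \fullref{lemma:isotopies} rewrite $\trap{S_{0}\sqcup S'}{v-1}$ as a composite in which a copy of $\trap{S'}{\mother-1}$ appears, sandwiched between $\upo{S_{0}}$ and $\downo{S_{0}}$.

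Substituting, the sum over $S\supset S_{0}$ factors as $\lambda_{v,S_{0}}\upo{S_{0}}\pqjw[\mother-1]\downo{S_{0}}$ (using the inductive hypothesis to recognize the BLS projector $\pqjw[\mother-1]$), while the sum over $S\cap S_{0}=\emptyset$ is the identity-plus-lower-terms coming from taking $\pqjw[\mother-1]$ in place of a central JW in the trivial trapeze. Combined, this yields a two-term expression
\[
\pqjw[v-1]=\qjw[v-1]\text{-part}+\lambda_{v,S_{0}}\,\upo{S_{0}}\pqjw[\mother-1]\downo{S_{0}},
\]
which is precisely the BLS recursion, once one checks that the scalar $\lambda_{v,S_{0}}$ defined in \eqref{eq:the-scalar} matches the normalizing coefficient appearing in \cite[Section 2.3]{BuLiSe-tl-char-p} (up to the mirroring noted in the statement).

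The main obstacle is therefore the bookkeeping involved in the last step: comparing the explicit scalar $\lambda_{v,S_{0}}$ with the recursive coefficient used by BLS, which is formulated via quantum integers evaluated at specific ancestors. This reduces to the identity $(-1)^{a_{s}\ppar^{s}}\fancest{v}{s-1}[S_{0}]/\fancest{v}{s}[S_{0}]$ (taken over $s\in S_{0}$) agreeing with their ratio of integer-valued traces; the telescoping structure of \eqref{eq:the-scalar} together with \fullref{example:trapezes} makes this a direct verification, but it is where all the characteristic-$\ppar$ content sits, and also where the up--down asymmetry of our trapezes (versus BLS's conventions) must be reconciled via the pivotal structure.
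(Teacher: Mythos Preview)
Your overall strategy---verify that the closed formula \eqref{eq:pjwdef} satisfies the BLS recursion by induction on generation, splitting the sum according to whether the smallest minimal stretch $S_0$ lies in $T$---is the right one and is essentially what the paper does. However, your execution contains a basic error that propagates through the argument.

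You assert that $v[S_0]=\mother$. This is false: the bracket operation \emph{negates} the digits in $S_0$, whereas passing to the mother \emph{zeroes} them. For instance, with $\ppar=3$ and $v=23=\pbase{2,1,2}{3}$ one has $S_0=\{0\}$, $v[S_0]=\pbase{2,1,-2}{3}=19$, but $\mother=\pbase{2,1,0}{3}=21$. In particular $\mother\notin\supp$. As a consequence, the morphism $\downo{S_0}$ has target $v[S_0]-1$, not $\mother-1$, so your claimed factorization $\lambda_{v,S_0}\,\upo{S_0}\pqjw[\mother-1]\downo{S_0}$ does not even compose. There is simply no cap/cup morphism between $v-1$ and $\mother-1$ strands of the required shape, since $v-\mother=a_s\ppar^s$ is odd as a strand-count difference relative to what caps provide.

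Relatedly, the BLS recursion (recorded in the paper as \eqref{eq:recursion-formula}) is \emph{not} a two-term formula of the shape you write down. It is a sum over down-admissible $S$ for $\mother$, and for each such $S$ there is a pair of terms: one with a JW of size $v[S]-1$ and one with an extra cap/cup and a JW of size $v[S\cup S_0]-1$. The correct matching with \eqref{eq:pjwdef} goes as follows. The down-admissible sets for $\mother$ are exactly the down-admissible $T$ for $v$ with $S_0\not\subset T$; for such $T$ one has $\lambda_{v,T}=\lambda_{\mother,T}$ directly from \eqref{eq:the-scalar} (the ancestors $\fancest{v}{t}$ for $t\geq i_1$ are unaffected by the $i_0$-th digit), and $\trap{T}{v-1}$ is the first bracketed term. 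For $T=S_0\cup S'$ with $S'$ down-admissible for $\mother$, \fullref{lemma:lambda} gives $\lambda_{v,T}=\lambda_{v[S'],S_0}\,\lambda_{v,S'}=\lambda_{v[S'],S_0}\,\lambda_{\mother,S'}$, and one checks that $\lambda_{v[S'],S_0}=(-1)^{a_s\ppar^s}\tfrac{v[S'\cup S_0]}{\mother[S']}$ is precisely the scalar in front of the second bracketed term, while $\trap{T}{v-1}$ matches that term diagrammatically. This is the ``careful inspection'' the paper refers to; no factorization through $\pqjw[\mother-1]$ is involved.
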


\begin{proof}
Careful inspection of the recursive definition in \cite[Section 2.3]{BuLiSe-tl-char-p}. 
More precisely, in our notation their recursion works as follows. 
If $v\in\eve$, then $\pqjw[v{-}1]=\qjw[v{-}1]$. Otherwise,
\begin{gather}\label{eq:recursion-formula}
\begin{tikzpicture}[anchorbase,scale=.25,tinynodes]
\draw[pQJW] (-1.5,0) rectangle (1.5,2);
\node at (0,.9) {$\qjwm[v{-}1]$};
\end{tikzpicture}
=
{\textstyle\sum_{\mother[v][S]\in\supp[{\mother[v]}]}}\,
\lambda_{\mother[v],S}\,
\left(
\begin{tikzpicture}[anchorbase,tinynodes]
\tru{1.65}{.3}{$\qjwm[S]$}{0}{.2}
\draw[JW] (-.3,-.2) rectangle (1.65,.2);
\trd{1.65}{.3}{$\qjwm[S]$}{0}{-.2}
\node at (.675,-.05) {$\qjwm[{v[S]{-}1}]$};
\draw[usual] (-.15,.2) to (-.15,.5) node[above,yshift=-2pt]{$a_{s}\ppar^{s}$};
\draw[usual] (-.15,-.2) to (-.15,-.5) node[below,yshift=-1pt]{$a_{s}\ppar^{s}$};
\end{tikzpicture}
+(-1)^{a_{s}\ppar^s}\tfrac{v[S][s]}{\mother{[S]}}\cdot
\begin{tikzpicture}[anchorbase,tinynodes]
\tru{1.65}{.3}{$\qjwm[S]$}{0}{.2}
\draw[JW] (.45,-.2) rectangle (1.65,.2);
\trd{1.65}{.3}{$\qjwm[S]$}{0}{-.2}
\node at (1.05,-.05) {$\qjwm[{v[S][s]{-}1}]$};
\draw[usual] (.3,.2) to[out=270,in=0] (.075,.075) 
to[out=180,in=270] (-.15,.2) to (-.15,.5) node[above,yshift=-2pt]{$a_{s}\ppar^{s}$};
\draw[usual] (.3,-.2) to[out=90,in=0] (.075,-.075) 
to[out=180,in=90] (-.15,-.2) to (-.15,-.5) node[below,yshift=-1pt]{$a_{s}\ppar^{s}$};
\end{tikzpicture}
\right),
\end{gather}
where $a_{s}$ is the first non-zero digit of $v$.
\end{proof}

By \fullref{lemma:pjw-well-defined}, we can 
refer to results of \cite{BuLiSe-tl-char-p} without further notice.

\begin{propositionqed}(\cite[Theorem 2.6]{BuLiSe-tl-char-p}.)\label{proposition:pjw-well-defined}
For any $v\in\N$ we have $\ord(\pqjw[v{-}1])\geq 0$.
\end{propositionqed}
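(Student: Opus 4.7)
The cleanest path is to invoke \fullref{lemma:pjw-well-defined} to identify our $\pqjw[v{-}1]$ with the elements constructed recursively in \cite[Section~2.3]{BuLiSe-tl-char-p}, and then appeal to \cite[Theorem~2.6]{BuLiSe-tl-char-p}. Since the match of definitions has already been established, this reduces the proposition to a citation. If one wanted a self-contained argument, I would proceed as follows.

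The plan is to induct on the generation $\generation$. For the base case $v\in\eve$, one has $v=a\ppar^{j}$ with $1\leq a<\ppar$ and $\pqjw[v{-}1]=\qjw[v{-}1]$ by \eqref{eq:pjwdef} (since $\supp=\{v\}$ and $\lambda_{v,\emptyset}=1$). Here $p$-admissibility of the classical Jones--Wenzl projector can be obtained via the closed-form expression for its coefficients on $\sbas$ in terms of ratios of quantum binomials, whose $\ppar$-adic valuations are controlled by Lucas' theorem; alternatively, one can argue conceptually using the fact that $\tmod(v-1)$ is simple with an integral $\SLtwo$-stable structure, so the idempotent cutting it out of $\tmod(1)^{\otimes(v-1)}$ exists over $\Z_{(\ppar)}$.

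For the inductive step $v\notin\eve$, I would use \eqref{eq:recursion-formula}, which expresses $\pqjw[v{-}1]$ as a sum over $\mother[S]\in\supp[{\mother}]$ of two terms involving the inner projectors $\qjw[{v[S]{-}1}]$ (really $\pqjw[{v[S]{-}1}]$, which is $\ppar$-admissible by induction after the analogous inductive argument on $v[S]$), sandwiched by cups and caps of thickness $a_{s}\ppar^{s}$ where $a_s$ is the lowest non-zero digit of $v$. The coefficients $\lambda_{\mother,S}$ have $\ord(\lambda_{\mother,S})=-|S|$, which is potentially negative, so the heart of the argument is to show that these negative contributions are precisely compensated by positive contributions arising when the caps and cups of thickness $a_{s}\ppar^{s}$ interact with the trapeze structure and the inner projector. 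Crucially, one uses that the relevant partial traces of classical Jones--Wenzl projectors contribute factors of the form $\tfrac{v}{v-k}$ from \eqref{eq:0trace}, which match the telescoping ratios defining $\lambda_{\mother,S}$ in \eqref{eq:the-scalar}.

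The main obstacle is therefore the explicit valuation bookkeeping in the inductive step, which is what \cite[Theorem~2.6]{BuLiSe-tl-char-p} carries out in detail. A cleaner conceptual route, which I would explore as an alternative, is to characterize $\pqjw[v{-}1]$ abstractly as the unique $\ppar$-admissible lift satisfying the absorption properties analogous to \eqref{eq:0absorb}--\eqref{eq:0kill} (suitably weakened to account for the law of small primes), and then verify that the formula in \eqref{eq:pjwdef} satisfies this characterization; $\ppar$-admissibility then follows from uniqueness and the existence of a $p$-admissible lift furnished, for instance, by Donkin's construction via Weyl factor idempotents.
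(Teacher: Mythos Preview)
Your proposal is correct and matches the paper's approach exactly: the paper gives no proof beyond the citation, relying on \fullref{lemma:pjw-well-defined} to identify $\pqjw[v{-}1]$ with the elements of \cite[Section~2.3]{BuLiSe-tl-char-p} and then invoking \cite[Theorem~2.6]{BuLiSe-tl-char-p}. Your additional self-contained sketch is extra material not present in the paper.
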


\begin{definition}\label{definition:p-jw}
We define the \emph{$\ppar$JW projectors} $\pjw[v{-}1]:=\spe{\pqjw[v{-}1]}\in\End_{\TL[\F]}(v{-}1)$.
\end{definition}

In illustrations we distinguish the three types of JW projectors via 
\begin{gather*}
\qjw[v{-}1]=
\begin{tikzpicture}[anchorbase,scale=.25,tinynodes]
\draw[JW] (-1.5,0) rectangle (1.5,2);
\node at (0,.9) {$\qjwm[v{-}1]$};
\end{tikzpicture}
,\quad
\pqjw[v{-}1]=
\begin{tikzpicture}[anchorbase,scale=.25,tinynodes]
\draw[pQJW] (-1.5,0) rectangle (1.5,2);
\node at (0,.9) {$\pqjwm[v{-}1]$};
\end{tikzpicture}
,\quad
\pjw[v{-}1]=
\begin{tikzpicture}[anchorbase,scale=.25,tinynodes]
\draw[pJW] (-1.5,0) rectangle (1.5,2);
\node at (0,.9) {$\pjwm[v{-}1]$};
\end{tikzpicture}
,
\end{gather*}
called JW, rational $\ppar$JW and $\ppar$JW projectors, respectively.

\begin{example}\label{example:some-jw-example}
Note that these projectors behave quite differently, \eg for the projectors 
as in \fullref{example:the-half-projectors} we have
\begin{gather*}
\begin{tikzpicture}[anchorbase,scale=.25,tinynodes]
\draw[JW] (-1,-1) rectangle (2,1);
\node at (.5,-.1) {$\qjwm[22]$};
\draw[usual] (-.5,1) to[out=90,in=180] (0,1.5) node[above,yshift=-2pt]{$5$} to[out=0,in=90] (.5,1);
\draw[usual] (1.5,1) to (1.5,2);
\node at (0,2.5) {$\phantom{a}$};
\node at (0,-2.5) {$\phantom{a}$};
\end{tikzpicture}
=0,
\quad
\begin{tikzpicture}[anchorbase,scale=.25,tinynodes]
\draw[pJW] (-1,-1) rectangle (2,1);
\node at (.5,-.1) {$\pjwm[22]$};
\draw[usual] (-.5,1) to[out=90,in=180] (0,1.5) node[above,yshift=-2pt]{$5$} to[out=0,in=90] (.5,1);
\draw[usual] (1.5,1) to (1.5,2);
\node at (0,2.5) {$\phantom{a}$};
\node at (0,-2.5) {$\phantom{a}$};
\end{tikzpicture}
=
\begin{tikzpicture}[anchorbase,scale=.25,tinynodes]
\draw[pJW] (-1,-1) rectangle (2,1);
\node at (.5,-.1) {$\pjwm[17]$};
\draw[usual] (-.5,1) to[out=90,in=0] (-1,1.5) node[above,yshift=-2pt]{$5$} to[out=180,in=90] (-1.5,1) to (-1.5,-1);
\draw[usual] (1.5,1) to (1.5,2);
\node at (0,2.5) {$\phantom{a}$};
\node at (0,-2.5) {$\phantom{a}$};
\end{tikzpicture}
.
\end{gather*}
\end{example}

\begin{proposition}\label{proposition:TLtilt} 
We have a pivotal, $\K$-linear functor 
\begin{gather*}
\tlfunctor\colon
\TL[\K]\to\tilt,
\quad
\tlfunctor(v-1)=\tmod(1)^{\otimes(v{-}1)},
\end{gather*}
which sends the idempotent $\pjw[v{-}1]$ to the projection 
$\tmod(1)^{\otimes(v{-}1)}\to\tmod(v-1)\to\tmod(1)^{\otimes(v{-}1)}$. This functor induces an equivalence of 
pivotal $\K$-linear categories upon additive Karoubi completion.  
\end{proposition}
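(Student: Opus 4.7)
The plan is to first construct $\tlfunctor$ by fixing a symplectic pivotal structure on the vector representation $\tmod(1)\cong\K^{2}$ of $\SLtwo$, which is self-dual. Sending a single cup to the coevaluation $\K\to\tmod(1)\hcirc\tmod(1)$ and a single cap to the evaluation gives candidate images for the generating morphisms of $\TL[\K]$. With the conventional signs, the closed circle evaluates to the categorical dimension $-2$, matching the defining relation of $\TL[\K]$. Extending monoidally and $\K$-linearly produces a pivotal, $\K$-linear functor $\tlfunctor\colon\TL[\K]\to\tilt$, since $\tmod(1)$ is tilting and $\tilt$ is closed under tensor products so all $\tmod(1)^{\hcirc(v-1)}$ are tilting.

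Next I would verify that $\tlfunctor(\pjw[v{-}1])$ is the projection onto a summand isomorphic to $\tmod(v-1)$. The key input is the recursive characterization in \eqref{eq:recursion-formula}, which mirrors the classical construction of the projection onto $\tmod(v-1)$ inside $\tmod(1)^{\hcirc(v-1)}$ provided by Donkin's tensor product theorem \cite{Do-tilting-alg-groups}: the projector onto $\tmod(v-1)=\tmod(a_{j}\ppar^{j}-1)\hcirc\mathrm{Fr}^{\ast}(\tmod(\pbase{a_{j-1},\dots,a_{0}}{\ppar}-1))$ can be assembled inductively from the JW projector for the leading digit and a projector for the lower digits, in a way that matches the two terms on the right-hand side of \eqref{eq:recursion-formula}. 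Combined with \fullref{proposition:pjw-well-defined}, this both confirms that $\pjw[v{-}1]$ maps to the desired projection and establishes essential surjectivity of the Karoubi completion of $\tlfunctor$, since every indecomposable tilting $\tmod(v-1)$ appears with multiplicity one inside $\tmod(1)^{\hcirc(v-1)}$.

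The hard part is full faithfulness. For each pair $v,w$ the map
\begin{gather*}
\tlfunctor\colon\Hom_{\TL[\K]}(v{-}1,w{-}1)\to\Hom_{\tilt}\big(\tmod(1)^{\hcirc(v{-}1)},\tmod(1)^{\hcirc(w{-}1)}\big)
\end{gather*}
must be shown to be an isomorphism of free $\K$-modules of rank equal to the number of crossingless matchings. My plan is to first treat the generic case over $\Q$ via the classical Schur--Weyl duality between $\SLtwo$ and the Temperley--Lieb algebra acting on tensor powers of the vector representation, which is well known and gives an isomorphism in characteristic zero. To descend to characteristic $\ppar$, I would work with an integral form: the $\Z$-linear Temperley--Lieb category is free on crossingless matchings, and Kempf vanishing, together with the fact that $\tmod(1)^{\hcirc n}$ has a Weyl and a dual Weyl filtration whose multiplicities are characteristic-independent, forces both $\Hom$-spaces above to have the same rank. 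Thus surjectivity, which follows from the $\SLtwo$-commutant being generated by $\SLtwo$-invariants in $\tmod(1)^{\hcirc 2}$ (\ie cups and caps), upgrades automatically to an isomorphism. This rank-matching step is the subtle one, and is the gap that was pointed out by Snyder in the acknowledgments; I would address it via the integral/base-change argument just sketched rather than by a direct diagrammatic calculation in $\TL[\K]$.
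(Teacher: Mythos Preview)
Your overall architecture---characteristic zero Schur--Weyl duality plus a flatness/base-change argument to transport the isomorphism to positive characteristic---is exactly the paper's strategy. The paper also invokes the classical result (attributed there to Rumer--Teller--Weyl) and then argues that both sides are flat over $\Z$: the Temperley--Lieb side via the crossingless-matchings basis, and the tilting side via the characteristic-independence of $\Hom$ between tilting modules.

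The substantive difference is in which half of bijectivity you supply directly. You opt for \emph{surjectivity}, asserting that the $\SLtwo$-commutant on $\tmod(1)^{\hcirc n}$ is generated by cups and caps, and then let rank-matching upgrade this to an isomorphism. But that surjectivity statement is precisely the positive-characteristic content of the proposition; invoking it without proof is close to circular, and the invariant-theoretic heuristic you give (generation by $\SLtwo$-invariants in $\tmod(1)^{\hcirc 2}$) does not come for free in characteristic $\ppar$. The paper instead supplies \emph{injectivity}: it checks that $\tlfunctor$ sends the crossingless-matchings basis $\sbas$ to a linearly independent set over $\K$, which is a concrete verification one can actually carry out (and is exactly the Snyder gap you allude to). With injectivity in hand, the dimension count forces surjectivity rather than the other way round.

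A smaller remark: your discussion of where $\pjw[v{-}1]$ lands, via Donkin's tensor product theorem and the recursion \eqref{eq:recursion-formula}, is more elaborate than the paper's, which simply treats this as immediate from \fullref{proposition:pjw-well-defined} and the results of \cite{BuLiSe-tl-char-p}. Your sketch is plausible but the match between Donkin's Frobenius-twist decomposition and the recursion for $\pqjw$ would need more detail to be a proof; the paper sidesteps this by relying on the cited source.
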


\begin{proof}
By \fullref{proposition:pjw-well-defined} and the construction of $\TL[\K]$, 
the only non-trivial statement is the fully-faithfulness of $\tlfunctor$.
This is known; 
however, for completeness, let us give a short 
(but not new, \cf \cite[Theorem 2.58]{El-ladders-clasps} 
or \cite[Proposition 2.3]{AnStTu-semisimple-tilting}) argument for this. 
First, the same statement over $\C$ is a classical result and dates back to 
work of Rumer--Teller--Weyl. This implies that hom spaces on both sides have the same dimension over $\C$. 
The point is now the \emph{flatness} of both sides. Precisely, the standard basis $\sbas$ works for $\TL[\Z]$, 
showing that the dimensions of hom spaces in $\TL[\K]$ are independent 
of the characteristic. The same is true in the image of $\tlfunctor$: 
The module $\tmod(1)$ is a tilting module regardless of the characteristic, and the same 
thus holds for $\tmod(1)^{\otimes(v{-}1)}$. This implies that the hom spaces in 
$\tlfunctor(\TL[\K])$ are also independent of the characteristic. Finally, one can check that $\tlfunctor(\sbas)$ remains linear independent, and
the claim follows since all involved dimensions are finite and the same on both sides.
\end{proof}

\section{The quiver algebra}\label{sec:presentation}

\subsection{Generators and relations}\label{subsec:main-TL}

In order to prove \fullref{theorem:main}
we have to give a 
presentation of the algebra
\begin{gather}\label{eq:ealg}
\ealg:={\textstyle\bigoplus_{v,w\in\N}}\,\Hom_{\TL[\F]}
\big(
\pjw[v{-}1],\pjw[w{-}1]
\big)
\end{gather} 
by generators and relations. To this end, we first introduce notation for certain elements.

\begin{definition}\label{definition:updown}
Let $S$ and $S^{\prime}$ be down- and up-admissible for $v$, respectively. Then we define
\begin{gather}
\begin{aligned}
\Down{S}\pjw[v{-}1]
&:=\pjw[{v[S]{-}1}]\down{S}\pjw[v{-}1]
,
\\
\Up{S^{\prime}}\pjw[v{-}1]
&:=\pjw[v(S^{\prime}){-}1]\up{S^{\prime}}\pjw[v{-}1]
,
\\
\loopdown{S}{v{-}1}\pjw[v{-}1]
&:=\pjw[v{-}1]\up{S}\down{S}\pjw[v{-}1].
\end{aligned}
\end{gather}
We call the latter the \emph{$\ppar$loop} on $v-1$ down through $v[S]-1$.
\end{definition}

We will consider the morphisms $\Down{S}\pjw[v{-}1]$ and 
$\Up{S}\pjw[v{-}1]$ as generators for $\ealg$, but restrict to the 
cases when $S$ and $S^{\prime}$ are minimal admissible stretches of 
consecutive integers. Then these morphisms can be pictured as
\begin{gather*}
\Down{S}\pjw[v{-}1]=
\begin{tikzpicture}[anchorbase,scale=.25,tinynodes]
\draw[pJW] (2,.5) rectangle (-2,-.5);
\draw[pJW] (2,1.5) rectangle (-2,2.5);
\node at (0,-.2) {$\pjwm[v{-}1]$};
\draw[usual] (-.5,.5) to[out=90,in=180] (0,1) to[out=0,in=90] (.5,.5);
\draw[usual] (-1,.5) to (-1,1.5);
\draw[usual] (1,.5) to (1,1.5);
\node at (1,2.5) {$\phantom{a}$};
\node at (1,-.5) {$\phantom{a}$};
\end{tikzpicture}
,\quad
\Up{S^{\prime}}\pjw[v{-}1]=
\begin{tikzpicture}[anchorbase,scale=.25,tinynodes]
\draw[pJW] (2,.5) rectangle (-2,-.5);
\draw[pJW] (2,1.5) rectangle (-2,2.5);
\node at (0,-.2) {$\pjwm[v{-}1]$};
\draw[usual] (-.5,1.5) to[out=270,in=180] (0,1) to[out=0,in=270] (.5,1.5);
\draw[usual] (-1,.5) to (-1,1.5);
\draw[usual] (1,.5) to (1,1.5);
\node at (1,2.5) {$\phantom{a}$};
\node at (1,-.5) {$\phantom{a}$};
\end{tikzpicture}
,\quad
\loopdown{S}{v{-}1}\pjw[v{-}1]=
\begin{tikzpicture}[anchorbase,scale=.25,tinynodes]
\draw[pJW] (2,-.5) rectangle (-2,.5);
\draw[pJW] (2,1.5) rectangle (-2,2.5);
\draw[pJW] (2,3.5) rectangle (-2,4.5);
\node at (0,-.2) {$\pjwm[v{-}1]$};
\node at (0,3.8) {$\pjwm[v{-}1]$};
\draw[usual] (-.5,.5) to[out=90,in=180] (0,1) to[out=0,in=90] (.5,.5);
\draw[usual] (-.5,3.5) to[out=270,in=180] (0,3) to[out=0,in=270] (.5,3.5);
\draw[usual] (-1,.5) to (-1,1.5);
\draw[usual] (1,.5) to (1,1.5);
\draw[usual] (-1,2.5) to (-1,3.5);
\draw[usual] (1,2.5) to (1,3.5);
\node at (1,2.5) {$\phantom{a}$};
\node at (1,-.5) {$\phantom{a}$};
\end{tikzpicture}.
\end{gather*}

We define two functions $\funcf,\funcg\colon\F\to\F$ 
(where we again see \losp) via
\begin{gather*}
\funcf(a)=
\begin{cases} 
(-1)^{a}\tfrac{2}{a}
&\text{if } 1\leq a\leq\ppar-2,
\\
0 
&\text{if }a=0\text{ or }a=\ppar-1,
\end{cases}
\quad
\funcg(a) =
\begin{cases}
-(\tfrac{a+1}{a})
&\text{if } 1\leq a\leq\ppar-1,
\\
-2 &\text{if }a=0.
\end{cases}
\end{gather*}
Note that $\funcf(\ppar-1)=\funcg(\ppar-1)=0$ and $\funcg(a)=\funcg(\ppar-a-1)^{-1}$ for $a\neq 0,\ppar-1$. 
Then for each finite $S\subset\N[0]$ we define scaling 
operators $\funcF,\funcG,\funcH\in\ealg$ on $v=\pbase{a_{j},\dots,a_{0}}{\ppar}$ as
\begin{gather*}
\funcF\pjw[v{-}1] 
=
\funcf(a_{\max(S)+1})\pjw[v{-}1], 
\quad
\funcG\pjw[v{-}1] 
= 
\funcg(a_{\max(S)+1})\pjw[v{-}1],
\\
\funcH\pjw[v{-}1] 
=
\funcg(a_{\max(S)+1}-1)\pjw[v{-}1].
\end{gather*}
These are not considered as generators of $\ealg$, but as mere 
bookkeeping devices for the appearing scalars.

\begin{theorem}(Generators and relations.)\label{theorem:main-tl-section}
The algebra $\ealg$ is generated by $\pjw[v{-}1]$ for $v\in\N$, 
and elements $\Down{S}\pjw[v{-}1]$ and $\Up{S^{\prime}}\pjw[v{-}1]$, 
where $S$ and $S^{\prime}$ denote minimal down- and up-admissible stretches for $v$, respectively.
These generators are subject to the following complete set of relations.
\begin{enumerate}[label=(\arabic*)]

\setlength\itemsep{0.15cm}

\item \label{theorem:main-tl-section-1} \emph{Idempotents.}
\begin{gather*}
\pjw[v{-}1]\pjw[w{-}1]=\delta_{v,w}\pjw[v{-}1],
\quad 
\pjw[{v[S]{-}1}]\Down{S}\pjw[v{-}1]
=
\Down{S}\pjw[v{-}1],
\quad
\pjw[v(S^{\prime}){-}1]\Up{S^{\prime}}\pjw[v{-}1]
=
\Up{S^{\prime}}\pjw[v{-}1].
\end{gather*}

\item \label{theorem:main-tl-section-2} \emph{Containment.}
If $S^{\prime} \subset S$, then we have
\begin{gather*}
\Down{S^{\prime}}\Down{S}\pjw[v{-}1]=0,
\quad
\Up{S}\Up{S^{\prime}}\pjw[v{-}1]=0.
\end{gather*}

\item \label{theorem:main-tl-section-3} \emph{Far-commutativity.}
If $\dist(S,S^{\prime})>1$, then 
\begin{gather*}
\Down{S}\Down{S^{\prime}}\pjw[v{-}1]=\Down{S^{\prime}}\Down{S}\pjw[v{-}1],
\quad
\Down{S}\Up{S^{\prime}}\pjw[v{-}1]=\Up{S^{\prime}}\Down{S}\pjw[v{-}1],
\quad
\Up{S}\Up{S^{\prime}}\pjw[v{-}1]=\Up{S^{\prime}}\Up{S}\pjw[v{-}1].
\end{gather*}

\item \label{theorem:main-tl-section-4} \emph{Adjacency relations.}
If $\dist(S,S^{\prime})=1$ and $S^{\prime}>S$, then
\begin{gather*}
\Down{S^{\prime}}\Up{S}\pjw[v{-}1]=\Down{S{\cup}S^{\prime}}\pjw[v{-}1],
\quad
\Down{S}\Up{S^{\prime}}\pjw[v{-}1]=\Up{S^{\prime}{\cup}S}\pjw[v{-}1],
\\
\Down{S^{\prime}}\Down{S}\pjw[v{-}1]=\Up{S}\Down{S^{\prime}}\funcH[S]\pjw[v{-}1],
\quad
\Up{S}\Up{S^{\prime}}\pjw[v{-}1]=\funcH[S]\Up{S^{\prime}}\Down{S}\pjw[v{-}1].
\end{gather*}

\item \label{theorem:main-tl-section-5} \emph{Overlap relations.}
If $S^{\prime}\geq S$ with $S^{\prime}\cap S=\{s\}$ and $S^{\prime}\not\subset S$, then we have
\begin{gather*}
\Down{S^{\prime}}\Down{S}\pjw[v{-}1]=\Up{\{s\}}\Down{S}\Down{S^{\prime}{\setminus}\{s\}}\pjw[v{-}1],
\quad
\Up{S}\Up{S^{\prime}}\pjw[v{-}1]=\Up{S^{\prime}{\setminus}\{s\}}\Up{S}\Down{\{s\}}\pjw[v{-}1].
\end{gather*}

\item \label{theorem:main-tl-section-6} \emph{Zigzag.}
\begin{gather*}
\Down{S}\Up{S}\pjw[v{-}1]=\Up{\hull[S]}\Down{\hull[S]}\funcG[S]\pjw[v{-}1] 
+\Up{T}\Up{\hull[S]}\Down{\hull[S]}\Down{T}\funcF[S]\pjw[v{-}1].
\end{gather*}
Here, if the down-admissible hull $\hull[S]$, or 
the smallest minimal down-admissible stretch $T$ with $T>\hull[S]$ does not exist, then the involved symbols are 
zero by definition.
\end{enumerate}
 
\noindent\emph{(Basis)} The elements of the form
\begin{gather*}
\pjw[w{-}1]\Up{S_{i_{l}}^{\prime}}
\cdots\Up{S_{i_{0}}^{\prime}}\Down{S_{i_{0}}}\cdots\Down{S_{i_{k}}}\pjw[v{-}1],
\end{gather*}
with $S_{i_{l}}^{\prime}>\cdots>S_{i_{0}}^{\prime}$, and $S_{i_{0}}<\cdots<S_{i_{k}}$, 
form a basis for $\pjw[w{-}1]\zigzag\pjw[v{-}1]$. 

\noindent\emph{(Complete)} Any word 
$\pjw[w{-}1]\morstuff{F}\pjw[v{-}1]$ in the generators 
of $\ealg$ can be rewritten as a linear
combination of basis elements from \emph{(Basis)} using only the relations \emph{(1)--(6)}.
\end{theorem}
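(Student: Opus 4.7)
The strategy is to establish the theorem in three stages: verifying the relations, establishing a spanning set in the claimed form, and then proving linear independence by comparing with known dimension data.

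First I would verify relations (1)--(6) by direct computation in $\TL[\F]$ using the diagrammatic properties of the $\ppar$JW projectors $\pjw[v{-}1]$ and the cap/cup operators $\down{S}, \up{S}$ of \fullref{definition:cup-cap-operators}. Relation (1) is immediate from idempotency and \eqref{eq:0absorb}. Relation (2) follows because when $S^{\prime}\subset S$ the composition $\down{S^{\prime}}\down{S}$ contains nested caps that, when sandwiched between $\ppar$JW projectors of appropriate throughput, must be killed by \eqref{eq:0kill} together with the $\ppar$-admissibility of \fullref{proposition:pjw-well-defined}. Relation (3) is a planar isotopy via \fullref{lemma:isotopies}. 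For relation (4), the identities $\down{S^{\prime}}\up{S}\pjw=\down{S\cup S^{\prime}}\pjw$ and its partner are essentially restatements of \fullref{lemma:isotopies}\ref{lemma:isotopies-c}--\ref{lemma:isotopies-d}; the scaled versions $\Down{S^{\prime}}\Down{S}=\funcH[S]\Up{S}\Down{S^{\prime}}$ require expanding the middle $\pjw[{v[S]-1}]$ via \eqref{eq:pjwdef} and tracking a telescoping coefficient, which after cancellations in $\lambda_{v,\bullet}$ produces exactly $\funcg(a_{\max(S)+1}-1)$. Relation (5) is again an isotopy after absorption. The zigzag relation (6) is the deepest: one inserts \eqref{eq:pjwdef} for the middle projector in $\Down{S}\Up{S}\pjw[v-1]$, and most trapeze terms vanish by \eqref{eq:0kill} and nilpotency; the only surviving contributions correspond to the down-admissible hull $\hull[S]$ (producing the $\funcG[S]$-scaled term) and to the smallest adjacent higher stretch $T$ (producing the $\funcF[S]$-scaled term). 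This is where I expect the genuine work to sit.

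The next stage is the (Complete) statement. I would use relations (1)--(6) as rewriting rules, arguing that any word in generators can be brought to the form $\Up{S^{\prime}_{i_{l}}}\cdots\Up{S^{\prime}_{i_{0}}}\Down{S_{i_{0}}}\cdots\Down{S_{i_{k}}}$. Using (4) and (5) one can eliminate all $\Down\Up$ patterns (cap before cup), converting them into $\Up\Down$ patterns or into pure $\Up\cdots$ or $\Down\cdots$ strings, while (6) resolves any remaining $\Down{S}\Up{S}$ into the claimed normal forms. Then (3) orders the downs in increasing and the ups in decreasing order, and (2) together with the scalar relations handles overlapping subsets. The termination of this rewriting procedure can be controlled by a lexicographic measure on the sequence of stretches involved, together with the fact that each application of (6) either decreases this measure or produces basis elements directly.

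For the (Basis) statement, the spanning part follows from (Complete), so it remains to prove linear independence. Here the cleanest route is to push the counting to the target: by \fullref{proposition:TLtilt}, $\tlfunctor$ identifies $\pjw[w{-}1]\zigzag\pjw[v{-}1]$ with $\Hom_{\tilt}\bigl(\tmod(v{-}1),\tmod(w{-}1)\bigr)$, whose dimension is known from the description of Weyl filtrations of indecomposable $\SLtwo$ tilting modules (Donkin's tensor product theorem and the ancestry combinatorics of \fullref{definition:ancestry}). Each proposed basis element corresponds to a choice of down-admissible $S$ (with $v[S] = \eve$ along the common part), an intermediate vertex in $\supp[v]\cap\supp[w]$, and then an up-path to $w$; the count of these matches the number of common Weyl factors of $\tmod(v{-}1)$ and $\tmod(w{-}1)$, which is precisely $\dim\Hom_{\tilt}\bigl(\tmod(v{-}1),\tmod(w{-}1)\bigr)$. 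Matching the two counts forces linear independence.

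The main obstacle will be the zigzag relation (6), since it requires identifying exactly which trapeze summands in the expansion of the middle $\pqjw$ survive after the cap-cup sandwich and then matching their coefficients with $\funcg$ and $\funcf$ through the telescoping products defining $\lambda_{v,\bullet}$. The \losp\ exceptional cases (digits equal to $0$, $1$, $\ppar-2$, or $\ppar-1$) are precisely where individual trapezes degenerate, and handling them uniformly will require careful case analysis supported by the computer experiments mentioned in the introduction.
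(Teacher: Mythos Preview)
Your overall architecture is reasonable, but you significantly underestimate the verification of relations (4)--(6), and in one place your claim is simply wrong. Relation (5) is \emph{not} ``an isotopy after absorption'': after absorbing the intermediate projector, the composite $\down{S^{\prime}}\down{S}$ is a cap configuration whose inner block is not ancestor-centered for $v$, so a naive isotopy argument does not produce the right-hand side. The paper's actual proof (\fullref{lemma:overlapapprox} and \fullref{lemma:tandem-third}) proceeds by expanding in the standard-morphism basis over $\Q$, computing the leading two coefficients modulo $\ppar$, and then killing a family of potential error terms by multiplying by carefully chosen $\ppar$loops and invoking \emph{already established} instances of the other relations. The same phenomenon occurs for the scaled adjacency relations in (4): your ``telescoping coefficient'' computation is essentially \fullref{lemma:adjacent-generators-3}, but that lemma only gives the relation \emph{up to error terms in $V_{>U}$}, and eliminating those errors (\fullref{lemma:tandem-second}) requires the zigzag relation $\condWm(v)$ and the endomorphism description $\condE(v-1)$. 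Likewise your sketch for (6) (``most trapeze terms vanish, two survive'') produces only the approximate relation \eqref{eq:zigzag-approx}; the error terms are then killed using partial traces and the adjacency/overlap relations.

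The upshot is that relations (4), (5), (6), together with the structure of $\End_{\TL[\F]}(\pjw[v{-}1])$ (\fullref{lemma:dualnumbers}), cannot be verified independently by direct expansion: they are established by a \emph{joint induction on $v$} in which one cycles through the conditions $\condWm(v)\Rightarrow\condA(v)\Rightarrow\condO(v)\Rightarrow\condW(v)\Rightarrow\condE(v)$, each step consuming the previous ones at the same or lower level. Your proposal treats them as separate direct computations and therefore has no mechanism to control the error terms.

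Your approach to (Complete) is close to the paper's \fullref{lemma:fullbasis}. Your approach to (Basis) via dimension count against $\Hom_{\tilt}$ is different from the paper's: the paper proves linear independence internally (\fullref{lemma:admsum}) by lifting to $\Q$, using the unitriangular change between $\ppar$-morphisms and standard morphisms \eqref{eq:morfilt}, and showing that all coefficients in a $\ppar$-admissible expansion are themselves $\ppar$-admissible via induction on through-degree. Your route is legitimate provided you are willing to import the Weyl-filtration multiplicities of $\tmod(v-1)$ from the literature, but the paper's argument is self-contained and does not need that input.
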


\begin{remark}
The algebra $\zigzag$ is a path algebra of an underlying quiver as follows. The idempotents $\pjw[v{-}1]$ correspond to vertices of a quiver, call these $v-1$. The elements $\Down{S}\pjw[v{-}1]$ and 
$\Up{S^{\prime}}\pjw[v{-}1]$ correspond to arrows starting at the vertex $v-1$, and either pointing to \emph{downwards} or \emph{upwards} (which is leftwards respectively rightwards in \autoref{figure:main-3}) to $v[S]-1$ or $v(S)-1$.
\end{remark}

\begin{remark}
In the special case of $v=w$,
\fullref{theorem:main-tl-section}.(Basis) says that $\ppar$loops form a basis of
the endomorphism spaces. Furthermore, we will see in \fullref{lemma:dualnumbers}
that all $\ppar$loops are products of $\ppar$loops
$\loopdown{S}{v{-}1}\pjw[v{-}1]$ for $S$ minimal down-admissible.
\end{remark}

\begin{remark}\label{remark:more-generators}
In \fullref{theorem:main-tl-section}.(4) and (6), the right-hand sides of the shown relations feature 
morphisms indexed by admissible subsets that are not necessarily minimal. 
We shall see in \fullref{lemma:loops-1} that 
such morphisms decompose into products of generators
\begin{gather}\label{eq:more-generators}
\Down{S}\pjw[v{-}1]
:=
\Down{S_{i_{1}}}\cdots\Down{S_{i_{k}}}\pjw[v{-}1],
\quad
\Up{S^{\prime}}\pjw[v{-}1]
:=
\Up{S^{\prime}_{i_{l}}}\cdots \Up{S^{\prime}_{i_{1}}}\pjw[v{-}1],
\end{gather}
where the products are taken over the minimal down- respectively up-admissible stretches $S_{i_{j}}$ and 
$S^{\prime}_{i_{j}}$, 
such that $S=\bigsqcup_{j} S_{i_{j}}$ and $S^{\prime}=\bigsqcup_{j} S^{\prime}_{i_{j}}$, with $S_{i_{1}}<\cdots<S_{i_{k}}$ and 
$S^{\prime}_{i_{l}}>\cdots>S^{\prime}_{i_{1}}$. 

In \fullref{theorem:main-tl-section} we use \eqref{eq:more-generators} as a \emph{shorthand notation}, 
but	one could also take $\Down{S}\pjw[v{-}1]$ and $\Up{S^{\prime}}\pjw[v{-}1]$ 
for (not necessarily minimal) admissible $S$ and $S^{\prime}$ as 
generators for $\ealg$. 
This requires listing the additional relations
\begin{gather}\label{eq:more-generatorstwo}
\Down{S}\pjw[v{-}1]
=
\Down{S_{1}}\Down{S_{2}}\pjw[v{-}1],
\quad
\Up{S^{\prime}}\pjw[v{-}1]
=
\Up{S^{\prime}_{2}}\Up{S^{\prime}_{1}}\pjw[v{-}1],
\end{gather}
for down-admissible $S_{1}<S_{2}$ with $S=S_{1}\cup S_{2}$ and up-admissible $S^{\prime}_{2}>S^{\prime}_{1}$ 
with $S^{\prime}=S^{\prime}_{2}\cup S^{\prime}_{1}$, 
in addition to the relations  
\fullref{theorem:main-tl-section}.(1-6) among minimal generators. 
One advantage of such a presentation is that it exhibits $\ealg$ as 
a quadratic algebra, since relations \fullref{theorem:main-tl-section}.(4-6)
now turn into quadratic relations with respect to the enlarged generating set.
\end{remark}

The proof of \fullref{theorem:main-tl-section} will occupy the remainder of this paper.
However, we already note that \fullref{theorem:main-tl-section}.(1) 
holds by the definition of $\ealg$ as the endomorphism algebra of a direct sum. 
Moreover, assuming the relations \fullref{theorem:main-tl-section}.(1-6), we get:

\begin{lemma}(Completeness---\fullref{theorem:main-tl-section}.(Complete).)\label{lemma:fullbasis}
Let $\pjw[w{-}1]\morstuff{F}\pjw[v{-}1]\in\ealg$. 
Then there is a finite sequence of relations \fullref{theorem:main-tl-section}.(1-6)
rewriting it as a linear combination of elements of the form \fullref{theorem:main-tl-section}.(Basis).
\end{lemma}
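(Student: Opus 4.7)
My plan is to prove completeness by induction on the length of $\morstuff{F}$ as a word in the generators $\Down{S},\Up{S^{\prime}}$ (after absorbing all idempotents via (1)). For length at most one the claim is immediate. For longer words, write $\morstuff{F}=X\cdot\morstuff{F}^{\prime}$ where $X$ is a single generator. By the inductive hypothesis, $\morstuff{F}^{\prime}\pjw[v{-}1]$ is already a linear combination of basis elements, so by linearity it suffices to establish the following key step: for every generator $X$ and every basis element $B=\Up{S^{\prime}_{l}}\cdots\Up{S^{\prime}_{0}}\Down{S_{0}}\cdots\Down{S_{k}}\pjw[v{-}1]$, the product $X\cdot B$ rewrites via (1)-(6) to an $\F$-linear combination of basis elements.

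I treat $X=\Down{T}$ (the case $X=\Up{T^{\prime}}$ is dual via the up-down symmetry of $\TL$). The idea is to push $\Down{T}$ rightward through the Up-block of $B$ one factor at a time. At each encounter $\Down{T}\Up{S^{\prime}_{j}}$, apply the relation dictated by the relationship of $T$ and $S^{\prime}_{j}$: if distant, use far-commutativity (3); if adjacent, use the first two cases of (4) to collapse two factors into one; if $T=S^{\prime}_{j}$, use zigzag (6); if $T$ and $S^{\prime}_{j}$ overlap properly, first expand $\Up{S^{\prime}_j}$ into its minimal constituents via the shorthand from \fullref{remark:more-generators} and then reduce to the previous cases combined with overlap (5). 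Once $\Down{T}$ has crossed the entire Up-block, I merge it into the Down-block and run a final sorting pass using (2), (3), (4) (last two cases) and (5) to rearrange the Downs in strictly increasing order, eliminating any contained pair via (2). Throughout, the admissibility conditions on the intervening sets are verified using \fullref{lemma:new-admissible}, \fullref{lemma:stretch-lemma} and \fullref{lemma:isotopies}.

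The main obstacle will be termination, since the second term in the zigzag relation (6) has four factors coming from a two-factor input, and (5) likewise lengthens the word. The key observation to overcome this is that in each of (3), (4), (5), (6) the right-hand side is internally in Up-before-Down form at the site where the relation is applied (for (6) this follows from $T>\hull[S]$, and for (5) from the disjointness $S\cap(S^{\prime}\smallsetminus\{s\})=\emptyset$). Consequently, the primary component of the termination measure should be the number of Down-Up adjacencies in the word: every application of (3), (4) (first two cases), (5), or (6) at a chosen Down-Up adjacency strictly decreases this count, and the lengthenings introduced are of Up-Up or Down-Down type and therefore do not create new Down-Up adjacencies. A secondary lexicographic component, measuring the number of out-of-order pairs within the (final) Up- and Down-blocks, then controls the final sorting phase via a standard bubble-sort argument using (2)-(5) applied within each block.

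In summary, the induction on word length reduces the statement to the key step, the key step is carried out by the explicit push-through procedure above, and termination of this procedure follows from the lexicographic measure (Down-Up adjacencies, intra-block disorder). Designing this measure so that (6) is genuinely well-founded — rather than merely plausible — is where the work lies; this will be the main technical point of the proof.
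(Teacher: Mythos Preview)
Your approach---induct on word length, peel off the leftmost generator, and push it through an already-normalised basis element---is a genuine alternative to the paper's argument. The paper does not induct on length; it works directly on an arbitrary word, singles out an out-of-order \emph{adjacent} pair, and carries out an exhaustive case analysis of the thirteen possible relative positions of two minimal stretches $S,S^{\prime}$ (distant, adjacent, overlapping in one element, contained, equal, \ldots), checking that each out-of-order configuration is handled by exactly one of (2)--(6). Your push-through procedure implicitly relies on the same case split but hides it inside the recursion; what you lose is the paper's explicit bookkeeping of which stretch configurations can actually occur for minimal admissible stretches (several of the thirteen cases are ruled out a priori).

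The concrete gap is your termination measure. You assert that each application of (3), (4), (5) or (6) at a chosen Down--Up adjacency strictly decreases the number of Down--Up adjacencies because the right-hand side is internally in Up-before-Down form. This ignores the boundaries: replacing a substring $\Down{}\Up{}$ by something of shape $\Up{}\cdots\Down{}$ removes one Down--Up adjacency but creates a new one at the right boundary whenever the next letter is an $\Up{}$, and another at the left boundary whenever the previous letter is a $\Down{}$. Already in your own setup the count does not drop: applying far-commutativity to $\Down{T}\Up{S^{\prime}_{l}}\Up{S^{\prime}_{l-1}}\cdots$ yields $\Up{S^{\prime}_{l}}\Down{T}\Up{S^{\prime}_{l-1}}\cdots$, still with exactly one Down--Up adjacency. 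After the four-letter term of (6) the count can strictly increase: starting from $\Up{T}\Up{\hull[S]}\,\Down{\hull[S]}\Down{T}\,\Up{S^{\prime}_{l-1}}\cdots$ and processing $\Down{T}\Up{S^{\prime}_{l-1}}$ via (6) inserts new $\Up{}$'s between $\Down{\hull[S]}$ and the rest, producing two Down--Up adjacencies where there was one. Neither the adjacency count nor the Down--Up inversion count is well-founded under these rewrites, so the last sentence of your proposal correctly identifies the crux---but the specific measure you put forward does not do the job.
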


\begin{proof}
We can immediately restrict to the case where $\pjw[w{-}1]\morstuff{F}\pjw[v{-}1]$ is 
a product of generators of $\ealg$ (rather than a linear combination of such). In order to prove the claim, 
we will show that, if $\pjw[w{-}1]\morstuff{F}\pjw[v{-}1]$ is not of the desired form, 
then we can measure its complexity by counting \emph{out-of-order} pairs of the following forms, 
all other pairs are called \emph{in-order}.
\begin{enumerate}[label=(\roman*)]

\setlength\itemsep{.15cm} 

\item $\Down{S^{\prime}}\Down{S}$ or $\Up{S}\Up{S^{\prime}}$ for $S^{\prime}\geq S$.

\item $\Down{S}\Up{S^{\prime}}$.

\end{enumerate}
A case-by-case check will verify that we can use our relations 
to reduce these to in-order pairs, which then inductively shows the claim. 
For the case-by-case check we write down the 
list of all combinations how stretches $S$ and $S^{\prime}$ can 
meet. A priori, there 
are $13$ such cases illustrated by
\begin{gather*}
\begin{aligned}
\min(S)<\min(S^{\prime})\colon&
\quad
\text{1a)}\;
\begin{tikzpicture}[anchorbase,tinynodes]
\draw (.75,.1) to (.25,.1);
\draw[densely dashed] (-.25,-.1) to (-.75,-.1);
\end{tikzpicture}
,\quad
\text{1b)}\;
\begin{tikzpicture}[anchorbase,tinynodes]
\draw (.5,.1) to (0,.1);
\draw[densely dashed] (0,-.1) to (-.5,-.1);
\end{tikzpicture}
,\quad
\text{1c)}\;
\begin{tikzpicture}[anchorbase,tinynodes]
\draw (.4,.1) to (-.1,.1);
\draw[densely dashed] (.1,-.1) to (-.4,-.1);
\end{tikzpicture}
,\quad
\text{1d)}\;
\begin{tikzpicture}[anchorbase,tinynodes]
\draw (-.5,.1) to (.5,.1);
\draw[densely dashed] (0,-.1) to (-.5,-.1);
\end{tikzpicture}
,\quad
\text{1e)}\;
\begin{tikzpicture}[anchorbase,tinynodes]
\draw (-.5,.1) to (.5,.1);
\draw[densely dashed] (-.25,-.1) to (.25,-.1);
\end{tikzpicture}
,
\\
\min(S)>\min(S^{\prime})\colon&
\quad
\text{2a)}\;
\begin{tikzpicture}[anchorbase,tinynodes]
\draw[densely dashed] (.75,-.1) to (.25,-.1);
\draw (-.25,.1) to (-.75,.1);
\end{tikzpicture}
,\quad
\text{2b)}\;
\begin{tikzpicture}[anchorbase,tinynodes]
\draw[densely dashed] (.5,-.1) to (0,-.1);
\draw (0,.1) to (-.5,.1);
\end{tikzpicture}
,\quad
\text{2c)}\;
\begin{tikzpicture}[anchorbase,tinynodes]
\draw[densely dashed] (.4,-.1) to (-.1,-.1);
\draw (.1,.1) to (-.4,.1);
\end{tikzpicture}
,\quad
\text{2d)}\;
\begin{tikzpicture}[anchorbase,tinynodes]
\draw[densely dashed] (-.5,-.1) to (.5,-.1);
\draw (0,.1) to (-.5,.1);
\end{tikzpicture}
,\quad
\text{2e)}\;
\begin{tikzpicture}[anchorbase,tinynodes]
\draw[densely dashed] (-.5,-.1) to (.5,-.1);
\draw (-.25,.1) to (.25,.1);
\end{tikzpicture}
,
\\
\min(S)=\min(S^{\prime})\colon&
\quad
\text{3a)}\;
\begin{tikzpicture}[anchorbase,tinynodes]
\draw (0,.1) to (-.5,.1);
\draw[densely dashed] (0,-.1) to (-1,-.1);
\end{tikzpicture}
,\quad
\text{3b)}\;
\begin{tikzpicture}[anchorbase,tinynodes]
\draw (0,.1) to (-1,.1);
\draw[densely dashed] (0,-.1) to (-1,-.1);
\end{tikzpicture}
,\quad
\text{3c)}\;
\begin{tikzpicture}[anchorbase,tinynodes]
\draw (0,.1) to (-1,.1);
\draw[densely dashed] (0,-.1) to (-.5,-.1);
\end{tikzpicture}
.
\end{aligned}
\end{gather*}
where the solid line represents $S$ and the dashed line $S^{\prime}$, with smaller entries appearing further to the right. 
Some of the illustrated cases never arise when considering minimal admissible stretches
and the remaining cases are precisely covered by our relations.
Let us do this in detail for the out-of-order pair $\Down{S^{\prime}}\Down{S}$. 
First, the cases 2a)--2e) as well as 1e) and 3c) are ruled out by the assumption $S^{\prime}\geq S$. 
The case 1a) is far-commutativity, the case 1b) adjacency, while 1d) and 3b) are covered by containment. 
The relation 3a) does not occur as $S^{\prime}$ would not be minimal. 
The remaining case 1c) is only possible if $S^{\prime}\cap S=\{\min(S^{\prime})\}$, in which case we can 
apply the overlap relation.
\end{proof}

\subsection{Basic properties of \texorpdfstring{$\ppar$}{p}JW projectors}

We invite the reader to illustrate the statements and proofs of the
next lemmas using the explicit diagrammatic examples of trapezes from
\fullref{example:trapezes} and of $\ppar$JW projectors from
\fullref{example:the-half-projectors}.

\begin{lemmaqed}(See \cite[Proposition 3.2]{BuLiSe-tl-char-p}.)\label{lemma:idemp} 
Suppose that $S$ and $S^{\prime}$ are down-admissible for $v$. Then we have 
\begin{gather*}
\qjw[{v[S]{-}1}]\downo{S}\upo{S^{\prime}}\qjw[{v[S^{\prime}]{-}1}] 
=
\begin{tikzpicture}[anchorbase,tinynodes]
\draw[JW] (.15,.3) rectangle (1.25,.6);
\node at (0.7,.4) {$\qjwm[{v[S]{-}1}]$};
\draw[JW] (.15,-.3) rectangle (1.25,-.6);
\node at (0.7,.-.5) {\scalebox{.875}{$\qjwm[{v[S^{\prime}]{-}1}]$}};
\trd{1.325}{.3}{$\qjwm[S]$}{-.075}{.3}
\tru{1.325}{.3}{\scalebox{.9}{$\qjwm[S^{\prime}]$}}{-.075}{-.3}
\end{tikzpicture}
=
\delta_{S,S^{\prime}}\lambda^{-1}_{v,S^{\prime}}\cdot
\begin{tikzpicture}[anchorbase,tinynodes]
\draw[JW] (.15,-.3) rectangle (1.25,.3);
\node at (0.7,-.05) {$\qjwm[{v[S^{\prime}]{-}1}]$};
\end{tikzpicture}
=
\delta_{S,S^{\prime}}\lambda^{-1}_{v,S^{\prime}}\qjw[{v[S^{\prime}]{-}1}].
\end{gather*}
Thus, the summands $\lambda_{v,S}\trap{S}{v{-}1}$ 
in \eqref{eq:pjwdef} are orthogonal idempotents in $\TL[\Q]$.
\end{lemmaqed}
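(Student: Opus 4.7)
The orthogonality claim follows at once from the main identity: the product $(\lambda_{v,S}\trap{S}{v{-}1})(\lambda_{v,S^{\prime}}\trap{S^{\prime}}{v{-}1})$ factors as
\[
\lambda_{v,S}\lambda_{v,S^{\prime}}\cdot\upo{S}\bigl(\qjw[{v[S]{-}1}]\downo{S}\upo{S^{\prime}}\qjw[{v[S^{\prime}]{-}1}]\bigr)\downo{S^{\prime}},
\]
and substituting the value $\delta_{S,S^{\prime}}\lambda_{v,S}^{-1}\qjw[{v[S^{\prime}]{-}1}]$ for the inner bracket collapses the product to $\delta_{S,S^{\prime}}\lambda_{v,S}\trap{S}{v{-}1}$. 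So I focus on the main identity.

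My plan is to induct on $|S|+|S^{\prime}|$, treating the vanishing case $S\neq S^{\prime}$ and the equality case $S=S^{\prime}$ separately. The base case $S=S^{\prime}=\emptyset$ is immediate (empty products are $1$ and the composition collapses to $\qjw[{v{-}1}]$). For the inductive step, I factor $\downo{S}=\downo{s_{0}}\cdots\downo{s_{k}}$ and $\upo{S^{\prime}}=\upo{s^{\prime}_{l^{\prime}}}\cdots\upo{s^{\prime}_{0}}$ into elementary pieces indexed by the minimal admissible stretches, using \fullref{lemma:isotopies} to reorganize the composition so that the outermost cap--cup pair can be analyzed first. Crucially, applying the absorption relation \eqref{eq:0absorb} repeatedly I enlarge all the internal JW projectors inside the trapezes, revealing a single central $\qjw[{v{-}1}]$ sandwiched between the outer $\qjw[{v[S^{\prime}]{-}1}]$ (with the cup structure of $\upo{S^{\prime}}$ above it) and the outer $\qjw[{v[S]{-}1}]$ (with the cap structure of $\downo{S}$ below it).

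For the vanishing case, by the up-down symmetry involution on $\TL[\Q]$ I may assume without loss of generality that there exists a smallest $s\in S\setminus S^{\prime}$. The cap of thickness $a_{s}\ppar^{s}$ from $\downo{s}$ then has no matching cup in $\upo{S^{\prime}}$; after the absorption step it becomes a cap landing directly on one of the remaining JW projectors, whose thickness is strictly greater than $a_{s}\ppar^{s}$---this strict inequality is ensured by the down-admissibility of $S^{\prime}$ together with $a_{s}\neq 0$. The relation \eqref{eq:0kill} then forces the entire composition to vanish. For the equality case $S=S^{\prime}$, the caps and cups pair bijectively and enclose the central $\qjw[{v{-}1}]$ in a nested family of closed loops, one pair per minimal admissible stretch. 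Applying the partial trace identity \eqref{eq:0trace} iteratively from innermost to outermost, each loop of thickness $a_{s}\ppar^{s}$ contributes the factor $(-1)^{a_{s}\ppar^{s}}\fancest{v}{s}[S]/\fancest{v}{s-1}[S]$ and shrinks the central projector to size $\fancest{v}{s-1}[S]-1$. The product of these factors across $s\in S$ is precisely $\lambda_{v,S}^{-1}$ by \eqref{eq:the-scalar}, and the remaining projector of size $v[S]-1$ is absorbed by the outer $\qjw[{v[S]{-}1}]$, yielding the claimed identity.

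The main technical obstacle is the bookkeeping in the equality case: one must verify that, as loops are closed one by one, the central JW projector passes through exactly the sequence of sizes $\fancest{v}{s}[S]-1$ indexed by $s\in S$, and that the telescoping product of trace factors matches $\lambda_{v,S}^{-1}$. This rests on the combinatorial interplay between $\fancest{v}{s}$ and the reflection $v\mapsto v[S]$, and is most cleanly organized by iterating \fullref{lemma:lambda} along the minimal down-admissible stretches. A secondary obstacle is a uniform treatment, in the vanishing case, of the different possible mismatches between $S$ and $S^{\prime}$ (at the boundary of a common stretch, strictly inside a stretch, or across disjoint stretches), each of which produces a killing cap or cup but via a slightly different absorption pattern.
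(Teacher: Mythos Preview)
The paper does not prove this lemma; it is stated with a citation to \cite{BuLiSe-tl-char-p} and a \textsf{qed} box. So there is no ``paper's proof'' to compare against, and the question is simply whether your argument stands.

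Your central structural claim is wrong. Absorption \eqref{eq:0absorb} lets a smaller JW be swallowed by an adjacent larger one; it never \emph{enlarges} a projector. In the composite $\downo{S}\upo{S'}$ the JW boxes closest to the $v{-}1$ interface are those in $\downo{\max S}$ and $\upo{\max S'}$, and from \fullref{definition:jw-cupscaps} each has size $\pbase{a_{j},\dots,a_{\max S+1},0,\dots,0}{\ppar}-1$ (respectively with $\max S'$), which is strictly less than $v-1$ whenever $S$ or $S'$ is nonempty. There is no $\qjw[{v{-}1}]$ anywhere in the picture, and you cannot produce one by absorption. You also have the vertical orientation reversed: in $\downo{S}\upo{S'}$ the cups of $\upo{S'}$ sit \emph{below} the interface and the caps of $\downo{S}$ sit \emph{above} it.

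Fortunately, for the equality case $S=S'$ this slip does not actually break the computation. The two interface projectors have the same size $\fancest{v}{\max S}-1$ and do merge by \eqref{eq:0absorb}; the innermost loop then gives the partial trace \eqref{eq:0trace} on \emph{that} projector, not on a fictitious $\qjw[{v{-}1}]$. After tracing, the resulting smaller projector merges with the next pair from $\downo{s_{k-1}}$ and $\upo{s_{k-1}}$, and one iterates. If you carry this out honestly, the sizes march through the sequence $\fancest{v}{s}[S]-1$ exactly as needed, and the telescoping product of \eqref{eq:0trace}-factors is $\lambda_{v,S}^{-1}$, matching \eqref{eq:the-scalar}. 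So your strategy here is right; only the description of what the central projector is needs correcting.

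For the vanishing case you are working much harder than necessary, and the step ``the unmatched cap lands directly on a remaining JW projector'' is not actually justified (you have not tracked where the outer endpoint of that cap goes inside the $\upo{S'}$-structure). A one-line argument replaces all of this: distinct down-admissible sets give distinct reflections, so $S\neq S'$ forces $v[S]\neq v[S']$; then the whole composite has through-degree at most $\min(v[S],v[S'])-1$, and whichever outer projector $\qjw[{v[S]{-}1}]$ or $\qjw[{v[S']{-}1}]$ is larger kills it via \eqref{eq:0kill}. The injectivity of $S\mapsto v[S]$ on down-admissible sets is an easy consequence of conditions (d1)--(d2) in \fullref{definition:adm}.
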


\begin{lemma}\label{lemma:capidem} 
Suppose $S$ is down-admissible for $v$, 
and $S^{\prime}=\{s,\dots,s^{\prime}-1\}$ is a minimal 
down-admissible stretch for $v$. Then we have 
\begin{gather*}
\begin{tikzpicture}[anchorbase,tinynodes]
\tru{1.2}{.6}{$\qjwm[S]$}{0}{0}
\draw[usual] (.125,.6) 
to[out=90,in=180] (0.25,.725) node[above,yshift=-2pt]{$S^{\prime}$} to[out=0,in=90] (.375,.6);
\end{tikzpicture}
=	
\begin{cases}
(-1)^{a_s\ppar^s}
\tfrac{\fancest{v}{s}[S]}{\fancest{v}{s{-}1}[S]}\cdot
\TRU{\,$\qjwm[S{\setminus}S^{\prime}]$}
& \text{ if }s\in S,s^{\prime}\notin S,
\\[5pt]
\TRU{\,$\qjwm[S\cup S^{\prime}]$}
& \text{ if }s\notin S,s^{\prime}\in S,
\\[5pt]
0 & \text{ otherwise}.
\end{cases}
\end{gather*}
\end{lemma}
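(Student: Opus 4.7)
The plan is to reduce the identity to an explicit diagrammatic computation, distinguishing the three cases via the interaction between the cap labelled $S'$ atop the trapezoid and the cup structure of the trapezoid indexed by $S$. Since $S'=\{s,s+1,\dots,s'-1\}$ is a minimal down-admissible stretch, we have $a_{s+1}=\cdots=a_{s'-1}=0$ and $a_s,a_{s'}\neq 0$, so the cap collapses to a single cap of thickness $a_s\ppar^s$ whose two legs are anchored at the digit-$s$ and digit-$s'$ positions of the trapezoid's top, skipping over the intervening zero digits. A quick check using (d2) shows that $s\in S$ forces $\{s+1,\dots,s'-1\}\subseteq S$, while $s\notin S$ combined with (d1) forces $\{s+1,\dots,s'-1\}\cap S=\emptyset$, so the four potential sub-cases cleanly reduce to the three listed.

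For the degenerate cases (both $s$ and $s'$ in $S$, or both outside $S$), the goal is to exhibit a cap on the internal JW projector $\qjw[v[S]-1]$ and then invoke \eqref{eq:0kill}. When $s,s'\notin S$, neither leg of the cap meets a cup of the trapezoid, so both descend straight into $\qjw[v[S]-1]$. When $s,s'\in S$ (so both positions are capped off by cups in the trapezoid), I would use planar isotopy together with the absorption \eqref{eq:0absorb} to slide the cap past the two cups, producing an arc of thickness $a_s\ppar^s$ that connects two bundles both entering the JW projector: once again a cap attached to $\qjw[v[S]-1]$.

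The substantive case is $s\in S,\,s'\notin S$ (so $S'\subseteq S$). Here the combined action of the cap and the $s$-cup of the trapezoid produces a closed loop of thickness $a_s\ppar^s$ attached to $\qjw[v[S]-1]$, on top of which sits a smaller trapezoid whose cup structure is indexed by $S\setminus S'$ (the trivial cups at $s+1,\dots,s'-1$ being absent since $a_i=0$ there). The trace formula \eqref{eq:0trace} then evaluates the loop to a scalar multiple of the remaining diagram. The dual case $s\notin S,\,s'\in S$ is handled symmetrically: isotopy together with $S\cap S'=\emptyset$ merges the cap into the $s'$-cup and enlarges the cup structure to one indexed by $S\cup S'$, without producing any closed loop, hence no scalar appears; that the enlarged diagram really is the trapezoid indexed by $S\cup S'$ follows from the admissibility bookkeeping already set up in \fullref{lemma:isotopies}.

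The step I anticipate as the main obstacle is identifying the scalar in case $s\in S,\,s'\notin S$ with the ancestry-theoretic expression $(-1)^{a_s\ppar^s}\tfrac{\fancest{v}{s}[S]}{\fancest{v}{s-1}[S]}$. A direct application of \eqref{eq:0trace} yields a ratio $(-1)^{a_s\ppar^s}\tfrac{v[S]}{v[S]-a_s\ppar^s}$ of JW-projector sizes, and recognizing this as the stated ancestry ratio requires carrying the $\ppar$-adic borrow across the zero digits $s+1,\dots,s'-1$ in $v[S]$ to identify the denominator with $\fancest{v}{s}[S]$ and the numerator with $\fancest{v}{s-1}[S]$. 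This normalization step is precisely what the bespoke definition of the reflected ancestors $\fancest{v}{k}$ is designed to encode, so the identification should come out cleanly once the $\ppar$-adic arithmetic is laid out.
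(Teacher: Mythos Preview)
Your overall strategy matches the paper's: the same four-way case split on the membership of $s$ and $s'$ in $S$, with the vanishing cases handled by \eqref{eq:0kill}, the case $s\notin S,\,s'\in S$ by an isotopy, and the case $s\in S,\,s'\notin S$ by the partial trace \eqref{eq:0trace}. The paper in fact carries this out by writing $S=S_{+}\cup S'\cup S_{-}$ with $S_{\pm}$ the parts above $s'$ and below $s$, using far commutativity to slide $\down{S'}$ past $\upo{S_{+}}$, and then evaluating $\down{S'}\upo{S'}$ as a partial trace.

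There is, however, a genuine error in your scalar computation. The closed loop produced by the cap and the $s$-cup is \emph{not} attached to $\qjw[{v[S]{-}1}]$: the cups for $S_{-}$ sit between the $s$-cup and the bottom projector, and the relevant JW projector is the one built into $\upo{s}$ itself. That projector has exactly $\fancest{v}{s}[S]-1$ strands (it covers the right half of the $s$-cup together with the $w$ strands coming from $\upo{S_{-}}\qjw[{v[S]{-}1}]$), so \eqref{eq:0trace} yields $(-1)^{a_{s}\ppar^{s}}\tfrac{\fancest{v}{s}[S]}{\fancest{v}{s}[S]-a_{s}\ppar^{s}}$ directly, and the identity $\fancest{v}{s-1}[S]=\fancest{v}{s}[S]-a_{s}\ppar^{s}$ finishes the job with no $\ppar$-adic carrying required. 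Your proposed ratio $\tfrac{v[S]}{v[S]-a_{s}\ppar^{s}}$ does not equal this in general (already when $S=S'$ one has $v[S]=\fancest{v}{s}[S]-a_{s}\ppar^{s}+\sum_{i<s}\epsilon_i a_{i}\ppar^{i}$), and your attempted identification of numerator with $\fancest{v}{s-1}[S]$ and denominator with $\fancest{v}{s}[S]$ is inverted relative to the target. The fix is exactly the paper's move: trace the intermediate JW, not the bottom one.
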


We will also use the non-zero cases in the form:
\begin{gather}\label{eq:captrapeze}
\lambda_{v,S}\cdot
\begin{tikzpicture}[anchorbase,tinynodes]
\tru{1.2}{.6}{$\qjwm[S]$}{0}{0}
\draw[usual] (.125,.6) 
to[out=90,in=180] (0.25,.725) node[above,yshift=-2pt]{$S^{\prime}$} to[out=0,in=90] (.375,.6);
\end{tikzpicture}
=
\begin{cases}
\lambda_{v[S^{\prime}],S{\setminus}S^{\prime}}\cdot
\TRU{\,$\qjwm[S{\setminus}S^{\prime}]$}
&\text{ if }s\in S,s^{\prime}\notin S,
\\[5pt]
\lambda_{v,S}\cdot
\TRU{\,$\qjwm[S\cup S^{\prime}]$}
&\text{ if }s\notin S,s^{\prime}\in S.
\end{cases}
\end{gather}

\begin{proof} 
Relation \eqref{eq:0kill} implies that $\down{S^{\prime}}\upo{S}\qjw[{v[S]{-}1}]=0$ 
if either $s\in S$, $s^{\prime}\in S$ or $s\notin S$, $s^{\prime}\notin S$. 
For the other cases, we define $S_{+}=\{t\in S\mid t>s^{\prime}\}$ and 
$S_{-}=\{t\in S\mid t<s\}$. If $s\in S$ and $s^{\prime}\notin S$, then we use 
far commutativity, relation \eqref{eq:0trace}, and $\down{S^{\prime}}=\down{s}$ to compute
\begin{gather*}
\begin{aligned}
\down{S^{\prime}}\upo{S}\qjw[{v[S]{-}1}]=
\upo{S_{+}}\down{S^{\prime}}\upo{S^{\prime}}\upo{S_{-}}\qjw[{v[S]{-}1}] &=
(-1)^{a_{s}\ppar^s}
\tfrac{\fancest{v}{s}[S]}{\fancest{v}{s{-}1}[S]}\upo{S_{+}}\upo{S_{-}}\qjw[{v[S]{-}1}]\\
&=
(-1)^{a_{s}\ppar^s}
\tfrac{\fancest{v}{s}[S]}{\fancest{v}{s{-}1}[S]}\upo{S{\setminus}S^{\prime}}\qjw[{v[S]{-}1}].
\end{aligned}
\end{gather*}
Similarly, if $s\notin S$ but $s^{\prime}\in S$, we use far commutativity and 
an isotopy to compute
\begin{gather*}
\down{S^{\prime}}\upo{S}\qjw[{v[S]{-}1}]=
\down{S^{\prime}}\upo{S_{+}\cup\{s^\prime\}}\upo{S_{-}}\qjw[{v[S]{-}1}]=
\upo{S_{+}\cup\{s^{\prime}\}}\upo{S^{\prime}}\upo{S_{-}}\qjw[{v[S]{-}1}]=
\upo{S^{\prime}\cup S}\qjw[{v[S]{-}1}],
\end{gather*}
which finishes the proof.
\end{proof}

\begin{lemma}\label{lemma:capidem2} 
Suppose that $S^{\prime}=\{s,\dots,s^{\prime}-1\}$ is the smallest minimal down-admissible stretch for $v$ and let $S$ be down-admissible for $\fancest{v}{s}=\mother$. Then we have:
\begin{gather}\label{eq:capR}
\begin{tikzpicture}[anchorbase,scale=.3,tinynodes]
\tr{2.4}{0.6}{\raisebox{-.05cm}{$\qjwm[S]$}}{0}{0}
\draw[usual] (-.6,-.6) to (-.6,.6) 
to[out=90,in=180] (0,1.2) node[above,yshift=-2pt]{$S^{\prime}$} to[out=0,in=90] (.6,.6);
\node at (1,1.2) {$\phantom{a}$};
\node at (1,-.6) {$\phantom{a}$};
\end{tikzpicture}	
=
\begin{cases}
\TRUD{$\qjwm[S]$}{\scalebox{.875}{$\qjwm[S\cup S^{\prime}]$}}
=
\upo{S}\qjw[{v[S\cup S^{\prime}]{-}1}]\downo{S\cup S^{\prime}} 
& \text{ if }s^{\prime}\notin S,
\\[5pt]
\TRUD{\scalebox{.875}{$\qjwm[S\cup S^{\prime}]$}}{$\qjwm[S]$}
=
\upo{S\cup S^{\prime}}\qjw[{v[S]{-}1}]\downo{S}
& \text{ if }s^{\prime}\in S.
\end{cases}
\end{gather}
\end{lemma}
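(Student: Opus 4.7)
The plan is to expand $\trap{S}{v-1} = \upo{S}\qjw[v[S]-1]\downo{S}$ by \fullref{definition:jw-cupscaps2} and then analyze how the $S^{\prime}$-cap on top interacts with this composite via (a version of) \fullref{lemma:capidem}. As a preliminary, I would verify that $S$ is actually down-admissible for $v$, not merely for $\mother = \fancest{v}{s}$: since $a_s = 0$ in $\mother$, condition (d1) of \fullref{definition:adm} forces $s \notin S$, and condition (d2) applied to $\mother$ at $t = s-1$ would require $s \in S$ if $s-1 \in S$, so $s-1 \notin S$. With $s, s-1 \notin S$ the admissibility conditions for $S$ translate unchanged to $v$, where $a_s \neq 0$. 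This in turn ensures that $S \cup S^{\prime}$ is down-admissible for $v$ whenever $s^{\prime} \notin S$, and that the various admissibilities for $v[S]$ arising on the RHS hold, via \fullref{lemma:isotopies}.

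For case $s^{\prime} \in S$, I would apply \fullref{lemma:capidem} directly to the upper half $\upo{S}\qjw[v[S]-1]$ of the trapeze, decorated with the $S^{\prime}$-cap on top. Since $s \notin S$ and $s^{\prime} \in S$, the relevant non-zero case of \fullref{lemma:capidem} rewrites this decorated upper half as $\upo{S \cup S^{\prime}}\qjw[v[S]-1]$. Post-composing with the unchanged $\downo{S}$ on the bottom then yields the stated RHS $\upo{S \cup S^{\prime}}\qjw[v[S]-1]\downo{S}$.

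For case $s^{\prime} \notin S$, direct application of \fullref{lemma:capidem} to the upper half yields zero (since both $s \notin S$ and $s^{\prime} \notin S$ fall into the ``otherwise'' case of that lemma), so the $S^{\prime}$-cap must instead interact with $\downo{S}$ at the bottom. My plan is to exploit the up-down symmetry of $\TL[\Q]$ noted in \fullref{sec:TL} to derive a contravariant analogue of \fullref{lemma:capidem} applicable to cups beneath a $\downo{S}$-trapeze, and then combine this with the untouched upper $\upo{S}$. Equivalently, because $s^{\prime} \notin S$ ensures the $S^{\prime}$-strands are topologically unobstructed by $\upo{S}$, a planar isotopy slides the $S^{\prime}$-cap past $\upo{S}$ and $\qjw[v[S]-1]$ to fuse with $\downo{S}$ into $\downo{S \cup S^{\prime}}$ via \fullref{lemma:isotopies}.(a); the middle JW then enlarges from $\qjw[v[S]-1]$ to $\qjw[v[S \cup S^{\prime}]-1]$ via the recursion \eqref{eq:jw-recursion} and the absorption identity \eqref{eq:0absorb}, giving the RHS $\upo{S}\qjw[v[S \cup S^{\prime}]-1]\downo{S \cup S^{\prime}}$.

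The main obstacle will be case $s^{\prime} \notin S$: rigorously justifying the sliding isotopy together with the simultaneous enlargement of the middle JW projector requires careful strand-count bookkeeping and admissibility tracking. The cleanest route is likely to formally deduce the contravariant analogue of \fullref{lemma:capidem} from the original by reflecting diagrams top-to-bottom, which reduces case $s^{\prime}\notin S$ to an application structurally identical to case $s^{\prime}\in S$.
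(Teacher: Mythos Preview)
Your overall strategy (case-split on $s^{\prime}\in S$ versus $s^{\prime}\notin S$, followed by isotopy) matches the paper's brief ``similar, but easier than \fullref{lemma:capidem}'', and your preliminary check that $S$ is also down-admissible for $v$ is correct and useful. However, you have misread the left-hand side of the lemma, and this breaks the $s^{\prime}\notin S$ case.

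The trapeze in the diagram is $\trap{S}{\mother-1}$ for $\mother=\fancest{v}{s}$, \emph{not} $\trap{S}{v-1}$; the strand drawn to its left is an extra bundle of $v-\mother=a_{s}\ppar^{s}$ identity strands, and the $S^{\prime}$-cap connects these to the leftmost strands of the trapeze. So the LHS is $\down{S^{\prime}}\big(\idtl[v-\mother]\hcirc\trap{S}{\mother-1}\big)$, exactly the expression appearing in \eqref{eq:capR2} in the proof of \fullref{proposition:absorb}. In particular the middle projector is $\idtl[v-\mother]\hcirc\qjw[\mother[S]-1]$, not the full-width $\qjw[v[S]-1]$.

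This distinction is decisive for $s^{\prime}\notin S$. Under your reading the cap would sit entirely above $\qjw[v[S]-1]$, and you correctly observe that \fullref{lemma:capidem} then gives zero; but a cap cannot ``slide past'' a full-width JW (it annihilates it by \eqref{eq:0kill}), so your proposed workaround fails and the claimed enlargement of the middle JW via \eqref{eq:jw-recursion} and \eqref{eq:0absorb} has no content (note also that $v[S\cup S^{\prime}]<v[S]$, so the JW would have to shrink, not enlarge). Under the correct reading, the left half of the cap lives on the external identity strands, which sit strictly to the left of $\qjw[\mother[S]-1]$. Since $s^{\prime}\notin S$ forces $\min(S)>s^{\prime}$, the right half of the cap threads through the straight ``$x$-part'' of $\upo{S}$; the cap therefore slides down and, together with $\qjw[\mother[S]-1]$, reassembles into $\downo{S^{\prime}}$ (for base $v[S]$), yielding $\upo{S}\qjw[{v[S\cup S^{\prime}]-1}]\downo{S\cup S^{\prime}}$ by absorption.

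Your $s^{\prime}\in S$ argument via \fullref{lemma:capidem} would go through for $\down{S^{\prime}}\trap{S}{v-1}$, but that is not the LHS here. The actual argument is again a bare isotopy: the cap hooks onto the outermost strand of the $\upo{s^{\prime}}$-cup (leftmost since $x=0$ for $\mother$) and extends it, producing $\upo{S\cup S^{\prime}}$. Both cases are thus pure isotopies with no \eqref{eq:0trace} scalars, which is why the paper calls this ``easier''.
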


\begin{proof} 
Similar, but easier than the proof of \fullref{lemma:capidem}.
\end{proof}

\begin{lemma}\label{lemma:ab-jw}
Let $e=\eve(v)$ and $w\leq v=\pbase{a_{j},\dots,a_{0}}{\ppar}$. Then we have
\begin{gather*}
\begin{tikzpicture}[anchorbase,scale=.25,tinynodes]
\draw[pQJW] (-.5,0) rectangle (2.5,2);
\draw[usual] (-1,0) to (-1,2);
\node at (1,.9) {$\pqjwm[w{-}1]$};
\draw[JW] (-1.5,-2) rectangle (2.5,0);
\node at (.5,-1.1) {$\pqjwm[v{-}1]$};
\node at (1,2.5) {$\phantom{a}$};
\node at (1,-2.5) {$\phantom{a}$};
\end{tikzpicture}
=
\begin{tikzpicture}[anchorbase,scale=.25,tinynodes]
\draw[JW] (-1.5,-1) rectangle (2.5,1);
\node at (0.5,-.1) {$\pqjwm[v{-}1]$};
\node at (1,2.5) {$\phantom{a}$};
\node at (1,-2.5) {$\phantom{a}$};
\end{tikzpicture}
=
\begin{tikzpicture}[anchorbase,scale=.25,tinynodes]
\draw[pQJW] (-.5,0) rectangle (2.5,-2);
\draw[usual] (-1,0) to (-1,-2);
\node at (1,-1.1) {$\pqjwm[w{-}1]$};
\draw[JW] (-1.5,0) rectangle (2.5,2);
\node at (0.5,.9) {$\pqjwm[v{-}1]$};
\node at (1,2.5) {$\phantom{a}$};
\node at (1,-2.5) {$\phantom{a}$};
\end{tikzpicture},
\quad
\begin{tikzpicture}[anchorbase,scale=.25,tinynodes]
\draw[JW] (-.5,0) rectangle (2.5,2);
\draw[usual] (-1,0) to (-1,2);
\node at (1,.9) {$\pqjwm[e{-}1]$};
\draw[pQJW] (-1.5,-2) rectangle (2.5,0);
\node at (.5,-1.1) {$\pqjwm[v{-}1]$};
\node at (1,2.5) {$\phantom{a}$};
\node at (1,-2.5) {$\phantom{a}$};
\end{tikzpicture}
=
\begin{tikzpicture}[anchorbase,scale=.25,tinynodes]
\draw[pQJW] (-1.5,-1) rectangle (2.5,1);
\node at (0.5,-.1) {$\pqjwm[v{-}1]$};
\node at (1,2.5) {$\phantom{a}$};
\node at (1,-2.5) {$\phantom{a}$};
\end{tikzpicture}
=
\begin{tikzpicture}[anchorbase,scale=.25,tinynodes]
\draw[JW] (-.5,0) rectangle (2.5,-2);
\draw[usual] (-1,0) to (-1,-2);
\node at (1,-1.1) {$\pqjwm[e{-}1]$};
\draw[pQJW] (-1.5,0) rectangle (2.5,2);
\node at (0.5,.9) {$\pqjwm[v{-}1]$};
\node at (1,2.5) {$\phantom{a}$};
\node at (1,-2.5) {$\phantom{a}$};
\end{tikzpicture}
.
\end{gather*}
\end{lemma}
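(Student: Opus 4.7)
Both equations follow from an analysis of the trapeze expansion $\pqjw[w{-}1]=\sum_S\lambda_{w,S}\,\trap{S}{w-1}$ from \fullref{definition:p-jw-q}, combined with the absorption \eqref{eq:0absorb} and cap-killing \eqref{eq:0kill} relations of \fullref{lemma:0-properties}.

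For the first equation, I would expand $\pqjw[w{-}1]$ on the rightmost $w-1$ strands atop $\qjw[v{-}1]$ and inspect each summand. Every trapeze $\trap{S}{w-1}$ with $S\neq\emptyset$ carries caps at its bottom (coming from $\downo{S}$) that annihilate $\qjw[v{-}1]$ via \eqref{eq:0kill}, so only the trivial summand $\trap{\emptyset}{w-1}=\qjw[w{-}1]$ (with coefficient $\lambda_{w,\emptyset}=1$) survives; this summand is then absorbed into $\qjw[v{-}1]$ via \eqref{eq:0absorb} since $w\leq v$, giving $\qjw[v{-}1]$. The mirror equality with $\pqjw[w{-}1]$ placed below $\qjw[v{-}1]$ follows by the same argument applied to the cups $\upo{S}$ at the top of each trapeze, which now meet $\qjw[v{-}1]$ from below and are killed by \eqref{eq:0kill}, together with the bottom-absorption of $\qjw[w{-}1]$ via \eqref{eq:0absorb}.

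For the second equation I would induct on the generation $\generation[v]$. The base case $v\in\eve$ reduces the claim to the idempotency $\qjw[v{-}1]\qjw[v{-}1]=\qjw[v{-}1]$, since then $e=v$ and $\pqjw[v{-}1]=\qjw[v{-}1]$. For the inductive step I would use the recursion \eqref{eq:recursion-formula}: its summands involve middle JW projectors $\qjw[v[S]-1]$ and $\qjw[v[S][s]-1]$ that share the leading digit $a_j$ with $v$, and hence the same eve $e$. The $\qjw[e{-}1]$ acting on the rightmost $e-1$ strands of each summand either absorbs directly into the middle JW via \eqref{eq:0absorb} (whenever the middle JW has at least $e-1$ strands), or is slid through the cup structure of $\upo{S}$ using \fullref{lemma:capidem} and \eqref{eq:captrapeze}.

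The hardest part will be treating the summands where $v[S]<e$ and $j-1\in S$: the middle JW is then too small for direct absorption, and the cup of $\upo{j-1}$ overlaps the rightmost $e-1$ region. Here my plan is to exploit that the leading-digit strand bundle of $v$ flows undisturbed through the trapeze (as no $s\in S$ equals $j$), so \eqref{eq:captrapeze} lets one rewrite $\qjw[e{-}1]$ acting on the mixed cup-and-middle-strand region in terms of smaller trapezes; the multiplicativity of $\lambda_{v,S}$ (\fullref{lemma:lambda}) should then telescope the combined expression back to $\pqjw[v{-}1]$, closing the induction.
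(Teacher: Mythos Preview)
Your treatment of the first pair of equalities is correct and matches the paper's: expand $\pqjw[w{-}1]$, kill all non-identity trapezes via \eqref{eq:0kill}, and absorb the surviving $\qjw[w{-}1]$ via \eqref{eq:0absorb}.

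For the second pair, however, your inductive plan via \eqref{eq:recursion-formula} has a genuine gap and misses the direct mechanism. You aim to push $\qjw[e{-}1]$ down to the \emph{middle} JW projector $\qjw[{v[S]{-}1}]$ of each summand, and this forces you into the ``hardest part'' where $v[S]<e$. But the cited tools do not support that move: \fullref{lemma:capidem} and \eqref{eq:captrapeze} describe how \emph{caps} act on trapezes, not how a JW box slides through a cup configuration; there is no reason $\qjw[e{-}1]$ should pass through $\upo{S}$ to reach the middle. Moreover, your induction hypothesis concerns rational $\ppar$JW projectors $\pqjw[\,\cdot\,]$ of smaller generation, while the recursion only produces ordinary JW projectors $\qjw[\,\cdot\,]$ in the middle, so the hypothesis is never actually invoked.

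The point you are missing is that the outer layer already does the work. Each $\downo{i}$ from \fullref{definition:jw-cupscaps} carries a JW box of size $\fancest{v}{i}-1$ on its rightmost strands at the \emph{source} end; by reflection, $\upo{i}$ carries the same JW box at its \emph{target} end, i.e.\ at the very top of the trapeze. Since any ancestor $\fancest{v}{i}$ retains the leading digit $a_j$, we have $\fancest{v}{i}\geq e$, so this top JW absorbs $\qjw[e{-}1]$ directly via \eqref{eq:0absorb}. The paper exploits this by grouping the summands of \eqref{eq:pjwdef} according to $i(S)=\max\{s\in S:a_s\neq 0\}$ and observing that each group factors as $\upo{i}(\cdots)\downo{i}$. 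No induction, no sliding through cups, and no case $v[S]<e$ ever arises.
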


\begin{proof}
The first pair of equalities is clear since $\pqjw[w{-}1]$ contains 
$\idtl[w{-}1]$ with coefficient $1$ and otherwise only cap and cup 
diagrams, which are killed by \eqref{eq:0kill}. 
For a down-admissible set $S$, let $i(S)=\max\{s\in S\mid a_s\neq 0\}$. 
For the second pair of equalities we express $\pqjw[v{-}1]$ as
\begin{gather*}
\pqjw[v{-}1]
=
\qjw[v{-}1]+
{\textstyle\sum_{i=0}^{j-1}}
\underbrace{\left({\textstyle\sum_{v[S]\in\supp[v],i=i(S)}}\;
\lambda_{v,S}\, 
\trap{S}{v{-}1}\right)}_{:=\pqjw[v{-}1](i)}.
\end{gather*} 
It follows from \fullref{lemma:idemp} that the 
summands $\pqjw[v{-}1](i)$ are orthogonal idempotents. Note 
that we can write $\pqjw[v{-}1](i)=\upo{i}\morstuff{F}(v,i)\downo{i}$ for 
some morphism $\morstuff{F}(v,i)$. In particular $\pqjw[v{-}1](i)$ absorbs $\qjw[{\fancest{v}{i}{-}1}]$ 
or smaller, and it annihilates all $\qjw[k]$ for $k>\fancest{v}{i}-1$. In particular, it absorbs $\qjw[{e{-}1}]$.
\end{proof}

We prove now a significant generalization of \cite[Proposition 3.3]{BuLiSe-tl-char-p} 
and the analog of \eqref{eq:0absorb}.

\begin{proposition}(Classical absorbtion.)\label{proposition:p-properties}
Let $w\leq v$. Then we have
\begin{gather*}
\begin{tikzpicture}[anchorbase,scale=.25,tinynodes]
\draw[pJW] (-.5,0) rectangle (2.5,2);
\draw[usual] (-1,0) to (-1,2);
\node at (1,.9) {$\pjwm[w{-}1]$};
\draw[pJW] (-1.5,-2) rectangle (2.5,0);
\node at (.5,-1.1) {$\pjwm[v{-}1]$};
\node at (1,2.5) {$\phantom{a}$};
\node at (1,-2.5) {$\phantom{a}$};
\end{tikzpicture}
=
\begin{tikzpicture}[anchorbase,scale=.25,tinynodes]
\draw[pJW] (-1.5,-1) rectangle (2.5,1);
\node at (0.5,-.1) {$\pjwm[v{-}1]$};
\node at (1,2.5) {$\phantom{a}$};
\node at (1,-2.5) {$\phantom{a}$};
\end{tikzpicture}
=
\begin{tikzpicture}[anchorbase,scale=.25,tinynodes]
\draw[pJW] (-.5,0) rectangle (2.5,-2);
\draw[usual] (-1,0) to (-1,-2);
\node at (1,-1.1) {$\pjwm[w{-}1]$};
\draw[pJW] (-1.5,0) rectangle (2.5,2);
\node at (0.5,.9) {$\pjwm[v{-}1]$};
\node at (1,2.5) {$\phantom{a}$};
\node at (1,-2.5) {$\phantom{a}$};
\end{tikzpicture}
.
\end{gather*} 
\end{proposition}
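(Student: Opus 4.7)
The plan is to lift the claim to $\TL[\Q]$ by proving
\[
(\pqjw[w{-}1] \otimes \idmor_{v{-}w})\,\pqjw[v{-}1] = \pqjw[v{-}1];
\]
specialization modulo $\ppar$ then yields the first equality of the proposition, and the second equality follows by applying the contravariant involution $\flip$ and specializing.

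To prove the rational identity, I would decompose
$\pqjw[w{-}1] = \qjw[w{-}1] + \sum_{T \neq \emptyset} \lambda_{w,T}\,\trap{T}{w{-}1}$
and expand $\pqjw[v{-}1] = \sum_{S} \lambda_{v,S}\,\trap{S}{v{-}1}$. For the leading summand I would first verify that $(\qjw[w{-}1] \otimes \idmor_{v{-}w})\,\pqjw[v{-}1] = \pqjw[v{-}1]$, combining \fullref{lemma:ab-jw} (which absorbs $\qjw[\eve(v){-}1]$ into $\pqjw[v{-}1]$) with the classical absorption \eqref{eq:0absorb} in the range $w \leq \eve(v)$; the range $w > \eve(v)$ requires a direct check on each trapeze, observing that the cups of $\upo{S}$ are supported at positions corresponding to digits of $v$ below the top and thus combine with the smaller JW projectors inside the trapeze via \eqref{eq:0absorb} to leave each $\trap{S}{v{-}1}$ invariant.

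The remaining task is to show that the non-leading contributions vanish, \ie
\[
\sum_{T \neq \emptyset} \lambda_{w,T}\,(\trap{T}{w{-}1} \otimes \idmor_{v{-}w})\,\pqjw[v{-}1] = 0.
\]
For fixed $T \neq \emptyset$ and varying $S$, I would analyze each composition $(\trap{T}{w{-}1} \otimes \idmor_{v{-}w})\,\trap{S}{v{-}1}$ by tracking the interaction of the caps in $\downo{T}$ at the bottom of $\trap{T}{w{-}1}$ with the cups in $\upo{S}$ at the top of $\trap{S}{v{-}1}$. Using \fullref{lemma:capidem} and \fullref{lemma:capidem2}, each such cap-cup contraction either annihilates (when a cap reaches the central $\qjw[v[S]{-}1]$ and is killed by \eqref{eq:0kill}) or collapses to a smaller trapeze with an explicit scalar; the multiplicativity \fullref{lemma:lambda} of the coefficients $\lambda_{v,S}$ should then organize the surviving terms into a telescoping cancellation.

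The main obstacle is the combinatorial mismatch between the admissibility of $T$ (with respect to $w$) and that of $S$ (with respect to $v$), requiring a careful case analysis on how the indices of $T$ relate to the $\ppar$-adic digits of $v$. A cleaner alternative is induction on $\generation[v]$: in the base case $v \in \eve$ we have $\pqjw[v{-}1] = \qjw[v{-}1]$, which annihilates all caps by \eqref{eq:0kill}, so only the leading term of $\pqjw[w{-}1]$ contributes and is handled by \eqref{eq:0absorb}; for the inductive step, the recursion \eqref{eq:recursion-formula} reduces the statement to an absorption identity for $\pqjw[\mother{-}1]$, which has strictly smaller generation, so the inductive hypothesis applies after commuting the outer cup-cap pattern of the recursion past $\pqjw[w{-}1] \otimes \idmor$.
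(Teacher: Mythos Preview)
Your main decomposition contains a genuine error. The claim that $(\qjw[w{-}1]\hcirc\idtl[v{-}w])\,\pqjw[v{-}1]=\pqjw[v{-}1]$ is \emph{false} in general when $w>\eve(v)$. Concretely, take $\ppar=3$, $v=13=\pbase{1,1,1}{3}$, $w=12$. The trapeze $\trap{\{1\}}{12}$ has at its top the morphism $\upo{1}$, whose JW box sits on the rightmost $8$ strands. When you place $\qjw[11]$ on the rightmost $11$ strands, it absorbs this JW box by \eqref{eq:0absorb}; but then the cups of $\upo{1}$ (which were previously shielded from the top by that box) are exposed directly beneath $\qjw[11]$, and \eqref{eq:0kill} kills the entire term. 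Thus $\qjw[11]\cdot\trap{\{1\}}{12}=0$, not $\trap{\{1\}}{12}$, and similarly for $\trap{\{1,0\}}{12}$. The leading term of $\pqjw[w{-}1]$ does not preserve all of $\pqjw[v{-}1]$; it annihilates certain summands. Correspondingly, your ``non-leading contributions'' do not sum to zero: they must recover precisely the annihilated pieces, so no telescoping cancellation to zero can occur.

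The paper's argument organizes this interaction cleanly. Rather than expanding fully into individual trapezes, it groups them as $\pqjw[v{-}1]=\qjw[v{-}1]+\sum_{i}\pqjw[v{-}1](i)$ where $\pqjw[v{-}1](i)$ collects all $\lambda_{v,S}\trap{S}{v{-}1}$ with $i(S)=\max\{s\in S\mid a_s\neq 0\}=i$; these are orthogonal idempotents by \fullref{lemma:idemp}. One then checks the four types of cross terms $\qjw[v{-}1]\cdot\qjw[w{-}1]$, $\qjw[v{-}1]\cdot\pqjw[w{-}1](j)$, $\pqjw[v{-}1](i)\cdot\qjw[w{-}1]$, $\pqjw[v{-}1](i)\cdot\pqjw[w{-}1](j)$ and finds that $\qjw[w{-}1]$ preserves $\pqjw[v{-}1](i)$ only for small $i$, while each $\pqjw[w{-}1](j)$ picks out exactly one of the remaining $\pqjw[v{-}1](i)$ (the one with matching ancestor). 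The key identity is $\pqjw[v{-}1](i)\cdot\pqjw[w{-}1](j)=\delta_{\fancest{v}{i},\fancest{w}{j}}\,\pqjw[v{-}1](i)$, which follows directly from \fullref{lemma:idemp}. Your inductive alternative via \eqref{eq:recursion-formula} might be salvageable, but the step ``commuting the outer cup-cap pattern past $\pqjw[w{-}1]$'' hides exactly the same difficulty and would again require something like this grouped-idempotent analysis.
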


\begin{proof} 
We distinguish two cases. If $w\leq e=\eve(v)$, then we have 
\begin{gather*}
\begin{gathered}
\pqjw[w{-}1]\pqjw[v{-}1]=\pqjw[w{-}1]\qjw[e{-}1]\pqjw[v{-}1]=
\qjw[e{-}1]\pqjw[v{-}1]=\pqjw[v{-}1]
\end{gathered}
\end{gather*}
and the other equation follows by reflection.

On the other hand, if $w\geq\eve(v)$, then 
$\ancest\cap\ancest[w]\neq\emptyset$. Let $z=\fancest{v}{s}=\fancest{w}{t}$ 
denote the youngest common ancestor of $v$ and $w$. It follows 
that $u:=\fancest{v}{s{-}1}$ is the oldest ancestor of $v$ with $u\geq w$. 
Now, we have $\qjw[v{-}1]\qjw[w{-}1]=\qjw[v{-}1]$ and $\qjw[v{-}1]\pqjw[w{-}1](j)=0$ for any $j$, as well as 
\begin{gather*}
\pqjw[v{-}1](i)\qjw[w{-}1]=
\begin{cases} 
\pqjw[v{-}1](i) &\text{ if }i<s,
\\
0 &\text{ if } i\geq s,
\end{cases}
\qquad 
\pqjw[v{-}1](i)\pqjw[w{-}1](j)=\delta_{\fancest{v}{i},\fancest{w}{j}}\pqjw[v{-}1](i).
\end{gather*}
The latter is a consequence of \fullref{lemma:idemp}. Moreover, 
for each $i\geq s$, there exists exactly one $j$, such that $\fancest{v}{i}=\fancest{w}{j}$. Thus, we have
\begin{gather*}
\begin{aligned}
\pqjw[v{-}1]\pqjw[w{-}1]
&=\qjw[v{-}1]\pqjw[w{-}1]
+{\textstyle\sum_{i}}\,\pqjw[v{-}1](i)\qjw[w{-}1]
+{\textstyle\sum_{i,j}}\,\pqjw[v{-}1](i)\pqjw[w{-}1](j)
\\
&=\qjw[v{-}1]+{\textstyle\sum_{i<s}}\,\pqjw[v{-}1](i)
+{\textstyle\sum_{i\geq s}}\,\pqjw[v{-}1](i)
=\pqjw[v{-}1].
\end{aligned}
\end{gather*}
The computation for $\pqjw[w{-}1]\pqjw[v{-}1]$ is analogous.
\end{proof}

\begin{example}\label{example:classical-ab}
For $\ppar=3$ we have
\begin{gather*}
\begin{tikzpicture}[anchorbase,scale=.25,tinynodes]
\draw[pJW] (-.5,0) rectangle (2.5,2);
\draw[usual] (-1,0) to (-1,2);
\node at (1,.9) {$\pjwm[2]$};
\draw[pJW] (-1.5,-2) rectangle (2.5,0);
\node at (.5,-1.1) {$\pjwm[3]$};
\node at (1,2.5) {$\phantom{a}$};
\node at (1,-2.5) {$\phantom{a}$};
\end{tikzpicture}
=
\begin{tikzpicture}[anchorbase,scale=.25,tinynodes]
\draw[pJW] (-1.5,-1) rectangle (2.5,1);
\node at (0.5,-.1) {$\pjwm[3]$};
\node at (1,2.5) {$\phantom{a}$};
\node at (1,-2.5) {$\phantom{a}$};
\end{tikzpicture}
=
\begin{tikzpicture}[anchorbase,scale=.25,tinynodes]
\draw[pJW] (-.5,0) rectangle (2.5,-2);
\draw[usual] (-1,0) to (-1,-2);
\node at (1,-1.1) {$\pjwm[2]$};
\draw[pJW] (-1.5,0) rectangle (2.5,2);
\node at (0.5,.9) {$\pjwm[3]$};
\node at (1,2.5) {$\phantom{a}$};
\node at (1,-2.5) {$\phantom{a}$};
\end{tikzpicture}
,\quad
\begin{tikzpicture}[anchorbase,scale=.25,tinynodes]
\draw[pJW] (.5,0) rectangle (-2.5,2);
\draw[usual] (1,0) to (1,2);
\node at (-1,.9) {$\pjwm[2]$};
\draw[pJW] (1.5,-2) rectangle (-2.5,0);
\node at (-.5,-1.1) {$\pjwm[3]$};
\node at (-1,2.5) {$\phantom{a}$};
\node at (-1,-2.5) {$\phantom{a}$};
\end{tikzpicture}
\neq
\begin{tikzpicture}[anchorbase,scale=.25,tinynodes]
\draw[pJW] (1.5,-1) rectangle (-2.5,1);
\node at (-.5,-.1) {$\pjwm[3]$};
\node at (-1,2.5) {$\phantom{a}$};
\node at (-1,-2.5) {$\phantom{a}$};
\end{tikzpicture}
\neq
\begin{tikzpicture}[anchorbase,scale=.25,tinynodes]
\draw[pJW] (.5,0) rectangle (-2.5,-2);
\draw[usual] (1,0) to (1,-2);
\node at (-1,-1.1) {$\pjwm[2]$};
\draw[pJW] (1.5,0) rectangle (-2.5,2);
\node at (-.5,.9) {$\pjwm[3]$};
\node at (-1,2.5) {$\phantom{a}$};
\node at (-1,-2.5) {$\phantom{a}$};
\end{tikzpicture}
.
\end{gather*}
\end{example}

We also have the following relations with no classical analog.

\begin{proposition}(Non-classical absorbtion and shortening.)\label{proposition:absorb}
Let $S$ be a
down-admissible stretch for $v$. Then we have
\begin{gather*}
\begin{aligned}
\pjw[{v[S]{-}1}]\down{S}\pjw[v{-}1]
=
\down{S}\pjw[v{-}1]
&\leftrightsquigarrow
\begin{tikzpicture}[anchorbase,scale=.25,tinynodes]
\draw[pJW] (2,0.5) rectangle (-2,-.5);
\draw[pJW] (2,1.5) rectangle (-2,2.5);
\node at (0,-.2) {$\pjwm[v{-}1]$};
\draw[usual] (-.5,.5) to[out=90,in=180] (0,1) to[out=0,in=90] (.5,.5);
\draw[usual] (-1,.5) to (-1,1.5);
\draw[usual] (1,.5) to (1,1.5);
\node at (1,2.5) {$\phantom{a}$};
\node at (1,-.5) {$\phantom{a}$};
\end{tikzpicture}
=
\begin{tikzpicture}[anchorbase,scale=.25,tinynodes]
\draw[pJW] (2,0.5) rectangle (-2,-.5);
\node at (0,-.2) {$\pjwm[v{-}1]$};
\draw[usual] (-.5,.5) to[out=90,in=180] (0,1) to[out=0,in=90] (.5,.5);
\draw[usual] (-1,.5) to (-1,2.5);
\draw[usual] (1,.5) to (1,2.5);
\node at (1,2.5) {$\phantom{a}$};
\node at (1,-.5) {$\phantom{a}$};
\end{tikzpicture}
,
\\
\pjw[{v{-}1}]\up{S}\pjw[{v[S]{-}1}]
=
\pjw[{v{-}1}]\up{S}
&\leftrightsquigarrow
\begin{tikzpicture}[anchorbase,scale=.25,tinynodes]
\draw[pJW] (2,0.5) rectangle (-2,-.5);
\draw[pJW] (2,1.5) rectangle (-2,2.5);
\node at (0,1.8) {$\pjwm[v{-}1]$};
\draw[usual] (-.5,1.5) to[out=270,in=180] (0,1) to[out=0,in=270] (.5,1.5);
\draw[usual] (-1,.5) to (-1,1.5);
\draw[usual] (1,.5) to (1,1.5);
\node at (1,2.5) {$\phantom{a}$};
\node at (1,-.5) {$\phantom{a}$};
\end{tikzpicture}
=
\begin{tikzpicture}[anchorbase,scale=.25,tinynodes]
\draw[pJW] (2,1.5) rectangle (-2,2.5);
\node at (0,1.8) {$\pjwm[v{-}1]$};
\draw[usual] (-.5,1.5) to[out=270,in=180] (0,1) to[out=0,in=270] (.5,1.5);
\draw[usual] (-1,-.5) to (-1,1.5);
\draw[usual] (1,-.5) to (1,1.5);
\node at (1,2.5) {$\phantom{a}$};
\node at (1,-.5) {$\phantom{a}$};
\end{tikzpicture}
,
\\
\pjw[{v[S]-1}]\down{S}
(\idtl[{v-\fancest{v}{s}}]\hcirc\pjw[\fancest{v}{s}{-}1])
=
\down{S}\pjw[v{-}1]
&\leftrightsquigarrow
\begin{tikzpicture}[anchorbase,scale=.25,tinynodes]
\draw[pJW] (2,0.5) rectangle (0,-.5);
\draw[pJW] (2,1.5) rectangle (-2,2.5);
\draw[usual] (-.5,-.5)to (-.5,.5) to[out=90,in=180] (0,1) to[out=0,in=90] (.5,.5);
\draw[usual] (-1,-.5) to (-1,1.5);
\draw[usual] (1,.5) to (1,1.5);
\node at (1,2.5) {$\phantom{a}$};
\node at (1,-.5) {$\phantom{a}$};
\end{tikzpicture}
=
\begin{tikzpicture}[anchorbase,scale=.25,tinynodes]
\draw[pJW] (2,0.5) rectangle (-2,-.5);
\node at (0,-.2) {$\pjwm[v{-}1]$};
\draw[usual] (-.5,.5) to[out=90,in=180] (0,1) to[out=0,in=90] (.5,.5);
\draw[usual] (-1,.5) to (-1,2.5);
\draw[usual] (1,.5) to (1,2.5);
\node at (1,2.5) {$\phantom{a}$};
\node at (1,-.5) {$\phantom{a}$};
\end{tikzpicture}
,
\\
(\idtl[{v-\fancest{v}{s}}]\hcirc\pjw[{\fancest{v}{s}{-}1}])\up{S}\pjw[{v[S]{-}1}]
=\pjw[{v{-}1}]\up{S}
&\leftrightsquigarrow
\begin{tikzpicture}[anchorbase,scale=.25,tinynodes]
\draw[pJW] (2,0.5) rectangle (-2,-.5);
\draw[pJW] (2,1.5) rectangle (0,2.5);
\draw[usual] (-.5,2.5) to (-.5,1.5) to[out=270,in=180] (0,1) to[out=0,in=270] (.5,1.5);
\draw[usual] (-1,.5) to (-1,2.5);
\draw[usual] (1,.5) to (1,1.5);
\node at (1,2.5) {$\phantom{a}$};
\node at (1,-.5) {$\phantom{a}$};
\end{tikzpicture}
=
\begin{tikzpicture}[anchorbase,scale=.25,tinynodes]
\draw[pJW] (2,1.5) rectangle (-2,2.5);
\node at (0,1.8) {$\pjwm[v{-}1]$};
\draw[usual] (-.5,1.5) to[out=270,in=180] (0,1) to[out=0,in=270] (.5,1.5);
\draw[usual] (-1,-.5) to (-1,1.5);
\draw[usual] (1,-.5) to (1,1.5);
\node at (1,2.5) {$\phantom{a}$};
\node at (1,-.5) {$\phantom{a}$};
\end{tikzpicture}
.
\end{aligned}
\end{gather*}
Here $\fancest{v}{s}$ denotes the youngest ancestor of $v$ for which the $s$th digit is zero.
\end{proposition}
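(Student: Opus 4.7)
The plan is to lift everything to $\TL[\Q]$ and work with the rational $\ppar$JW projectors $\pqjw$, then specialize to $\TL[\F]$ at the end. Under the pivotal involution $\flip$, which reflects diagrams horizontally, swaps caps with cups, and fixes $\pqjw$ by \fullref{lemma:0-properties}, equations (2) and (4) are the duals of (1) and (3) respectively, so it suffices to prove (1) and (3).

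For (1), I would expand both sides via the trapeze decomposition
\begin{gather*}
\pqjw[v{-}1]={\textstyle\sum_{S'}}\,\lambda_{v,S'}\trap{S'}{v{-}1}
\end{gather*}
from \fullref{definition:p-jw-q}, which by \fullref{lemma:idemp} realizes the rescaled trapezes $\lambda_{v,S'}\trap{S'}{v{-}1}$ as mutually orthogonal idempotents. Applying $\down{S}$ termwise and invoking \fullref{lemma:capidem} once for each minimal down-admissible sub-stretch of $S$, every summand $\down{S}\trap{S'}{v{-}1}$ either vanishes or simplifies to a scalar multiple of $\upo{\sigma}\qjw[{v[S][\sigma]{-}1}]\downo{S'}$, where $\sigma$ is the appropriate symmetric-difference-type combination of $S$ and $S'$ that is down-admissible for $v[S]$. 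Multiplying on the left by $\pqjw[{v[S]{-}1}]={\textstyle\sum_{T}}\lambda_{v[S],T}\trap{T}{v[S]{-}1}$, a second invocation of \fullref{lemma:idemp} forces $T=\sigma$ for the central $\qjw$-sandwich to survive, and the scalar $\lambda^{-1}_{v[S],\sigma}$ produced cancels the coefficient $\lambda_{v[S],\sigma}$, recovering ${\textstyle\sum_{S'}}\lambda_{v,S'}\down{S}\trap{S'}{v{-}1}=\down{S}\pqjw[v{-}1]$.

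For (3), I would first use (1) (and the analogous expansion argument for $\pqjw[{\fancest{v}{s}{-}1}]$ tensored by $\idtl$) to absorb the top $\pjw[{v[S]{-}1}]$ on both sides, reducing the claim to
\begin{gather*}
\down{S}(\idtl[v{-}\fancest{v}{s}]\hcirc\pqjw[{\fancest{v}{s}{-}1}])=\down{S}\pqjw[v{-}1].
\end{gather*}
Here $s$ is chosen so that $\fancest{v}{s}$ strips every non-zero digit of $v$ at or below $\max(S)$, ensuring that the cap $\down{S}$ is supported inside the lower $\fancest{v}{s}{-}1$ strands. Expanding $\pqjw[v{-}1]$ into trapezes and splitting the sum according to whether $S'$ meets the positions $\geq s$, the vanishing clauses of \fullref{lemma:capidem} kill all trapezes whose $S'$ extends above $s$ except those that collapse entirely onto the lower strands, and \fullref{lemma:lambda} telescopes the surviving $\lambda_{v,S'}$ into $\lambda_{\fancest{v}{s},S'}$-scalars matching the expansion of the right-hand side.

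The main obstacle will be the combinatorial bookkeeping in (3), namely verifying that $\down{S}$ precisely removes the ``upper'' trapezes and that the residual coefficients match. The cleanest organization is an induction on $\generation[v]-\generation[{\fancest{v}{s}}]$, unwinding the recursion \eqref{eq:recursion-formula} one generation at a time and using \fullref{lemma:lambda} to telescope scalars across generations.
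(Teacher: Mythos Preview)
Your plan for the absorption relations (1) and (2) is correct and coincides with the paper's argument: expand $\down{S}\pqjw[v{-}1]$ via \fullref{lemma:capidem} and then check via \fullref{lemma:idemp} that every surviving summand is left-invariant under the appropriate trapeze of $\pqjw[{v[S]{-}1}]$.

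For the shortening relation (3), your route diverges from the paper's, and your sketch contains a genuine misstep. You propose to ``absorb the top $\pjw[{v[S]{-}1}]$ on both sides'' and reduce to the projector-free equation $\down{S}(\idtl\hcirc\pqjw[{\fancest{v}{s}{-}1}])=\down{S}\pqjw[v{-}1]$. But relation (1) only lets you insert or delete $\pjw[{v[S]{-}1}]$ on the left of $\down{S}\pjw[v{-}1]$; it says nothing about $\down{S}(\idtl\hcirc\pjw[{\fancest{v}{s}{-}1}])$, so the ``absorption'' on the left-hand side is unjustified. What you are really proposing is to prove a \emph{stronger} identity and then deduce (3) by left-multiplying with $\pjw[{v[S]{-}1}]$. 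That is a legitimate strategy, but you have not supplied an argument for the stronger identity. More importantly, your geometric picture is wrong: the cap $\down{S}$ is \emph{not} supported inside the rightmost $\fancest{v}{s}{-}1$ strands. One foot of the cap lands on the free strands $\idtl[v{-}\fancest{v}{s}]$ and the other on the leftmost strands of $\pjw[{\fancest{v}{s}{-}1}]$; this straddling is precisely what makes the interaction non-trivial and is exactly what \fullref{lemma:capidem2} (which you do not invoke) is designed to handle.

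The paper's proof of (3) proceeds differently: it keeps the left factor $\pqjw[{v[S]{-}1}]$ throughout and computes the composite directly. One expands $\down{S}(\idtl\hcirc\pqjw[{\fancest{v}{s}{-}1}])$ termwise via \fullref{lemma:capidem2}, then multiplies by $\pqjw[{v[S]{-}1}]$ and uses \fullref{lemma:idemp} to isolate the surviving trapeze in each case, producing explicit coefficients that are then checked (via the telescoping formula \eqref{eq:the-scalar}) to coincide with the coefficients in the expansion \eqref{eq:capsummands} of $\down{S}\pqjw[v{-}1]$ obtained in the proof of (1). Your proposed induction on $\generation[v]-\generation[\fancest{v}{s}]$ via the recursion \eqref{eq:recursion-formula} could perhaps be made to work, but as written it does not substitute for this coefficient comparison.
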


\begin{proof} 
If suffices to prove these relations in the case of minimal down-admissible stretches. 
To be consistent with the above notation, let us write 
$S^{\prime}=\{s,\dots,s^{\prime}-1\}$ instead of $S$.

In order to verify the first relation we compute, using \eqref{eq:captrapeze}, that
\begin{gather}\label{eq:capsummands} 
\begin{aligned}
\down{S^{\prime}}\pqjw[v{-}1]
&=
\down{S^{\prime}}
{\textstyle\sum_{v[S]\in\supp[v]}}\,
\lambda_{v,S}\trap{S}{v{-}1}
\\	
&=
{\textstyle\sum_{s\in S,s^{\prime}\notin S}}\,
\lambda_{v[S^{\prime}],S{\setminus}S^{\prime}}\,
\upo{S{\setminus}S^{\prime}}\qjw[{v[S]{-}1}]\downo{S}
+ 
{\textstyle\sum_{s\notin S,s^{\prime}\in S}}\,
\lambda_{v,S}\upo{S\cup S^{\prime}}\qjw[{v[S]{-}1}]\downo{S}.
\end{aligned}
\end{gather}
For $s$ with $s\in S,s^{\prime}\notin S$, we 
define $S_{-}=S\setminus S^{\prime}$. For $S$ with 
$s\notin S,s^{\prime}\in S$, we define $S_{+}=S\cup S^{\prime}$.  
It is easy to verify that the sets $S_{-}$ and $S_{+}$ are 
down-admissible for $v[S^{\prime}]$.

Then \fullref{lemma:idemp} implies that each summand in 
\eqref{eq:capsummands} is invariant under left multiplication by a unique 
summand in $\pqjw[{v[S^{\prime}]{-}1}]=
{\textstyle\sum_{v[S^{\prime}][X]\in\supp[{v[S^{\prime}]}]}}\;
\lambda_{v[S^{\prime}],X}\,\trap{X}{v[S^{\prime}]{-}1}$,
while it is killed under left multiplication 
by any other summand. 
This proves the first equation; the second absorption equation follows by reflection symmetry.

For later use, note that the relevant summands of 
$\pqjw[{v[S^{\prime}]{-}1}]$ 
are the $\lambda_{v[S^{\prime}],X}\trap{X}{v[S^{\prime}]{-}1}$ 
for which $s,s^{\prime}\in X$ or $s,s^{\prime}\notin X$. 

Now we are ready to prove the projector shortening relations.
We start by expanding
\begin{gather*}
\down{S^{\prime}}(\idtl[{v-\fancest{v}{s}}]\hcirc\pqjw[{\fancest{v}{s}{-}1}])
=
\down{S^{\prime}}
{\textstyle\sum_{\fancest{v}{s}[S]\in\supp[\fancest{v}{s}]}}\,
\lambda_{\fancest{v}{s},S}\,(\idtl[{v-\fancest{v}{s}}]\hcirc\trap{S}{\fancest{v}{s}{-}1}).
\end{gather*}
Note that the down-admissible sets $S$ for 
$\fancest{v}{s}=\pbase{a_{j},\dots,a_{s+1},0,0,\dots,0}{\ppar}$ are 
exactly the down-admissible sets of $v$ which are contained 
in $\{s+1,\dots,j-1\}$ and that $\lambda_{\fancest{v}{s},S}=\lambda_{v,S}$.
Recall that, by \eqref{eq:capR}, we have 
\begin{gather}\label{eq:capR2}
\down{S^{\prime}}
(\idtl[{v-\fancest{v}{s}}]\hcirc\trap{S}{\fancest{v}{s}{-}1})
=
\begin{cases}
\upo{S}\qjw[{\fancest{v}{s{-}1}[S\cup S^{\prime}]{-}1}]\downo{S\cup S^{\prime}} 
& \text{ if }s^{\prime}\notin S,
\\
\upo{S\cup S^{\prime}}\qjw[{\fancest{v}{s{-}1}[S]{-}1}]\downo{S}
& \text{ if }s^{\prime}\in S.
\end{cases}
\end{gather}
In the resulting elements we either see $\upo{S}$, 
with $s,s^{\prime}\notin S$ or $\upo{S\cup S^{\prime}}$ 
with $s,s^{\prime}\in S\cup S^{\prime}$.

Now, if $X$ is down-admissible for $v[S^{\prime}]$, we 
compute 
\begin{gather*}
\begin{gathered}
(
\lambda_{v[S^{\prime}],X}\,\trap{X}{v[S^{\prime}]{-}1}
)
\down{S^{\prime}}
(
\lambda_{\fancest{v}{s},S}\,\idtl[{v-\fancest{v}{s}}]\hcirc\trap{S}{\fancest{v}{s}{-}1}
)
\\
= 
\begin{cases}
c_{1}(v,Y)\,\upo{S_{-}}\qjw[{v[Y]-1}]\downo{Y}
& \text{ if }s\notin 
X=:S_{-},\,s^{\prime}\notin S,\, 
X(\geq s^{\prime})=S,\, 
Y:=S_{-}\cup S^{\prime},
\\
c_{2}(v,Y)\,\upo{S_{+}}\qjw[{v[Y]-1}]\downo{Y}
& \text{ if }s\in X=:S_{+},\, s^{\prime}\in S,\, 
X(\geq s^{\prime})=S,\,Y:=S_{+}\setminus S^{\prime},
\\
0 & \text{ otherwise},
\end{cases}
\end{gathered}
\end{gather*}
where the scalars $c_{1}(v,Y)$ and $c_{2}(v,Y)$ are computed as follows.
\begin{gather*}
\begin{aligned}
c_{1}(v,Y) 
&= 
\lambda_{v[S^{\prime}],X}\,\lambda_{\fancest{v}{s},S}\,\lambda^{-1}_{\fancest{v[S^{\prime}]}{s{-}1},S} 
\\
&=
\lambda_{v[S^{\prime}],X}\,\lambda_{\fancest{v}{s},S}\,\lambda^{-1}_{\fancest{v}{s},S}\, 
=
\lambda_{v[S^{\prime}],Y{\setminus}S^{\prime}}.
\\
c_{2}(v,Y)
&=
\lambda_{v[S^{\prime}],X}\,\lambda_{\fancest{v}{s},S}\,\lambda^{-1}_{\fancest{v[S^{\prime}]}{s{-}1},S\cup S^{\prime}}
\\ 
&=
(-1)^{a_{s}\ppar^s}\tfrac{\fancest{v}{s{-}1}[Y]}{\fancest{v}{s}[Y]}\lambda_{v,Y}\,\lambda_{\fancest{v}{s},S}\,
(-1)^{a_{s}\ppar^s}\tfrac{\fancest{v}{s}[Y]}{\fancest{v}{s{-}1}[Y]}\lambda^{-1}_{\fancest{v}{s},S}=\lambda_{v,Y}.
\end{aligned}
\end{gather*}
Thus, by \eqref{eq:capR2}, we have
\begin{align*}
\pqjw[{v[S^{\prime}]{-}1}]\down{S^{\prime}}(\idtl[{v-\fancest{v}{s}}]\hcirc\pqjw[{\fancest{v}{s}{-}1}])
&= 
{\textstyle\sum_{X,S}}\,
(
\lambda_{v[S^{\prime}],X}\,\trap{X}{v[S^{\prime}]{-}1}
) 
\down{S^{\prime}}
(
\lambda_{\fancest{v}{s},S}\,\idtl[{v-\fancest{v}{s}}]\hcirc\trap{S}{\fancest{v}{s}{-}1}
)\\
&= 
\down{S^{\prime}}\pqjw[v{-}1].
\end{align*}
This establishes the third relation. The analogous relation for cups follow by reflection.
\end{proof}

The characteristic $\ppar$ analog of \eqref{eq:0trace} is:

\begin{proposition}(Partial trace.)\label{proposition:p-properties-trace}
\begin{enumerate}[label=(\alph*)]

\setlength\itemsep{.15cm}

\item \label{proposition:p-properties-trace-a} For $v\notin\eve$, $a_{s}$ being the first non-zero digit of $v$, we have
\begin{gather}\label{eq:pp-trace}
\begin{tikzpicture}[anchorbase,scale=.25,tinynodes]
\draw[pJW] (-1.5,-1) rectangle (2.5,1);
\node at (0.5,-.1) {$\pjwm[v{-}1]$};
\draw[usual] (-1,1) to[out=90,in=0] (-1.5,1.5) to[out=180,in=90] 
(-2,1) to (-2,0) node[left,xshift=2pt]{$a_{s}\ppar^{s}$} to (-2,-1) 
to[out=270,in=180] (-1.5,-1.5) to[out=0,in=270] (-1,-1);
\node at (0,2.5) {$\phantom{a}$};
\node at (0,-2.5) {$\phantom{a}$};
\end{tikzpicture}
=(-1)^{a_{s}\ppar^s}2\cdot
\begin{tikzpicture}[anchorbase,scale=.25,tinynodes]
\draw[pJW] (-1.5,-1) rectangle (2.5,1);
\node at (0.5,-.1) {$\pjwm[{\mother{-}1}]$};
\node at (0,2.5) {$\phantom{a}$};
\node at (0,-2.5) {$\phantom{a}$};
\end{tikzpicture}.
\end{gather}
On the other hand, if $v\in\eve$ and $v\geq\ppar$, then the (partial) trace of $\pjw[v{-}1]$ is zero.

\item \label{proposition:p-properties-trace-b} Let $S$ be down-admissible for
$v$ and $S^{\prime}$ the smallest minimal down-admissible stretch for $v$. Then
the partial trace on the bundle of strands specified by $S^{\prime}$ evaluates as:
\begin{gather*}
\pTr_{S^{\prime}}(\loopdown{S}{v{-}1})
=
\begin{cases} 
\loopdown{S{\setminus}S^{\prime}}{\mother{-}1} & \text{if }S^{\prime}\subset S,
\\
(-1)^{v{-}\mother}2\cdot\loopdown{S}{\mother{-}1} & \text{if }S^{\prime}\not\subset S.
\end{cases}
\end{gather*}
\end{enumerate}
\end{proposition}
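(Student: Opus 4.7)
My plan is to expand $\pqjw[v-1] = \sum_{S} \lambda_{v, S}\,\trap{S}{v-1}$ via \eqref{eq:pjwdef} and compute the partial trace termwise. For part (a) with $v \notin \eve$, let $s$ be the index of the first non-zero digit of $v$, so that $\mother = v - a_s\ppar^s = \fancest{v}{s}$. For each down-admissible set $S$ for $v$, I would split into two cases based on whether $s \in S$.

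If $s \notin S$, the traced bundle of $a_s\ppar^s$ strands is disjoint from the caps in $\downo{S}$ and cups in $\upo{S}$, so the partial trace slides through to act on the interior JW projector $\qjw[v[S]-1]$; by \eqref{eq:0trace} this yields the scalar $(-1)^{a_s\ppar^s}\tfrac{v[S]}{\mother[S]}$ times $\trap{S}{\mother-1}$. Here $S$ remains down-admissible for $\mother$, since $a_{s-1}=\cdots=a_0=0$ rules out $s-1 \in S$. If $s \in S$, the cap in $\downo{s}$ and cup in $\upo{s}$ meet the trace loop: the two internal JW projectors inside $\downo{s}$ and $\upo{s}$ get threaded together via the external loop and, using idempotency, compose into a single JW projector of size $\mother - 1$. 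Unwinding this using Lemma~\ref{lemma:capidem} produces a scalar multiple of $\trap{S\setminus\{s\}}{\mother-1}$.

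Grouping contributions indexed by down-admissible $T$ for $\mother$ (where $T = S$ from the first case with $s \notin S$, or $T = S \setminus \{s\}$ from the second case with $s \in S$), and using the telescoping identity $\lambda_{v, T \cup \{s\}} = \lambda_{v[\{s\}],T}\lambda_{v,\{s\}}$ from Lemma~\ref{lemma:lambda} together with the explicit formula \eqref{eq:the-scalar}, the total sum must equal $(-1)^{a_s\ppar^s}\cdot 2 \cdot \pqjw[\mother-1]$; specialising modulo $\ppar$ then yields the claim in $\TL[\F]$. For $v \in \eve$ with $v \geq \ppar$, we have $\pjw[v-1] = \qjw[v-1]$ and \eqref{eq:0trace} gives a scalar with $v \equiv 0 \pmod \ppar$ in the numerator, making the partial trace vanish in $\F$.

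For part (b), the $\ppar$-loop $\loopdown{S}{v-1} = \pjw[v-1]\up{S}\down{S}\pjw[v-1]$ is analyzed based on whether $S' \subset S$. When $S' \not\subset S$, the cap-cup pair $\up{S}\down{S}$ is disjoint from the traced bundle, so part (a) applies to each outer $\pjw[v-1]$ and produces the factor $(-1)^{v-\mother}\cdot 2$, leaving the middle $\up{S}\down{S}$ acting on $\mother-1$ strands. When $S' \subset S$, an isotopy merges the trace loop with the cap-cup pair inside $\up{S}\down{S}$ at position $s$, reducing the outer index from $S$ to $S \setminus S'$ and producing $\loopdown{S\setminus S'}{\mother-1}$. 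The main obstacle is the scalar bookkeeping in the second case of part (a): verifying that the threaded internal JW projectors contribute with the correct coefficient so that the sum telescopes via Lemma~\ref{lemma:lambda} precisely to the claimed factor of $2$.
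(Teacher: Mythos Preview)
Your plan for part (a) is sound in outline but unnecessarily laborious, and the paper's route avoids exactly the scalar bookkeeping you flag as the obstacle. Rather than expanding $\pqjw[v-1]$ via \eqref{eq:pjwdef} and regrouping by hand, the paper applies the partial trace directly to the recursion \eqref{eq:recursion-formula}, whose summation is already indexed by down-admissible $S$ for $\mother$. In each bracket the first term traces via \eqref{eq:0trace} to $(-1)^{a_s\ppar^s}\tfrac{v[S]}{\mother[S]}\,\trap{S}{\mother-1}$, while in the second term the trace loop simply straightens the cup and cap, leaving $(-1)^{a_s\ppar^s}\tfrac{v[S][s]}{\mother[S]}\,\trap{S}{\mother-1}$. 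Since $v[S]+v[S][s]=2\mother[S]$, the bracket traces to $(-1)^{a_s\ppar^s}\cdot 2\cdot\trap{S}{\mother-1}$, and summing against $\lambda_{\mother,S}$ gives the result. Your pairing $S\leftrightarrow S\setminus\{s\}$ amounts to rederiving this recursion, and it is not quite correct as stated when $a_{s+1}=0$ (then the minimal stretch containing $s$ is larger than $\{s\}$ and $S\setminus\{s\}$ need not be down-admissible for $\mother$); the recursion formula has already absorbed these subtleties.

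For part (b) there is a genuine gap. The sentence ``part (a) applies to each outer $\pjw[v-1]$'' does not make sense: the partial trace is a single closed operation, and you cannot apply \eqref{eq:pp-trace} separately to the two outer projectors. Similarly, in the case $S'\subset S$ the trace loop lives outside the two $\pjw[v-1]$'s while the $S'$-cap and $S'$-cup live between them; no isotopy merges these until the outer projectors are dealt with. The paper's missing ingredient is the projector shortening relation from \fullref{proposition:absorb}: it replaces each outer $\pjw[v-1]$ by $\idtl[{v-\fancest{v}{\min S}}]\hcirc\pjw[\fancest{v}{\min S}-1]$, so the traced bundle now bypasses the outer projectors entirely. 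After this, for $S'\not\subset S$ the traced strands pass only through the middle $\pjw[{v[S]-1}]$, to which part (a) applies (note $v[S]$ has the same first non-zero digit $a_s$ as $v$), producing the factor $(-1)^{v-\mother}2$; for $S'\subset S$ the trace, the $S'$-cap and the $S'$-cup now straighten by isotopy and one is left with $\loopdown{S\setminus S'}{\mother-1}$ after reabsorbing.
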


\begin{proof} 
The second claim in \fullref{proposition:p-properties-trace}.(a), concerning the case of $v\in\eve$, 
follows from $\pqjw[v{-}1]=\qjw[v{-}1]$ and \eqref{eq:0trace}, which produces a scalar $a\in\Q$ with $\ord(a)>0$.
The case $v\notin\eve$ follows immediately by applying \eqref{eq:0trace} 
to the two expressions in the bracket in \eqref{eq:recursion-formula}.

In \fullref{proposition:p-properties-trace}.(a) we have already seen the case $S=\emptyset$, 
so we assume that $S\neq\emptyset$. We then apply the projector shortening 
relations from \fullref{proposition:absorb}, and get the following two cases 
for $\pTr_{S^{\prime}}(\loopdown{S}{v{-}1})$, depending on whether $S^{\prime}\subset S$ or $S^{\prime}\not\subset S$.
\begin{gather*}
\begin{tikzpicture}[anchorbase,scale=.25,tinynodes]
\draw[pJW] (2,-.5) rectangle (-2,.5);
\draw[pJW] (2,1.5) rectangle (-2,2.5);
\draw[pJW] (2,3.5) rectangle (-2,4.5);
\node at (0,-.2) {$\pjwm[v{-}1]$};
\node at (0,3. 8) {$\pjwm[v{-}1]$};
\draw[usual] (-1.5,.5) to[out=90,in=180] (-1,1) to[out=0,in=90] (-.5,.5);
\draw[usual] (-1.5,3.5) to[out=270,in=180] (-1,3) to[out=0,in=270] (-.5,3.5);
\draw[usual] (1.5,.5) to (1.5,1.5);
\draw[usual] (1.5,2.5) to (1.5,3.5);
\node at (1,2.5) {$\phantom{a}$};
\node at (1,-.5) {$\phantom{a}$};
\draw[usual] (-1.5,4.5) to[out=90,in=90] (-3,4.5) to (-3,2) to (-3,-.5) to[out=270,in=270] (-1.5,-.5);
\end{tikzpicture}
=
\begin{tikzpicture}[anchorbase,scale=.25,tinynodes]
\draw[pJW] (2,-.5) rectangle (-1,.5);
\draw[pJW] (2,1.5) rectangle (-2,2.5);
\draw[pJW] (2,3.5) rectangle (-1,4.5);
\draw[usual] (-1.5,-.5) to (-1.5,.5) to[out=90,in=180] (-1,1) to[out=0,in=90] (-.5,.5);
\draw[usual] (-1.5,4.5) to (-1.5,3.5) to[out=270,in=180] (-1,3) to[out=0,in=270] (-.5,3.5);
\draw[usual] (1.5,.5) to (1.5,1.5);
\draw[usual] (1.5,2.5) to (1.5,3.5);
\node at (1,2.5) {$\phantom{a}$};
\node at (1,-.5) {$\phantom{a}$};
\draw[usual](-1.5,4.5) to[out=90,in=90] (-3,4.5) to (-3,2) 	to (-3,-.5) to[out=270,in=270] (-1.5,-.5);
\end{tikzpicture}
= 
(-1)^{v{-}\mother}2\cdot\;
\begin{tikzpicture}[anchorbase,scale=.25,tinynodes]
\draw[pJW] (2,-.5) rectangle (-1,.5);
\draw[pJW] (2,1.5) rectangle (-1,2.5);
\draw[pJW] (2,3.5) rectangle (-1,4.5);
\draw[usual] (-.5,.5) to (-.5,1.5);
\draw[usual] (-.5,2.5) to (-.5,3.5);	
\draw[usual] (1.5,.5) to (1.5,1.5);
\draw[usual] (1.5,2.5) to (1.5,3.5);
\node at (1,2.5) {$\phantom{a}$};
\node at (1,-.5) {$\phantom{a}$};
\end{tikzpicture}
\quad
\text{if }
S^{\prime}\subset S
,
\\
\begin{tikzpicture}[anchorbase,scale=.25,tinynodes]
\draw[pJW] (2,-.5) rectangle (-2,.5);
\draw[pJW] (2,1.5) rectangle (-2,2.5);
\draw[pJW] (2,3.5) rectangle (-2,4.5);
\node at (0,-.2) {$\pjwm[v{-}1]$};
\node at (0,3.8) {$\pjwm[v{-}1]$};
\draw[usual] (-1,.5) to[out=90,in=180] (-.5,1) to[out=0,in=90] (0,.5);
\draw[usual] (-1,3.5) to[out=270,in=180] (-0.5,3) to[out=0,in=270] (0,3.5);
\draw[usual] (-1.5,.5) to (-1.5,1.5);
\draw[usual] (1.5,.5) to (1.5,1.5);
\draw[usual] (-1.5,2.5) to (-1.5,3.5);
\draw[usual] (1.5,2.5) to (1.5,3.5);
\node at (1,2.5) {$\phantom{a}$};
\node at (1,-.5) {$\phantom{a}$};
\draw[usual] (-1.5,4.5) to[out=90,in=90] (-3,4.5) to (-3,2)	 to (-3,-.5) to[out=270,in=270] (-1.5,-.5);
\end{tikzpicture}
=
\begin{tikzpicture}[anchorbase,scale=.25,tinynodes]
\draw[pJW] (2,-.5) rectangle (-.5,.5);
\draw[pJW] (2,1.5) rectangle (-2,2.5);
\draw[pJW] (2,3.5) rectangle (-.5,4.5);
\draw[usual] (-1,-.5)to (-1,.5) to[out=90,in=180] (-.5,1) to[out=0,in=90] (0,.5);
\draw[usual] (-1,4.5) to (-1,3.5) to[out=270,in=180] (-.5,3) to[out=0,in=270] (0,3.5);
\draw[usual] (1.5,.5) to (1.5,1.5);
\draw[usual] (-1.5,2.5) to (-1.5,3.5);
\draw[usual] (1.5,2.5) to (1.5,3.5);
\node at (1,2.5) {$\phantom{a}$};
\node at (1,-.5) {$\phantom{a}$};
\draw[usual](-1.5,2.5) to (-1.5,4.5) to[out=90,in=90] (-3,4.5) to (-3,2) to (-3,-.5) to[out=270,in=270] (-1.5,-.5) to  (-1.5,1.5) ;
\end{tikzpicture}
= 
(-1)^{v{-}\mother}2\cdot\;
\begin{tikzpicture}[anchorbase,scale=.25,tinynodes]
\draw[pJW] (2,-.5) rectangle (-.5,.5);
\draw[pJW] (2,1.5) rectangle (-1,2.5);
\draw[pJW] (2,3.5) rectangle (-.5,4.5);
\draw[usual] (-1,-.5)to (-1,.5) to[out=90,in=180] (-.5,1) to[out=0,in=90] (0,.5);
\draw[usual] (-1,4.5) to (-1,3.5) to[out=270,in=180] (-.5,3) to[out=0,in=270] (0,3.5);
\draw[usual] (1.5,.5) to (1.5,1.5);
\draw[usual] (1.5,2.5) to (1.5,3.5);
\node at (1,2.5) {$\phantom{a}$};
\node at (1,-.5) {$\phantom{a}$};
\end{tikzpicture}
= 
(-1)^{v{-}\mother}2\cdot\;
\begin{tikzpicture}[anchorbase,scale=.25,tinynodes]
\draw[pJW] (2,-.5) rectangle (-1.5,.5);
\draw[pJW] (2,1.5) rectangle (-1.5,2.5);
\draw[pJW] (2,3.5) rectangle (-1.5,4.5);
\draw[usual] (-1,.5) to[out=90,in=180] (-.5,1) to[out=0,in=90] (0,.5);
\draw[usual] (-1,3.5) to[out=270,in=180] (-.5,3) to[out=0,in=270] (0,3.5);
\draw[usual] (1.5,.5) to (1.5,1.5);
\draw[usual] (1.5,2.5) to (1.5,3.5);
\node at (1,2.5) {$\phantom{a}$};
\node at (1,-.5) {$\phantom{a}$};
\end{tikzpicture}
\quad
\text{if }
S^{\prime}\not\subset S
.
\end{gather*}
Here we have used \fullref{proposition:p-properties-trace} for the second equation in the bottom row.
\end{proof}

\begin{example}\label{example:partial-p-trace}
Note that \eqref{eq:pp-trace} and \eqref{eq:0trace} (for eves) give a recursive way to compute 
traces. For example, for $v=\pbase{a,b,c}{\ppar}$ we have 
$\mother=\pbase{a,b,0}{\ppar}$, $\motherr{v}{2}=\pbase{a,0,0}{\ppar}$, and
\begin{gather*}
\begin{tikzpicture}[anchorbase,scale=.25,tinynodes]
\draw[pJW] (-1.5,-1) rectangle (2.5,1);
\node at (.5,-.1) {$\pjwm[v{-}1]$};
\draw[usual] (-1,1) to[out=90,in=0] (-1.5,1.5) node[above,yshift=-2pt]{$\{2{,}1{,}0\}$} to[out=180,in=90] 
(-2,1) to (-2,0) to (-2,-1)
to[out=270,in=180] (-1.5,-1.5) to[out=0,in=270] (-1,-1);
\node at (0,2.5) {$\phantom{a}$};
\node at (0,-2.5) {$\phantom{a}$};
\end{tikzpicture}
=
\begin{tikzpicture}[anchorbase,scale=.25,tinynodes]
\draw[pJW] (-1.5,-1) rectangle (3,1);
\node at (.75,-.1) {$\pjwm[v{-}1]$};
\draw[usual] (-1,1) to[out=90,in=0] (-1.5,1.5) to[out=180,in=90] 
(-2,1) to (-2,0) node[left,xshift=2pt]{$c$} to (-2,-1)
to[out=270,in=180] (-1.5,-1.5) to[out=0,in=270] (-1,-1);
\draw[usual] (0,1) to[out=90,in=0] (-1.5,2.5) to[out=180,in=90] 
(-3,1) to (-3,0) to (-3,-1) to[out=270,in=180] (-1.5,-2.5) to[out=0,in=270] (0,-1);
\draw[usual] (1,1) to[out=90,in=0] (-1.5,3.5) to[out=180,in=90] 
(-4,1) to (-4,0) to (-4,-1) to[out=270,in=180] (-1.5,-3.5) to[out=0,in=270] (1,-1);
\node at (0,2.5) {$\phantom{a}$};
\node at (0,-2.5) {$\phantom{a}$};
\end{tikzpicture}
=(-1)^{c}2\cdot
\begin{tikzpicture}[anchorbase,scale=.25,tinynodes]
\draw[pJW] (-1.5,-1) rectangle (3,1);
\node at (.75,-.1) {$\pjwm[\mother{-}1]$};
\draw[usual] (0,1) to[out=90,in=0] (-1.5,2.5) to[out=180,in=90] 
(-3,1) to (-3,0) node[right,xshift=-2pt]{$b\ppar$} to (-3,-1) to[out=270,in=180] (-1.5,-2.5) to[out=0,in=270] (0,-1);
\draw[usual] (1,1) to[out=90,in=0] (-1.5,3.5) to[out=180,in=90] 
(-4,1) to (-4,0) to (-4,-1) to[out=270,in=180] (-1.5,-3.5) to[out=0,in=270] (1,-1);
\node at (0,2.5) {$\phantom{a}$};
\node at (0,-2.5) {$\phantom{a}$};
\end{tikzpicture}
=(-1)^{b\ppar+c}2^{2}\cdot
\begin{tikzpicture}[anchorbase,scale=.25,tinynodes]
\draw[pJW] (-1.5,-1) rectangle (3,1);
\node at (.75,-.1) {$\pjwm[\motherr{v}{2}{-}1]$};
\draw[usual] (1,1) to[out=90,in=0] (-1.5,3.5) to[out=180,in=90] 
(-4,1) to (-4,0) node[right,xshift=3pt,yshift=-16pt]{$a\ppar^{2}{-}1$} to (-4,-1) to[out=270,in=180] (-1.5,-3.5) to[out=0,in=270] (1,-1);
\node at (0,2.5) {$\phantom{a}$};
\node at (0,-2.5) {$\phantom{a}$};
\end{tikzpicture}
=0.
\end{gather*}
The proposition also implies that the (full) trace of 
the $\ppar$JW projectors are zero unless $v<\ppar$.
\end{example}

\subsection{Morphisms between \texorpdfstring{$\ppar$}{p}JW projectors---the linear structure}\label{subsec:hom-TL}

First, we state direct consequences of classical absorption, see 
\fullref{proposition:p-properties}, and non-classical absorption, see \fullref{proposition:absorb}.

\begin{lemmaqed}\label{lemma:loops-1}
\begin{enumerate}[label=(\alph*)]

\setlength\itemsep{.15cm}

\item \label{lemma:loops-1-a} 
If $S=\bigsqcup_{j=0}^{k}S_{i_{j}}$ with $S_{i_{k}}>\cdots>S_{i_{1}}>S_{i_{0}}$, each down-admissible for $v$,
and $S^{\prime}=\bigsqcup_{j=0}^{l}S_{i_{j}}^{\prime}$ with $S_{i_{l}}^{\prime}>\cdots>S_{i_{1}}^{\prime}>S_{i_{0}}^{\prime}$, 
each up-admissible for $v$, then 
\begin{gather*}
\Down{S}\pjw[v{-}1]=\Down{S_{i_{0}}}\cdots\Down{S_{i_{k}}}\pjw[v{-}1],
\quad\quad 
\Up{S^{\prime}}\pjw[v{-}1]=\Up{S_{i_{l}}^{\prime}}\cdots\Up{S_{i_{0}}^{\prime}}\pjw[v{-}1].
\end{gather*}

\item \label{lemma:loops-1-b} 
Let $S$ and $S^{\prime}$ be down- and up-admissible for $v$, respectively. Then we have
\begin{gather*}
\begin{aligned}
\pjw[{v[S]{-}1}]\down{S}\pjw[v{-}1]
&=
\down{S}\pjw[v{-}1]
=\pjw[{v[S]{-}1}]\down{S}(\idtl[v-\fancest{v(S)}{S}]\hcirc\pjw[\fancest{v}{S}{-}1]).
\\
\pjw[{v(S^{\prime}){-}1}]\up{S^{\prime}}\pjw[v{-}1] 
&=
\pjw[{v(S^{\prime}){-}1}]\up{S^{\prime}}
=(\idtl[v-\fancest{v(S^{\prime})}{S^{\prime}}]\hcirc\pjw[{\fancest{v(S^{\prime})}{S^{\prime}}{-}1}])\up{S^{\prime}}\pjw[v{-}1].
\end{aligned}
\end{gather*}
Here $\fancest{v}{S}$ denotes the youngest ancestor 
of $v$ for which all digits with indices in $S$ are zero.\qedhere
\end{enumerate}
\end{lemmaqed}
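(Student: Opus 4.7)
\noindent\emph{Proof plan.} The plan is to derive both parts from iterated application of the non-classical absorption and shortening relations in \fullref{proposition:absorb}, which treat the case of a single minimal down- or up-admissible stretch. Throughout, the up-version of every assertion follows by the reflection symmetry of $\TL[\F]$, so I focus on the down-version.

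First I would prove part (a) in the special case where each $S_{i_{j}}$ is a minimal down-admissible stretch. Setting $w_{j}:=v[S_{i_{k}}]\cdots[S_{i_{j}}]$ with the convention $w_{k+1}:=v$, unfolding the definition yields
\[
\Down{S_{i_{0}}}\cdots\Down{S_{i_{k}}}\pjw[v{-}1]
=\pjw[w_{0}{-}1]\down{S_{i_{0}}}\pjw[w_{1}{-}1]\down{S_{i_{1}}}\cdots\pjw[w_{k}{-}1]\down{S_{i_{k}}}\pjw[v{-}1].
\]
Applying the identity $\pjw[w_{j}{-}1]\down{S_{i_{j}}}\pjw[w_{j+1}{-}1]=\down{S_{i_{j}}}\pjw[w_{j+1}{-}1]$ (the first equation of \fullref{proposition:absorb}, with $v$ replaced by $w_{j+1}$ and $S$ by $S_{i_{j}}$) one step at a time absorbs every intermediate $\pjw$. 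Combined with the isotopy $\down{S_{i_{0}}}\cdots\down{S_{i_{k}}}\pjw[v{-}1]=\down{S}\pjw[v{-}1]$ from \fullref{lemma:isotopies}(a) and the identification $w_{0}=v[S]$, this collapses the right-hand side to $\pjw[v[S]{-}1]\down{S}\pjw[v{-}1]=\Down{S}\pjw[v{-}1]$. For general down-admissible $S_{i_{j}}$, I would further decompose each $S_{i_{j}}$ into its minimal down-admissible substretches and apply the minimal case of (a) to both sides.

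Part (b) then follows as a corollary. Its first equality is (a) specialized to the single-block partition $S=S_{i_{0}}$ of a general down-admissible set, reread as an absorption identity. For its second equality I would iterate the shortening relation of \fullref{proposition:absorb} along the minimal down-admissible substretches $T_{0}<T_{1}<\cdots$ of $S$: each application replaces the bottom projector by one on a younger ancestor. The main claim is that these successive youngest-ancestor reductions telescope to the single projector $\pjw[\fancest{v}{S}{-}1]$ on the youngest ancestor of $v$ whose digits indexed by $S$ all vanish.

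The main obstacle is not logical but combinatorial: verifying that the iterated ancestor reductions produced by the minimal substretches assemble into the simultaneous ancestor $\fancest{v}{S}$. This reduces to a direct $\ppar$-adic inspection, facilitated by the commutations in \fullref{lemma:isotopies} and \fullref{lemma:new-admissible}, which guarantee that reflecting through a larger stretch first does not disturb the down-admissibility of smaller stretches at the remaining positions, so the order of the absorptions and shortenings along the minimal decomposition does not affect the final intermediate projector.
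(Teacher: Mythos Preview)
Your proposal is correct and follows precisely the route the paper intends: the lemma is stated there without proof, flagged only as ``direct consequences of classical absorption, see \fullref{proposition:p-properties}, and non-classical absorption, see \fullref{proposition:absorb}.'' Your plan of iterating \fullref{proposition:absorb} along the minimal substretches, absorbing the intermediate $\ppar$JW projectors one at a time, and then recognising the resulting cap configuration as $\down{S}\pjw[v{-}1]$ is exactly the unpacking the paper leaves to the reader; the shortening part of (b) likewise telescopes as you describe.
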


Let $\morstuff{F}$ be a cap (or cup) configuration, \ie a
Temperley--Lieb diagram consisting only of caps (or cups), with source (target)
$v-1$. We say that $\morstuff{F}$ is ancestor-centered for $v$, and write
$\morstuff{F}\in\ancest$, if each cap or cup has its center immediately to the
right of an ancestor strand of $v$.

\begin{example}\label{example:anccent}
A way to illustrate ancestor-centering is imagine a line with a marker 
$\star$ to the right of each 
ancestor strand of $v$. (There are $v-1$ strands in total and $\generation$ many $\star$.)
For example, for $\ppar=3$ and $v=13=\pbase{1,1,1}{3}$ we have
\begin{gather*}
\begin{tikzpicture}[anchorbase,scale=.25,tinynodes]
\draw[usual] (0,0) node[below]{$\bullet$} to (0,2);
\draw[usual] (-1,0) node[below]{$\bullet$} to (-1,2);
\draw[usual] (-2,0) node[below]{$\bullet$} to (-2,2);
\draw[usual] (-3,0) node[below]{$\bullet$} to (-3,2);
\draw[usual] (-4,0) node[below]{$\bullet$} to (-4,2);
\draw[usual] (-5,0) node[below]{$\bullet$} to (-5,2);
\draw[usual] (-7,0) node[below]{$\bullet$} to[out=90,in=0] (-7.5,1) to[out=180,in=90] (-8,0) node[below]{$\bullet$};
\draw[usual] (-6,0) node[below]{$\bullet$} to[out=90,in=0] (-7.5,2) to[out=180,in=90] (-9,0) node[below]{$\bullet$};
\draw[usual] (-10,0) node[below]{$\bullet$} to[out=90,in=0] (-10.5,1) to[out=180,in=90] (-11,0) node[below]{$\bullet$};
\node at (-7.5,-.55) {$\star$};
\node at (-10.5,-.55) {$\star$};
\end{tikzpicture}
\in\ancest,\quad
\begin{tikzpicture}[anchorbase,scale=.25,tinynodes]
\draw[usual] (0,0) node[below]{$\bullet$} to (0,2);
\draw[usual] (-1,0) node[below]{$\bullet$} to (-1,2);
\draw[usual] (-2,0) node[below]{$\bullet$} to (-2,2);
\draw[usual] (-3,0) node[below]{$\bullet$} to (-3,2);
\draw[usual] (-4,0) node[below]{$\bullet$} to (-4,2);
\draw[usual] (-6,0) node[below]{$\bullet$} to[out=90,in=0] (-6.5,1) to[out=180,in=90] (-7,0) node[below]{$\bullet$};
\draw[usual] (-5,0) node[below]{$\bullet$} to[out=90,in=0] (-6.5,2) to[out=180,in=90] (-8,0) node[below]{$\bullet$};
\draw[usual] (-9,0) node[below]{$\bullet$} to (-9,2);
\draw[usual] (-10,0) node[below]{$\bullet$} to[out=90,in=0] (-10.5,1) to[out=180,in=90] (-11,0) node[below]{$\bullet$};
\node at (-7.5,-.55) {$\star$};
\node at (-10.5,-.55) {$\star$};
\end{tikzpicture}
\notin\ancest.
\end{gather*}
Moreover, we have $\down{i}\idtl,\idtl\up{i}\in\ancest$, with mid point right to 
$\pbase{a_{j},\dots,a_{i+1},0,\dots,0}{\ppar}\in\ancest$. 
More generally, the morphisms $\down{S}\idtl$ and $\idtl\up{S}$ are ancestor-centered if $S$ is down- or up-admissible, 
respectively.
\end{example}

The following is the analog of \eqref{eq:0kill}.

\begin{lemma}\label{lemma:anccent} 
For a cap configuration $\morstuff{d}$ we have $\morstuff{d}\pjw[v{-}1]=0$
unless $\morstuff{d}\in\ancest$. Analogously for cup configurations.
\end{lemma}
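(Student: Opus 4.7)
Plan: I prove the contrapositive by induction on $v$. For the base case $v\in\eve$, we have $\pjw[v{-}1]=\qjw[v{-}1]$ and $\ancest=\emptyset$, so the claim reduces to the classical cap-absorption \eqref{eq:0kill}.

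For the inductive step, let $\morstuff{d}$ contain a cap $c$ whose center is not an ancestor marker of $v$. The key geometric observation---immediate from \fullref{definition:cup-cap-operators} and \fullref{definition:jw-cupscaps2}---is that in the trapezoidal expansion $\pqjw[v{-}1]=\sum_{v[S]\in\supp[v]}\lambda_{v,S}\,\trap{S}{v{-}1}$ of \fullref{definition:p-jw-q}, the nested cup bundles appearing in $\upo{S}$ are centered precisely at the ancestor positions $\fancest{v}{\max(S_i)}$, one for each minimal down-admissible stretch $S_i$ in the finest decomposition $S=\bigsqcup_i S_i$. Thus every cup in every $\upo{S}$ is ancestor-centered.

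Two cases now arise. If both endpoints of $c$ fit within the rightmost $w{-}1$ strands for some strict ancestor $w\in\ancest[v]$, then since ancestors of $w$ are also ancestors of $v$, the cap $c$ remains non-ancestor-centered for $w$; the classical absorption $\pjw[w{-}1]\pjw[v{-}1]=\pjw[v{-}1]$ of \fullref{proposition:p-properties} combined with the inductive hypothesis applied to $\pjw[w{-}1]$ then yields $\morstuff{d}\pjw[v{-}1]=0$. Otherwise $c$ straddles an ancestor strand; here I apply \fullref{lemma:capidem} to each trapeze summand and use the telescoping identity $\lambda_{v,S\cup S'}=\lambda_{v[S'],S}\lambda_{v,S'}$ from \fullref{lemma:lambda} to collect the surviving contributions into a $\pqjw$-type expression on a smaller ancestor strand count, vanishing by the inductive hypothesis. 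Carefully verifying the telescoping in this second case is the principal obstacle. The cup-configuration version of the lemma then follows by the vertical reflection symmetry $\flip{\pjw[v{-}1]}=\pjw[v{-}1]$.
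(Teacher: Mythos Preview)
Your proof attempt has two genuine gaps.

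\textbf{The case analysis is incomplete.} Your dichotomy ``either both endpoints lie in the rightmost $w-1$ strands for some ancestor $w$, or $c$ straddles an ancestor strand'' misses a third possibility: both endpoints of $c$ may lie in the \emph{leftmost} $v-\mother$ strands (this happens whenever the first nonzero digit of $v$ satisfies $a_s\ppar^s\geq 2$). Such a cap is not ancestor-centered, does not fit inside any $\pjw[w{-}1]$ on the right, and straddles no ancestor marker, so neither of your cases covers it.

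\textbf{Case 2 invokes the wrong lemma.} \fullref{lemma:capidem} computes $\down{S'}\cdot\upo{S}\qjw[{v[S]-1}]$ where $\down{S'}$ is the \emph{specific} bundle of nested caps associated to a minimal down-admissible stretch $S'$; such a bundle is by construction \emph{ancestor-centered}. It says nothing about an arbitrary single cap whose center is off the ancestor markers, which is exactly your situation. The subsequent ``telescoping into a $\pqjw$-type expression vanishing by induction'' is therefore unsupported: if the pieces actually assembled into $\pqjw[w{-}1]$ for some ancestor $w$, the result would be that projector, not zero, and you would still need a non-ancestor-centered cap on it to invoke the hypothesis --- but your original cap has already been consumed.

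The paper's argument avoids all of this. It works directly over $\Q$: expand $\pqjw[v{-}1]=\sum_S\lambda_{v,S}\trap{S}{v{-}1}$ and observe that in every trapeze the cups of $\upo{S}$ are centered precisely at ancestor positions, while every other strand at the top feeds (directly or through those cups) into some JW box. Hence a cap that is \emph{not} ancestor-centered must, after possibly being redirected through a cup, land with both endpoints on a single JW projector in the summand and die by \eqref{eq:0kill}. This kills every summand separately, so no induction, no absorption, and no telescoping are needed. Your Case 1, while correct, is already more machinery than the statement requires.
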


\begin{proof} 
By assumption, $\morstuff{d}$ contains a cap which is 
not centered around an ancestor of $v$. By expanding $\morstuff{d}\pqjw[v{-}1]$ 
along \eqref{eq:pjwdef}, we see that this cap hits a JW projector in every 
summand in \eqref{eq:pjwdef}, and thus annihilates $\pqjw[v{-}1]$.
\end{proof}

\begin{lemma}\label{lemma:sbasis}
\begin{enumerate}[label=(\alph*)]

\setlength\itemsep{.15cm}

\item \label{lemma:sbasis-a} Suppose that $S$ and $S^{\prime}$ are down-admissible for $w$ and $v$, respectively, with $w[S]=v[S^{\prime}]$.  Then we have
\begin{gather}\label{eq:morfilt}
\pqjw[w{-}1]\Up{S}\Down{S^{\prime}}\pqjw[v{-}1] 
= 
\upo{S}\qjw[{v[S^{\prime}]{-}1}]\downo{S^{\prime}} 
+{\textstyle\sum_{X,Y}}\,
c_{X,Y}\upo{X}\qjw[{v[Y]{-}1}]\downo{Y},
\end{gather}
for some coefficients $c_{X,Y}\in \Q$, where $X$ and $Y$ are down-admissible for $w$ and $v$, respectively, and $w[X]=v[Y]<v[S^{\prime}]$.
(In other words 
$\pqjw[w{-}1]\Up{S}\Down{S^{\prime}}\pqjw[v{-}1]-
\upo{S}\qjw[{v[S^{\prime}]{-}1}]\downo{S^{\prime}}\in\catstuff{td}_{>}$.)

\item \label{lemma:sbasis-b} We have isomorphisms of $\Q$-vector spaces
\begin{gather}\label{eq:corollary-pbasis}
\Hom_{\TL[\Q]}(\pqjw[v{-}1],\pqjw[w{-}1])
\cong
\spanQ{\upo{S}\qjw[{v[S]{-}1}]\downo{S^{\prime}}}
\cong
\spanQ{\pqjw[w{-}1]\Up{S}\Down{S^{\prime}}\pqjw[v{-}1]},
\end{gather} 
where $(S,S^{\prime})$ ranges over pairs of sets that are down-admissible for $w$ and $v$, respectively, such that $w[S]=v[S^{\prime}]$.
In particular, $\End_{\TL[\Q]}(\pqjw[v{-}1])\cong
\spanQ{\loopdown{S}{v{-}1}}$.
\end{enumerate}
\end{lemma}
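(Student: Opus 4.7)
For part (a), I would expand each rational $\ppar$JW according to Definition \ref{definition:p-jw-q}, obtaining
\[
\pqjw[w{-}1]\Up{S}\Down{S^{\prime}}\pqjw[v{-}1] = \sum_{T, T^{\prime}} \lambda_{w,T}\lambda_{v,T^{\prime}}\cdot \upo{T}\qjw[{w[T]{-}1}]\downo{T}\up{S}\down{S^{\prime}}\upo{T^{\prime}}\qjw[{v[T^{\prime}]{-}1}]\downo{T^{\prime}},
\]
summed over $T, T^{\prime}$ down-admissible for $w, v$ respectively. The central cup/cap configuration $\downo{T}\up{S}\down{S^{\prime}}\upo{T^{\prime}}$ is simplified by repeated application of Lemmas \ref{lemma:capidem} and \ref{lemma:capidem2}, which either annihilate the term or rescale it while exchanging caps and cups across the intervening JWs. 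After this reduction Lemma \ref{lemma:idemp} extracts a factor $\lambda^{-1}_{*,*}$ on matching pairs and kills everything else. With $\lambda_{-,\emptyset} = 1$, the $T = T^{\prime} = \emptyset$ summand yields precisely $\upo{S}\qjw[{v[S^{\prime}]{-}1}]\downo{S^{\prime}}$, while every remaining surviving summand carries a strictly smaller central projector $\qjw[{w[X]{-}1}] = \qjw[{v[Y]{-}1}]$ with $w[X] = v[Y] < v[S^{\prime}]$, hence lies in the lower through-degree ideal $\catstuff{td}_{<v[S^{\prime}]-1}$.

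For part (b), I would exploit the standard crossingless-matching basis $\sbas$ of $\Hom_{\TL[\Q]}(v{-}1, w{-}1)$ sandwiched between $\pqjw[w{-}1]$ on the left and $\pqjw[v{-}1]$ on the right. Lemma \ref{lemma:anccent} annihilates any summand containing a non-ancestor-centered cap or cup, and the projector-shortening identities of Proposition \ref{proposition:absorb} (whose proof establishes them already at the rational level), Lemma \ref{lemma:ab-jw}, and the classical absorption of Proposition \ref{proposition:p-properties} then bring each surviving matching into the canonical shape $\upo{S}\qjw[{\ast}]\downo{S^{\prime}}$, with the constraint $w[S] = v[S^{\prime}]$ forced by equating through-strand counts at the central JW. This proves the first isomorphism in \eqref{eq:corollary-pbasis}. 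The equality with $\spanQ{\pqjw[w{-}1]\Up{S}\Down{S^{\prime}}\pqjw[v{-}1]}$ is then immediate from (a): the sandwich family expands, with respect to the canonical family, as a triangular matrix with unit diagonal in the through-degree filtration. Linear independence of the canonical family itself is clear, since distinct $(S, S^{\prime})$ produce either distinct central JW sizes (separated by through-degree) or distinct extremal cap/cup patterns (already independent in the usual Temperley--Lieb basis), and triangularity transports this to the sandwich family. The endomorphism assertion follows by specializing to $v = w$: the constraint $v[S] = v[S^{\prime}]$ forces $S = S^{\prime}$ and the resulting sandwich is the $\ppar$loop $\loopdown{S}{v{-}1}$.

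The main obstacle is the combinatorial bookkeeping in (a): controlling exactly which $(T, T^{\prime})$ pairs survive the central cup/cap collapse, and verifying that the accompanying scalars telescope correctly via Lemma \ref{lemma:lambda} without cancelling the leading $T = T^{\prime} = \emptyset$ term. Once this is under control, both the spanning and independence assertions in (b) reduce to a clean triangularity argument on the through-degree filtration.
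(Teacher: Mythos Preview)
Your plan for (a) contains a concrete error: the $T=T'=\emptyset$ summand is not the leading term but in fact vanishes. That summand equals $\qjw[w{-}1]\up{S}\down{S'}\qjw[v{-}1]$, and for $S'\neq\emptyset$ the raw caps of $\down{S'}$ sit directly on $\qjw[v{-}1]$, so \eqref{eq:0kill} kills it. In your double expansion the leading contribution actually arises from $(T,T')=(S,S')$, where Lemma~\ref{lemma:idemp} must be invoked twice and the resulting $\lambda^{-1}$ factors cancel against $\lambda_{w,S}\lambda_{v,S'}$; this is exactly the ``combinatorial bookkeeping'' you flagged, and it is not bypassed. The paper sidesteps it by treating each side separately: writing $S'=\bigsqcup_{j}S'_{j}$ in minimal stretches and applying $\down{S'_{l}},\down{S'_{l-1}},\dots$ one at a time to the expansion of $\pqjw[v{-}1]$, each step is governed by \eqref{eq:captrapeze}. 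Summands with $S'_{l}\subset X$ survive with rescaled coefficient $\lambda_{v[S'_{l}],X\setminus S'_{l}}$; all others either die or already have $v[X]<v[S']$. Iterating gives $\Down{S'}\pqjw[v{-}1]\in\qjw[{v[S']{-}1}]\downo{S'}+\catstuff{td}_{>}$, and the mirror statement for $\pqjw[w{-}1]\Up{S}$; composing these yields \eqref{eq:morfilt} with no scalar chase.

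Your plan for (b) has a separate gap. Ancestor-centering constrains only the \emph{centers} of caps and cups, not their \emph{multiplicities}: the first configuration in Example~\ref{example:anccent} is ancestor-centered yet is not $\down{S'}$ for any down-admissible $S'$, so after killing the non-ancestor-centered matchings you are left with strictly more diagrams than standard morphisms, and Proposition~\ref{proposition:absorb} (stated only for the specific $\down{S}$) does not reduce them. The paper's argument for the first isomorphism in \eqref{eq:corollary-pbasis} is in fact the double-expansion idea you proposed for (a), but applied to an \emph{arbitrary} $\morstuff{F}$: one obtains $\sum_{S,S'}\lambda_{w,S}\lambda_{v,S'}\,\upo{S}\bigl(\qjw[{w[S]{-}1}]\downo{S}\,\morstuff{F}\,\upo{S'}\qjw[{v[S']{-}1}]\bigr)\downo{S'}$, and the bracketed piece is a morphism between classical JW projectors, hence zero unless $w[S]=v[S']$ and a scalar otherwise. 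This proves spanning in one stroke, without ancestor-centering. The second isomorphism then follows from (a) by unitriangularity, and your through-degree argument for linear independence is correct.
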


Note that the second isomorphism in \eqref{eq:corollary-pbasis} is unitriangular by \eqref{eq:morfilt}. 
We will refer to morphisms of the form $\upo{S}\qjw[{v[S]{-}1}]\downo{S^{\prime}}$ as \emph{standard morphisms} 
and to morphisms of the form $\pqjw[w{-}1]\Up{S}\Down{S^{\prime}}\pqjw[v{-}1]$ as \emph{$\ppar$-morphisms}.

\begin{proof} 
The proof of (a) proceeds by iterating \fullref{lemma:capidem}. 
Let $S=\bigsqcup_{i}S_{i}$ and $S^{\prime}=\bigsqcup_{j}S^{\prime}_{j}$ be the partitions 
into minimal admissible stretches of consecutive integers with the 
usual ordering. Then we expand
\begin{gather*}
\Down{S^{\prime}}\pqjw[v{-}1]=\Down{S^{\prime}_{1}}\cdots\Down{S^{\prime}_{l}}\pqjw[v{-}1]
={\textstyle\sum_{X}}\,\down{S^{\prime}_{1}} 
\cdots\down{S^{\prime}_{l{-}1}}\cdot 
\lambda_{v,X}\cdot\down{S^{\prime}_{l}} 
\upo{X}\qjw[{v[X]{-}1}]\downo{X}
\\  
\in{\textstyle\sum_{X{\supset}S^{\prime}_{l}}}\, 
\down{S^{\prime}_{1}}\cdots\down{S^{\prime}_{l{-}2}}\cdot
\lambda_{v[X{\setminus}S^{\prime}_{l}],X{\setminus}S^{\prime}_{l}} 
\cdot\down{S^{\prime}_{l{-}1}}\upo{R{\setminus}S^{\prime}_{l}}\qjw[{v[X]{-}1}]\downo{X}  
+\catstuff{td}_{>},
\end{gather*} 
since by \eqref{eq:captrapeze}, we have
$\lambda_{v,X}\cdot\down{S^{\prime}_{l}}\upo{X}=\lambda_{v,X{\setminus}S^{\prime}_{l}}\cdot\upo{X{\setminus}S^{\prime}_{l}}$
if $S^{\prime}_{l}\subset X$, and otherwise $\max(S^{\prime}_{l})+1\in X$ and
thus $v[X]<v[S^{\prime}]$. Here we write $\catstuff{td}_{>}$ for
the ideal of morphisms of smaller through-degree than the leading term. We now iterate this argument to find 
\begin{gather*}
\Down{S^{\prime}}\pqjw[v{-}1] 
\in\qjw[{v[S^{\prime}]{-}1}]\downo{S^{\prime}}+\catstuff{td}_{>},
\quad
\pqjw[w{-}1]\Up{S} 
\in\upo{S}\qjw[{w[S]{-}1}]
+\catstuff{td}_{>},
\end{gather*}
which together imply \eqref{eq:morfilt}.

To see the first isomorphism in \eqref{eq:corollary-pbasis}:
For a given $\morstuff{F}\in\Hom_{\TL[\Q]}(v-1,w-1)$, we compute
\begin{gather*}
\begin{aligned}
\pqjw[w{-}1]\morstuff{F}\,\pqjw[v{-}1] 
&= 
{\textstyle\sum_{S,S^{\prime}}}\,(
\lambda_{w,S}\upo{S}\qjw[{w[S]{-}1}]\downo{S}
)
\morstuff{F}
(
\lambda_{v,S^{\prime}}\upo{S^{\prime}}\qjw[{v[S^{\prime}]{-}1}]\downo{S^{\prime}}
)
\\
&= 
{\textstyle\sum_{S,S^{\prime}}}\,\delta_{w[S],v[S^{\prime}]}
\upo{S}
(\lambda_{w,S}\lambda_{v,S^{\prime}}\qjw[{w[S]{-}1}]
\downo{S}\morstuff{F}\upo{S^{\prime}}\qjw[{v[S^{\prime}]{-}1}]
)\downo{S^{\prime}}
\\
&= 
{\textstyle\sum_{S,S^{\prime}}}\,\delta_{w[S],v[S^{\prime}]}c_{X,S,S^{\prime}}\upo{S}\qjw[{v[S^{\prime}]{-}1}]\downo{S^{\prime}},
\end{aligned}
\end{gather*}
where $c_{X,S,S^{\prime}}\in\Q$. In the last two lines, we have used 
\fullref{lemma:idemp} and the fact the 
JW projectors have no endomorphisms besides scalar multiples of the identity, \cf \eqref{eq:0kill}.
Finally, \eqref{eq:morfilt} implies then the second isomorphism in \eqref{eq:corollary-pbasis}. 
\end{proof}

\begin{lemma}(\fullref{theorem:main-tl-section}.(Basis).)\label{lemma:admsum}
\begin{enumerate}[label=(\alph*)]

\setlength\itemsep{.15cm}

\item \label{lemma:admsum-a} Suppose that a $\ppar$-admissible morphism is
expressed as 
\begin{gather}\label{eq:admsum}
{\textstyle\sum_{S,S^{\prime}}}\,r_{S,S^{\prime}}\cdot\pqjw[w{-}1]\Up{S}\Down{S^{\prime}}\pqjw[v{-}1] 
\in\Hom_{\TL[\Q]}(v-1,w-1),
\end{gather}
where $r_{S,S^{\prime}}\in\Q$ and the sum $(S,S^{\prime})$ ranges over pairwise distinct pairs of
sets that are down-admissible for $w$ and $v$, respectively, such that
$w[S]=v[S^{\prime}]$. Then every coefficient $r_{S,S^{\prime}}$ is
$\ppar$-admissible.

\item \label{lemma:admsum-b} We have the $\F$-vector space isomorphisms 
\begin{gather*}
\Hom_{\TL[\F]}(\pjw[v{-}1],\pjw[w{-}1])\cong
\spanFp{\pjw[w{-}1]\Up{S}\Down{S^{\prime}}\pjw[v{-}1]
},
\end{gather*} 
where $(S,S^{\prime})$ ranges over the same set as above. In particular 
\begin{gather*} 
\End_{\TL[\F]}(\pjw[v{-}1])\cong\spanFp{\loopdown{S}{v{-}1}|S \text{ down-admissible for } v}.
\end{gather*} 

\end{enumerate}
\end{lemma}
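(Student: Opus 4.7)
Plan: I prove (a) and (b) together, with the crux being linear independence in $\TL[\F]$ of the $\ppar$-morphisms $\pjw[w{-}1]\Up{S}\Down{S'}\pjw[v{-}1]$, established via a through-degree filtration argument that upgrades \fullref{lemma:sbasis}(a) to a mod-$\ppar$ statement. Specifically, I would show that in $\TL[\F]$, for every admissible pair $(S, S')$ with $u := w[S] = v[S']$, one has $\pjw[w{-}1]\Up{S}\Down{S'}\pjw[v{-}1] = \upo{S}\pjw[u{-}1]\downo{S'} + (\text{terms in } \catstuff{td}_{<u{-}1})$. This is obtained by combining the $\Q$-expansion from \fullref{lemma:sbasis}(a) with the identity $\qjw[u{-}1] = \pqjw[u{-}1] - \sum_{X \neq \emptyset}\lambda_{u,X}\trap{X}{u{-}1}$, each of whose omitted trapezes has through-degree strictly below $u-1$. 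Hence $\pqjw[w{-}1]\Up{S}\Down{S'}\pqjw[v{-}1] - \upo{S}\pqjw[u{-}1]\downo{S'}$ is a difference of two $\ppar$-admissible morphisms, therefore itself $\ppar$-admissible, and concentrated in through-degree $<u-1$; specializing to $\TL[\F]$ yields the displayed expansion. The leading term $\upo{S}\pjw[u{-}1]\downo{S'}$ lives in through-degree exactly $u-1$ and is non-zero since $\pjw[u{-}1]$ contains the identity with coefficient $1$. By \fullref{lemma:stretch-lemma}, the correspondence $(S, S') \mapsto u$ is a bijection onto $\supp[w] \cap \supp[v]$, so distinct admissible pairs produce leading terms of pairwise distinct through-degrees, and the linear independence of the $\ppar$-morphisms follows.

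Part (a) is then a short scaling argument: if some $r_{S,S'}$ in \eqref{eq:admsum} had $\ord(r_{S,S'}) < 0$, set $m := -\min_{S,S'}\ord(r_{S,S'}) > 0$. Then $\ppar^{m} \cdot \sum r_{S,S'}\pqjw[w{-}1]\Up{S}\Down{S'}\pqjw[v{-}1]$ is $\ppar$-admissible of order at least $m$, so it specializes to zero in $\TL[\F]$; but its coefficients $\ppar^m r_{S,S'}$ are all $\ppar$-admissible with at least one of order zero, so reducing modulo $\ppar$ produces a non-trivial $\F$-linear dependence among the $\pjw[w{-}1]\Up{S}\Down{S'}\pjw[v{-}1]$, contradicting the linear independence just established.

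Finally, for (b), only spanning remains. Given $F \in \Hom_{\TL[\F]}(\pjw[v{-}1], \pjw[w{-}1])$, represent $F = \pjw[w{-}1] G \pjw[v{-}1]$ for some $G \in \Hom_{\TL[\F]}(v{-}1, w{-}1)$, lift $G$ to an integral morphism $\tilde G \in \Hom_{\TL[\Z]}(v{-}1, w{-}1)$ via the crossingless matching basis, and expand $\pqjw[w{-}1]\tilde G\pqjw[v{-}1] = \sum r_{S,S'}\pqjw[w{-}1]\Up{S}\Down{S'}\pqjw[v{-}1]$ over $\Q$ using \fullref{lemma:sbasis}(b); part (a) forces $r_{S,S'} \in \Z_{(\ppar)}$, so specialization to $\TL[\F]$ recovers $F$ as the required $\F$-linear combination. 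The final claim about $\End_{\TL[\F]}(\pjw[v{-}1])$ follows because $v[S] = v[S']$ with $S, S'$ both down-admissible for $v$ forces $S = S'$, so the basis is indexed by down-admissible $S$ and consists of the $\ppar$-loops $\loopdown{S}{v{-}1}$. The main obstacle is the mod-$\ppar$ unitriangular expansion; while the manipulation above makes the correction automatically $\ppar$-admissible, care is required to keep track of the cap/cup absorption behavior of \fullref{proposition:absorb} and \fullref{lemma:capidem} when verifying the through-degree drop.
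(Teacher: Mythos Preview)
Your overall strategy---a through-degree filtration to establish linear independence over $\F$, then a scaling contradiction for (a), then an integral lift for (b)---matches the paper's approach. However, your implementation of the key step contains a genuine gap.

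You claim that $\upo{S}\pqjw[u{-}1]\downo{S'}$ is $\ppar$-admissible, and use this to conclude that the difference $\pqjw[w{-}1]\Up{S}\Down{S'}\pqjw[v{-}1]-\upo{S}\pqjw[u{-}1]\downo{S'}$ is $\ppar$-admissible. But this is false. By \fullref{definition:jw-cupscaps2}, the trapeze $\downo{S'}$ carries a copy of $\qjw[u{-}1]$ at its top, and by \fullref{lemma:ab-jw} (or just \eqref{eq:0absorb}) it is $\qjw[u{-}1]$ that absorbs $\pqjw[u{-}1]$, not the other way around. Hence $\upo{S}\pqjw[u{-}1]\downo{S'}=\upo{S}\qjw[u{-}1]\downo{S'}$ is exactly the standard morphism, which in general is \emph{not} $\ppar$-admissible (already for $S=S'=\emptyset$ it equals $\qjw[v{-}1]$). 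Consequently your substitution $\qjw[u{-}1]=\pqjw[u{-}1]-\sum_{X\neq\emptyset}\lambda_{u,X}\trap{X}{u{-}1}$ achieves nothing: the left-hand side reappears on the right, and the correction you hoped to peel off has negative $\ppar$-adic valuation term by term.

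The fix is to bypass the trapezes entirely and work with the crossingless matching basis $\sbas$, as the paper does. Since $\pqjw[v{-}1]=\idtl[v{-}1]+(\text{lower through-degree})$ with leading coefficient $1$, and likewise for $\pqjw[w{-}1]$, one has
\[
\pqjw[w{-}1]\Up{S}\Down{S'}\pqjw[v{-}1]
=\pqjw[w{-}1]\up{S}\down{S'}\pqjw[v{-}1]
=\up{S}\down{S'}+(\text{terms of through-degree}<u{-}1),
\]
where $\up{S}\down{S'}\in\sbas$ is a single crossingless matching of through-degree $u-1$. This is a $\ppar$-admissible identity with integral leading coefficient, so it survives reduction to $\TL[\F]$ and gives the linear independence immediately (distinct pairs give distinct $u$). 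The paper uses exactly this observation---reading off $r_{S,S'}$ as the coefficient of $\up{S}\idtl[u{-}1]\down{S'}$ in the $\ppar$-admissible expression \eqref{eq:admsum}---to run the induction for (a) directly, rather than proving linear independence first.
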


\begin{proof} 
For the first claim, we proceed by induction on the through-degree.
Note that the through-degree of
$\pqjw[w{-}1]\Up{S}\Down{S^{\prime}}\pqjw[v{-}1]$ is $w[S]=v[S^{\prime}]$. Let
$(S,S^{\prime})$ be the pair labeling the summand with maximal through-degree.
Then $r_{S,T}$ is $\ppar$-admissible since it is the coefficient of the (maximal
through-degree) basis element
$\up{S}\idtl[{v[S^{\prime}]{-}1}]\down{S^{\prime}}$ in \eqref{eq:admsum}. Thus,
we can subtract
$r_{S,S^{\prime}}\cdot\pqjw[w{-}1]\Up{S}\Down{S^{\prime}}\pqjw[v{-}1]$ to obtain
another $\ppar$-admissible sum, which now has strictly lower
through-degree since
$r_{S,S^{\prime}}\cdot\pqjw[w{-}1]\Up{S}\Down{S^{\prime}}\pqjw[v{-}1]$ was the
only summand with this maximal through-degree. If the resulting sum is non-zero, then the
remaining coefficients are now $\ppar$-admissible by the induction hypothesis.
The basis step for the induction concerns the morphism of minimal possible
through-degree, which is $\ppar$-admissible (and thus also its coefficient)
since there are no correction terms in \eqref{eq:morfilt}.

To see (b), for any given $\morstuff{F}\in\Hom_{\TL[\F]}(v-1,w-1)$, we choose a 
lift $\tilde{\morstuff{F}}\in\Hom_{\TL[\Z]}(v-1,w-1)\subset\Hom_{\TL[\Q]}(v-1,w-1)$. By \eqref{eq:corollary-pbasis}, the 
$\ppar$-admissible morphism $\pqjw[w{-}1]\tilde{\morstuff{F}}\,\pqjw[v{-}1]$ can be expanded in the $\ppar$-morphism 
basis over $\Q$. By (a), all appearing coefficients are $\ppar$-admissible 
and can be specialized to $\F$. This results in an expansion of 
$\pjw[w{-}1]\morstuff{F}\,\pjw[v{-}1]$ in terms of the $\ppar$-morphisms over $\F$. 
Note that all such morphisms are still linearly independent, since they have distinct through-degrees.
\end{proof}

\subsection{Morphisms between \texorpdfstring{$\ppar$}{p}JW projectors---the algebra structure}\label{subsec:relations}

\begin{lemma}\label{lemma:nil-ploop}
\begin{enumerate}[label=(\alph*)]

\setlength\itemsep{.15cm}

\item \label{lemma:nil-ploop-a} 
The algebra $\End_{\TL[\F]}(\pjw[v{-}1])$ is commutative.

\item \label{lemma:nil-ploop-b} Every $\loopdown{S}{v{-}1}$ is nilpotent.
As a consequence, every element of non-maximal 
through-degree in $\End_{\TL[\F]}(\pjw[v{-}1])$ is nilpotent.
\end{enumerate}
\end{lemma}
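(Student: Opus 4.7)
The plan is to deduce both statements from the basis of Lemma~\ref{lemma:admsum}, the horizontal reflection involution $\flip$ on $\TL[\F]$, and the equivalence induced by $\tlfunctor$ in Proposition~\ref{proposition:TLtilt}.

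For (a), the key observation is that every loop $\loopdown{S}{v{-}1}$ is fixed by $\flip$: each trapeze $\trap{S}{v{-}1}=\upo{S}\qjw[{v[S]{-}1}]\downo{S}$ is horizontally symmetric (since $\flip[\qjw[{v[S]{-}1}]]=\qjw[{v[S]{-}1}]$, $\flip[\downo{S}]=\upo{S}$, and $\flip$ reverses composition), so $\pqjw[v{-}1]$ and $\pjw[v{-}1]$ are self-dual, and so is the loop diagram. Given any two down-admissible $S,T$, expand the product in the basis of Lemma~\ref{lemma:admsum} as
\begin{gather*}
\loopdown{S}{v{-}1}\loopdown{T}{v{-}1}={\textstyle\sum_{U}}\,c_{U}\,\loopdown{U}{v{-}1}.
\end{gather*}
Applying $\flip$, which is $\F$-linear and contravariant, interchanges the two factors on the left and fixes every summand on the right, yielding
\begin{gather*}
\loopdown{T}{v{-}1}\loopdown{S}{v{-}1}={\textstyle\sum_{U}}\,c_{U}\,\loopdown{U}{v{-}1}=\loopdown{S}{v{-}1}\loopdown{T}{v{-}1},
\end{gather*}
which is commutativity.

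For (b), set $A:=\End_{\TL[\F]}(\pjw[v{-}1])$ and let $\mathfrak{m}$ denote the subspace of morphisms of through-degree strictly less than $v-1$. Since $\td{FG}\leq\min(\td{F},\td{G})$, this is a two-sided ideal of $A$. By Lemma~\ref{lemma:admsum}, $A$ has basis $\{\loopdown{S}{v{-}1}\}$ indexed by down-admissible $S$ for $v$, with $\mathfrak{m}$ spanned by those with $S\neq\emptyset$; since $\loopdown{\emptyset}{v{-}1}=\pjw[v{-}1]$ is the identity of through-degree $v-1$, we have $A=\F\cdot\pjw[v{-}1]\oplus\mathfrak{m}$, so $\mathfrak{m}$ is a maximal ideal of codimension one.

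The main step is to show that $A$ is a local ring, from which nilpotency of every element of $\mathfrak{m}$ follows since, in a commutative Artinian local ring, the unique maximal ideal equals the nilradical. For this I would invoke Proposition~\ref{proposition:TLtilt}: the Karoubi-extended equivalence identifies $A\otimes_{\F}\K\cong\End_{\TL[\K]}(\pjw[v{-}1])$ with $\End_{\tilt}\bigl(\tmod(v-1)\bigr)$, which is local because $\tmod(v-1)$ is an indecomposable tilting module. A finite-dimensional commutative $\F$-algebra whose base change to $\K$ is local can contain no nontrivial idempotents (any such would persist after base change), and is therefore itself local. Hence $\mathfrak{m}$ is the unique maximal ideal of $A$ and is nil, proving the lemma. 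The principal subtlety is to verify that this appeal to Proposition~\ref{proposition:TLtilt} is not circular, but since that result is established earlier in the paper, locality is at our disposal.
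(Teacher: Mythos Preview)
Your proof of (a) is essentially the paper's argument: the basis of $\ppar$loops is $\flip$-invariant, and commutativity then follows formally from $ab=(ba)^{\star}=ba$.

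For (b) your argument is correct but genuinely different from the paper's. The paper gives a self-contained diagrammatic induction on the through-degree: working over $\Q$, one expands $\loopdown{S}{v{-}1}\in\trap{S}{v{-}1}+\catstuff{td}_{>}$ as a sum of orthogonal quasi-idempotents and observes that $\trap{S}{v{-}1}$ has eigenvalue divisible by $\ppar$ (via \fullref{lemma:idemp}), so $(\loopdown{S}{v{-}1})^{2}$ has strictly smaller through-degree over $\F$; then one concludes by rewriting in the $\ppar$loop basis and invoking the induction hypothesis. Your route instead imports the representation-theoretic fact that $\tmod(v-1)$ is indecomposable (hence has local endomorphism ring) through \fullref{proposition:TLtilt}, and then uses that in a commutative Artinian local ring the maximal ideal is nil. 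This is clean and conceptually transparent, and the non-circularity check you flag is indeed satisfied. The trade-off is that the paper's argument stays entirely within the Temperley--Lieb calculus and extracts slightly more, namely the explicit drop in through-degree upon squaring, which is in the spirit of the later inductive arguments; your argument is shorter but relies on the external input that the $\pjw[v{-}1]$ are primitive idempotents.
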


\begin{proof}
By \fullref{lemma:admsum}.(b), $\End_{\TL[\F]}(\pjw[v{-}1])$ has a basis that is invariant under reflection. 
Thus, for all $a,b\in\End_{\TL[\F]}(\pjw[v{-}1])$ we have $\flip[a]=a$ and $\flip[b]=b$, and then $ab=\flip[a]\flip[b]=\flip[(ba)]=ba$. 
This implies that $\End_{\TL[\F]}(\pjw[v{-}1])$ is commutative.

To see (b), we shall use induction on $\td{\loopdown{S}{v{-}1}}$. 
We work over $\Q$ and start by expanding 
$\loopdown{S}{v{-}1}\in\trap{S}{v{-}1}+\catstuff{td}_{>}$ into a sum of orthogonal quasi-idempotents and 
noting that $\trap{S}{v{-}1}$ has eigenvalue divisible by $\ppar$. If $S$ was maximal, 
then we have $(\loopdown{S}{v{-}1})^{2}=(\trap{S}{v{-}1})^{2}=0$ in $\End_{\TL[\F]}(\pjw[v{-}1])$. 
Otherwise, if $S\neq\emptyset$, we conclude $\td{(\loopdown{S}{v{-}1})^{2}}<\td{\loopdown{S}{v{-}1}}$. 
By \fullref{lemma:admsum}.(b), $(\loopdown{S}{v{-}1})^{2}$ is a linear combination of $\ppar$loops $\loopdown{S^{\prime}}{v{-}1}$ with 
$\td{\loopdown{S^{\prime}}{v{-}1}}<\td{\loopdown{S}{v{-}1}}$. Then the induction hypothesis implies that $(\loopdown{S}{v{-}1})^{2}$, and thus 
also $\loopdown{S}{v{-}1}$, is nilpotent.
\end{proof}

\begin{lemma}(Containment---\fullref{theorem:main-tl-section}.(2).)\label{lemma:nilpotent}
Let $S$ be a stretch that is down- or up-admissible for $v$ and $S^{\prime}\subset S$ 
down-admissible for $v[S]$ or up-admissible for $v(S)$ respectively. Then we have
\begin{gather*}
\Down{S^{\prime}}\Down{S}\pjw[v{-}1]=0,
\quad
\Up{S}\Up{S^{\prime}}\pjw[v{-}1]=0.
\end{gather*}
\end{lemma}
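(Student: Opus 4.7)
The Up and Down relations are exchanged by the star-involution on $\TL[\F]$, which fixes each $\pjw[v-1]$, so the plan is to prove $\Down{S'}\Down{S}\pjw[v-1]=0$ and deduce the Up relation by reflection. The first step is to use the non-classical absorption from \fullref{proposition:absorb} to rewrite
\begin{gather*}
\Down{S'}\Down{S}\pjw[v-1]=\pjw[v[S][S']-1]\,\down{S'}\,\pjw[v[S]-1]\,\down{S}\,\pjw[v-1]=\pjw[v[S][S']-1]\,\down{S'}\down{S}\,\pjw[v-1],
\end{gather*}
reducing the problem to showing that the composite cap diagram $\down{S'}\down{S}$ annihilates $\pjw[v-1]$ in $\TL[\F]$.

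The next step is to work in $\TL[\Q]$ and compute $\down{S'}\down{S}\pqjw[v-1]$ via the trapeze expansion $\pqjw[v-1]=\sum_T\lambda_{v,T}\trap{T}{v-1}$ from \fullref{definition:p-jw-q}. Combining the orthogonality of the trapeze idempotents (\fullref{lemma:idemp}) with the cap-on-trapeze formulas of \fullref{lemma:capidem} and \fullref{lemma:capidem2}, each summand $\down{S'}\down{S}\trap{T}{v-1}$ reduces to a standard morphism $\upo{X}\qjw[v[X]-1]\downo{Y}$ scaled by an explicit rational factor. The hypothesis $S'\subset S$ forces every surviving term to feature a cap of $\down{S'}$ landing on a strand bundle already paired by a cap of $\down{S}$, producing either an outright zero via \eqref{eq:0kill} or a scaling factor of the form $(-1)^{a_s\ppar^s}\fancest{v}{s}[T]/\fancest{v}{s-1}[T]$ that, by the telescoping identity of \fullref{lemma:lambda}, cancels precisely against its partner in $\lambda_{v,T}$.

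Summing the resulting expressions over all down-admissible $T$ for $v$ and collecting coefficients in front of each standard morphism, the containment hypothesis $S'\subset S$ produces an additional factor of $\ppar$ beyond the $\ppar$-admissibility of \fullref{proposition:pjw-well-defined}. Specialising from $\TL[\Q]$ to $\TL[\F]$ thus annihilates $\down{S'}\down{S}\pjw[v-1]$, and left-multiplying by $\pjw[v[S][S']-1]$ preserves the vanishing. The main obstacle will be organising the scalar bookkeeping through the iterated application of \fullref{lemma:capidem}, \fullref{lemma:capidem2}, and \fullref{lemma:lambda}, whose interactions parallel (but are more intricate than) those in the telescoping proof of \fullref{proposition:absorb}.
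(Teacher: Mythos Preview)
Your first step---reducing $\Down{S'}\Down{S}\pjw[v-1]$ to $\down{S'}\down{S}\pjw[v-1]$ via projector absorption---is exactly what the paper does. But after that point your plan diverges substantially from the paper's argument and introduces a genuine error.

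The paper's proof is a single observation: the composite $\down{S'}\down{S}$ is a cap configuration on $v-1$ strands consisting of two side-by-side bundles of concentric caps, and the bundle coming from $\down{S'}$ is \emph{not ancestor-centered} for $v$. Hence \fullref{lemma:anccent} applies directly and gives $\down{S'}\down{S}\pjw[v-1]=0$. Unpacking that lemma's proof, this means that in every summand $\lambda_{v,T}\trap{T}{v-1}$ of $\pqjw[v-1]$, the non-centered cap lands on a JW projector and annihilates it by \eqref{eq:0kill}. So the expression is already zero in $\TL[\Q]$, term by term---there is no scalar bookkeeping, no telescoping cancellation, and no ``additional factor of $\ppar$''.

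Your proposal claims instead that surviving terms acquire scaling factors which cancel telescopically, and that the sum is merely $\ppar$-divisible rather than zero. This is incorrect: nothing survives. Your geometric picture of ``a cap of $\down{S'}$ landing on a strand bundle already paired by a cap of $\down{S}$'' is also off---after $\down{S}$ those strands are gone, and $\down{S'}$ acts on the remaining $v[S]-1$ strands, producing a second bundle of caps adjacent to the first. The point is that this second bundle is centered at an ancestor position of $v[S]$, which (because $S'\subset S$) is \emph{not} an ancestor position of $v$. That mismatch is what makes \fullref{lemma:anccent} fire. You would be better served by invoking that lemma directly than by re-deriving a weaker version of it through coefficient tracking.
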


\begin{proof}
Note that by projector absorption, we 
have $\Down{S^{\prime}}\Down{S}\pjw[v{-}1]=\down{S^{\prime}}\down{S}\pjw[v{-}1]$. 
This is a cap configuration consisting of a pair of collections of 
concentric caps. The right one is not ancestor-centered and, thus, kills $\pjw[v{-}1]$ by \fullref{lemma:anccent}. 
\end{proof}

\begin{lemma}(Far-commutativity---\fullref{theorem:main-tl-section}.(3).)\label{lemma:far-commutation} 
Suppose that $S$ and $S^{\prime}$ are down-admissi\-ble, $T$ 
and $T^{\prime}$ up-admissible 
and $\dist(S,S^{\prime})>1$, $\dist(S,T)>1$, and $\dist(T,T^{\prime})>1$. 
The following hold.

\begin{gather*}
\Down{S}\Down{S^{\prime}}\pjw[v{-}1]=\Down{S^{\prime}}\Down{S}\pjw[v{-}1],
\quad
\Down{S}\Up{T}\pjw[v{-}1]=\Up{T}\Down{S}\pjw[v{-}1],
\quad
\Up{T}\Up{T^{\prime}}\pjw[v{-}1]=\Up{T^{\prime}}\Up{T}\pjw[v{-}1].
\end{gather*}
\end{lemma}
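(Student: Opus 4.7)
The plan is to reduce all three relations to the isotopy invariance of cap and cup diagrams at distant positions in $\TL[\F]$, after using the projector shortening results of \fullref{proposition:absorb} to remove intermediate $\ppar$JW projectors. I would treat the first relation in full detail and then note that the other two follow by the same recipe with minor modifications.

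First, I would verify that all compositions under consideration are well-defined, i.e., that $S$ remains down-admissible for $v[S^{\prime}]$ and for $v(T)$, and that $T$ and $T^{\prime}$ remain up-admissible after the relevant reflection. This is immediate from the distance hypothesis: reflecting $v$ along a set at distance greater than $1$ from $S$ leaves every digit of $v$ with index in $S$ and every digit with index adjacent to $S$ unchanged, so conditions (d1) and (d2) (respectively (u1) and (u2)) of \fullref{definition:adm} are preserved.

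Next, I would apply the classical and non-classical absorption relations of \fullref{proposition:p-properties} and \fullref{proposition:absorb} to swallow the intermediate $\ppar$JW projector in each composition. For the first relation this yields
\begin{gather*}
\Down{S}\Down{S^{\prime}}\pjw[v{-}1]=\pjw[{v[S^{\prime}][S]{-}1}]\,\down{S}\down{S^{\prime}}\,\pjw[v{-}1],\quad
\Down{S^{\prime}}\Down{S}\pjw[v{-}1]=\pjw[{v[S][S^{\prime}]{-}1}]\,\down{S^{\prime}}\down{S}\,\pjw[v{-}1],
\end{gather*}
and analogously for the mixed and all-up cases. The equality of the top projectors is exactly \eqref{eq:reflectcommute}: distant reflection operations commute, so $v[S^{\prime}][S]=v[S][S^{\prime}]$, $v(T)[S]=v[S](T)$, and $v(T^{\prime})(T)=v(T)(T^{\prime})$.

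Finally, since $\dist(S,S^{\prime})>1$ (and similarly in the other two cases), the cap/cup configurations $\down{S}$, $\down{S^{\prime}}$, $\up{T}$, $\up{T^{\prime}}$ defined in \fullref{definition:cup-cap-operators} occupy horizontal regions in the string diagram that are separated by at least one through-strand, so any two of them commute by planar isotopy in $\TL[\F]$. Combining with the matched top projector, this yields the desired equalities. The main obstacle, such as it is, is bookkeeping: keeping careful track of which admissibilities are needed for the absorption relations of \fullref{proposition:absorb} to apply in each of the three cases, and checking in each situation that the distance hypothesis guarantees them. Once that is done, the isotopy step is a one-line observation.
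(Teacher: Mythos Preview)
Your proposal is correct and follows essentially the same approach as the paper: both arguments use the non-classical absorption of \fullref{proposition:absorb} to strip the intermediate $\ppar$JW projector, invoke planar isotopy of the distant cap/cup configurations, and then reinsert the projector. You are simply more explicit than the paper about the bookkeeping (preservation of admissibility and the equality $v[S^{\prime}][S]=v[S][S^{\prime}]$ from \eqref{eq:reflectcommute}), which the paper leaves implicit.
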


\begin{proof} 
These relations follow from projector absorption. 
For example, for the first relation we compute
\begin{gather*}
\Down{S}\Down{S^{\prime}}\pjw[v{-}1]
= 
\down{S}\pjw[w]\down{S^{\prime}}\pjw[v{-}1]
=
\down{S}\down{S^{\prime}}\pjw[v{-}1]
=
\down{S^{\prime}}\down{S}\pjw[v{-}1]
=
\down{S^{\prime}}\pjw[z]\down{S}\pjw[v{-}1]
= 
\Down{S^{\prime}}\Down{S}\pjw[v{-}1].
\end{gather*}
Here we have used an isotopy of caps in the third equality.
\end{proof}

\begin{lemma}(Adjacency relations $1$---\fullref{theorem:main-tl-section}.(4).)\label{lemma:adjacent-generators-1}
If $\dist(S,S^{\prime})=1$ and $S^{\prime}>S$, then
the following equations hold whenever one side, and thus also the other one, is admissible
\begin{gather*}
\Down{S^{\prime}}\Up{S}\pjw[v{-}1]=\Down{S}\Down{S^{\prime}}\pjw[v{-}1],
\quad
\Down{S}\Up{S^{\prime}}\pjw[v{-}1]=\Up{S^{\prime}}\Up{S}\pjw[v{-}1].
\end{gather*}
\end{lemma}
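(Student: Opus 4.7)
My strategy is to reduce both sides of each equation to a common diagrammatic normal form through projector absorption and cap--cup isotopies. I focus on the first equation $\Down{S^{\prime}}\Up{S}\pjw[v-1] = \Down{S}\Down{S^{\prime}}\pjw[v-1]$; the second follows by a dual argument.

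The first step is to unfold Definition~\ref{definition:updown}, which gives
$\Down{S^{\prime}}\Up{S}\pjw[v-1] = \pjw[{v(S)[S']-1}]\,\down{S^{\prime}}\,\pjw[{v(S)-1}]\,\up{S}\,\pjw[v-1]$.
Applying the non-classical absorption relations of Proposition~\ref{proposition:absorb} to the outer two projectors (both sitting on the narrow ends of their adjacent cup or cap configurations) collapses them into their neighbors, leaving $\down{S^{\prime}}\,\pjw[{v(S)-1}]\,\up{S}\,\pjw[v-1]$. The crux is then to eliminate the interior projector $\pjw[{v(S)-1}]$: since $v(S) > v$, classical absorption from Proposition~\ref{proposition:p-properties} is unavailable, so I would use the shortening identities (third and fourth relations of Proposition~\ref{proposition:absorb}) to replace $\pjw[{v(S)-1}]$ by a smaller projector supported only on the strands relevant to the stretch $S$, which can then be merged into the adjacent pieces via a further application of absorption.

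Once the middle projector is removed, the composition reduces to $\down{S^{\prime}}\,\up{S}\,\pjw[v-1]$, and Lemma~\ref{lemma:isotopies}(d) identifies the cap--cup part as $\down{S \cup S^{\prime}}\,\idtl$, giving $\Down{S^{\prime}}\Up{S}\pjw[v-1] = \Down{S \cup S^{\prime}}\pjw[v-1]$. An application of Lemma~\ref{lemma:loops-1}(a), using that $S^{\prime} > S$, factors this as $\Down{S}\Down{S^{\prime}}\pjw[v-1]$. The equivalence of the admissibility hypotheses on the two sides, as well as the equality of targets $v(S)[S'] = v[S \cup S']$, is built into the iff-statements of Lemmas~\ref{lemma:new-admissible} and~\ref{lemma:isotopies} combined with Lemma~\ref{lemma:stretch-lemma}.

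The second equation is established by the dual argument, invoking Lemma~\ref{lemma:isotopies}(c) in place of (d) to combine a cap followed by a cup into a larger cup configuration $\up{S^{\prime} \cup S}$, which Lemma~\ref{lemma:loops-1}(a) then factors as $\Up{S^{\prime}}\Up{S}\pjw[v-1]$. The main obstacle in either case is the handling of the interior $\ppar$JW projector, whose removal requires a controlled application of projector shortening to isolate the strands active in $S$ (resp.\ $S^{\prime}$); a fallback strategy is to expand this interior projector via Definition~\ref{definition:p-jw-q} into its trapeze summands and to verify, using the kill relation \eqref{eq:0kill} and the capidem computations of Lemma~\ref{lemma:capidem}, that only the classical Jones--Wenzl contribution survives the sandwiching by $\down{S^{\prime}}$ and $\up{S}$.
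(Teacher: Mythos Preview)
Your approach is correct and matches the paper's proof: both use the projector shortening relations from \fullref{proposition:absorb} (applied twice) to eliminate the middle projector $\pjw[{v(S)-1}]$, followed by absorption and the cap--cup isotopy of \fullref{lemma:isotopies}(d) to obtain $\Down{S\cup S^{\prime}}\pjw[v{-}1]=\Down{S}\Down{S^{\prime}}\pjw[v{-}1]$. One small imprecision: after shortening, the residual small projector sits on the strands \emph{complementary} to those involved in $S$ (the high-index ancestor strands), not the strands ``relevant to $S$''; your fallback strategy via trapeze expansion is unnecessary.
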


\begin{proof} 
The first relation follows from projector shortening and absorption, as can be best verified graphically, \ie
\begin{gather*}
\Down{S^{\prime}}\Up{S}\pjw[v{-}1]
=
\begin{tikzpicture}[anchorbase,scale=.25,tinynodes]
\draw[pJW] (2.5,0.5) rectangle (-2,-.5);
\draw[pJW] (2.5,2.5) rectangle (-2,1.5);
\draw[pJW] (2.5,4.5) rectangle (-2,3.5);
\node at (.25,-.2) {$v{-}1$};
\draw[usual] (-1.25,1.5) to[out=270,in=180] (-.75,1) to[out=0,in=270] (-.25,1.5);
\draw[usual] (.25,2.5) to[out=90,in=180] (.75,2.75) to[out=0,in=90] (1.25,2.5);
\draw[usual] (0,2.5) to[out=90,in=180] (.75,3) to[out=0,in=90] (1.5,2.5);
\draw[usual] (-.25,2.5) to[out=90,in=180] (.75,3.25) to[out=0,in=90] (1.75,2.5);
\draw[usual] (-1.75,.5) to (-1.75,1.5) 
(-1.75,2.5) to (-1.75,3.5)
(-1.25,2.5) to (-1.25,3.5) 
(0,.5) to (0,1.5)
(.25,.5) to (.25,1.5)
(1.25,.5) to (1.25,1.5)
(1.5,.5) to (1.5,1.5)
(1.75,.5) to (1.75,1.5)
(2.25,.5) to (2.25,1.5)
(2.25,2.5) to (2.25,3.5);
\node at (1,2.5) {$\phantom{a}$};
\node at (1,-.5) {$\phantom{a}$};
\end{tikzpicture}
=
\begin{tikzpicture}[anchorbase,scale=.25,tinynodes]
\draw[pJW] (2.5,0.5) rectangle (-2,-.5);
\draw[pJW] (2.5,2.5) rectangle (-.75,1.5);
\draw[pJW] (2.5,4.5) rectangle (-2,3.5);
\node at (.25,-.2) {$v{-}1$};
\draw[usual] (-1.25,3.5)to (-1.25,1.5) to[out=270,in=180] (-.75,1) to[out=0,in=270] (-.25,1.5);
\draw[usual] (.25,2.5) to[out=90,in=180] (.75,2.75) to[out=0,in=90] (1.25,2.5);
\draw[usual] (0,2.5) to[out=90,in=180] (.75,3) to[out=0,in=90] (1.5,2.5);
\draw[usual] (-.25,2.5) to[out=90,in=180] (.75,3.25) to[out=0,in=90] (1.75,2.5);
\draw[usual] (-1.75,.5) to (-1.75,3.5) 
(0,.5) to (0,1.5)
(.25,.5) to (.25,1.5)
(1.25,.5) to (1.25,1.5)
(1.5,.5) to (1.5,1.5)
(1.75,.5) to (1.75,1.5)
(2.25,.5) to (2.25,1.5)
(2.25,2.5) to (2.25,3.5);
\node at (1,2.5) {$\phantom{a}$};
\node at (1,-.5) {$\phantom{a}$};
\end{tikzpicture}
=
\begin{tikzpicture}[anchorbase,scale=.25,tinynodes]
\draw[pJW] (2.5,0.5) rectangle (-2,-.5);
\draw[pJW] (2.5,2.5) rectangle (.75,1.5);
\draw[pJW] (2.5,4.5) rectangle (-2,3.5);
\node at (.25,-.2) {$v{-}1$};
\draw[usual] (-1.25,3.5) to (-1.25,1.5) to[out=270,in=180] (-.75,1) to[out=0,in=270] (-.25,1.5) 
to (-.25,2.5) to[out=90,in=180] (.75,3.25) to[out=0,in=90] (1.75,2.5);
\draw[usual] (0,.5) to (0,2.5) to[out=90,in=180] (.75,3) to[out=0,in=90] (1.5,2.5);
\draw[usual] (.25,0.5) to (.25,2.5) to[out=90,in=180] (.75,2.75) to[out=0,in=90] (1.25,2.5);
\draw[usual] (-1.75,.5) to (-1.75,3.5) 
(1.25,.5) to (1.25,1.5)
(1.5,.5) to (1.5,1.5)
(1.75,.5) to (1.75,1.5)
(2.25,.5) to (2.25,1.5)
(2.25,2.5) to (2.25,3.5);
\node at (1,2.5) {$\phantom{a}$};
\node at (1,-.5) {$\phantom{a}$};
\end{tikzpicture}
=
\begin{tikzpicture}[anchorbase,scale=.25,tinynodes]
\draw[pJW] (2.5,0.5) rectangle (-2,-.5);
\draw[pJW] (2.5,4.5) rectangle (-2,3.5);
\node at (.25,-.2) {$v{-}1$};
\draw[usual] (-1.25,3.5)to[out=270,in=90] (1.75,2) to (1.75,0.5);
\draw[usual] (0,.5) to (0,1.5) to[out=90,in=180] (.75,2) to[out=0,in=90] (1.5,1.5) to (1.5,.5);
\draw[usual] (.25,.5) to (.25,1.25) to[out=90,in=180] (.75,1.75) to[out=0,in=90] (1.25,1.25) to (1.25,.5);
\draw[usual] (-1.75,.5) to (-1.75,3.5)
(2.25,.5) to (2.25,3.5);
\node at (1,2.5) {$\phantom{a}$};
\node at (1,-.5) {$\phantom{a}$};
\end{tikzpicture}
=
\Down{S\cup S^{\prime}}\pjw[v{-}1]=\Down{S}\Down{S^{\prime}}\pjw[v{-}1].
\end{gather*}
Here we have used projector shortening twice, 
then projector absorption and an isotopy.
The second relation is analogous.
\end{proof}

The following four statements will be proved jointly by induction in $v$. 
The proofs depend on each other in a non-trivial way.

\begin{lemma}(The endomorphisms.)\label{lemma:dualnumbers} 
Let $v\in \N$ with minimal 
down-admissible stretches $S_{j},\dots,S_{0}$. Then we have the algebra isomorphism
\begin{gather*}
\End_{\TL[\F]}(\pjw[v{-}1])
\cong
\F[]\big[\loopdown{S_{j}}{v{-}1},\dots,\loopdown{S_{0}}{v{-}1}\big]
\Big/\big\langle(\loopdown{S_{j}}{v{-}1})^{2},\dots, (\loopdown{S_{0}}{v{-}1})^{2}\big\rangle,
\end{gather*}
and if $S$ is down-admissible for $v$, then $\loopdown{S}{v{-}1} = \prod_{k|S_{k}{\subset}S}\loopdown{S_{k}}{v{-}1}$. 
Furthermore, if $S$ is down-admissible for $v$, then we have 
\begin{gather}\label{eq:DUD}
\Down{S}\Up{S}\Down{S}\pjw[v{-}1]=0, 
\quad
\pjw[v{-}1]\Up{S}\Down{S}\Up{S}=0.
\end{gather}
\end{lemma}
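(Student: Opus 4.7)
The plan is to prove the three claims of the lemma together by induction on the generation of $v$, establishing them in the logical order: first the vanishing relations \eqref{eq:DUD}, from which the square-zero relations $(\loopdown{S}{v-1})^{2}=0$ follow immediately; then the product formula; and finally the algebra isomorphism. The base case $v\in\eve$ is immediate, since such $v$ admits no non-empty down-admissible sets and $\pjw[v-1]=\qjw[v-1]$ has only scalar endomorphisms in $\TL[\F]$.

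\textbf{Proof of \eqref{eq:DUD}.} Fix a down-admissible $S$ for $v$ and set $w=v[S]$. By \fullref{lemma:admsum}, the hom space $\Hom_{\TL[\F]}(\pjw[v-1],\pjw[w-1])$ has basis the $\ppar$-morphisms $\pjw[w-1]\Up{A}\Down{B}\pjw[v-1]$ with $w[A]=v[B]$. I would expand $\Down{S}\Up{S}\Down{S}\pjw[v-1]$ in this basis by lifting the composition to $\TL[\Q]$, substituting the trapeze expansions $\pqjw[v-1]=\sum_{T}\lambda_{v,T}\trap{T}{v-1}$ and $\pqjw[w-1]=\sum_{U}\lambda_{w,U}\trap{U}{w-1}$, and repeatedly applying the cap-trapeze simplifications of \fullref{lemma:capidem} and \fullref{lemma:capidem2} together with the orthogonality of trapeze idempotents from \fullref{lemma:idemp}. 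A careful tracking of valuations, using $\ord(\lambda_{v,T})=-|T|$ and the compatibility of these scalars under restriction provided by \fullref{lemma:lambda}, will show that every resulting coefficient (including the top-through-degree term, which is a multiple of $\Down{S}\pjw[v-1]$) carries a strictly positive power of $\ppar$. Upon specialization to $\F$ all coefficients vanish. The UDU relation then follows by the reflection symmetry of $\TL[\F]$.

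\textbf{Product formula and algebra structure.} From \eqref{eq:DUD} I deduce $(\loopdown{S}{v-1})^{2}=\pjw[v-1]\Up{S}\cdot(\Down{S}\Up{S}\Down{S}\pjw[v-1])=0$ for every down-admissible $S$. For the product formula $\loopdown{S}{v-1}=\prod_{k:S_{k}\subset S}\loopdown{S_{k}}{v-1}$, I proceed by induction on the number of minimal stretches $S_{k}$ contained in $S$. For two disjoint minimal stretches $S_{a}<S_{b}$ at distance $>1$, far-commutativity (\fullref{lemma:far-commutation}) together with \fullref{lemma:loops-1} yields
\[
\pjw[v-1]\Up{S_{a}}\Down{S_{a}}\Up{S_{b}}\Down{S_{b}}\pjw[v-1]
=
\pjw[v-1]\Up{S_{a}\cup S_{b}}\Down{S_{a}\cup S_{b}}\pjw[v-1]
\]
after swapping $\Down{S_{a}}\Up{S_{b}}$ and $\Up{S_{a}}\Up{S_{b}}$; for adjacent minimal stretches one instead applies \fullref{lemma:adjacent-generators-1}, combined with the containment \fullref{lemma:nilpotent} and the newly established DUD relation to resolve the resulting terms. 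Combining the square-zero and product relations with commutativity (\fullref{lemma:nil-ploop}.(a)) then gives a surjective $\F$-algebra map from $\F[\loopdown{S_{j}}{v-1},\dots,\loopdown{S_{0}}{v-1}]/\langle(\loopdown{S_{k}}{v-1})^{2}\rangle$ onto $\End_{\TL[\F]}(\pjw[v-1])$, which is an isomorphism since both sides have dimension $2^{j+1}$, the target by \fullref{lemma:admsum}.(b).

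\textbf{Main obstacle.} The technically hardest step is the $\ppar$-adic valuation analysis proving the vanishing of the coefficients in \eqref{eq:DUD}. The arising scalars are products of $\lambda_{v,T}$-factors (each of valuation $-|T|$) with sign and ratio factors from \fullref{lemma:capidem} and \fullref{lemma:capidem2}, and one must verify that the combined valuation is always strictly positive after the trapeze-by-trapeze expansion. A potentially cleaner alternative is to argue inductively via the partial trace formula (\fullref{proposition:p-properties-trace}), reducing the endomorphism algebra of $\pjw[v-1]$ to that of $\pjw[\mother[v]-1]$; however, the non-vanishing factor $(-1)^{v-\mother}2$ appearing in the non-containment case requires delicate handling to avoid a circular dependence on the product formula being established in parallel.
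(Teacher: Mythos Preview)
Your plan diverges from the paper in a structurally important way, and the gap is genuine. In the paper, \fullref{lemma:dualnumbers} is not proved by a standalone induction: it is the condition $\condE(v)$ in a \emph{joint} induction together with the second adjacency relations (\fullref{lemma:adjacent-generators-2}, condition $\condA$), the overlap relations (\fullref{lemma:overlap}, condition $\condO$), and the zigzag relations (\fullref{lemma:real-zigzag}, condition $\condW$). The proof of the $DUD$ vanishing \eqref{eq:DUD} for the smallest minimal stretch $S$ uses $\condW(v)$ to rewrite $\Down{S}\Up{S}$ as a combination of $\Up{S}\Down{S}$ and a longer loop, then $\condA(v)$ or $\condO(v)$ (depending on whether $S$ is down-admissible for $v[S]$) and finally $\condE(v-1)$ to kill the remaining terms. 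Your proposed direct valuation argument would have to replace all of that by a single coefficient computation, and you neither carry it out nor give a reason to believe it terminates cleanly: the scalars you would meet are ratios of products of $\lambda_{v,T}$, and there is no a priori reason every surviving combination has strictly positive valuation. This is exactly the step the paper's interlocking induction is designed to avoid.

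Your argument for the product formula in the adjacent case also does not go through with the tools you cite. Applying \fullref{lemma:adjacent-generators-1} to $\Down{S_{a}}\Up{S_{b}}$ at the intermediate idempotent gives $\Up{S_{a}\cup S_{b}}$, so $\loopdown{S_{a}}{v-1}\loopdown{S_{b}}{v-1}$ becomes $\Up{S_{a}}\Up{S_{a}\cup S_{b}}\Down{S_{b}}\pjw[v-1]$; to reorganize this into $\Up{S_{b}}\Up{S_{a}}\Down{S_{a}}\Down{S_{b}}\pjw[v-1]=\loopdown{S_{a}\cup S_{b}}{v-1}$ you would need precisely the \emph{second} adjacency relations of \fullref{lemma:adjacent-generators-2}, which you have not assumed and which are part of the joint induction. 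The paper sidesteps this by a different argument: it computes $\pTr_{v-\mother}(\loopdown{S}{v-1}\loopdown{X}{v-1})=\loopdown{X}{\mother-1}$ diagrammatically via projector shortening, uses \fullref{proposition:p-properties-trace} to force $\loopdown{S}{v-1}\loopdown{X}{v-1}=x\,\loopdown{X}{v-1}+y\,\loopdown{XS}{v-1}$ with $(-1)^{v-\mother}2x+y=1$, and then rules out $x\neq 0$ by a through-degree contradiction (if $x\neq 0$ then $-x+\loopdown{S}{v-1}$ is a unit, forcing $\loopdown{X}{v-1}$ to have too small through-degree). That through-degree trick, together with the partial trace, is the idea your sketch is missing.
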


\begin{lemma}(Adjacency relations $2$---\fullref{theorem:main-tl-section}.(4).)\label{lemma:adjacent-generators-2}
Let $S^{\prime}>S$ be 
down-admissible stretches of 
consecutive integers for $v$ with $\dist(S,S^{\prime})=1$. Then we have
\begin{gather*}
\Down{S^{\prime}}\Down{S}\pjw[v{-}1]=\Up{S}\Down{S^{\prime}}\funcH[S]\pjw[v{-}1], 
\quad
\pjw[v{-}1]\Up{S}\Up{S^{\prime}}=\pjw[v{-}1]\funcH[S]\Up{S^{\prime}}\Down{S}.
\end{gather*}
\end{lemma}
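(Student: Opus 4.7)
The second identity will follow from the first by applying the contravariant involution $\flip$ on $\TL[\F]$, which reverses diagrams vertically; so the plan is to focus on the first:
\begin{gather*}
\Down{S^{\prime}}\Down{S}\pjw[v{-}1] = \funcg(a_{\max(S)+1}-1)\cdot\Up{S}\Down{S^{\prime}}\pjw[v{-}1].
\end{gather*}
By \fullref{lemma:new-admissible}.(a), both sides are morphisms $\pjw[v-1]\to\pjw[u-1]$ with $u:=v[S][S^{\prime}]=v[S^{\prime}](S)$.

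First, I will locate both sides in the $\ppar$-morphism basis of $\Hom_{\TL[\F]}(\pjw[v-1],\pjw[u-1])$ provided by \fullref{lemma:sbasis}. \fullref{lemma:stretch-lemma}.(a) applied to the up-admissibility of $S$ for $v[S^{\prime}]$ yields $u[S]=v[S^{\prime}]$, so $(X,Y)=(S,S^{\prime})$ indexes a basis element, and the scaled right-hand side is precisely $\funcg(a_{\max(S)+1}-1)$ times this basis element. By \fullref{lemma:sbasis}.(a), its leading term in the through-degree filtration is $\funcg(a_{\max(S)+1}-1)\cdot\upo{S}\qjw[{v[S^{\prime}]-1}]\downo{S^{\prime}}$.

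Next, I will rewrite the left-hand side as $\pjw[u-1]\down{S^{\prime}}\down{S}\pjw[v-1]$ using the absorption relations of \fullref{proposition:absorb}, and expand $\pjw[v-1]=\sum_{T}\lambda_{v,T}\upo{T}\qjw[{v[T]-1}]\downo{T}$ per \fullref{definition:p-jw-q}. Iterated application of \fullref{lemma:capidem} evaluates $\down{S^{\prime}}\down{S}$ on each trapeze summand: only those $T$ whose intersection with $S$ and $S^{\prime}$ satisfies the compatibility conditions survive, carrying scalars built from $\lambda_{v,T}$ and the factors $(-1)^{a_{s}\ppar^{s}}\fancest{v}{s-1}[T]/\fancest{v}{s}[T]$ from \fullref{lemma:capidem}. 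Tracking these through a telescoping cancellation, I aim to identify the leading term as exactly $\funcg(a_{\max(S)+1}-1)\cdot\upo{S}\qjw[{v[S^{\prime}]-1}]\downo{S^{\prime}}$, matching the right-hand side; the scalar emerges because after all cancellations the ratios of ancestors collapse to a function of the single digit $a_{\max(S)+1}$.

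To complete the proof, I will need to show that the lower through-degree components of both sides also agree, so that their difference vanishes entirely. This is the hard part. The strategy is to match component-by-component, relying on the jointly-inductive framework announced above: the four statements (this lemma together with \fullref{lemma:dualnumbers} and the two statements to follow) are to be established jointly by induction on $v$, so smaller cases can be used on ancestors of $v$. Projector shortening from \fullref{proposition:absorb} permits the reduction of lower through-degree components to morphisms involving $\pjw[{\fancest{v}{s}-1}]$ for indices $s$ in $S\cup S^{\prime}$, where the inductive hypothesis applies and matches both sides. The principal technical hurdle will be the scalar bookkeeping throughout the iterated capidem reductions, together with the management of the exceptional cases $a_{\max(S)+1}\in\{0,\ppar-1\}$ governed by the \losp.
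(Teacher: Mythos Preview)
Your leading-term analysis is on the right track and matches the paper: expanding $\down{S^{\prime}}\down{S}\pqjw[v-1]$ via iterated \fullref{lemma:capidem} and extracting the coefficient of the maximal through-degree term $\upo{S}\qjw[{v[S^{\prime}]-1}]\downo{S^{\prime}}$ indeed gives a scalar congruent to $\funcg(a_{\max(S)+1}-1)$ modulo $\ppar$. The paper records this as part of an approximate result (\fullref{lemma:adjacent-generators-3}), which also includes a second explicit computation showing the next-highest through-degree term has coefficient divisible by $\ppar$.

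The gap is in your plan for the remaining terms. You propose to ``match component-by-component'' by using projector shortening to reduce lower through-degree pieces to morphisms between $\pjw[\fancest{v}{s}-1]$, where the induction applies. But projector shortening does not straightforwardly decompose a morphism $\pjw[v-1]\to\pjw[u-1]$ into morphisms between ancestors; it relates $\Down{S}\pjw[v-1]$ to $\pjw[{v[S]-1}]\down{S}(\idtl\hcirc\pjw[\fancest{v}{s}-1])$, and composing with $\Down{S^{\prime}}$ on top does not land you in a hom-space to which the inductive hypothesis directly applies. The paper's mechanism is quite different: having shown the relation holds modulo error terms in $V_{>U}$ (where $U$ is the next minimal stretch above $S^{\prime}$), it multiplies the approximate equation on the left by $\Down{S}$ and on the right by $\Up{S^{\prime}}$, landing in $\End_{\TL[\F]}(\pjw[w-1])$ for $w=v[S^{\prime}]<v$. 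Then $\condWm(v)$ (zigzag relations already known for all pairs except $(v,S)$) and $\condE(v-1)$ (the dual-numbers structure of smaller endomorphism algebras) force a linear system in the error coefficients whose only solution is zero. There is also a non-generic case, when $S^{\prime}$ fails to be down-admissible for $v[S^{\prime}]$, handled by instead multiplying by $\loopdown{U}{v-1}$ and again using $\condE(v-1)$ and $\condWm(v)$. Your proposal does not invoke zigzag relations or the endomorphism structure at all, and without them there is no evident reason the lower terms should match.
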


\begin{lemma}(Overlap relations---\fullref{theorem:main-tl-section}.(5).)\label{lemma:overlap}
Suppose that $S$ is a minimal down-admissible stretch for $v$ and $S^{\prime}\geq S$ a 
minimal down-admissible stretch for $v[S]$ with $S^{\prime}\cap S=\{s\}$ and $S^{\prime}\not\subset S$, then we have 
\begin{gather*}
\Down{S^{\prime}}\Down{S}\pjw[v{-}1]=\Up{\{s\}}\Down{S}\Down{S^{\prime}{\setminus}\{s\}}\pjw[v{-}1],
\quad
\pjw[v{-}1]\Up{S}\Up{S^{\prime}}=\pjw[v{-}1]\Up{S^{\prime}{\setminus}\{s\}}\Up{S}\Down{\{s\}}.
\end{gather*}
\end{lemma}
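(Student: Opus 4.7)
The plan is to verify that both equalities in the lemma hold in $\Hom_{\TL[\F]}(\pjw[v-1], \pjw[w-1])$, where $w = v[S][S']$. A preliminary step is to confirm that both sides of the first equation are morphisms with the same target: by a direct computation of $\ppar$-adic digits (using the formulas in the proof of \fullref{lemma:new-admissible}), we have $v[S][S'] = v[S'\setminus\{s\}][S](\{s\})$. Intuitively this is because on the left the digit at the shared position $s$ is flipped twice, while on the right the down-reflection at $s$ (coming from $S$) is undone by the up-reflection $\Up{\{s\}}$, and the remaining positions, which lie in $(S\cup S')\setminus\{s\}$, are reflected exactly once on either side with compatible carrying.

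I would then expand both sides in the standard morphism basis $\upo{X}\qjw[v[Y]-1]\downo{Y}$ from \fullref{lemma:sbasis}. By \fullref{lemma:admsum}.(b) this expansion over $\F$ is unique, so it suffices to show the two expansions agree. After applying projector absorption (\fullref{proposition:absorb}) to trim the interior $\ppar$JW projectors, both sides simplify to compositions $\pjw[w-1]\cdot D\cdot\pjw[v-1]$ with $D$ a pure cap--cup Temperley--Lieb diagram. Expanding each $\pqjw$ via \fullref{definition:p-jw-q} and repeatedly applying the cap-through-trapeze identity of \fullref{lemma:capidem}, both sides become weighted sums of standard morphisms. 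Comparing leading through-degree terms first, and then inductively accounting for lower through-degree corrections (using the unitriangular relation between standard and $\ppar$-morphism bases in \eqref{eq:morfilt}), should yield the desired equality.

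The main obstacle I anticipate is the scalar bookkeeping. The $\lambda_{v, X}$-coefficients from \eqref{eq:the-scalar} are telescoping products of ratios of ancestor values $\fancest{v}{k}$, and the assertion of the lemma is that these cancel exactly so that \emph{no} overall prefactor appears on the right-hand side---in stark contrast to the adjacency relations in \fullref{lemma:adjacent-generators-2}, where a surviving factor $\funcH[S]$ is needed. The necessary cancellations should follow from the multiplicativity $\lambda_{v,S\cup S'} = \lambda_{v[S'],S}\lambda_{v,S'}$ of \fullref{lemma:lambda}, combined with a careful case analysis of the digit behavior at the shared position $s$ under the various reflections $[S]$, $[S'\setminus\{s\}]$, and $(\{s\})$: the telescoping numerators and denominators at position $s$ must conspire to cancel because $s$ is flipped twice on the LHS and also reflected twice (once down, once up) on the RHS. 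The second equality, for $\Up{S}\Up{S'}$, then follows by applying the pivotal reflection $\flip$ to the first, since the standard morphism basis and all relevant absorption statements are stable under $\flip$.
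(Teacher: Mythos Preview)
Your plan captures the opening move of the paper's argument but underestimates what remains. The direct expansion via \fullref{lemma:capidem} that you sketch is exactly what the paper does in \fullref{lemma:overlapapprox}, and it succeeds only in pinning down the coefficients of the \emph{two} basis morphisms of highest through-degree in $\Hom_{\TL[\F]}(\pjw[v{-}1],\pjw[w{-}1])$: the top one vanishes modulo $\ppar$ and the next is $1$. This leaves uncontrolled error terms of four shapes, indexed by down-admissible $X>U$ (with $U$ the next minimal stretch above $T$); see \eqref{eq:overlapstart}. Your expectation that ``telescoping numerators and denominators at position $s$ must conspire to cancel'' is too optimistic: the leading coefficient computed in the paper is $q=\lambda_{v,T}\lambda^{-1}_{v[S],TS}\lambda_{w,R}$, which is \emph{nonzero} over $\Q$ and only becomes $0$ after reduction modulo $\ppar$. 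So the two sides genuinely differ over $\Q$, and there is no clean $\lambda$-identity to invoke; pushing the direct method to lower through-degrees would require controlling ever more delicate $\ppar$-adic congruences among products of $\lambda$'s, which the paper does not attempt.

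Instead, the paper proves this lemma \emph{jointly} with \fullref{lemma:dualnumbers}, \fullref{lemma:adjacent-generators-2}, and \fullref{lemma:real-zigzag} by a global induction on $v$, organized through the conditions $\condWm(v),\condA(v),\condO(v),\condW(v),\condE(v)$ introduced at the start of \fullref{sec:inductive-proof}. To kill the error terms, \fullref{lemma:tandem-third} multiplies \eqref{eq:overlapstart} on the right by $\Up{T}$ (and, in the non-generic case, by the loop $\loopdown{U}{v{-}1}$), then applies the already-established zigzag relations $\condWm(v)$, adjacency $\condA(v{-}1)$, and loop-nilpotency $\condE(v{-}1)$ to obtain a linear system forcing all $c_X,d_X,e_X,f_X$ to vanish. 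This mutual dependence is essential: the overlap relation at $v$ cannot be isolated from the other three lemmas at smaller parameters. Your reflection argument for the second equality is fine and matches the paper's convention.
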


\begin{lemma}(Zigzag---\fullref{theorem:main-tl-section}.(6).)\label{lemma:real-zigzag} 
Suppose that $S$ is an 
up-admissible stretch for $v$. If $S$ is also down-admissible for $v$, then we have
\begin{gather*}
\Down{S}\Up{S}\pjw[v{-}1]=\Up{S}\Down{S}\funcG[S]\pjw[v{-}1] 
+\Up{T}\Up{S}\Down{S}\Down{T}\funcF[S]\pjw[v{-}1].
\end{gather*} 
Here $T$ denotes the smallest minimal down-admissible 
stretch with $T>S$, provided it exists. If not, then the 
equation holds without the second term on the right-hand side.

Furthermore, if $S$ is not down-admissible for $v$, then we have
\begin{gather*}
\Down{S}\Up{S}\pjw[v{-}1]= 
-2 
\Up{\hull}\Down{\hull}\pjw[v{-}1].
\end{gather*} 
Here $\hull[S]$ denotes the down-admissible hull of $S$, if it exists. 
If not, then the right-hand side is defined to be zero. 
\end{lemma}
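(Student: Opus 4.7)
The plan is to lift to $\TL[\Q]$ and use the expansion of $\pqjw[v(S){-}1]$ via \fullref{definition:p-jw-q}. Writing
\begin{gather*}
\Down{S}\Up{S}\pqjw[v{-}1]
={\textstyle\sum_{Y}}\,\lambda_{v(S),Y}\,\pqjw[v{-}1]\,\down{S}\,\upo{Y}\,\qjw[v(S)[Y]{-}1]\,\downo{Y}\,\up{S}\,\pqjw[v{-}1],
\end{gather*}
where $Y$ ranges over down-admissible sets for $v(S)$, I would evaluate each summand by iterating \fullref{lemma:capidem} to simplify the cap-on-cup pieces $\down{S}\,\upo{Y}$ on the left and, symmetrically, $\downo{Y}\,\up{S}$ on the right. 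Most choices of $Y$ yield zero because the caps hit the wrong cups; only those $Y$ compatible with $S$ at the boundary index $m:=\max(S)+1$ survive. Each surviving summand reassembles, via the orthogonality of trapeze idempotents from \fullref{lemma:idemp}, into a standard morphism $\upo{A}\,\qjw[v[A]{-}1]\,\downo{A}$ for some $A$ down-admissible for $v$, which by \fullref{lemma:admsum} corresponds to a $\ppar$loop $\loopdown{A}{v{-}1}$.

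For \textbf{Case 1} where $S$ is both up- and down-admissible (i.e., $a_m\notin\{0,\ppar{-}1\}$), a case analysis on compatibility pins down the surviving labels to $A\in\{\emptyset,\,S,\,S\cup\{m\}\cup T\}$, with $T$ the smallest minimal down-admissible stretch above $S$, when it exists. The $A=\emptyset$ contribution turns out to vanish modulo $\ppar$: its scalar is a positive power of $\ppar$ arising from the partial trace of $\qjw[v(S){-}1]$ through the nested cap-cup configurations via \eqref{eq:0trace}. The $A=S$ and $A=S\cup\{m\}\cup T$ contributions survive and, after telescoping the $\lambda$-ratios in \eqref{eq:the-scalar} via \fullref{lemma:lambda}, their scalars reduce respectively to $-(a_m{+}1)/a_m=\funcg(a_m)$ and $(-1)^{a_m}\cdot 2/a_m=\funcf(a_m)$, yielding exactly $\funcG[S]\,\Up{S}\Down{S}\pjw[v{-}1]$ and $\funcF[S]\,\Up{T}\Up{S}\Down{S}\Down{T}\pjw[v{-}1]$. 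Non-existence of $T$ is directly encoded as the vanishing of the corresponding summand.

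For \textbf{Case 2} where $S$ is up-admissible but not down-admissible (so $a_m=0$ in $v$, while $a_m=2$ in $v(S)$), the same expansion is used, but now the cap-cup manipulations pull a bundle at position $m$ that is freshly created in $v(S)$ through the $\qjw$-projectors via \fullref{lemma:capidem2}, and \fullref{proposition:p-properties-trace}(a) takes successive partial traces along the zero-digit positions in $\hull[S]\setminus S$. Each such trace contributes a factor of $(-1)^{0}\cdot 2=2$, and after the full filtration the unique surviving compatible set is $A=\hull[S]$, with all factors assembling to $-2$. Non-existence of $\hull[S]$ means no compatible $A$ exists at all, so the expression vanishes, consistent with the stated convention.

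The main obstacle is the scalar bookkeeping in Case~1: for each $Y$, the factor $\lambda_{v(S),Y}$ must be combined with the scalars produced by \fullref{lemma:capidem} on both sides and with the orthogonality constants of \fullref{lemma:idemp}, and one has to verify that the resulting sums collapse correctly modulo $\ppar$ to $\funcg(a_m)$ and $\funcf(a_m)$. The exceptional values $\funcf(\ppar{-}1)=\funcg(\ppar{-}1)=0$ arise precisely when one of the surviving coefficients becomes non-$\ppar$-admissible, killing its reduction to $\F$, in line with the law of small primes.
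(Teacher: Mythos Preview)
Your direct-expansion strategy is genuinely different from the paper's, and if it worked it would be valuable: the paper proves this lemma as part of a joint induction with \fullref{lemma:dualnumbers}, \fullref{lemma:adjacent-generators-2}, and \fullref{lemma:overlap} (see the paragraph before \fullref{lemma:dualnumbers} and \fullref{lemma:tandem-fourth}), so it never establishes the zigzag relation in isolation.

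However, there is a real gap. Your claim that the surviving labels are only $A\in\{\emptyset,\,S,\,S\cup T\}$ is not justified and is in fact false at the level of standard loops. When you expand $\pqjw[v(S){-}1]$, the sum runs over \emph{all} down-admissible $Y$ for $v(S)$, including those containing stretches $Z$ far above $T$. Such $Z$ are transparent to $\down{S}$ in the capidem step, so the corresponding summand survives and contributes a standard loop $\upo{X}\qjw[{v[X]{-}1}]\downo{X}$ with $X\supset Z$. After unitriangular conversion to $\ppar$loops via \fullref{lemma:admsum}, these produce error terms of exactly the shape the paper writes as $\sum_U x_U\loopdown{U}{v[S]{-}1}$ in \eqref{eq:zigzag-approx}. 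Showing that every such $x_U$ vanishes modulo $\ppar$ is precisely the hard part; the paper accomplishes this only by invoking the already-established $\condE(v-1)$, $\condA(v)$, $\condO(v)$ (multiplying by suitable $\ppar$loops and taking partial traces to force $x_U=0$). Your proposal provides no mechanism to kill these higher error terms directly, and there is no reason to expect the raw $\lambda$-scalars to conspire to zero without that inductive input.

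A smaller issue: in Case~2 you invoke \fullref{proposition:p-properties-trace}(a), but that result concerns partial traces of $\pjw$, whereas your expansion over $\Q$ contains the classical projectors $\qjw$; the relevant tool there is \eqref{eq:0trace}, and the bookkeeping of how the resulting rational scalars assemble to $-2$ after the basis change (and why no higher $A$'s survive) is again not addressed.

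In short: your computation would recover the leading coefficients $\funcg(a_m)$ and $\funcf(a_m)$ correctly---this is essentially what the paper does explicitly in generation~$2$ in \fullref{lem:gen2zigzag}---but it does not control the tail, which is where the real difficulty lies.
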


\section{Inductive proof of the relations}\label{sec:inductive-proof}

In this section we will use the 
far-commutativity relations from \fullref{lemma:far-commutation}, the containment 
relations from \fullref{lemma:nilpotent}, and the adjacency relations from 
\fullref{lemma:adjacent-generators-1}, sometimes without explicitly mentioning them. \makeautorefname{lemma}{Lemmas}
Further, we only prove \fullref{lemma:adjacent-generators-2} and \ref{lemma:overlap}, and \makeautorefname{lemma}{Lemma}
\eqref{eq:DUD} for the first shown relations as the other ones are equivalent by reflection.

\begin{convention}
Throughout this section, unless stated otherwise, we use the convention that $S$ denotes 
either a minimal down- or up-admissible stretch for $v$, and $U>T>\hull$ 
are the following minimal down-admissible stretches for $v$. 
To declutter the notation, we will suppress $\cup$ symbols in many expressions, 
for example $\Down{STU}:=\Down{S\cup T\cup U}$. 
Further, we 
introduce shorthand notation for the states where we have already proven 
the above Lemmas for certain $v\in N$.

\begin{enumerate}[label=$\bullet$]

\setlength\itemsep{.15cm}

\item $\condWm(v)$ means \fullref{lemma:real-zigzag} holds for all zigzags of 
the form $\Down{X}\pjw[w{-}1]\Up{X}$ where $w\leq v$ and $X$ is down-admissible 
for $w$, except possibly for the case $w=v$ and $X=S$, the smallest minimal down-admissible stretch for $v$.

\item $\condA(v)$ means \fullref{lemma:adjacent-generators-2} on adjacent generators holds for all $w\leq v$.

\item $\condO(v)$ means \fullref{lemma:overlap} on overlapping generators holds for all $w\leq v$.

\item $\condW(v)$ means \fullref{lemma:real-zigzag}, holds for all zigzags of the form $\Down{X}\pjw[w{-}1]\Up{X}$ where $w\leq v$.

\item $\condE(v)$ means \fullref{lemma:dualnumbers}, which describes $\End_{\TL[\F]}(\pjw[w{-}1])$, holds for all $w\leq v$.

\end{enumerate}

Here we would like to draw the readers attention to the fact that the 
relevant quantity for zigzags is not where they start, but how high they reach.
\end{convention}

The inductive proof of these conditions will proceed in the order shown. 
As base cases we observe that $\condA(v)$, $\condO(v)$, $\condE(v)$ and $\condW(v)$ are 
all vacuously satisfied for $1\leq v\leq\ppar$. Then, assuming that these conditions 
all hold for $v-1$, we will first deduce $\condWm(v)$, then $\condA(v)$ and 
$\condO(v)$, followed by $\condW(v)$, and finally $\condE(v)$.

\begin{lemma}\label{lemma:tandem-first}
$\condWm(v)$ follows if we have $\condW(v-1)$.
\end{lemma}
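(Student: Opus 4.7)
The plan is to derive the new instances of \fullref{lemma:real-zigzag}---those involving $\pjw[v{-}1]$ with a stretch $X\neq S$---from the zigzag relations on strictly smaller projectors provided by $\condW(v{-}1)$, using projector shortening as the main tool.

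Given a down-admissible stretch $X$ for $v$ with $X\neq S$, either $X$ is disjoint from $S$ (so $\min(X)>\max(S)$) or $X\supsetneq S$; the case $X\subsetneq S$ is forbidden by minimality of $S$. In the first case, I would apply the projector-shortening identities of \fullref{lemma:loops-1} to replace $\pjw[v{-}1]$ with $\idtl[v-w]\hcirc\pjw[w{-}1]$ on the relevant strands, where $w=\fancest{v}{\min(X)-1}$ is the proper ancestor of $v$ obtained by killing its nonzero digits at positions below $\min(X)$. Then $w\leq v-1$, so $\condW(v{-}1)$ provides the zigzag relation for $\Down{X}\Up{X}\pjw[w{-}1]$. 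Since the data determining this relation---the scalars $\funcG[X]$, $\funcF[X]$, and the next stretch $T>X$ or the hull $\hull[X]$---depends only on the digits of $v$ at positions $\geq\min(X)$, and these agree with the corresponding digits of $w$, the relation lifts back unchanged to $\pjw[v{-}1]$ after tensoring with $\idtl[v-w]$.

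In the second case $X\supsetneq S$, I would decompose $X=S\sqcup Y$ via \fullref{lemma:loops-1}, with $Y$ a union of minimal sub-stretches of $X$ strictly above $S$, and factor $\Down{X}\Up{X}\pjw[v{-}1]=\Down{S}\Down{Y}\Up{Y}\Up{S}\pjw[v{-}1]$. The inner zigzag $\Down{Y}\Up{Y}\pjw[v(S){-}1]$ falls under the first case, now viewed for $v(S)$: since $\min(Y)>\max(S)$ and the lowest nonzero digit of $v(S)$ remains at position $\min(S)$, a further application of projector shortening reduces the inner zigzag to one on a proper ancestor of $v(S)$, covered again by $\condW(v{-}1)$. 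Reassembling with $\Down{S}$ and $\Up{S}$ on the outside and collapsing the sandwich via the adjacency relations of \fullref{lemma:adjacent-generators-1} together with the containment relations of \fullref{lemma:nilpotent} yields the predicted zigzag relation for $X$ on $\pjw[v{-}1]$.

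The main obstacle is the scalar bookkeeping in the second case: one must check that the digit $a_{\max(Y)+1}$, which governs the scalars of the inner zigzag on $Y$, agrees in $v$ and $v(S)$ (it does, since $Y$ lies strictly above $S$), and that the final collapse produces the predicted $\funcG[X]$ and $\funcF[X]$ for $X$. The exclusion of $X=S$ in $\condWm(v)$ is in fact unavoidable, since the natural ancestor $\fancest{v}{S}$ zeros out precisely the digits of $S$, rendering $S$ inadmissible for that ancestor, so neither shortening strategy applies in the excluded case.
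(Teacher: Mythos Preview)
Your two-case split and the reduction of Case~2 to Case~1 via the factorisation $\Down{X}=\Down{S}\Down{Y}$ is exactly the paper's approach, and your scalar bookkeeping is correct. However, your indexing is systematically off: the new zigzags in $\condWm(v)$ are those \emph{reaching} $v$, i.e.\ $\Down{X}\Up{X}\pjw[v[X]{-}1]$ with the middle projector $\pjw[v{-}1]$, not $\Down{X}\Up{X}\pjw[v{-}1]$ as you write; consequently the inner zigzag in Case~2 is based at $v[X](S)=v[Y]$, not at $v(S)$. With this correction your Case~1 mechanism works: replacing the middle $\pjw[v{-}1]$ by $\idtl[x]\hcirc\pjw[w{-}1]$ (with $w$ the ancestor of $v$ obtained by zeroing digits below $\min(X)$) is justified by combining the shortening of \fullref{proposition:absorb} with classical absorption, and then both sides of the zigzag identity for $v$ become $\pjw[v[X]{-}1](\idtl[x]\hcirc M)\pjw[v[X]{-}1]$ for the corresponding term $M$ of the zigzag identity for $w$, which holds by $\condW(v{-}1)$.
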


\begin{proof}
We need to show that we can resolve all zigzags of the form $\Down{Y}\Up{Y}\pjw[{v[Y]{-}1}]$ 
where $Y$ denotes a down-admissible stretch for $v$ such 
that $Y\neq S$, the smallest minimal down-admissible stretch for $v$. If $S\not\subset Y$, 
then this is possible using projector absorption and $\condW(v-1)$. 
In the remaining cases we write $Y_{+}:=Y\setminus S$ and employ the same 
trick, but for $Y_{+}\not\supset S$. If $Y_{+}$ is down-admissible for $v$, we get
\begin{align*}
\Down{Y}\Up{Y}\pjw[{v[Y]{-}1}]
&=\Down{S}\uwave{\Down{Y_{+}}\Up{Y_{+}}}\Up{S}\pjw[{v[Y]{-}1}]
\\
&=\uwave{\Down{S}\Up{Y_{+}}}\uwave{\Down{Y_{+}}\Up{S}}\funcG[Y]\pjw[{v[Y]{-}1}]+  
\uwave{\Down{S}\Up{T}\Up{Y_{+}}}\uwave{\Down{Y_{+}}\Down{T}\Up{S}}\funcF[Y]\pjw[{v[Y]{-}1}]
\\
&=\Up{Y_{+}}\Up{S}\Down{S}\Down{Y_{+}}\funcG[Y]\pjw[{v[Y]{-}1}]
+\Up{T}\Up{Y_{+}}\Up{S}
\Down{S}\Down{Y_{+}}\Down{T}\funcF[Y]\pjw[{v[Y]{-}1}]
\\
&=\Up{Y}\Down{Y} 
\funcG[Y]\pjw[{v[Y]{-}1}]+
\Up{T}\Up{Y}\Down{Y}\Down{T} 
\funcF[Y]\pjw[{v[Y]{-}1}].
\end{align*}
Here $T$ denotes the smallest minimal down-admissible stretch $T>Y$ for 
$v[Y]$, if it exists. We have also \uwave{underlined} the 
locations where relations are applied. If $Y_{+}$ is not down-admissible for $v$, then we instead get
\begin{align*}
\Down{Y}\Up{Y}\pjw[{v[Y]{-}1}]
&=\Down{S}\uwave{\Down{Y_{+}}\Up{Y_{+}}}\Up{S}\pjw[{v[Y]{-}1}]
=-2\uwave{\Down{S}\Up{\hull[Y_{+}]}}\uwave{\Down{\hull[Y_{+}]}\Up{S}}\pjw[{v[Y]{-}1}]
\\
&=-2\Up{\hull[Y_{+}]}\Up{S}\Down{S}\Down{\hull[Y_{+}]}\pjw[{v[Y]{-}1}]
=-2\Up{\hull[Y]}\Down{\hull[Y]}\pjw[{v[Y]{-}1}],
\end{align*}
or zero, if $\hull[{Y_{+}}]$ (and thus $\hull[Y]$) does not exist.
\end{proof}

\subsection{Adjacency relations}
Next we focus on establishing $\condA(v)$. These relations are irrelevant for
$\ppar=2$, so we will assume $\ppar>2$ in this subsection. For this we need an
approximate result first.

\begin{lemma}\label{lemma:adjacent-generators-3}
Suppose that $S<T<U$ are adjacent minimal down-admissible stretches for $v$. Then we have
\begin{gather*}
\Down{T}\Down{S}\pjw[v{-}1] 
\in\Up{S}\Down{T}\funcH[S]\pjw[v{-}1] 
+V_{>U}.
\end{gather*}
Here $V_{>U}=\spanFp{\Up{S}\Down{T}\pjw[v{-}1]\mid\exists t\in T\text{ such that }t>U}$ 
is the span of morphisms with $T$ exceeding $U$. 

Similarly, if the stretches are up-admissible for $v$, then we have
\begin{gather*}
\Up{S}\Up{T}\pjw[v{-}1]
\in\funcH[S]\Up{T}\Down{S}\pjw[v{-}1]+W_{>U}.
\end{gather*}
where $W_{>U}=\spanFp{\Up{S}\Down{T}\pjw[v{-}1]\mid \exists s\in S\text{ such that }s>U}$. 

In either case, if $U$ is a largest down-admissible stretch for $v$, 
or if no down-admissible stretch exists above $T$, then the 
relations from \fullref{lemma:adjacent-generators-2} hold on the nose.
\end{lemma}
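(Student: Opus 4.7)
The plan is to reduce the claim to a diagrammatic computation using non-classical absorption (\fullref{proposition:absorb}) and then expand along the trapeze decomposition \eqref{eq:pjwdef}. By non-classical absorption, $\Down{T}\Down{S}\pjw[v{-}1] = \pjw[{v[T\cup S]{-}1}]\down{T}\down{S}\pqjw[v{-}1]$, so the content of the lemma is a computation of $\down{T}\down{S}\pqjw[v{-}1]$ and a comparison with $\pqjw[{v(S)[T]{-}1}]\up{S}\down{T}\pqjw[v{-}1]$ (whose target equals $v[T\cup S]-1$ by \fullref{lemma:new-admissible}\ref{lemma:new-admissible-a}, so the expression makes sense).

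Next, I would expand $\pqjw[v{-}1]$ as the orthogonal idempotent sum over $X$ down-admissible for $v$ and apply \fullref{lemma:capidem} twice: first to cap off by $S$, then by $T$. The surviving trapezes $\trap{X}{v{-}1}$ are precisely those with $\min(S)\in X$ and $\min(T)\in X$ or the complementary compatible cases. A careful bookkeeping of the resulting scalars via \eqref{eq:the-scalar} produces the factor $\lambda_{v,S\cup T}/\lambda_{v,S}$ on the leading term, which is exactly $\funcg(a_{\max(S)+1}-1) = \funcH[S]\pjw[v{-}1]$ by direct comparison of the two telescoping products; the shift by $-1$ in the digit reflects the fact that $S$ has already been reflected before $T$ acts, which is where the ``$\funch$'' rather than ``$\funcg$'' appears.

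Simultaneously I would expand $\Up{S}\Down{T}\pjw[v{-}1]$ using classical/non-classical absorption, rewriting it as $\pqjw[{v(S)[T]{-}1}]\up{S}\down{T}\pqjw[v{-}1]$, and expand this along \eqref{eq:pjwdef}. Using \fullref{lemma:capidem2} and \fullref{lemma:isotopies}\ref{lemma:isotopies-c}, the leading-through-degree summands on the two sides match (after incorporating the $\funcH[S]$ scalar), both reducing to the standard morphism $\upo{S}\qjw[{v[T\cup S]{-}1}]\downo{T\cup S}$. The difference therefore has strictly smaller through-degree and, by the basis statement of \fullref{lemma:admsum}\ref{lemma:admsum-b}, is an $\F$-linear combination of $\ppar$-morphisms $\pjw[{v(S)[T]{-}1}]\Up{X}\Down{Y}\pjw[v{-}1]$ with $v(S)[T][X]=v[Y]<v[T\cup S]$.

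I expect the main obstacle to be checking that every such correction term lies in $V_{>U}$, i.e.\ that the ``shape'' of $X$ and $Y$ produced necessarily has a component strictly above $U$. The argument here is combinatorial: after reflecting along $S$ and $T$, the only way to lower the through-degree is to have $X\cup Y$ contain digits from positions above $T$, and since $U$ is the smallest stretch above $T$, any such contribution forces $Y\cap\{t\mid t>U\}\neq\emptyset$, which places the morphism in $V_{>U}$ by definition. In particular, if no down-admissible stretch exists strictly above $T$, or if $U$ is already the top minimal down-admissible stretch of $v$, then $V_{>U}=0$ and the relation of \fullref{lemma:adjacent-generators-2} holds exactly, as claimed. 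The up-admissible case follows by the up-down reflection symmetry $\flip$ of $\TL[\F]$.
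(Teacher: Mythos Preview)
Your overall strategy matches the paper's: reduce via absorption, expand $\pqjw[v{-}1]$ into trapezes, apply \fullref{lemma:capidem} twice, and read off the coefficients in the $\ppar$-morphism basis using the unitriangularity of \fullref{lemma:sbasis}. The leading coefficient does come out to $\funch$; the paper obtains it as $q=\lambda_{v,T}/\lambda_{v[S],T}$ (not $\lambda_{v,S\cup T}/\lambda_{v,S}$ as you wrote, though these are related via \fullref{lemma:lambda}), and then checks $q\equiv\funcg(a_{\max(S)+1}-1)\bmod\ppar$ directly from the $\ppar$-adic digits.

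There is a genuine gap in your treatment of the correction terms. Your combinatorial claim ``any such contribution forces $Y\cap\{t\mid t>U\}\neq\emptyset$'' is false. The $\ppar$-morphism of \emph{second} highest through-degree in $\Hom_{\TL[\F]}(\pjw[v{-}1],\pjw[{v[S\cup T]{-}1}])$ is $\Up{UT}\Down{SU}\pjw[v{-}1]$, whose down-index $S\cup U$ has no element strictly above $U$; hence this term does \emph{not} lie in $V_{>U}$. Its coefficient could, a priori, be nonzero, and nothing in your argument excludes it. The paper handles this by an explicit scalar computation: one writes the coefficient $q'$ of $\upo{UT}\qjw[{v[SU]-1}]\downo{SU}$ in the difference $\pqjw[w{-}1]\down{T}\down{S}\pqjw[v{-}1]-q\,\pqjw[w{-}1]\up{S}\down{T}\pqjw[v{-}1]$, manipulates the $\lambda$-factors into the form
\[
q'=\lambda_{v,U}\Big(1-\tfrac{\lambda_{v,T}\,\lambda_{v[SU](T),T}}{\lambda_{v[S],T}\,\lambda_{v[U](T),T}}\Big),
\]
and then checks that the resulting rational number has $\ppar$-adic valuation at least $|T|>0$, so $q'\equiv 0\bmod\ppar$. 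Only after this step can one conclude that all remaining correction terms have down-index strictly exceeding $U$, i.e.\ lie in $V_{>U}$. Your argument needs this computation (or an equivalent one) to close the gap.
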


\begin{proof} 
Let us write $\funch$ for the scalar appearing in
$\funch\pjw[v{-}1]=\funcH[S]\pjw[v{-}1]$. (The functions $\funcf$, $\funcg$, and
$\funch$ were defined in \fullref{subsec:main-TL}.) We would like to
identify
\begin{gather*}
\pjw[w{-}1]\Down{T}\Down{S}\pjw[v{-}1]
=
\begin{tikzpicture}[anchorbase,scale=.25,tinynodes]
\draw[pJW] (2.75,0.5) rectangle (-2.25,-.5);
\draw[pJW] (2.75,2.5) rectangle (-2.25,1.5);
\draw[pJW] (2.75,4.5) rectangle (-2.25,3.5);
\node at (.25,-.2) {$\pjwm[v{-}1]$};
\draw[usual] (-1.25,.5) to[out=90,in=180] (-.75,1) to[out=0,in=90] (-.25,.5);
\draw[usual] (0,.5) to (0,.9) to[out=90,in=270]  (-1.25,1.5); 
\draw[usual] (.25,2.5) to[out=90,in=180] (.75,3) to[out=0,in=90] (1.25,2.5);
\draw[usual] (1.5,2.5) to (1.5,2.9) to[out=90,in=270]  (.25,3.5); 
\draw[usual] (-2,.5) to (-2,1.5);
\draw[usual] (-1.25,2.5) to (-1.25,3.5);
\draw[usual] (.25,.5) to (.25,1.5);
\draw[usual] (-2,2.5) to (-2,3.5);
\draw[usual] (1.25,.5) to (1.25,1.5);
\draw[usual] (1.5,.5) to (1.5,1.5);
\draw[usual] (1.75,.5) to (1.75,1.5);
\draw[usual] (1.75,2.5) to (1.75,3.5);
\draw[usual] (2.5,.5) to (2.5,1.5);
\draw[usual] (2.5,2.5) to (2.5,3.5);
\node at (1,2.5) {$\phantom{a}$};
\node at (1,-.5) {$\phantom{a}$};
\end{tikzpicture}
\quad\text{and}\quad
\funch\cdot\;
\begin{tikzpicture}[anchorbase,scale=.25,tinynodes]
\draw[pJW] (2.5,0.5) rectangle (-2.25,-.5);
\draw[pJW] (2.5,2.5) rectangle (-2.25,1.5);
\draw[pJW] (2.5,4.5) rectangle (-2.25,3.5);
\node at (.25,-.2) {$\pjwm[v{-}1]$};
\draw[usual] (-1.25,3.5) to[out=270,in=180] (-.75,3) to[out=0,in=270] (-.25,3.5);
\draw[usual] (-1.25,2.5) to[out=90,in=270] (0,3.1) to (0,3.5); 
\draw[usual] (0,.5) to[out=90,in=180] (.5,1) to[out=0,in=90] (1,.5);
\draw[usual] (1.25,.5) to (1.25,.9) to[out=90,in=270]  (.25,1.5); 
\draw[usual] (-2,.5) to (-2,1.5);
\draw[usual] (-1.25,.5) to (-1.25,1.5);
\draw[usual] (.25,2.5) to (.25,3.5);
\draw[usual] (-2,2.5) to (-2,3.5);
\draw[usual] (1.5,.5) to (1.5,1.5);
\draw[usual] (1.5,2.5) to (1.5,3.5);
\draw[usual] (2.25,.5) to (2.25,1.5);
\draw[usual] (2.25,2.5) to (2.25,3.5);
\node at (1,2.5) {$\phantom{a}$};
\node at (1,-.5) {$\phantom{a}$};
\end{tikzpicture}
=
\funch\pjw[w{-}1]\Up{S}\Down{T}\pjw[v{-}1].
\end{gather*}
By projector absorption it suffices to do this in the case when $S$ is the smallest minimal down-admissible stretch of $v$.
We will start by computing the characteristic zero analogs of both sides. 

Suppose that $V\subset\N[0]$ is down-admissible for $v$, then by \fullref{lemma:capidem} we have 
\begin{gather*}
\begin{gathered}
\down{S}\lambda_{v,V}\,\trap{V}{v{-}1}=
\begin{cases}
\lambda_{v[S],V{\setminus}S}\,\upo{V{\setminus}S}\qjw[{v[V]{-}1}]\downo{V}
& \text{if }S\subset V,T\not\subset V, 
\\
\lambda_{v,V}\,\upo{VS}\qjw[{v[V]-1}]\downo{V}
& \text{if }S\not\subset V,T\subset V,
\\
0 & \text{otherwise}.
\end{cases}
\end{gathered}
\end{gather*}
After another application of \fullref{lemma:capidem} we get
\begin{gather}\label{eq:ddlhs}
\begin{gathered}
\down{T}\down{S}\lambda_{v,V}\,\trap{V}{v{-}1}=
\begin{cases}
\lambda_{v[S],V{\setminus}S}\,\upo{VT{\setminus}S}\qjw[{v[V]-1}]\downo{V}
& \text{if }S\subset V,T\not\subset V,U\subset V, 
\\
\lambda_{v,V}\,\lambda^{-1}_{v[S],VS}\,\lambda_{v[T](S),VS{\setminus}T}\, 
\upo{VS{\setminus}T} \qjw[{v[V]-1}]\downo{V}
& \text{if }S\not\subset V,T\subset V,U\not\subset V,
\\
0 & \text{otherwise}.
\end{cases}
\end{gathered}
\end{gather}
These possibilities for $V$ index the $\ppar$-morphism basis for $\Hom_{\TL[\F]}(\pjw[v{-}1],\pjw[w{-}1])$ from \fullref{lemma:admsum}. 
The term of maximal through-degree arises for $V=T$. 
Now, by the unitriangularity of the basis change between $\ppar$-morphisms and standard morphisms, 
we can read off the coefficient of $\Up{S}\Down{T}\pjw[v{-}1]$ 
in the $\ppar$-morphism expansion of $\Down{T}\Down{S}\pjw[v{-}1]$ 
as the coefficient of $\upo{S} \qjw[{v[T]-1}]\downo{T}$ in \eqref{eq:ddlhs}. 
Writing $S=\{i,i+1,\dots,i_{1}-1\}$ and 
$T=\{i_{1},\dots,i_{2}-1\}$, we compute this coefficient as
\begin{gather}\label{eq:q}
\begin{aligned}
q&= 
\tfrac{\lambda_{v,T}\,\lambda_{v[T](S),S}\,}
{\lambda^{-1}_{v[S],ST}\,}  
=
\tfrac{\lambda_{v,T}\,}
{\lambda_{v[S],T}\,}  
=(-1)^{\ppar^{i_1}}\tfrac{
\pbase{a_{j},\dots,a_{i_{2}},-a_{i_{2}-1},\dots,-a_{i_{1}+1},-a_{i_{1}},0,\dots,0}{\ppar}}
{\pbase{a_{j},\dots,a_{i_{2}},-a_{i_{2}-1},\dots,-a_{i_{1}+1},1-a_{i_{1}},0,\dots,0}{\ppar}}
\\
&=(-1)^{\ppar^{i_1}}\tfrac{\fancest{v}{S}[T]}{\fancest{v}{S}[T]+\ppar^{i_{1}}}.
\end{aligned}
\end{gather}
From this, we immediately get $q\equiv\funcg(a_{i_{1}}-1)=\funch\bmod\ppar$, as desired. 

To finish the proof, we also need to show that $\Up{UT}\Down{SU}\pjw[v{-}1]$ (the basis morphism of second 
highest through-degree 
in $\Hom_{\TL[\F]}(\pjw[v{-}1],\pjw[w{-}1])$) does not occur 
in the $\ppar$-morphism expansion of $\Down{T}\Down{S}\pjw[v{-}1]$. 
Thanks to triangularity of the basis change, this can be verified by computing the coefficient $q^{\prime}$ of 
$\upo{UT}\qjw[{v[SU]{-}1}]\downo{SU}$ in the difference 
$\pqjw[w{-}1]\down{T}\down{S}\pqjw[{v[T]-1}]-q\pqjw[w{-}1]\upo{S}\downo{T}\pqjw[v{-}1]$, 
and showing that it reduces to zero modulo $\ppar$.

To this end, we again use \fullref{lemma:capidem} to expand 
\begin{gather}\label{eq:ddrhs}
\begin{gathered}
\pqjw[w{-}1]\upo{S}\downo{T}\lambda_{v,V}\,\trap{V}{v{-}1}=
\begin{cases}
\lambda_{v,V}\,\lambda^{-1}_{v[T],VT}\,\lambda_{v[T](S),VT{\setminus}S}\,\upo{VT {\setminus}S}\qjw[{v[V]-1}]\downo{V}
& \text{if }S\subset V,T\not\subset V,U\subset V,
\\
\lambda_{v[T],V{\setminus}T}\,\upo{VS{\setminus}T}\qjw[{v[V]-1}]\downo{V}
& \text{if }S\not\subset V,T\subset V,U\not\subset V,
\\
0 & \text{otherwise}.
\end{cases}
\end{gathered}
\end{gather}

Focusing on the case $V=S\cup U$, we compute the crucial coefficient $q^{\prime}$ as
\begin{gather}\label{eq:discr}
\begin{aligned}
q^{\prime} = 
\lambda_{v[S],U}\,-  
\tfrac{\lambda_{v,T}\,}
{\lambda_{v[S],T}\,}  
\tfrac{\lambda_{v[S],SU}\,\lambda_{v[T](S),TU}\,}
{\lambda_{v[T],SUT}\,}  
&= 
\lambda_{v,U}\,
\big( 
1- 
\tfrac{\lambda_{v,T}\,\lambda_{v[T](S),TU}\, }
{\lambda_{v[S],T}\,\lambda_{v[T],TU}}
\big)
\\
&=
\lambda_{v,U}\,
\big( 
1 - 
\tfrac{\lambda_{v,T}\,\lambda_{v[SU](T),T}}
{\lambda_{v[S],T}\,\lambda_{v[U](T),T}}
\big),
\end{aligned}
\end{gather}
where we have used $\lambda_{v[S],U}\,=\lambda_{v,U}\,$, $\lambda_{v[S],SU}\,=\lambda_{v,U}\,\lambda_{v[U],S}\,$ 
and $\lambda_{v[T],SUT}\,=\lambda_{v[T],UT}\,\lambda_{v[U],S}\,$ in the first line, and in the second line
\begin{gather*}
\begin{aligned}
\lambda_{v[T](S),TU}
&=\lambda_{v[T](S),U}\,\lambda_{v[T](S)[U],T}=\lambda_{v[T],U}\,\lambda_{v[SU](T),T},
\\
\lambda_{v[T],TU}
&=\lambda_{v[T],U}\,\lambda_{v[T][U],T}=\lambda_{v[T],U}\,\lambda_{v[U](T),T}.
\end{aligned}
\end{gather*}
Now we note that
\begin{gather*}
\begin{aligned}
\tfrac{\lambda_{v[SU](T),T}}
{\lambda_{v[U](T),T}}
&=
\tfrac{\pbase{a_{j},\dots,a_{i_{3}},-a_{i_{3}-1},\dots,-a_{i_{2}},0,\dots,0,a_{i_{1}}-1,0,\dots,0}{\ppar}}
{\pbase{a_{j},\dots,a_{i_{3}},-a_{i_{3}-1},\dots,-a_{i_{2}},0,\dots,0,a_{i_{1}},0,\dots,0}{\ppar}} 
=\tfrac{\fancest{v[U]}{S}-\ppar^{i_{1}}}{\fancest{v[U]}{S}},
\end{aligned}
\end{gather*}
and together with \eqref{eq:q} we can continue
\begin{gather*} 
\begin{aligned}
\eqref{eq:discr} 
&= 
\lambda_{v,U}\,
\Big( 
\tfrac{
(\fancest{v}{S}[T]+\ppar^{i_{1}})\fancest{v[U]}{S}
-\fancest{v}{S}[T](\fancest{v[U]}{S}-\ppar^{i_{1}})}{(\fancest{v}{S}[T]+\ppar^{i_{1}} )\fancest{v[U]}{S}}
\Big)
\\
&= 
\lambda_{v,U}\,
\Big( 
\tfrac{\ppar^{i_{1}}(\fancest{v[U]}{S}+\fancest{v}{S}[T])}{(\fancest{v}{S}[T]+\ppar^{i_{1}})\fancest{v[U]}{S}}
\Big).
\end{aligned}
\end{gather*}
This is divisible by $
\tfrac{\ppar^{-|U|}\ppar^{|S|}\ppar^{|S|+|T|+|U|}}{\ppar^{|S|}\ppar^{|S|}}=\ppar^{|T|}$ and, thus, 
$q^{\prime}$ is zero modulo $\ppar$. This completes the proof of the first claim of the lemma. The second one is analogous.
\end{proof}

\begin{lemma}\label{lemma:tandem-second}
$\condA(v)$ follows if we have $\condA(v-1)$, $\condE(v-1)$ and $\condWm(v)$.  
\end{lemma}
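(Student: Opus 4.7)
The plan is to upgrade the approximate adjacency of \fullref{lemma:adjacent-generators-3} to an exact identity on $\pjw[v-1]$. Setting
\begin{gather*}
\delta := \Down{T}\Down{S}\pjw[v-1] - \funcH[S]\Up{S}\Down{T}\pjw[v-1],
\end{gather*}
that lemma yields $\delta \in V_{>U}$, where $U>T$ is the next minimal down-admissible stretch for $v$, and gives $\delta=0$ outright when no such $U$ exists. The task reduces to showing $\delta=0$ in the remaining case.

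First, I would expand $\delta$ in the $\ppar$-morphism basis of $\Hom(\pjw[v-1],\pjw[v[ST]-1])$ from \fullref{lemma:admsum} as a linear combination of elements $\Up{S}\Down{T'}\pjw[v-1]$ with $T'\supsetneq T$ reaching into or above $U$. Each such $T'$ decomposes as $T\sqcup T_+$ with $T_+$ down-admissible for $v$ strictly above $T$, so that $\Down{T'}\pjw[v-1]=\Down{T}\Down{T_+}\pjw[v-1]$ by \fullref{lemma:loops-1}.(a). This shows that the error is supported on morphisms factoring through $\pjw[v[T_+]-1]$ via the projector absorption of \fullref{proposition:p-properties}.

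Next, I would invoke $\condA(v-1)$ on the strictly smaller integer $v[T_+]<v$, whose digits at positions $\leq \max(T)$ coincide with those of $v$ so that $S$ and $T$ remain adjacent minimal down-admissible stretches there. This yields the exact identity $\Down{T}\Down{S}\pjw[v[T_+]-1]=\funcH[S]\Up{S}\Down{T}\pjw[v[T_+]-1]$. Post-composing the approximate identity at $v$ with $\Down{T_+}\pjw[v-1]$ and matching against the exact identity at $v[T_+]$ forces $\delta\,\Down{T_+}\pjw[v-1]=0$. Running this over all admissible $T_+$, together with $\condE(v-1)$ applied to $\End_{\TL[\F]}(\pjw[v[T_+]-1])$ to control residual endomorphism factors, and with the linear independence of basis elements from \fullref{lemma:admsum}, pins each coefficient of $\delta$ down to zero.

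The main obstacle will be the law of small primes: digit carries produced by reflecting through $T_+$ can shift the position or admissibility of the stretch directly above $T$ in $v[T_+]$, so that $U$ need not survive as a stretch after the reflection. In those exceptional configurations the reduction to $\condA(v-1)$ must be combined with $\condWm(v)$ and with the partial-trace identity of \fullref{proposition:p-properties-trace} to re-route the argument through a compatible sub-stretch. Splitting $T_+$ into carry-free blocks and handling the resulting small-prime cases stepwise will be the delicate part; everything else is bookkeeping once the reduction to $v[T_+]$ is set up.
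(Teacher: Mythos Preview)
Your characterization of the error term $\delta$ is wrong, and this breaks the argument. The $\ppar$-morphism basis of $\Hom_{\TL[\F]}(\pjw[v-1],\pjw[w-1])$ with $w=v[T](S)$ does \emph{not} consist only of elements of the form $\Up{S}\Down{T'}$ with $T'\supsetneq T$. As the paper spells out in \eqref{eq:adjacenttwoerror}, the terms in $V_{>U}$ come in two families:
\[
c_{X}\,\Up{XUT}\Down{SUX}\pjw[v-1]
\quad\text{and}\quad
d_{X}\,\Up{XS}\Down{TX}\pjw[v-1],
\]
with $X$ ranging over down-admissible subsets $X>U$. Your description only captures the $d_X$ family. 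For the $c_X$ terms the down-part is $SUX$, which does \emph{not} contain $T$, so your factorization $\Down{T'}=\Down{T}\Down{T_+}$ is unavailable and the reduction to $\condA(v-1)$ at $v[T_+]$ does not apply. The two families interact nontrivially, and separating them is precisely where the work lies.

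Relatedly, you never really use the hypothesis $\condWm(v)$; it appears only as a vague fallback for ``law of small primes'' configurations. In the paper this hypothesis is essential. The actual argument multiplies \eqref{eq:adjacenttwoerror} by $\Down{S}$ on the left and $\Up{T}$ on the right, producing an identity in $\End_{\TL[\F]}(\pjw[v[T]-1])$, see \eqref{eq:adjacenttwoerrortwo}. One then needs $\condWm(v)$ to resolve the zigzags $\Down{ST}\Up{TS}$ and $\Down{S}\Up{S}\Down{T}\Up{T}$ into $\ppar$-loops, and $\condE(v-1)$ to read off a linear system forcing $c_X=d_X=0$. (In the non-generic case where $T$ is not down-admissible for $v[T]$ this only gives $c_X$ and $d_X$ proportional, and a second step---multiplication by $\loopdown{U}{v-1}$---is needed to finish.) Your proposed route, precomposing with various $\Down{T_+}$ and invoking $\condA(v-1)$, does not produce enough independent constraints to pin down both families of coefficients.
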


The proof will be split into two parts. 
First we give a proof that works under a technical 
assumption, which is generically satisfied. 
In the second part, we treat the remaining cases.

\begin{proof}[Proof, with caveat.]
By $\condA(v-1)$ and projector absorption we may assume that 
$S$ is a smallest down-admissible stretch. At first, we will also 
assume that $S<T$ are minimal down-admissible stretches for $v$ and 
that $T$ is also down-admissible for $v[S]$. By \fullref{lemma:isotopies}, 
this implies that $S$ is up-admissible for $v$ and $T$ is down-admissible for $v(S)$. 

We already know that the desired equation holds up to certain potential error terms, \ie
\begin{gather}\label{eq:adjacenttwoerror}
\Down{T}\Down{S}\pjw[v{-}1] 
=\funch_{1}\Up{S}\Down{T}\pjw[v{-}1] 
+{\textstyle\sum_{X}}\,\big( 
c_{X}\Up{XUT}\Down{SUX} 
+d_{X}\Up{XS}\Down{TX}\big)
\pjw[v{-}1],
\end{gather}
where the summation runs over down-admissible subsets 
$X>U$, $c_{X},d_{X}\in\F$ and where we write $\funch_{1}:=\funcg(a_{\max(S)+1}-1)$ 
for $v=\pbase{a_{j},\dots,a_{0}}{\ppar}$. 
We now multiply this equation with $\Down{S}$ 
on the left and with $\Up{T}$ on the right 
and rewrite it using $w=v[T]$ and \fullref{lemma:adjacent-generators-1} into
\begin{gather}\label{eq:adjacenttwoerrortwo}
\begin{aligned}
\Down{ST}\Up{TS}\pjw[w{-}1]=
&\funch_{1}\Down{S}\Up{S}\Down{T}\Up{T}\pjw[w{-}1] 
\\
&
+{\textstyle\sum_{X}}\,\big(
c_{X}\loopdown{XUTS}{w{-}1} 
+d_{X}\loopdown{X}{w{-}1}\Down{S}\Up{S}\Down{T}\Up{T}\big)\pjw[w{-}1].
\end{aligned}
\end{gather}
This equation 
can be simplified using $\condWm(v)$. In this proof attempt, we only consider the 
\emph{generic case} where $T$ (and thus also $TS$) is down-admissible for $w$. 
So, using $\condWm(v)$ for the pair $(v,ST)$ we get:
\begin{gather*}
\Down{ST}\Up{TS}\pjw[{v[T]{-}1}] 
=\funcg_{1}\loopdown{TS}{v[T]{-}1} 
+\funcf_{1}\loopdown{UTS}{v[T]{-}1},
\end{gather*}
where $\funcg_{1}:=\funcg(b_{\max(S\cup T)+1})$ and 
$\funcf_{1}:=\funcf(b_{\max(S\cup T)+1})$ are computed 
from $v[T]=\pbase{b_{i},\dots,b_{0}}{\ppar}$. Further, 
using $\condWm(v)$ for $(v[T](S),S)$ and $(v,T)$ as well as $\condE(v-1)$ we compute
\begin{gather*}
\Down{S}\Up{S}\Down{T}\Up{T}\pjw[{v[T]{-}1}] 
= 
(\funcg_{2}\loopdown{S}{v[T]{-}1}+
\funcf_{2}\loopdown{TS}{v[T]{-}1})(\funcg_{1}\loopdown{T}{v[T]{-}1} 
+\funcf_{1}\loopdown{UT}{v[T]{-}1})
\\
=\funcg_{2}\funcg_{1}\loopdown{TS}{v[T]{-}1}
+\funcg_{2}\funcf_{1}\loopdown{UTS}{v[T]{-}1},
\end{gather*}
where $\funcg_{1}$ and $\funcf_{1}$ are as above, while 
$\funcf_{2}:=\funcf(b_{\max(S)+1})=\funcf(\ppar-a_{\max(S)+1})$. We also have
\begin{gather*}
\funcg_{2}:=\funcg(b_{\max(S)+1})=\funcg(\ppar-a_{\max(S)+1})
=\funch_{1}^{-1}.
\end{gather*} 
(Note that $\funch_{1}$ is invertible since $\ppar>2$.) Using these two computations and 
$\condE(v-1)$, the equation \eqref{eq:adjacenttwoerrortwo} transforms into
\begin{gather*}
0=0+{\textstyle\sum_{X}}\, 
\big( 
(c_{X}+\funcg_{2}\funcf_{1}d_{X})\loopdown{XUTS}{w{-}1} 
+\funcg_{2}\funcg_{1}d_{X}\loopdown{XTS}{w{-}1} 
\big).
\end{gather*}
Since the $\ppar$-loops $\loopdown{Y}{w{-}1}$ form a basis 
of $\End_{\TL[\F]}\left(\pjw[w{-}1]\right)$ and the scalars $\funcg_{1}$ and $\funcg_{2}$ are non-zero by admissibility and $\ppar>2$,
we conclude $d_{X}=0$ and then $c_{X}=0$. Thus all error terms in \eqref{eq:adjacenttwoerror} vanish. This completes the proof in the case where $S$ and $T$ are minimal.

In the general case, we partition $S$ and $S^{\prime}$ into minimal 
down-admissible stretches $S_{1}<\cdots<S_{k}$ and $S^{\prime}_{1}<\cdots<S^{\prime}_{l}$, 
respectively. Then we have
\begin{align*}
\Down{S^{\prime}}\Down{S}\pjw[v{-}1]
&=
\Down{S^{\prime}_{1}}\Down{S^{\prime}_{2}}\cdots\Down{S^{\prime}_{l}} 
\Down{S_{1}}\cdots\Down{S_{k-1}}\Down{S_{k}}\pjw[v{-}1]
\\
&=
\Down{S_{1}}\cdots\Down{S_{k-1}}\Down{S^{\prime}_{1}}\Down{S_{k}}\Down{S^{\prime}_{2}}\cdots\Down{S^{\prime}_{l}}\pjw[v{-}1]
\\
&=
\Down{S_{1}}\cdots\Down{S_{k-1}}\Up{S_{k}}\Down{S^{\prime}_{1}}\funcH[S_{k}]\Down{S^{\prime}_{2}}\cdots\Down{S^{\prime}_{l}}\pjw[v{-}1]
\\
&=
\Up{S_{k}}\cdots\Up{S_{1}}\Down{S^{\prime}_{1}}\cdots
\Down{S^{\prime}_{l}}\funcH[S_{k}]\pjw[v{-}1]
=\Up{S}\Down{S^{\prime}}\funcH[S]\pjw[v{-}1].
\end{align*}
Here we have first used far-commutativity, then $\condA(v-1)$ on the 
adjacent minimal stretches $S_{k}<S^{\prime}_{1}$, and finally \fullref{lemma:adjacent-generators-1}. 
Note also that $\funcH[S_{k}]=\funcH[S]$ far-commutes with $\Down{S^{\prime}_{2}}\cdots\Down{S^{\prime}_{l}}$.
\end{proof}

\begin{proof}[Proof of the remaining cases]
In the previous proof we made the assumption that $T$, and thus also $T\cup S$, is down-admissible for $w=v[T]$. 
Now suppose this is not the case. 
At first we can proceed in a very similar way as 
in the previous proof. Whenever we use zigzag 
relations, we have to replace $T$ by $\hull[T]=T\cup U$ and set the $\funcf$-term to zero. Hence, we get
\begin{gather*}
\Down{ST}\Up{TS}\pjw[w{-}1] 
=\funcg_{1}\loopdown{UTS}{w{-}1},
\\
\Down{S}\Up{S}\Down{T}\Up{T}\pjw[w{-}1] 
= 
(\funcg_{2}\loopdown{S}{w{-}1}+
\funcf_{2}\loopdown{UTS}{w{-}1})
(\funcg_{1}\loopdown{UT}{w{-}1})=\funcg_{2}\funcg_{1}\loopdown{UTS}{w{-}1},
\end{gather*}
and the equation \eqref{eq:adjacenttwoerrortwo} transforms into
\begin{gather*}
0=0+{\textstyle\sum_{X}}\, 
\big( 
(c_{X}+\funcg_{2}\funcg_{1}d_{X})\loopdown{XUTS}{w{-}1}\big).
\end{gather*}
This implies that the coefficients $c_{X}$ and $d_{X}$ are unit 
multiples of each other for every $X$. Next we will use a 
different strategy to show that $d_{X}=0$, which thus implies $c_{X}=0$ and finishes the proof. 
The strategy is to multiply both sides of \eqref{eq:adjacenttwoerror} by $\loopdown{U}{v{-}1}$ on 
the right, to equate the first two terms, to kill all terms with coefficients 
$c_{X}$, and to preserve all terms with coefficients $d_{X}$. 


The first two terms are rewritten as
\begin{align*}
\Down{T}\Down{S}\loopdown{U}{v{-}1}
&=\Down{T}\Up{U}\Down{S}\Down{U}\pjw[v{-}1]
=\Up{U}\Up{T}\Down{S}\Down{U}\pjw[v{-}1]  
\\
\funch_{1}\Up{S}\Down{T}\loopdown{U}{v{-}1}
&=\funch_{1}\Up{S}\Up{U}\Up{T}\Down{U}\pjw[v{-}1]=\funch_{1}\Up{U}\Up{S}\Up{T}\Down{U}\pjw[v{-}1],
\end{align*}
which are equal by virtue of $\condA(v-1)$ since $v[T](S)[U]<v$. 
We also note that the scalar that appears is exactly $\funch_{1}^{-1}$. 
After subtracting these terms from the multiple of \eqref{eq:adjacenttwoerror}, we are left with
\begin{gather}\label{eq:cxdx}
0={\textstyle\sum_{X}}\,\big( 
c_{X}\Up{XUT}\Down{SUX}\loopdown{U}{v{-}1}   
+d_{X}\Up{XS}\Down{TX}\loopdown{U}{v{-}1}\big).
\end{gather}
We first claim that $\Up{XUT}\Down{SUX}\loopdown{U}{v{-}1}=0$. 
To verify this, we distinguish between the two cases in which $X$ is distant or adjacent to $U$. 
In the first case, we get
\begin{gather*}
\Up{XUT}\Down{SUX}\loopdown{U}{v{-}1}  
=\Up{XUT}\Down{S}\Down{U}\Up{U}\Down{U}\Down{X}\pjw[v{-}1]=0,
\end{gather*}
since $\Down{U}\Up{U}\Down{U}\Down{X}\pjw[v{-}1]=0$ thanks to $\condE(v-1)$ as $v[X]<v$. 
In the second case, we get
\begin{gather*}
\Up{XUT}\Down{SUX}\loopdown{U}{v{-}1}  
=\Up{XUT}\Down{S}\Down{UX}\Up{U}\Down{U}\pjw[v{-}1] 
=\Up{XUT}\Down{S}\Down{X}\Up{U}\Up{U}\Down{U}\pjw[v{-}1]=0,
\end{gather*}
since $\Up{U}^2\pjw[v{-}1]=0$. This proves the claim.

Our second claim is that $\Up{XS}\Down{TX}\loopdown{U}{v{-}1}\neq 0$ 
for every $X$ and that these morphisms are linearly independent. 
Again it matters whether $X$ is distant or adjacent to $U$. In the first case we get
\begin{align*}
\Up{XS}\Down{TX}\loopdown{U}{v{-}1}  
=\Up{XS}\Down{T}\Up{U}\Down{U}\Down{X}\pjw[v{-}1]  
&=\Up{X}\Up{U}\Up{S}\Up{T}\Down{U}\Down{X}\pjw[v{-}1]
\\
&\sim\Up{X}\Up{U}\Up{T}\Down{S}\Down{U}\Down{X}\pjw[v{-}1].
\end{align*}
Here we have used $\condA(v-1)$ for 
$v[U\cup  X]<v$ to proceed to the second 
line. (We use $\sim$ to indicate unit proportionality.) In the second case we compute
\begin{alignat*}{2}
\Up{XS}\Down{TX}\loopdown{U}{v{-}1}  
&=\Up{XS}\Down{TU}\Down{X}\Down{U}\pjw[v{-}1] &&
\\  
&\sim\Up{XS}\Down{T}\Down{U}\Up{U}\Down{X}\pjw[v{-}1]&&
\\
&\sim\Up{XS}\Down{T}\Up{U}\Down{UX}\pjw[v{-}1]
&&=\Up{XS}\Up{UT}\Down{UX}\pjw[v{-}1]
\\
& &&\sim \Up{XUT}\Down{SUX}\pjw[v{-}1]
\end{alignat*}
This time we have used $\condA(v-1)$, namely on the ancestor $\fancest{v}{T}<v$ 
using projector absorption, to get to the second and the fourth line, 
and $\condWm(v)$ in the form of a zigzag relation for $v[X](U)<v$, noting 
that $U$ is down-admissible for $v[X]$, to get to the third line. The 
proportionality constants that appear in these steps are units and 
$\Up{XUT}\Down{SUX}\pjw[v{-}1]$ are linearly independent as $X$ varies. 

Finally, the two claims and equation \eqref{eq:cxdx} imply that $d_{X}=0$ 
for every $X$, and thus also $c_X=0$, which finishes the proof of $\condA(v)$. 
\end{proof}

\subsection{Overlap relations}
Next, we focus on establishing $\condO(v)$. We again start with an approximate version.

\begin{lemma}\label{lemma:overlapapprox}
Suppose that $S<T$ are adjacent minimal down-admissible stretches for $v$ and $S^{\prime}\geq S$ 
is a minimal down-admissible stretch for $v[S]$ with $S^{\prime}\cap S=\{s\}$ and $S^{\prime}\not\subset S$, then we have 
\begin{gather*}
\Down{S^{\prime}}\Down{S}\pjw[v{-}1]=\Up{\{s\}}\Down{S}\Down{S^{\prime}{\setminus}\{s\}}\pjw[v{-}1] 
+V_{>T},
\quad
\pjw[v{-}1]\Up{S}\Up{S^{\prime}}=\pjw[v{-}1]\Up{S^{\prime}{\setminus}\{s\}}\Up{S}\Down{\{s\}}+W_{>T}.
\end{gather*}
Here we use the notations $V_{>T}=\spanFp{\Up{X}\Down{Y}\pjw[v{-}1]\mid\exists y\in Y\text{ such that }y>T}$ and 
$W_{>T}=\spanFp{\pjw[v{-}1]\Up{Y}\Down{X}\mid\exists y\in Y\text{ such that }y>T}$.
In either case, if $T$ is a largest down-admissible stretch for $v$ 
then the relations from \fullref{lemma:overlap} hold on the nose.
\end{lemma}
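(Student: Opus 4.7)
The plan is to adapt the strategy from the proof of \fullref{lemma:adjacent-generators-3} to the overlap setting. By projector absorption (from \fullref{lemma:loops-1}) we may assume that $S$ is the smallest minimal down-admissible stretch for $v$. Write $S' = \{s\} \cup S''$ with $S'' = S' \setminus \{s\}$, so that $S''$ sits directly above $\max S$ and is disjoint from $S$. The first step is to expand both $\pqjw[w{-}1] \down{S'} \down{S} \pqjw[v{-}1]$ and $\pqjw[w{-}1] \up{\{s\}} \down{S} \down{S''} \pqjw[v{-}1]$ (here $w = v[S][S']$) in characteristic zero using \eqref{eq:pjwdef}, iteratively applying \fullref{lemma:capidem} to each trapeze summand $\lambda_{v,V}\,\trap{V}{v-1}$, and collecting the resulting standard-morphism decomposition in the basis of \fullref{lemma:sbasis}. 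As in the proof of \fullref{lemma:adjacent-generators-3}, only certain $V$ contribute non-trivially, and these $V$ parameterize the $p$-morphism basis elements $\Up{X}\Down{Y}\pjw[v{-}1]$ appearing in the expansion.

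The second step is to identify the leading term. The maximal through-degree basis element appearing on both sides is the standard morphism $\upo{\{s\}}\,\qjw[v[S\cup S']-1]\,\downo{S\cup S'}$ — this is precisely the summand associated with $V = S$ on the left and the only non-vanishing summand of $\up{\{s\}}\down{S}\down{S''}\pqjw[v{-}1]$ of maximal through-degree on the right. Its coefficients on the two sides are each a product of $\lambda$-scalars governed by \eqref{eq:the-scalar}, and the telescoping nature of \fullref{lemma:lambda} causes them to match exactly in $\Q$, which is much stronger than the modulo-$p$ matching required. This handles the leading term.

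The third and critical step is to control the remaining terms of through-degree above the $V_{>T}$ cutoff. The only basis morphisms whose target $Y$ is a down-admissible set with $\max Y \le \max T$ (but strictly smaller through-degree than the leading term) correspond to $V = S \cup U$, where $U$ is a down-admissible stretch with $\max S < \min U \le \max T$. The corresponding coefficient on each side is a linear combination of $\lambda$-ratios. The plan is to mimic the discriminant computation \eqref{eq:discr}: after writing out the ratio using the identities $\lambda_{v[S],SU} = \lambda_{v,U}\,\lambda_{v[U],S}$ and similar factorizations, the difference telescopes to an expression divisible by $p^{|T|}$, hence vanishes modulo $p$. All strictly lower through-degree terms feed directly into $V_{>T}$ by definition. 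The main obstacle will be the $\lambda$-scalar bookkeeping: unlike the adjacency case, the overlap at $\{s\}$ changes which ancestor factors $\fancest{v}{k}[V]$ appear in the numerators and denominators, and one must track carefully how the telescoping behaves when the stretches share a digit. The final claim (that the relations hold on the nose when $T$ is the largest down-admissible stretch) follows because then $V_{>T}$ is zero by definition, and the cup version of the statement follows by the up-down reflection symmetry of $\TL[\F]$.
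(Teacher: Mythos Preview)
Your plan has a critical misidentification of the leading term, which causes the whole strategy to fail. Write $R=S\setminus\{s\}$ and note that $S'=T\cup\{s\}$ and $S\cup S'=S\cup T$. The $\ppar$-morphism basis of $\Hom_{\TL[\F]}(\pjw[v{-}1],\pjw[w{-}1])$ has \emph{highest} through-degree element $\Up{R}\Down{T}\pjw[v{-}1]$ (through-degree $v[T]$), while $\Up{\{s\}}\Down{ST}\pjw[v{-}1]$ is only the \emph{second} highest (through-degree $v[ST]<v[T]$). Your Step~2 asserts the opposite. Moreover, the summand $V=S$ does \emph{not} contribute to the expansion of $\down{S'}\down{S}\pqjw[v{-}1]$: applying $\down{S}$ to $\lambda_{v,S}\trap{S}{v{-}1}$ leaves $\qjw[{v[S]{-}1}]\downo{S}$, and then $\down{S'}$ annihilates this JW projector by \eqref{eq:0kill}. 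The leading standard morphism $\upo{R}\qjw[{v[T]{-}1}]\downo{T}$ actually comes from $V=T$, with coefficient $q=\lambda_{v,T}\lambda^{-1}_{v[S],ST}\lambda_{w,R}$, which is \emph{nonzero} over $\Q$.

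Consequently the two sides do \emph{not} agree over $\Q$ at the leading term (contrary to your claim that ``telescoping'' gives an exact match): on the left the top term is $q\cdot\upo{R}\qjw[{v[T]{-}1}]\downo{T}$, while the right-hand side $\Up{\{s\}}\Down{ST}\pjw[v{-}1]$ has no component of through-degree $v[T]$ at all. The actual mechanism in the paper is a genuinely characteristic-$\ppar$ phenomenon: one first shows $q\equiv 0\bmod\ppar$, so the spurious top term disappears; then --- and this is the subtle point --- the standard morphism $\upo{\{s\}}\qjw[{v[ST]{-}1}]\downo{ST}$ does not occur \emph{at all} in the direct expansion $\down{S'}\down{S}\pqjw[v{-}1]$, and the coefficient of $\Up{\{s\}}\Down{ST}\pjw[v{-}1]$ is recovered \emph{indirectly} from the unitriangular basis-change correction of $q\cdot\upo{R}\qjw[{v[T]{-}1}]\downo{T}$, yielding $-\lambda_{w,\{s\}}q\equiv 1\bmod\ppar$. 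Your Step~3 ``discriminant'' computation never gets off the ground because the discrepancy you would need to control is already present at the top, not at a subleading term.
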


\begin{proof}
We will use the notation $w=v[T](R)$ and 
$\{s\}=S\cap S^{\prime}$, $R=S\setminus \{s\}$, and 
note $S^{\prime}=T{ \cup}\{s\}$. We will also consider the 
minimal down-admissible stretch $U>T$ for $v$, if it exists. 
For the purpose of this proof it is useful to explicitly write 
down the relevant parts of the continued fraction expansions of $v$, $w$ and other entities
\begin{align*}
v 
&=\pbase{\dots,0,a_{x},0,\dots,a_{u},0,\dots,0,1,{\color{red}0},0,\dots,0,a_{r},\dots}{\ppar},
\\
v[S] 
&=\pbase{\dots,0,a_{x},0,\dots,a_{u},0,\dots,0,0,{\color{red}\ppar-1},\ppar-1,\dots,\ppar-1,\ppar-a_{r},\dots}{\ppar},
\\
v[T] 
&=\pbase{\dots,0,a_{x},0,\dots,a_{u}-1,\ppar-1,\dots,\ppar-1,\ppar-1,{\color{red}0},0,\dots,0,a_{r},\dots}{\ppar} 
=w[R],
\\
v[T][S] 
&=\pbase{\dots,0,a_{x},0,\dots,a_{u}-1,\ppar-1,\dots,\ppar-1,\ppar-2,{\color{red}\ppar-1},
\ppar-1,\dots,\ppar-1,\ppar-a_{r},\dots}{\ppar},
\\
v[S][S^{\prime}]
&=\pbase{\dots,0,a_{x},0,\dots,a_{u}-1,\ppar-1,\dots,
\ppar-1,\ppar-1,{\color{red}1},\ppar-1,\dots,\ppar-1,\ppar-a_{r},\dots}{\ppar}=w.
\end{align*}
Here we have highlighted the digit in position $s$ in red.

From this description, it is straightforward to see 
that $\Hom_{\TL[\F]}(\pjw[v{-}1],\pjw[w{-}1])$ is spanned by morphisms of the following four different types
\begin{gather*}
\Up{XR}\Down{TX}\pjw[v{-}1],
\quad
\Up{XUS^{\prime}}\Down{SUX}\pjw[v{-}1],
\quad
\Up{X\{s\}}\Down{STX}\pjw[v{-}1],
\quad
\Up{XUTR}\Down{UX}\pjw[v{-}1],
\end{gather*}
where $X$ denotes a down-admissible subset for $v$ with $X>U$, 
which may be empty. The basis elements of highest and second highest 
through-degree among the above are $\Up{R}\Down{T}\pjw[v{-}1]$ and 
$\Up{\{s\}}\Down{ST}\pjw[v{-}1]$, and all other basis elements are in the subspace $V_{>T}$. 

Our task is to show that $\Up{R}\Down{T}\pjw[v{-}1]$ 
appears with coefficient $0$ and $\Up{\{s\}}\Down{ST}\pjw[v{-}1]$ 
appears with coefficient $1$ if we expand $\Down{S^{\prime}}\Down{S}\pjw[v{-}1]$ 
in this basis. We again start with a computation in characteristic zero.

Two applications of \fullref{lemma:capidem} establish
\begin{gather}\label{eq:ddlhsagain}
\begin{gathered}
\down{S^{\prime}}\down{S}\lambda_{v,V}\,\trap{V}{v{-}1}=
\\
\begin{cases}
\lambda_{v[S],V{\setminus}S}\,\upo{VS^{\prime}{\setminus}S}\qjw[{v[V]-1}]\downo{V}
& \text{if }S\subset V,T\not\subset V,U\subset V, 
\\
\lambda_{v,V}\,\lambda^{-1}_{v[S],VS}\,\lambda_{w,VR{\setminus}T}\, 
\upo{VR{\setminus}T} \qjw[{v[V]-1}]\downo{V}
& \text{if }S\not\subset V,T\subset V,U \not\subset V,
\\
0 & \text{otherwise}.
\end{cases}
\end{gathered}
\end{gather}
where we have used $V\cup S\setminus S^{\prime}=V\cup R\setminus T$ in the second case.

The coefficient $q$ of the maximal through-degree 
basis element $\Up{R}\Down{T}\pjw[v{-}1]$ in 
$\Down{S^{\prime}}\Down{S}\pjw[v{-}1]$ is equal to the
coefficient shown for $\upo{R} \qjw[{v[T]-1}]\downo{T}$ in \eqref{eq:ddlhsagain}. This is
\begin{gather*}
q=\lambda_{v,T}\,\lambda^{-1}_{v[S],TS}\,\lambda_{w,R} \equiv
(-1)^{\ppar^{s}}\frac{
\pbase{\dots,0,a_{x},0,\dots,a_{u}-1,\ppar-1,\dots,\ppar-1,\ppar-1,{\color{red}0}}{\ppar}
}{
\pbase{\dots,0,a_{x},0,\dots,a_{u}-1,\ppar-1,\dots,\ppar-1,\ppar-1,{\color{red}1}}{\ppar}
}
\equiv 0 \bmod \ppar.
\end{gather*}
This shows that $\Up{R}\Down{T}\pjw[v{-}1]$ appears with coefficient $0$ in 
$\Down{S^{\prime}}\Down{S}\pjw[v{-}1]$. The term $\Up{\{s\}}\Down{ST}\pjw[v{-}1]$, however, does 
not seem to appear at all in \eqref{eq:ddlhsagain}. 
Since it is of second highest through-degree in $\Hom_{\TL[\F]}(\pjw[v{-}1],\pjw[w{-}1])$, 
its coefficient is congruent to the coefficient of 
$\up{\{s\}}\pqjw[{v[ST]-1}]\down{ST}$ in the $\ppar$-morphism expansion of 
$q\upo{R}\qjw[{v[T]-1}]\downo{T}$.

Using \fullref{lemma:capidem}, it is straightforward to 
compute that $\upo{R} \qjw[{v[T]-1}]\downo{T}$ equals $\up{R}\pqjw[{v[T]-1}]\down{T}-\lambda_{w,\{s\}}\up{\{s\}}\pqjw[{v[ST]-1}]\down{ST}$ 
up to terms of lower through-degree. 
Thus, we compute the coefficient of interest as
\begin{gather*}
-\lambda_{w,\{s\}} q \equiv
-\frac{
\pbase{\dots,a_{u}-1,\ppar-1,\dots,\ppar-1,{\color{red}-1}}{\ppar}
}{
\pbase{\dots,a_{u}-1,\ppar-1,\dots,\ppar-1,{\color{red}0}}{\ppar}
}
\frac{
\pbase{\dots,a_{u}-1,\ppar-1,\dots,\ppar-1,{\color{red}0}}{\ppar}
}{
\pbase{\dots,a_{u}-1,\ppar-1,\dots,\ppar-1,{\color{red}1}}{\ppar}
}
\equiv
1 \bmod \ppar,
\end{gather*}
and this finishes the proof.
\end{proof}

\begin{lemma}\label{lemma:tandem-third}
$\condO(v)$ follows if we have $\condE(v-1)$, $\condA(v-1)$, $\condO(v-1)$, and $\condWm(v)$.
\end{lemma}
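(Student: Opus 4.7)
The plan is to upgrade the approximate overlap relation of \fullref{lemma:overlapapprox} to the on-the-nose equality of \fullref{lemma:overlap}, following the blueprint of \fullref{lemma:tandem-second}. By \fullref{lemma:overlapapprox}, we have
\begin{gather*}
\Down{S^{\prime}}\Down{S}\pjw[v{-}1]
=\Up{\{s\}}\Down{S\cup T}\pjw[v{-}1]
+{\textstyle\sum_{\alpha}}\,c_{\alpha}\,\Up{X_{\alpha}}\Down{Y_{\alpha}}\pjw[v{-}1],
\end{gather*}
with $c_{\alpha}\in\F$ and each $Y_{\alpha}$ containing some index strictly greater than $T$ (and so containing $U$). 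The task is to show that every $c_{\alpha}$ vanishes.

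The first step is to multiply both sides on the right by $\Up{S^{\prime}}\Up{S}$, turning the equation into an identity in $\End_{\TL[\F]}(\pjw[v{-}1])$. Using $\condA(v-1)$ and $\condA(v)$ to slide $\Up{}$-generators past $\Down{}$-generators, then $\condWm(v)$ to evaluate the emerging zigzag compositions $\Down{\bullet}\Up{\bullet}$ into sums of arrows scaled by $\funcg$ and $\funcf$, one expresses the LHS, the leading RHS term, and each error term as linear combinations of $\ppar$-loops on $v-1$. These loops form a basis by \fullref{lemma:dualnumbers} (\ie $\condE(v-1)$ applied at the matrilineal ancestors of $v$), so the identity projects cleanly onto coefficients.

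Because \fullref{lemma:overlapapprox} already matches the maximal-through-degree coefficient, the equation reduces to a vanishing relation involving only the $\ppar$-loop expansions of the error terms. Each such expansion produces $\ppar$-loops whose index set contains a stretch above $T$, and under the generic assumption that the relevant scalars $\funcg(a_{*})$ and $\funcf(a_{*})$ are invertible in $\F$, the linear independence of these loops forces $c_{\alpha}=0$. The upper overlap relation involving products of $\Up{}$-generators then follows by the up-down symmetry of the construction, using $\condO(v-1)$ in places where the lower relation used $\condA(v-1)$.

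The main obstacle, as in the proof of \fullref{lemma:tandem-second}, will be the exceptional cases in which some $\funcg$ or $\funcf$ scalar vanishes (\ie a relevant digit equals $0$ or $\ppar-1$), so that the first multiplication fails to separate all $c_{\alpha}$. To handle these I would mimic the ``remaining cases'' part of \fullref{lemma:tandem-second}: multiply the approximate relation instead by $\loopdown{U}{v{-}1}$ on the right, and exploit the annihilations $\Up{U}\Up{U}\pjw[v{-}1]=0$ from \fullref{lemma:nilpotent} and $\Down{U}\Up{U}\Down{U}\pjw[v{-}1]=0$ from \eqref{eq:DUD} (granted by $\condE(v-1)$) to kill the problematic error terms while preserving enough structure to pin down the remaining $c_{\alpha}$ by comparison in the $\ppar$-loop basis.
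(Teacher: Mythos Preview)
Your overall architecture---upgrade \fullref{lemma:overlapapprox} by multiplying so as to isolate the error terms, then treat degenerate digit configurations via a second multiplication by $\loopdown{U}{v{-}1}$---is exactly the paper's. But two specific steps in your plan do not work as stated.

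First, the multiplication you propose is ill-posed. The morphism $\Down{S^{\prime}}\Down{S}\pjw[v{-}1]$ goes from $v{-}1$ to $w{-}1$ with $w=v[S][S^{\prime}]$; right multiplication (pre-composition) changes the source, not the target, so you cannot land in $\End_{\TL[\F]}(\pjw[v{-}1])$ this way. Moreover $S^{\prime}$ is not up-admissible for $v$ itself: in the overlap situation the digit $a_{s}$ of $v$ at position $s=\min(S^{\prime})$ is zero (see the explicit expansions in the proof of \fullref{lemma:overlapapprox}), so $\Up{S^{\prime}}$ does not even make sense at $v$. The paper instead multiplies on the right by $\Up{T}$ alone and works in $\Hom_{\TL[\F]}(\pjw[{v[T]{-}1}],\pjw[w{-}1])$; all subsequent zigzag and adjacency manipulations then take place at labels strictly below $v$.

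Second, you invoke $\condA(v)$, which is not among the stated hypotheses. The paper uses only $\condA(v-1)$, and this matters: every adjacency rewrite in the proof is performed after first descending below $v$ (e.g.\ on $v[T]$, $v[U]$, or ancestors thereof). Relatedly, the error-term analysis is more intricate than your sketch suggests: the terms in $V_{>T}$ split into four families with coefficients $c_{X},d_{X},e_{X},f_{X}$ (see \eqref{eq:overlapstart}), and in the generic case the multiplication by $\Up{T}$ produces the linear system $\funcg c_{X}=0$, $\funcf c_{X}+f_{X}=0$, $d_{X}+\funcf e_{X}=0$, $\funcg e_{X}=0$, which kills everything since $\funcg\neq 0$. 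In the non-generic case ($a_{u}=1$) only the relations $f_{X}=-2c_{X}$ and $d_{X}=-2e_{X}$ survive, and it is then the multiplication by $\loopdown{U}{v{-}1}$---exactly as you anticipate---that annihilates the $d_{X},f_{X}$ terms while sending the $c_{X},e_{X}$ terms to linearly independent basis elements, forcing them to vanish.
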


\begin{proof}[Proof, with caveat.]
As usual, $\condO(v-1)$ and projector absorption 
allows us to restrict to the case when $S$ is the 
smallest minimal down-admissible stretch for $v$.
By \fullref{lemma:overlapapprox} we then have
\begin{gather}\label{eq:overlapstart}
\begin{aligned}
\Down{S^{\prime}}\Down{S}\pjw[v{-}1] 
&=\Up{\{s\}}\Down{S}\Down{T}\pjw[v{-}1]
\\
&+{\textstyle\sum_{X\neq \emptyset}}\,c_{X}\Up{XR}\Down{TX}\pjw[v{-}1] 
&+{\textstyle\sum_{X}}\,d_{X}\Up{XUS^{\prime}}\Down{SUX}\pjw[v{-}1]
\\
&+{\textstyle\sum_{X\neq \emptyset}}\,e_{X}\Up{X\{s\}}\Down{STX}\pjw[v{-}1] 
&+{\textstyle\sum_{X}}\,f_{X}\Up{XUTR}\Down{UX}\pjw[v{-}1].
\end{aligned}
\end{gather}
Here $U>T$ denotes another adjacent minimal down-admissible stretch for $v$, 
if it exists, and $X$ ranges over down-admissible subsets $X>U$ for $v$. 
Our task is to show that the scalars $c_{X},d_{X},e_{X},f_{X}\in\F$ are all zero. 

We start by multiplying both sides of \eqref{eq:overlapstart} by $\Up{T}$ on the right. After rearranging, we get
\begin{gather}\label{eq:overlapsecond}
\begin{aligned}
\Down{S^{\prime}}\Up{ST}\pjw[{v[T]{-}1}] 
&=\Up{\{s\}}\Down{S}\Down{T}\Up{T}\pjw[{v[T]{-}1}]
\\
&+{\textstyle\sum_{X\neq \emptyset}}\,c_{X}\Up{XR}\Down{X}\Down{T}\Up{T}\pjw[{v[T]{-}1}]
\\
&+{\textstyle\sum_{X}}\,d_{X}\Up{XUS^{\prime}}\Down{STUX}\pjw[{v[T]{-}1}]
\\
&+{\textstyle\sum_{X\neq \emptyset}}\,e_{X}\Up{X\{s\}}\Down{SX}\Down{T}\Up{T}\pjw[{v[T]{-}1}]
\\
&+{\textstyle\sum_{X}}\,f_{X}\Up{XUTR}\Down{TUX}\pjw[{v[T]{-}1}].
\end{aligned}
\end{gather}
The next step is to apply the zigzag relations and for this 
we shall assume that we are in the \emph{generic case}, where $T$ 
is down-admissible for $v[T]$ (and thus $\ppar>2$). This also implies that $S^{\prime}$ is 
down-admissible for $v[T](R)$, and using the zigzag relations provided by $\condWm(v)$ for $v[T](R)$ we compute
\begin{gather*}
\begin{aligned}
\Down{S^{\prime}}\Up{ST}\pjw[{v[T]{-}1}]
&=\Down{S^{\prime}}\Up{S^{\prime}}\Up{R}\pjw[{v[T]{-}1}]
\\
&=\funcg(a_u)\Up{S^{\prime}}\Down{S^{\prime}}\Up{R}\pjw[{v[T]{-}1}]
\\
&\;\;+\funcf(a_u)\Up{US^{\prime}}\Down{S^{\prime}U}\Up{R}\pjw[{v[T]{-}1}]
\\
&=\funcg(a_u)\Up{S^{\prime}}\Down{ST}\pjw[{v[T]{-}1}]
\\
&\;\;+\funcf(a_u)\Up{US^{\prime}}\Down{STU}\pjw[{v[T]{-}1}].
\end{aligned}
\end{gather*}
Similarly we compute
\begin{gather}\label{eq:overlapright}
\begin{aligned}
\Up{\{s\}}\Down{S}\Down{T}\Up{T}\pjw[{v[T]{-}1}]
&=\funcg\Up{\{s\}}\Down{S}\Up{T}\Down{T}\pjw[{v[T]{-}1}]
+\funcf\Up{\{s\}}\Down{S}\Up{UT}\Down{TU}\pjw[{v[T]{-}1}]
\\
&=\funcg\Up{\{s\}}\Up{TS}\Down{T}\pjw[{v[T]{-}1}]
+\funcf\Up{\{s\}}\Up{UTS}\Down{TU}\pjw[{v[T]{-}1}]
\\
&=\funcg(\ppar-2)\funcg \Up{T}\Down{\{s\}}\Up{\{s\}}\Up{R}\Down{T}\pjw[{v[T]{-}1}]
\\
&\;\;+\funcg(\ppar-2)\funcf \Up{UT}\Down{\{s\}}\Up{\{s\}}\Up{R}\Down{TU}\pjw[{v[T]{-}1}]
\\
&=\funcg\Up{S^{\prime}}\Down{\{s\}}\Up{R}\Down{T}\pjw[{v[T]{-}1}]
+\funcf\Up{US^{\prime}}\Down{\{s\}}\Up{R}\Down{TU}\pjw[{v[T]{-}1}]
\\
&=\funcg\Up{S^{\prime}}\Down{ST}\pjw[{v[T]{-}1}]
+\funcf\Up{US^{\prime}}\Down{STU}\pjw[{v[T]{-}1}],
\end{aligned}
\end{gather}
where we write $\funcg=\funcg(a_u-1)$ and $\funcf=\funcf(a_u-1)$, and we 
have used $\condWm(v)$ on the pair $(v,T)$ and smaller instances, as well as 
$\condA(v-1)$. Thus, we have equated the first two terms in \eqref{eq:overlapsecond}. We also simplify the $c_{X}$ terms
\begin{align*}
\Up{XR}\Down{X}\Down{T}\Up{T}\pjw[{v[T]{-}1}] 
&=\funcg\Up{XR}\Down{X}\Up{T}\Down{T}\pjw[{v[T]{-}1}]+ 
\funcf\Up{XR}\Down{X}\Up{UT}\Down{TU}\pjw[{v[T]{-}1}]
\\
&=\funcg\Up{XTR}\Down{TX}\pjw[{v[T]{-}1}]+ 
\funcf\Up{XUTR}\Down{TUX}\pjw[{v[T]{-}1}],
\end{align*}
where we have again used $\condWm(v)$ on the pair $(v,T)$, then $\condA(v-1)$, 
and smaller instances of zigzag relations in the case 
when $X\neq \emptyset$ is adjacent to $U$ for the final step. To be explicit, the sequence of transformations is
\begin{gather*}
\Down{X}\Up{UT}\Down{TU}\pjw[{v[T]{-}1}]=
\Down{TU}\Down{X}\Down{TU}\pjw[{v[T]{-}1}]\sim
\\
\Down{TU}\Up{UT}\Down{X}\pjw[{v[T]{-}1}]\sim
\Up{UT}\Down{TUX}\pjw[{v[T]{-}1}].
\end{gather*}
The simplification of the $f_{X}$ term proceeds in complete analogy to \eqref{eq:overlapright} and we get
\begin{gather*}
\Up{X\{s\}}\Down{SX}\Down{T}\Up{T}\pjw[{v[T]{-}1}]= 
\funcg\Up{XS^{\prime}}\Down{STX}\pjw[{v[T]{-}1}]
+\funcf\Up{XUS^{\prime}}\Down{STUX}\pjw[{v[T]{-}1}],
\end{gather*}
having again used only $\condWm(v)$ and $\condA(v-1)$.

Finally, after all these simplifications, \eqref{eq:overlapsecond} gives the following linear system
\begin{gather*}
0=\funcg c_{X},\quad
0=\funcf c_{X}+ f_{X},\quad
0=d_{X}+\funcf e_{X},\quad
0=\funcg e_{X},
\end{gather*}
which, since $\funcg\neq 0$, implies that all unwanted scalars are zero. 
\end{proof}

\begin{proof}[Proof of the remaining cases]
Now suppose that $T$ is not down-admissible for $v[T]$, 
which happens exactly if $a_u=1$ in the notation from above. 
In this case we have $\hull[T]=T\cup U$ for $v[T]$ and 
$\hull[S^{\prime}]=S^{\prime}\cup U$ for $v[T](R)$. 

We proceed exactly as above, with the only differences being that 
no $\funcg$ terms arise and $\funcf=-2$.
The linear system resulting from \eqref{eq:overlapsecond} is
\begin{align*}
0=-2c_{X}+f_{X},
\quad
0=d_{X}-2e_{X}.
\end{align*}
(Note that if $\ppar=2$, we immediately see $d_{X}=0=f_{X}$.) 
To see that all coefficients are zero, 
we multiply \eqref{eq:overlapstart} by $\loopdown{U}{v{-}1}$, 
expecting that this should allow us to equate the first two 
terms, kill the $d_{X}$ and $f_{X}$ terms, and not hurt the $c_{X}$ 
and $e_{X}$ terms. Let us check these assertions in turn.

For the first term we get
\begin{gather*}
\Down{S^{\prime}}\Down{S}\Up{U}\Down{U}\pjw[v{-}1]=
\Down{S^{\prime}}\Up{U}\Down{S}\Down{U}\pjw[v{-}1]=
\Up{US^{\prime}}\Down{SU}\pjw[v{-}1].
\end{gather*}

For the second term we compute
\begin{gather*}
\Up{\{s\}}\Down{S}\Down{T}\Up{U}\Down{U}\pjw[v{-}1]=
\Up{\{s\}}\Up{UTS}\Down{U}\pjw[v{-}1]= 
\Up{US^{\prime}}\Down{SU}\pjw[v{-}1],
\end{gather*}
where the second step works as in \eqref{eq:overlapright} and 
requires $\condA(v-1)$ and $\condWm(v)$. This equates the first two terms. 

Now we claim that the $d_{X}$ and $f_{X}$ terms are killed by the loop along $U$
\begin{gather*}
\Up{XUS^{\prime}}\Down{SUX}\Up{U}\Down{U}\pjw[v{-}1]=0
=\Up{XUTR}\Down{UX}\Up{U}\Down{U}\pjw[v{-}1].
\end{gather*}

If $X\neq \emptyset$ is adjacent to $U$, then both assertions follow from
\begin{gather*}
\Down{UX}\Up{U}\Down{U}\pjw[v{-}1]=
(\Down{U})^2\Down{X}\Down{U}\pjw[v{-}1]=0.
\end{gather*}
If $X$ is distant from $U$ or empty, then we use far-commutativity to see substrings 
of the form $\Down{U}\Up{U}\Down{U}\Down{X}\pjw[v{-}1]=0$ by $\condE(v-1)$.

Now we claim that the $c_{X}$ and $e_{X}$ terms survive the multiplication by the loop along $U$: 
\begin{gather*}
\Up{XR}\Down{TX}\Up{U}\Down{U}\pjw[v{-}1]\neq 0
\neq\Up{X\{s\}}\Down{STX}\Up{U}\Down{U}\pjw[v{-}1].
\end{gather*}
To see this, let us first observe $\Down{X}\Up{U}\Down{U}\pjw[v{-}1]=\Up{U}\Down{U}\Down{X}\pjw[v{-}1]$. 
This is clear if $X$ is distant from $U$, and it follows 
from $\condA(v-1)$ and $\condWm(v)$, otherwise. Using this observation, we compute
\begin{align*}
\Up{XR}\Down{TX}\Up{U}\Down{U}\pjw[v{-}1]=
\Up{XR}\Down{T}\Up{U}\Down{UX}\pjw[v{-}1]
&=
\Up{XUTR}\Down{UX}\pjw[v{-}1]\neq 0
\\
\Up{X\{s\}}\Down{STX}\Up{U}\Down{U}=
\Up{X\{s\}}\Down{ST}\Up{U}\Down{UX}
&=
\Up{X\{s\}}\Up{UTS}\Down{UX}
\\
&=\Up{XUS^{\prime}}\Down{SUX}\neq 0
\end{align*}
where the last step works as in 
\eqref{eq:overlapright} and requires $\condA(v-1)$ and $\condWm(v)$.

After these simplifications, we see that 
\eqref{eq:overlapstart} multiplied by $\loopdown{U}{v{-}1}$ 
shows $c_{X}=0=e_{X}$, which (for $\ppar>2$) in turn implies $d_{X}=0=f_{X}$. This completes the proof of $\condO(v)$. 
\end{proof}

Let us also note the following consequence.

\begin{lemma}\label{lemma:overlapcorollary} 
Suppose that a minimal stretch $S$ is down-admissible for $v$ but not for $v[S]$, 
and suppose the down-admissible hull 
$\hull$ exists. 
Then $\condO(v)$ implies
\begin{gather}\label{eq:overlapcor}
\Down{\hull}\Down{S}\pjw[v{-}1]=\Up{S}\Down{\hull}\pjw[v{-}1],
\quad 
\pjw[v{-}1]\Up{S}\Up{\hull}=\pjw[v{-}1]\Up{\hull}\Down{S}.
\end{gather}
\end{lemma}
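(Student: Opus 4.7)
The plan is to prove the first equation in \eqref{eq:overlapcor}; the second follows by the pivotal reflection symmetry of $\TL[\F]$ (which reverses the order of morphisms and exchanges $\Down{\cdot}$ with $\Up{\cdot}$). Write $v=\pbase{a_j,\ldots,a_0}{\ppar}$ and $S=\{s,\ldots,s'-1\}$. Since $S$ is a \emph{minimal} down-admissible stretch, a direct unpacking of \fullref{definition:adm} forces $a_{s+1}=\cdots=a_{s'-1}=0$ while $a_s,a_{s'}\neq 0$. The hypothesis that $S$ fails to be down-admissible for $v[S]$ then forces $a_{s'}=1$, so the digit of $v[S]$ at position $s'$ vanishes. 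Consequently the down-admissible hull has the form $\hull=\{s,\ldots,t-1\}$, where $t>s'$ is the smallest index with $a_t\neq 0$, and $R:=\hull\setminus S=\{s',\ldots,t-1\}$ is itself a minimal down-admissible stretch for $v$ adjacent to $S$ from above. Thus $\hull=S\sqcup R$ and \fullref{lemma:loops-1} gives $\Down{\hull}\pjw[v-1]=\Down{S}\Down{R}\pjw[v-1]$.

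Inspection of the digits of $v[S]$ shows that positions $s,\ldots,s'-1$ are all non-zero (namely $\ppar-a_s$ at $s$ and $\ppar-1$ at the interior positions), while positions $s',\ldots,t-1$ vanish and position $t$ equals $a_t\neq 0$. The finest decomposition of $\hull$ into minimal down-admissible stretches of $v[S]$ therefore reads
\[
\hull=\{s\}\sqcup\{s+1\}\sqcup\cdots\sqcup\{s'-2\}\sqcup T_m,\qquad T_m=\{s'-1,s',\ldots,t-1\}.
\]
The pair $(S,T_m)$ now satisfies the hypotheses of the overlap lemma $\condO(v)$, since $T_m$ is a minimal down-admissible stretch for $v[S]$ with $T_m\cap S=\{s'-1\}$ and $T_m\not\subset S$; together with $T_m\setminus\{s'-1\}=R$ the overlap relation yields
\[
\Down{T_m}\Down{S}\pjw[v-1]=\Up{\{s'-1\}}\Down{S}\Down{R}\pjw[v-1]=\Up{\{s'-1\}}\Down{\hull}\pjw[v-1].
\]

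It remains to propagate $\Up{\{s'-1\}}$ leftward through the singletons $\Down{\{s'-2\}},\ldots,\Down{\{s\}}$. At each step the adjacency relation $\Down{\{i\}}\Up{\{i+1,\ldots,s'-1\}}=\Up{\{i,i+1,\ldots,s'-1\}}$ from \fullref{lemma:adjacent-generators-1} applies; the necessary admissibility is automatic because every sub-stretch of $S$ is up-admissible for $v[\hull]$, as one verifies by computing the digits of $v[\hull]$ (these are $\ppar-a_s$ at position $s$, $\ppar-1$ at positions $s+1,\ldots,s'-1$, and $\ppar-2$ at position $s'$, the latter cutting off any required extension above). After $s'-1-s$ absorptions the accumulated up-operator becomes $\Up{\{s,\ldots,s'-1\}}=\Up{S}$, producing $\Up{S}\Down{\hull}\pjw[v-1]$ and completing the proof. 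The singleton case $s=s'-1$ needs no iteration since then $T_m$ already equals $\hull$ and the overlap lemma directly delivers the conclusion; the only mild obstacle is the book-keeping for the decomposition of $\hull$ in $v[S]$, but this is rigidly controlled by the minimality of $S$.
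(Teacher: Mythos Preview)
Your proof is correct and follows essentially the same route as the paper's: you identify the overlapping stretch $T_m=\{\max S\}\cup(\hull\setminus S)$ (the paper calls it $S'$), apply $\condO(v)$ to $\Down{T_m}\Down{S}$, and then absorb the remaining $\Down{}$ operators into the emerging $\Up{}$ via the adjacency relation of \fullref{lemma:adjacent-generators-1}. The only cosmetic difference is that you iterate the adjacency step over singletons, whereas the paper applies it once with the non-minimal stretch $R=S\setminus\{\max S\}$; one small wording caveat is that not literally \emph{every} sub-stretch of $S$ is up-admissible for $v[\hull]$ (those omitting $s'-1$ fail (u2) since the digit there is $\ppar-1$), but the ones you actually use, namely $\{i,\ldots,s'-1\}$, are, and your parenthetical about the digit $\ppar-2$ at position $s'$ makes the intended meaning clear.
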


\begin{proof}
Let $s=\max(S)$ and $S^{\prime}=\{s\} \cup \hull\setminus S$ and $R=S\setminus \{s\}$. Then $S^{\prime}$ is down-admissible for $v[S]$ and we use $\condO(v)$ to compute
\begin{gather*}
\Down{\hull}\Down{S}\pjw[v{-}1]
=
\Down{R}\Down{S^{\prime}}\Down{S}\pjw[v{-}1]
=
\Down{R}\Up{\{s\}}\Down{S}\Down{S^{\prime}\setminus\{s\}}\pjw[v{-}1]
=
\Up{\{s\}}\Up{R}\Down{\hull}\pjw[v{-}1]
=
\Up{S}\Down{\hull}\pjw[v{-}1].
\end{gather*}
The other relation follows by reflection.
\end{proof}

\subsection{Zigzag relations}

\begin{lemma}\label{lem:gen2zigzag} 
The zigzag relations from \fullref{lemma:real-zigzag} hold in generation $2$.
\end{lemma}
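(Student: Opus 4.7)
Let $v$ be of generation $2$ and let $S$ be the smallest minimal down-admissible stretch of $v$. By \fullref{lemma:tandem-first} together with $\condW(v{-}1)$, the inductive hypothesis already supplies every instance of the zigzag relation relevant at this step except the one for the pair $(v,S)$ itself. My plan is to verify this missing relation by direct computation in $\TL[\Q]$, followed by reduction modulo $\ppar$.

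The concrete strategy is to expand $\pqjw[v{-}1]\downo{S}\upo{S}\pqjw[v{-}1]$ using the description \eqref{eq:pjwdef} of $\pqjw[v{-}1]$ as a sum of trapezes $\trap{X}{v{-}1}$ weighted by $\lambda_{v,X}$ over $X$ with $v[X]\in\supp$, together with the iterated cap/cup absorption rules of \fullref{lemma:idemp}, \fullref{lemma:capidem} and \fullref{lemma:capidem2}. Since $\generation=2$ the support $\supp$ has size $2^{2}=4$ and the collection of down-admissible subsets of $v$ is small, so only a handful of standard morphisms $\upo{X}\qjw[{v[X]{-}1}]\downo{Y}$ can appear. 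The unitriangular change between the standard and $\ppar$-morphism bases provided by \fullref{lemma:sbasis} then translates this expansion into an explicit formula for $\Down{S}\Up{S}\pjw[v{-}1]$ in the $\ppar$-loop basis of $\End_{\TL[\F]}(\pjw[v{-}1])$ from \fullref{lemma:dualnumbers}.

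The scalars to extract are the coefficient of $\loopdown{S}{v{-}1}$, which I expect to equal $\funcg(a_{\max(S)+1})$, and, when a minimal down-admissible stretch $T>S$ exists, the coefficient of $\loopdown{TS}{v{-}1}$, which should match $\funcf(a_{\max(S)+1})$. In the exceptional case where $S$ is not down-admissible for $v[S]$ the only surviving term is $-2\,\Up{\hull[S]}\Down{\hull[S]}\pjw[v{-}1]$. Each such coefficient arises as a product of $\lambda_{v,X}$-ratios which telescopes via \eqref{eq:the-scalar} to the prescribed digit expression modulo $\ppar$, matching the formulas defining $\funcg$ and $\funcf$ in \fullref{subsec:main-TL}.

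The main obstacle will be the book-keeping forced by the law of small primes: namely the behaviour when $a_{\max(S)+1}=1$ (so $S$ fails to be down-admissible for $v[S]$ and $\hull[S]$ appears), when $a_{\max(S)+1}=\ppar-1$ (so $S$ fails to be up-admissible for $v$ at all and the $\funcf$-term vanishes), and when no stretch $T>S$ exists. In each such branch the verification amounts to checking that the extraneous standard morphisms cancel or reduce to zero modulo $\ppar$, which is enforced by the same $\ppar$-divisibility phenomena already seen in \fullref{lemma:adjacent-generators-3} and \fullref{lemma:overlapapprox}. The computation remains tractable because at generation $2$ the number of possibly non-zero coefficients in the standard-morphism expansion is bounded by a small constant, allowing each case to be checked by hand.
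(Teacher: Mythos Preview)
Your plan has two genuine gaps. First, the appeal to \fullref{lemma:tandem-first} and $\condW(v{-}1)$ is misplaced: those statements are indexed by the \emph{top} of the zigzag, whereas here $v$ is the generation-$2$ \emph{base}. The zigzags $\Down{S}\Up{S}\pjw[v{-}1]$ you must verify reach up to $v(S)>v$, so $\condWm(v)$ says nothing about them and no reduction to a single remaining pair is available. (The same confusion resurfaces when you take $S$ to be ``the smallest minimal down-admissible stretch of $v$'': for the zigzag relation one needs $S$ \emph{up}-admissible for the base $v$, and all such $S$ must be treated; the case ``$a_{\max(S)+1}=1$'' you flag is likewise about down-admissibility for $v[S]$, which plays no role in \fullref{lemma:real-zigzag}.)

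Second, the expression $\pqjw[v{-}1]\downo{S}\upo{S}\pqjw[v{-}1]$ is not the rational lift of the zigzag. By non-classical absorption (\fullref{proposition:absorb}) one has $\Down{S}\Up{S}\pjw[v{-}1]=\down{S}\pjw[v(S){-}1]\up{S}$, so it is the \emph{top} projector $\pqjw[v(S){-}1]$ that must be expanded via \eqref{eq:pjwdef}. Since $v(S)$ can have generation $3$, its support has up to eight elements, not four; a direct expansion along your lines is possible but heavier than you indicate, and you cannot read off the answer from $\supp$ alone.

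The paper sidesteps both problems with a single identity. Writing $w$ for the generation-$2$ base and $v=w(S^{\prime})$ for the top, projector shortening (\fullref{proposition:absorb}) gives
\[
\Down{S^{\prime}}\Up{S^{\prime}}\pjw[w{-}1]=\pTr_{(v{-}w)/2}\big(\pjw[{(v+w)/2{-}1}]\big),
\]
and $(v+w)/2$ always has generation at most $2$. One then expands $\pqjw[{(v+w)/2{-}1}]$ into at most four standard loops, evaluates their partial traces using \eqref{eq:0trace}, and converts to the $\ppar$loop basis of $\End_{\TL[\F]}(\pjw[w{-}1])$ via the explicit four-term unitriangular change of basis recorded in the proof. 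This is a short self-contained computation and uses no inductive hypothesis at all.
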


\begin{proof} 
Suppose that $S^{\prime}$ is a down-admissible stretch 
for $v$ such that $w=v[S^{\prime}]$ is of generation $2$. Then, 
using the projector shortening property from \fullref{proposition:absorb}, we get 
\begin{gather*}
\Down{S^{\prime}}\Up{S^{\prime}}\pjw[w{-}1]= 
\pTr_{(v{-}w)/2}(\pjw[{(v+w)/2{-}1}]).
\end{gather*}
This partial trace is not covered by \fullref{proposition:p-properties-trace}, 
but since $(v+w)/2$ is of generation at most $2$, it can be straightforwardly computed: 
One first expands $\pqjw[{(v+w)/2{-}1}]$ into a linear combination of standard loops and 
computes their partial traces using \eqref{eq:0trace}. The result follows by changing 
back into the $\ppar$loop basis of $\End_{\TL[\Q]}(\pqjw[w{-}1])$ and reducing the coefficients to $\F$. 

The basis change from $\ppar$loops to standard loops for $w$ of generation $2$ with minimal down-admissible stretches $S<T$ is
\begin{gather*}
\begin{aligned}
\loopdown{\emptyset}{w{-}1}
&=\trap{\emptyset}{w{-}1}
+(-1)^{w-\mother[w]}\tfrac{w[S]}{\mother[w]}\cdot\trap{S}{w{-}1}
+(-1)^{\mother[w]-\motherr{w}{2}}\tfrac{\mother[w][T]}{\motherr{w}{2}}\cdot\trap{T}{w{-}1}
+(-1)^{w-\motherr{w}{2}}\tfrac{w[ST]}{\motherr{w}{2}}\cdot\trap{ST}{w{-}1},
\\ 
\loopdown{S}{w{-}1}
&=\trap{S}{w{-}1}
+(-1)^{w-\motherr{w}{2}}\tfrac{(\mother[w][T])^{2}}{w[T]\motherr{w}{2}}\cdot\trap{T}{w{-}1}, 
\\ 
\loopdown{T}{w{-}1}&=\trap{T}{w{-}1}
+(-1)^{w-\mother[w]}\tfrac{w[ST]}{\mother[w][T]}\cdot\trap{ST}{w{-}1},
\\
\loopdown{ST}{w{-}1}&=\trap{ST}{w{-}1}.
\end{aligned}
\end{gather*}
The inverse basis change can be readily computed from this. The basis change in generation $1$ is easier and left as an exercise for the reader.
\end{proof}

\begin{lemma}\label{lemma:tandem-fourth}
$\condW(v)$ follows if we have $\condWm(v)$, $\condE(v-1)$, $\condA(v)$ and $\condO(v)$.
\end{lemma}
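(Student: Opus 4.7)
The objective is to establish the zigzag relation in the one remaining case excluded by $\condWm(v)$, namely for $\Down{S}\Up{S}\pjw[v{-}1]$ where $S$ is the smallest minimal down-admissible stretch for $v$. Both sides of the desired identity live in $\End_{\TL[\F]}(\pjw[v{-}1])$, which by \fullref{lemma:sbasis}.(b) has the $\ppar$-loop basis $\{\loopdown{X}{v{-}1}\}_{X}$ indexed by down-admissible subsets $X$ for $v$. Writing $\Down{S}\Up{S}\pjw[v{-}1]=\sum_{X} c_{X}\loopdown{X}{v{-}1}$, the task is to determine the scalars $c_{X}\in\F$.

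The main tool is the partial trace $\pTr_{S}\colon\End_{\TL[\F]}(\pjw[v{-}1])\to\End_{\TL[\F]}(\pjw[\mother{-}1])$ from \fullref{proposition:p-properties-trace}, whose explicit action on the $\ppar$-loop basis (sending $\loopdown{X}{v{-}1}$ to either $\loopdown{X\setminus S}{\mother{-}1}$ or $(-1)^{v-\mother}2\loopdown{X}{\mother{-}1}$, depending on whether $S\subseteq X$) transforms an identity in $\End_{\TL[\F]}(\pjw[v{-}1])$ into one in $\End_{\TL[\F]}(\pjw[\mother{-}1])$, which is fully governed by $\condE(v-1)$. I would compute $\pTr_{S}(\Down{S}\Up{S}\pjw[v{-}1])$ directly via the trapeze decomposition \eqref{eq:pjwdef} combined with \fullref{lemma:capidem2}. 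Each candidate term on the right-hand side is then simplified using $\condA(v)$, $\condO(v)$, and $\condWm(v)$ before applying $\pTr_{S}$, after which comparing coefficients in the $\ppar$-loop basis of the target algebra yields linear constraints on the $c_{X}$. Even though the partial trace mixes contributions from loops containing $S$ with those not containing $S$, the resulting linear system has a structured form that can be solved, particularly after observing (via absorption and ancestor-centering, \fullref{lemma:anccent}) that only loops with $X\supseteq S$, or $X\supseteq\hull[S]$ in the exceptional case, can contribute terms of the relevant through-degree.

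To extract any remaining constraints, one can compose both sides of the desired identity on the right with $\Up{T}$, where $T$ is the next minimal down-admissible stretch above $S$ (if it exists); this yields an identity in $\Hom_{\TL[\F]}(\pjw[{v[T]-1}],\pjw[v{-}1])$ that can be reduced to a case covered by $\condWm(v)$, $\condA(v)$, and $\condO(v)$. The specific scalars $\funcg(a_{\max(S)+1})$ and $\funcf(a_{\max(S)+1})$ appearing as $c_{X}$ for $X=S$ and $X=S\cup T$ are then computed via a characteristic-zero calculation using the trapeze decomposition of $\pqjw[v{-}1]$ together with \fullref{lemma:capidem} and \fullref{lemma:capidem2}, paralleling the strategy employed in the proofs of \fullref{lemma:adjacent-generators-3} and \fullref{lemma:overlapapprox}: decompose, evaluate the cap-cup compositions on trapezes, and reduce modulo $\ppar$. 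The main obstacle will be the case distinction between the generic case (where $S$ is both up- and down-admissible for $v$ and both $T$ and $\hull[S]$ exist), the sub-cases in which $T$ or $\hull[S]$ fails to exist (so the corresponding terms vanish by the convention in \fullref{lemma:real-zigzag}), and the exceptional case in which $S$ is not down-admissible for $v$ (requiring the $-2\Up{\hull[S]}\Down{\hull[S]}$ formula), the last of these handled using \fullref{lemma:overlapcorollary} together with $\condO(v)$.
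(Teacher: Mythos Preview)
There is a genuine gap: you have misidentified which zigzag is excluded by $\condWm(v)$. The convention in the paper indexes zigzags by their \emph{peak}: $\condWm(v)$ covers all $\Down{X}\pjw[w{-}1]\Up{X}$ with peak $w\leq v$, except the case $w=v$ and $X=S$ the smallest minimal down-admissible stretch for $v$. Thus the missing relation is $\Down{S}\Up{S}\pjw[{v[S]{-}1}]$, an element of $\End_{\TL[\F]}(\pjw[{v[S]{-}1}])$, \emph{not} of $\End_{\TL[\F]}(\pjw[v{-}1])$. Your case distinction at the end inherits this confusion: the split is according to whether $S$ is down-admissible for $v[S]$ (the base), not for $v$ (the peak); $S$ is down-admissible for $v$ by hypothesis.

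This matters for more than bookkeeping. The key leverage the paper exploits is that $v[S]\leq v-1$, so the full structure of $\End_{\TL[\F]}(\pjw[{v[S]{-}1}])$ is already available via $\condE(v-1)$. The paper's main mechanism is to multiply the approximate identity \eqref{eq:zigzag-approx} by $\loopdown{S}{v[S]{-}1}$ and then by $\loopdown{T}{v[S]{-}1}$; the squaring relations from $\condE(v-1)$ force most error coefficients $x_U$ to vanish, and $\condA(v)$ equates the surviving terms. Only the residual coefficients with $S\cup T\subset U$ are then killed by a single partial trace $\pTr_{ST}$. Your proposal uses partial trace as the primary tool and composition with $\Up{T}$ as a secondary one, but never invokes multiplication by $\ppar$loops together with $\condE(v-1)$; without that, the partial trace alone does not separate the contributions from loops containing $S$ from those not containing $S$, exactly the ambiguity you flag but do not resolve. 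The leading coefficients $\funcg,\funcf$ are obtained in the paper via the generation-$2$ computation of \fullref{lem:gen2zigzag} together with unitriangularity, which is morally the characteristic-zero computation you sketch, so that part of your plan is on track once the base point is corrected.
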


The proof again splits into two parts. First we give a 
proof that works under a technical assumption, which is generically 
satisfied. In the second part, we refine this proof to work in all cases.

\begin{proof}[Proof, with caveat.] 
We need to consider the zigzag  $\Down{S}\pjw[v{-}1]\Up{S}$ where $S$ is the smallest minimal down-admissible stretch of $v$.
Let us also assume that we are in the \emph{generic case}, where $S$ 
is also down-admissible for $v[S]$ (and thus $\ppar>2$), and we denote by $T$ the minimal down-admissible stretch for $v[S]$ that is adjacent and $T>S$. 

By the unitriangularity of the basis change between the $\ppar$loops basis and the standard 
loops basis for $\End_{\TL[\Q]}(\pqjw[{v[S]{-}1}])$ and by the generation $2$ case in 
\fullref{lem:gen2zigzag}, we may assume that
\begin{gather}\label{eq:zigzag-approx}
\Down{S}\Up{S}\pjw[{v[S]{-}1}]
=\funcG[S]\loopdown{S}{{v[S]{-}1}}+
\funcF[S]\loopdown{ST}{{v[S]{-}1}}+
{\textstyle\sum_{U\not\subset {S\cup T}}}\,x_{U}\loopdown{U}{{v[S]{-}1}},
\end{gather}
with error terms $x_{U}\loopdown{U}{{v[S]{-}1}}$ with $x_{U}\in\F$. 
Our job is to show that we have $x_{U}=0$ for all 
such $U$. If we multiply \eqref{eq:zigzag-approx} 
by $\loopdown{S}{{v[S]{-}1}}$, then $\condE(v-1)$ implies $0=0+0+\sum_{X}x_{X}\loopdown{SX}{{v[S]{-}1}}$ 
and thus $x_{X}=0$, where $X$ runs over all $U$ as above, for which $S\not\subset U$. 
On the other hand, if we multiply \eqref{eq:zigzag-approx} by $\loopdown{T}{{v[T]{-}1}}$, then we get
\begin{gather*}
\Down{S}\Up{S}\Up{T}\Down{T}\pjw[{v[S]{-}1}]  
=\funcG[S]\loopdown{ST}{v{-}1}\pjw[{v[S]{-}1}]  
+{\textstyle\sum_{X}}\,x_{X}\loopdown{TX}{{v[S]{-}1}},
\end{gather*}
where now $X$ runs over all remaining $U$ such that $T\not\subset X$. Then, by $\condA(v)$, we also get
\begin{gather*}
\Down{S}\Up{S}\Up{T}\Down{T}\pjw[{v[S]{-}1}]  
=  
\funcG[S]\loopdown{ST}{{v[S]{-}1}}.
\end{gather*} 
This implies $x_{X}=0$ for such $X$. 
The only coefficients 
$x_{U}$ that are left to be considered are the ones for 
which $S\cup T\subset U$. Now we apply the partial 
trace $\mathrm{pTr}_{ST}:=\mathrm{pTr}_{(v[S]-\fancest{v[S]}{ST})}$ to both 
sides of \eqref{eq:zigzag-approx}. For this we 
will use the notation $w=\fancest{v[S]}{S}$ and $u=\fancest{v[S]}{ST}$, and we get
\begin{gather*}
(-1)^{w+1-u}2\pjw[u{-}1] 
= 
\funcG[S](-1)^{w-u}2
\pjw[u{-}1]+ 
\funcF[S]\pjw[u{-}1] 
+{\textstyle\sum_{U\neq \emptyset}}x_{U}\loopdown{U}{u{-}1}.
\end{gather*}
This is because $\mathrm{pTr}_{ST}(\Down{S}\Up{S}\pjw[v{-}1])=\mathrm{pTr}_{T}(\pjw[z{-}1])$
where $z=\fancest{v}{S}$, which differs from $w$ by 
increasing its first non-zero digit $a$ by one. 
The coefficient of $\pjw[u{-}1]$ on the right-hand side 
is computed as follows
\begin{gather*}
-(-1)^{w-u}2  
\big(\tfrac{a+1}{a}\big) 
+(-1)^{w-u}\tfrac{2}{a}
=(-1)^{w+1-u}2.
\end{gather*}
After subtracting the multiples of $\pjw[u{-}1]$ from both sides, we conclude $x_{U}=0$. 
\end{proof}

\begin{proof}[Proof of the remaining cases] 
Now suppose that $S$ is smallest minimal down-admissible stretch 
for $v$, but not down-admissible for $v[S]$. Then \eqref{eq:zigzag-approx} takes the form
\begin{gather*}
\Down{S}\Up{S}\pjw[{v[S]{-}1}] 
=-2\loopdown{\hull}{{v[S]{-}1}}+
{\textstyle\sum_{U\not\subset {\hull}}}\,x_{U}\loopdown{U}{{v[S]{-}1}}.
\end{gather*}
We first multiply this by $\loopdown{\hull}{{v[S]{-}1}}$ and deduce
\begin{gather*}
\Down{S}\Up{S}\Up{{\hull}}\Down{{\hull}}\pjw[{v[S]{-}1}]= 
\Down{S}\Up{{\hull}}\Down{S}\Down{{\hull}}\pjw[{v[S]{-}1}]=0
\end{gather*}
from $\condO(v)$. Since $(\loopdown{\hull}{{v[S]{-}1}})^{2}=0$ 
by $\condE(v-1)$, we get $x_U=0$ unless $\hull{\subset}U$. Then a 
partial trace argument as above shows that all remaining $x_{U}$ are also zero.
\end{proof}

\begin{lemma}\label{lemma:I-stopped-counting}
$\condE(v)$ follows if we have $\condE(v-1)$, $\condA(v)$, $\condO(v)$, and $\condW(v)$.
\end{lemma}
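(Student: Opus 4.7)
The plan is to establish $\condE(v)$ via three structural identities about $\pjw[v-1]$: the $DUD=0$ and $UDU=0$ relations \eqref{eq:DUD}, the nilpotency $(\loopdown{S_k}{v-1})^2=0$ for each minimal down-admissible stretch $S_k$ of $v$, and the factorization $\loopdown{S}{v-1}=\prod_{S_k\subset S}\loopdown{S_k}{v-1}$ for general down-admissible $S$. Granted these, the assignment $x_i\mapsto\loopdown{S_i}{v-1}$ defines a surjective algebra map $\F[x_0,\dots,x_{\generation-1}]/\langle x_0^2,\dots,x_{\generation-1}^2\rangle\to\End_{\TL[\F]}(\pjw[v-1])$, well-defined by nilpotency and the commutativity from \fullref{lemma:nil-ploop}.(a). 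Both sides have dimension $2^{\generation}$ (the target by \fullref{lemma:admsum}.(b), since $|\supp|=2^{\generation}$), so the map is an isomorphism.

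For the $DUD$ relation, I would view $\Down{S}\Up{S}\Down{S}\pjw[v-1]$ as $(\Down{S}\Up{S}\pjw[v[S]-1])\circ(\Down{S}\pjw[v-1])$, legitimate since $S$ is up-admissible for $v[S]$ with $v[S](S)=v$ by \fullref{lemma:stretch-lemma}. The first factor is an up-down composition at $v[S]-1\leq v-1$, so $\condW(v)$ expands it. Generically it becomes $\funcG[S]\Up{S}\Down{S}\pjw[v[S]-1]+\funcF[S]\Up{T}\Up{S}\Down{S}\Down{T}\pjw[v[S]-1]$, and composing with $\Down{S}\pjw[v-1]$ the first term dies by containment ($\Down{S}\Down{S}=0$), while the $\funcF$-correction features a $\Down{T}\Down{S}\pjw[v-1]$ which after reordering via $\condA(v)$ is pushed past the outer $\Up$'s and yields another containment zero. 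In the exceptional case, $\Down{S}\Up{S}\pjw[v[S]-1]=-2\Up{\hull[S]}\Down{\hull[S]}\pjw[v[S]-1]$ with $\hull[S]\supsetneq S$, and \fullref{lemma:overlapcorollary} rewrites $\Down{\hull[S]}\Down{S}\pjw[v-1]=\Up{S}\Down{\hull[S]}\pjw[v-1]$; the composition becomes $-2\Up{\hull[S]}\Up{S}\Down{\hull[S]}\pjw[v-1]$, which is zero by containment since $S\subset\hull[S]$. The $UDU$ relation follows by the reflection symmetry.

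Nilpotency of $\loopdown{S_k}{v-1}$ for minimal $S_k$ is obtained by the same template: expand $(\loopdown{S_k}{v-1})^2=\Up{S_k}(\Down{S_k}\Up{S_k}\pjw[v[S_k]-1])\Down{S_k}\pjw[v-1]$ and apply $\condW(v)$ to the middle endomorphism; every resulting summand is sandwiched between $\Up{S_k}$ and $\Down{S_k}$, and after reorganization via $\condA(v)$ and $\condO(v)$ one exposes a repeated $\Up{S_k}\Up{S_k}$ or $\Down{S_k}\Down{S_k}$ that containment kills. For the factorization, I would induct on the number of minimal stretches in $S$: writing $S=S'\sqcup S_0$ with $S_0$ smallest, I simplify $\loopdown{S'}{v-1}\loopdown{S_0}{v-1}=\Up{S'}\Down{S'}\Up{S_0}\Down{S_0}\pjw[v-1]$ by commuting the central $\Down{S'}\Up{S_0}$ through the middle using far-commutativity or the adjacency and overlap relations from $\condA(v)$ and $\condO(v)$, then apply the inductive hypothesis to $\loopdown{S'}{v-1}$.

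The main obstacle is the factorization step when two minimal stretches of $S$ are adjacent: far-commutativity fails, and the adjacency relations introduce $\funcH$-scalar corrections and further overlap terms. To dispatch these, I would use the through-degree filtration from \fullref{lemma:sbasis}.(a) and \fullref{lemma:admsum}.(a) together with the nilpotency just established, which confines the corrections to combinations involving squares of loops and hence to zero; the induction then closes.
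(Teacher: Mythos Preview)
Your overall architecture is right, and the treatment of the exceptional case of $DUD$ matches the paper exactly. Two points deserve comment.

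\textbf{A small slip in the generic $DUD$ case.} After applying $\condA(v)$ to the $\funcF$-term you obtain $\Up{T}\Up{S}\Down{S}\Up{S}\Down{T}\pjw[v{-}1]$. The substring $\Up{S}\Down{S}\Up{S}$ is not a containment pattern; it is a $UDU$ at the vertex $v[T](S)=v[S][T]<v$, and vanishes by the inductive hypothesis $\condE(v{-}1)$, exactly as the paper argues. Also, once $DUD$ is in hand, nilpotency of $\loopdown{S_k}{v{-}1}$ is immediate from $(\loopdown{S_k}{v{-}1})^2=\Up{S_k}\big(\Down{S_k}\Up{S_k}\Down{S_k}\big)\pjw[v{-}1]=0$; there is no need to reopen the zigzag.

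\textbf{The real gap is the factorization.} Your closing paragraph is where the argument breaks. The appeal to ``through-degree filtration together with nilpotency'' does not pin down the corrections: writing $\loopdown{S_0}{v{-}1}\loopdown{X}{v{-}1}=\sum_Y c_Y\loopdown{Y}{v{-}1}$, through-degree only tells you $v[Y]\le v[X]$, which still leaves infinitely many candidate $Y$ relative to what you need, and there is no mechanism by which the unwanted $c_Y$ are forced to be squares of minimal loops. Multiplying by $\loopdown{S_0}{v{-}1}$ does not help either, since evaluating $\loopdown{S_0}{v{-}1}\loopdown{Y}{v{-}1}$ is precisely the assertion you are trying to prove.

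The paper closes this differently. Reducing to $S_0=S$ the smallest minimal stretch, a short chain of projector shortenings and absorptions shows diagrammatically that
\[
\pTr_{v-\mother}\big(\loopdown{S}{v{-}1}\loopdown{X}{v{-}1}\big)=\loopdown{X}{\mother{-}1}.
\]
Now compare with \fullref{proposition:p-properties-trace}.(b), which computes $\pTr_{v-\mother}$ on each basis loop: for every down-admissible $Z$ for $\mother$ one gets the linear relation $c_{Z\cup S}+(-1)^{v-\mother}2\,c_{Z}=\delta_{Z,X}$. The paper then uses a unit-versus-nilpotent contradiction on through-degree: if the coefficient $c_X$ of $\loopdown{X}{v{-}1}$ were nonzero, then $(-c_X+\loopdown{S}{v{-}1})$ would be invertible (since $\loopdown{S}{v{-}1}$ is nilpotent), forcing $\loopdown{X}{v{-}1}$ to lie in the span of loops of strictly lower through-degree, which is absurd. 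This pins down $c_X=0$ and hence $c_{X\cup S}=1$, completing the factorization. If you want to salvage a purely relational argument instead, you would also need to feed in $\condW(v)$ again (not just $\condA(v)$ and $\condO(v)$): for the base case $X=S_1$ adjacent to $S_0$, one checks $\funcH[S_0]\cdot\funcG[S_0]\big|_{v[S_1]}=1$ and that the $\funcF$-correction produces a genuine $\Up{S_1}\Up{S_1}$ containment; but this direct route then has to be iterated carefully through the induction, which your proposal does not do.
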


\begin{proof}
We first prove \eqref{eq:DUD}. By $\condE(v-1)$ and projector absorption, 
we may assume that $S$ is a smallest minimal down-admissible stretch. 
Suppose first that $S$ is down-admissible for $v[S]$. Then we have
\begin{align*}
\Down{S}\Up{S}\Down{S}\pjw[v{-}1] 
&= 
\funcg \Up{S}\Down{S}\Down{S}\pjw[v{-}1]+\funcf\Up{T}\Up{S}\Down{S}\Down{T}\Down{S}\pjw[v{-}1]
\\
&= 
0+\funch\funcf\Up{T}\Up{S}\Down{S}\Up{S}\Down{T}\pjw[v{-}1]=0.
\end{align*}
where we have used 
$\condW(v)$, $\condA(v)$ and finally $\condE(v-1)$ to 
deduce $\pjw[{v[T](S){-}1}]\Up{S}\Down{S}\Up{S}=0$. 
Now, suppose that $S$ is not down-admissible for $v[S]$. Then we instead get
\begin{align*}
\Down{S}\Up{S}\Down{S}\pjw[v{-}1] 
&= 
-2\Up{\hull}\Down{\hull}\Down{S}\pjw[v{-}1]
\\
&= 
-2\Up{\hull}\Up{S}\Down{\hull}\pjw[v{-}1]=0.
\end{align*}
Here we have used $\condO(v)$ and \fullref{lemma:nilpotent}. 

Next we need to prove that $(\loopdown{S_{k}}{v{-}1})^{2}=0$ and 
$\loopdown{X}{v{-}1}=\prod_{k\mid S_{k}\subset X}\loopdown{S_{k}}{v{-}1}$. 
The first relation simply follows from \eqref{eq:DUD}, \ie
\begin{gather*}
(\loopdown{S_{k}}{v{-}1})^{2}=\Up{S_{k}}\Down{S_{k}}\Up{S_{k}}\Down{S_{k}}\pjw[v{-}1]=0.
\end{gather*}
Next, suppose we already know that $\loopdown{X}{v{-}1}=\prod_{S_{k}\subset X}\loopdown{S_{k}}{v{-}1}$ 
for subsets $X$ that decompose into $l-1$ minimal stretches, and let us 
now consider $X\cup S_{i}$ where $X>S_{i}$. The result is clear if $X$ and $S_{i}$ 
are distant, so we will assume that $S_{i}$ is adjacent to 
$X$. By projector absorption and \eqref{eq:DUD}, we may further assume that $S_{i}=S$ is the 
smallest minimal down-admissible stretch for $v$. Then we compute
\begin{gather*}
\loopdown{S}{v{-}1}\loopdown{X}{v{-}1}
=
\begin{tikzpicture}[anchorbase,scale=.25,tinynodes]
\draw[pJW] (2,-.5) rectangle (-2,.5);
\draw[pJW] (2,2) rectangle (-2,3);
\draw[pJW] (2,4.5) rectangle (-2,5.5);
\node at (0,-.2) {$\pjwm[v{-}1]$};
\node at (0.75,1.25) {$\dots$};
\node at (0.75,3.75) {$\dots$};
\node at (0,2.3) {$\pjwm[v{-}1]$};
\node at (0,4.8) {$\pjwm[v{-}1]$};
\draw[usual] (-.5,.5) to[out=90,in=180] (0,1) to[out=0,in=90] (0.5,.5);
\draw[usual] (-.5,2) to[out=270,in=180] (0,1.5) to[out=0,in=270] (.5,2);
\draw[usual] (-1.5,3) to[out=90,in=180] (-1,3.5) to[out=0,in=90] (-.5,3);
\draw[usual] (-1.5,4.5) to[out=270,in=180] (-1,4) to[out=0,in=270] (-.5,4.5);
\draw[usual] (-1.5,.5) to (-1.5,2);
\draw[usual] (1.5,.5) to (1.5,2);
\draw[usual] (-1.5,.5) to (-1.5,2);
\draw[usual] (0,3) to (0,4.5)(1.5,3) to (1.5,4.5);
\node at (1,2.5) {$\phantom{a}$};
\node at (1,-.5) {$\phantom{a}$};
\end{tikzpicture}
=
\begin{tikzpicture}[anchorbase,scale=.25,tinynodes]
\draw[pJW] (0,-.5) rectangle (2,.5);
\draw[pJW] (2,2) rectangle (-2,3);
\draw[pJW] (2,4.5) rectangle (-1,5.5);
\node at (0,2.3) {$\pjwm[v{-}1]$};
\draw[usual] (-.5,-.5) to (-.5,.5) to[out=90,in=180] (0,1) to[out=0,in=90] (0.5,.5);
\draw[usual] (-.5,2) to[out=270,in=180] (0,1.5) to[out=0,in=270] (.5,2);
\draw[usual] (-1.5,3) to[out=90,in=180] (-1,3.5) to[out=0,in=90] (-.5,3);
\draw[usual] (-1.5,5.5) to (-1.5,4.5) to[out=270,in=180] (-1,4) to[out=0,in=270] (-.5,4.5);
\draw[usual] (-1.5,-.5) to (-1.5,2);
\draw[usual] (1.5,.5) to (1.5,2);
\draw[usual] (-1.5,.5) to (-1.5,2);
\draw[usual] (0,3) to (0,4.5)(1.5,3) to (1.5,4.5);
\node at (1,2.5) {$\phantom{a}$};
\node at (1,-.5) {$\phantom{a}$};
\end{tikzpicture}
=
\begin{tikzpicture}[anchorbase,scale=.25,tinynodes]
\draw[pJW] (0,-.5) rectangle (2,.5);
\draw[pJW] (2,2) rectangle (-2,3);
\draw[pJW] (2,3.25) rectangle (-0.25,4.25);
\draw[pJW] (2,4.5) rectangle (-1,5.5);
\node at (0.75,1.25) {$\dots$};
\node at (0,2.3) {$\pjwm[v{-}1]$};
\draw[usual] (-.5,-.5) to (-.5,.5) to[out=90,in=180] (0,1) to[out=0,in=90] (0.5,.5);
\draw[usual] (-.5,2) to[out=270,in=180] (0,1.5) to[out=0,in=270] (.5,2);
\draw[usual] (-1.5,3) to[out=90,in=180] (-1,3.5) to[out=0,in=90] (-.5,3);
\draw[usual] (-1.5,5.5) to (-1.5,4.5) to[out=270,in=180] (-1,4) to[out=0,in=270] (-.5,4.5);
\draw[usual] (-1.5,-.5) to (-1.5,2);
\draw[usual] (1.5,.5) to (1.5,2);
\draw[usual] (-1.5,.5) to (-1.5,2);
\draw[usual] (0,3) to (0,3.25)(1.5,3) to (1.5,3.25)(0,4.25) to (0,4.5)(1.5,4.25) to (1.5,4.5);
\node at (1,2.5) {$\phantom{a}$};
\node at (1,-.5) {$\phantom{a}$};
\end{tikzpicture}
=
\begin{tikzpicture}[anchorbase,scale=.25,tinynodes]
\draw[pJW] (0,-.5) rectangle (2,.5);
\draw[pJW] (2,2) rectangle (-1,3);
\draw[pJW] (2,3.25) rectangle (-0.25,4.25);
\draw[pJW] (2,4.5) rectangle (-1,5.5);
\draw[usual] (-.5,-.5) to (-.5,.5) to[out=90,in=180] (0,1) to[out=0,in=90] (0.5,.5);
\draw[usual] (-.5,2) to[out=270,in=180] (0,1.5) to[out=0,in=270] (.5,2);
\draw[usual] (-1.5,3) to[out=90,in=180] (-1,3.5) to[out=0,in=90] (-.5,3);
\draw[usual] (-1.5,5.5) to (-1.5,4.5) to[out=270,in=180] (-1,4) to[out=0,in=270] (-.5,4.5);
\draw[usual] (-1.5,-.5) to (-1.5,3);
\draw[usual] (1.5,.5) to (1.5,2);
\draw[usual] (-1.5,.5) to (-1.5,2);
\draw[usual] (0,3) to (0,3.25)(1.5,3) to (1.5,3.25)(0,4.25) to (0,4.5)(1.5,4.25) to (1.5,4.5);
\node at (1,2.5) {$\phantom{a}$};
\node at (1,-.5) {$\phantom{a}$};
\end{tikzpicture}
=
\begin{tikzpicture}[anchorbase,scale=.25,tinynodes]
\draw[pJW] (0,-.5) rectangle (2,.5);
\draw[pJW] (2,2) rectangle (-1,3);
\draw[pJW] (2,4.5) rectangle (-1,5.5);
\draw[usual] (-.5,-.5) to (-.5,.5) to[out=90,in=180] (0,1) to[out=0,in=90] (0.5,.5);
\draw[usual] (-.5,2) to[out=270,in=180] (0,1.5) to[out=0,in=270] (.5,2);
\draw[usual] (-1.5,3) to[out=90,in=180] (-1,3.5) to[out=0,in=90] (-.5,3);
\draw[usual] (-1.5,5.5) to (-1.5,4.5) to[out=270,in=180] (-1,4) to[out=0,in=270] (-.5,4.5);
\draw[usual] (-1.5,-.5) to (-1.5,3);
\draw[usual] (1.5,.5) to (1.5,2);
\draw[usual] (-1.5,.5) to (-1.5,2);
\draw[usual] (0,3) to (0,4.5)(1.5,3) to (1.5,4.5);
\node at (1,2.5) {$\phantom{a}$};
\node at (1,-.5) {$\phantom{a}$};
\end{tikzpicture}
=
\begin{tikzpicture}[anchorbase,scale=.25,tinynodes]
\draw[pJW] (-1,-.5) rectangle (2,.5);
\draw[pJW] (2,2) rectangle (-1,3);
\draw[pJW] (2,4.5) rectangle (-1,5.5);
\draw[usual] (-.5,.5) to[out=90,in=180] (0,1) to[out=0,in=90] (0.5,.5);
\draw[usual] (-.5,2) to[out=270,in=180] (0,1.5) to[out=0,in=270] (.5,2);
\draw[usual] (-1.5,3) to[out=90,in=180] (-1,3.5) to[out=0,in=90] (-.5,3);
\draw[usual] (-1.5,5.5) to (-1.5,4.5) to[out=270,in=180] (-1,4) to[out=0,in=270] (-.5,4.5);
\draw[usual] (-1.5,-.5) to (-1.5,3);
\draw[usual] (1.5,.5) to (1.5,2);
\draw[usual] (-1.5,.5) to (-1.5,2);
\draw[usual] (0,3) to (0,4.5)(1.5,3) to (1.5,4.5);
\node at (1,2.5) {$\phantom{a}$};
\node at (1,-.5) {$\phantom{a}$};
\end{tikzpicture}
\xrightarrow{\pTr_{v{-}\mother}}
\begin{tikzpicture}[anchorbase,scale=.25,tinynodes]
\draw[pJW] (-1,-.5) rectangle (2,.5);
\draw[pJW] (2,2) rectangle (-1,3);
\draw[usual] (-.5,.5) to[out=90,in=180] (0,1) to[out=0,in=90] (0.5,.5);
\draw[usual] (-.5,2) to[out=270,in=180] (0,1.5) to[out=0,in=270] (.5,2);
\draw[usual] (1.5,.5) to (1.5,2);
\node at (1,2.5) {$\phantom{a}$};
\node at (1,-.5) {$\phantom{a}$};
\end{tikzpicture}
= 
\loopdown{X}{\mother{-}1}.
\end{gather*}
The equation $\mathrm{pTr}_{v{-}\mother}(\loopdown{S}{v{-}1}\loopdown{X}{v{-}1})=\loopdown{X}{\mother{-}1}$ and 
\fullref{proposition:p-properties-trace} imply 
$\loopdown{S}{v{-}1}\loopdown{X}{v{-}1}=x\loopdown{X}{v{-}1}+y\loopdown{XS}{v{-}1}$ where $(-1)^{v-\mother}2x+y=1$. 
Equivalently, we can write $\loopdown{X}{v{-}1}(-x+\loopdown{S}{v{-}1})=y\loopdown{XS}{v{-}1}$. 
Suppose that $x\neq 0$, then $(-x+\loopdown{S}{v{-}1})$ would be a unit in 
$\End_{\TL[\F]}\left(\pjw[v{-}1]\right)$, so we can write 
\begin{gather*}
\loopdown{X}{v{-}1}=(-x+\loopdown{S}{v{-}1})^{-1}y\loopdown{XS}{v{-}1}.
\end{gather*}
However, the left-hand side has through-degree $v[X]$, while 
the right-hand side has through-degree at most $v[X\cup S]<v[X]$, 
a contradiction. Thus, we have $x=0$ and $y=1$, and 
consequently $\loopdown{S}{v{-}1}\loopdown{X}{v{-}1}=\loopdown{XS}{v{-}1}$.
\end{proof}

This completes the proof of \fullref{theorem:main-tl-section}, which 
by \fullref{proposition:TLtilt}, completes the proof of \fullref{theorem:main}.

\begin{remark}\label{rem:smallcases}
In addition to the eve base cases $1\leq v \leq\ppar$ for the induction we have
explicitly seen certain relations in cases of low generation. For example, for
$v$ of generation $1$, the description of the endomorphism algebra can be
deduced from the proof of \fullref{lemma:nil-ploop} while the adjacency and
overlap relations hold vacuously. For $v$ of generation $2$, we have seen the
adjacency relations in \fullref{lemma:adjacent-generators-3} and the overlap
relations in \fullref{lemma:overlapapprox}. Finally, zigzag relations for loops
based at $w$ of generation $2$ were treated in \fullref{lem:gen2zigzag}.
\end{remark}

\section{Some conclusions}\label{section:final}

\subsection{The fractal nature of \texorpdfstring{$\zigzag$}{Z}}\label{subsec:fractal}
The quiver underlying $\zigzag$ is a graph with countably infinitely many connected components. 
In each connected component there is a unique vertex $e-1$ with $e\in\eve$, 
and we denote the vertex set of this component by $(e)_{\ppar}$.

\begin{example}\label{example:block}
We have $(1)_{3}=\{0<4<6<10<12<16<18<22<\cdots\}$, \cf \eqref{eq:funny-example}.
\end{example}

The decomposition of the quiver implies that the algebra $\zigzag$ decomposes as
\begin{gather*}
\zigzag 
= 
{\textstyle\coprod_{e\in\eve}}\,\zigzag_{e{-}1}
, \qquad
\zigzag_{e{-}1}:={\textstyle\bigoplus_{v,w\in(e)_{\ppar}}}\,\idemy[w{-}1]\zigzag\idemy[v{-}1].
\end{gather*}
Let $M_{\ppar}$ denote the free monoid on the set $L=\{0,\dots,\ppar-1\}$.
We represent the elements of $M_{\ppar}$ by words $[b_{k},\dots,b_{0}]$ for $b_{i}\in L$ 
and the multiplication is given by 
\begin{gather*}
[b_{k},\dots,b_{0}]\odot 
[a_{j},\dots,a_{0}]
:=[a_{j},\dots,a_{0},b_{k},\dots,b_{0}].
\end{gather*}
(Note that the empty word $\emptyset$ is the neutral element.) 
Elements of $M_{\ppar}$ with differing numbers of leading zeros are considered as distinct, 
and so they should not be interpreted as $\ppar$-adic expansions of natural numbers. 
However, $\N$ carries an action of $M_{\ppar}$ defined by 
\begin{gather*}
M_{\ppar} 
\times 
\N\to\N,\quad
[b_{k},\dots,b_{0}]\odot 
\pbase{a_{j},\dots,a_{0}}{\ppar}
:=\pbase{a_{j},\dots,a_{0},b_{k},\dots,b_{0}}{\ppar}.
\end{gather*}

The fractal nature, \ie the self-similarity,
of $\zigzag$ is now captured by the following proposition.

\begin{proposition}\label{proposition:fractal}
The monoid $M_{\ppar}$ acts on $\zigzag$ by 
algebra endomorphisms: For each $w\in M_{\ppar}$, there is an 
algebra endomorphism $\phi_{w}\in\End(\zigzag)$ acting on idempotents by
\begin{gather*}
\phi_{w}(\pjw[v{-}1]) 
:= 
\pjw[{w\odot v{-}1}],
\end{gather*}
and on arrows by reindexation. Moreover, we have
\begin{gather*}
\phi_{z}\phi_{w}
= 
\phi_{z\odot w}\text{ for }w,z\in M_{\ppar}.
\end{gather*}
Finally, if $w=[0,\dots,0]$, then $\phi_{w}$ maps any $\zigzag_{e{-}1}$
isomorphically to $\zigzag_{w\odot e{-}1}$. 
In this sense, $\zigzag$ is generated under the 
action of $M_{\ppar}$ by the summands $\zigzag_{e{-}1}$ for 
$e\in\{1,\dots,\ppar-1\}$.
\end{proposition}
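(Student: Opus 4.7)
The plan is to define $\phi_{w}$ on the generators-and-relations presentation of $\zigzag$ provided by \fullref{theorem:main-tl-section} through a rigid reindexation of digit positions, then exploit the resulting shift-equivariance of all structural ingredients. Writing $w=[b_{k},\dots,b_{0}]$ of length $\ell:=k+1$ and $S+\ell:=\{s+\ell\mid s\in S\}$, I would set
\begin{gather*}
\phi_{w}(\pjw[v-1]):=\pjw[w\odot v-1],\qquad
\phi_{w}(\Down{S}\pjw[v-1]):=\Down{S+\ell}\pjw[w\odot v-1],
\end{gather*}
and analogously for the up-generators. The key observation is that for $v=\pbase{a_{j},\dots,a_{0}}{\ppar}$, the digit of $w\odot v$ at position $s+\ell$ equals $a_{s}$. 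Consequently, the admissibility conditions of \fullref{definition:adm}, the partition into minimal stretches, the reflected integers $v[S]$ and $v(S^{\prime})$, the hulls $\hull[S]$, the ancestors $\fancest{v}{s}$, and the scalar-valued functions $\funcf,\funcg,\funch$ all transform compatibly under the shift $(v,S)\mapsto (w\odot v,S+\ell)$.

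Next, I would verify that each of the six families \fullref{theorem:main-tl-section}.(1)--(6) is respected by $\phi_{w}$. Relations (1)--(3) and (5) are purely combinatorial in the admissible sets and carry over verbatim. For the adjacency relations (4) and the zigzag relations (6), the only substantive point is that the scalars $\funcH[S]$, $\funcG[S]$, $\funcF[S]$ are evaluations at $a_{\max(S)+1}$, which by the preceding observation coincides with the digit of $w\odot v$ at position $\max(S+\ell)+1$. Hence the shifted relation in $\zigzag$ has precisely the required scalars, so $\phi_{w}$ is an algebra endomorphism of $\zigzag$. The composition rule $\phi_{z}\phi_{w}=\phi_{z\odot w}$ then follows from the monoid axiom $(z\odot w)\odot v=z\odot(w\odot v)$ combined with the additivity $|z\odot w|=|z|+|w|$ of word length.

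For the last assertion, assume $w=[0,\dots,0]$ of length $\ell$, so that $w\odot v=v\cdot\ppar^{\ell}$ has vanishing bottom $\ell$ digits. Admissibility condition (d1) then forces every minimal down- or up-admissible stretch for $w\odot v$ to lie in $\{\ell,\dots,j+\ell-1\}$ and hence to be of the form $S+\ell$ for some minimal admissible stretch $S$ of $v$. Moreover, the target of an arrow emanating from $w\odot v-1$ equals $w\odot v[S]-1$, because subtracting $2a_{i}\ppar^{i}$ at a position $i\geq\ell$ leaves the bottom $\ell$ digits undisturbed. A straightforward induction on path length through the quiver then identifies the connected component of $w\odot e-1$ with $\{w\odot v-1\mid v-1\in(e)_{\ppar}\}$, and stripping the $\ell$ trailing zeros from the $\ppar$-adic expansion supplies a two-sided algebra inverse. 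The hardest step will be exactly this vertex bijection, which hinges on the two complementary rigidity properties above: admissibility forces all incident arrows to be shifts by $\ell$, while the subtraction formula for reflections keeps the trailing zeros intact, jointly ruling out that the component of $w\odot e-1$ reaches any vertex with a non-zero digit below position $\ell$.
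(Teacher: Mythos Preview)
Your argument is correct and follows exactly the approach the paper intends: the paper's own proof is the single sentence ``This is a direct consequence of \fullref{theorem:main-tl-section},'' and what you have written is precisely the elaboration of that sentence, verifying shift-equivariance of admissibility, minimal stretches, hulls, and the scalar functions $\funcf,\funcg,\funch$ so that relations (1)--(6) are preserved. Your treatment of the isomorphism claim for $w=[0,\dots,0]$ via the vertex bijection (using that admissibility forces stretches above position $\ell$ and that reflections leave the trailing zeros intact) is the natural way to unpack the paper's assertion.
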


\begin{proof}
This is a direct consequence of \fullref{theorem:main-tl-section}.
\end{proof}

We also note that, since $\zigzag$ is 
the direct sum of $\N$ many copies of $\zigzag_{e{-}1}$ for 
$e\in\{1,\dots,\ppar-1\}$, 
the underlying quiver of $\zigzag$ 
is a \emph{fractal graph} in the sense of Ille--Woodrow \cite{IlWo-fractal-graphs}, 
albeit in the trivial sense that any countable graph without edges and more than one vertex can be considered as a fractal factor. 

\subsection{A few words about tilting modules}\label{sec:tilting}

Let us work over the ground field $\K$.
First, recall the category of finite-dimensional modules for $\SLtwo$ has 
simple $\lmod(v-1)$, Weyl $\wmod(v-1)$, dual Weyl $\dwmod(v-1)$ and 
indecomposable tilting modules $\tmod(v-1)$ for $v\in\N$, the latter being the
indecomposable objects of $\tilt$,
see \eg \cite[Section 1]{Wi-algebraic-sheaves} for a concise summary of 
the main definitions and properties regarding $\tilt$.

Let us now elaborate a bit further on the representation-theoretic implications of \fullref{corollary:main}. 
Almost all of these are, of course, well-understood. 
However, the reader might find it helpful to see how they can be 
derived from our results in the previous sections.

It is well-known that
\begin{gather*}
\tilt
=
{\textstyle\bigoplus_{e\in\eve}}\,
\tilt_{e{-}1},
\quad
\tilt_{e{-}1}=
\{
\tmod(v-1)\mid
v-1\in (e)_{\ppar}
\},
\end{gather*}
whose direct summands are called \emph{blocks}, which are equivalent as additive, $\K$-linear 
categories. From our discussion we immediately get the following.

\begin{proposition}\label{proposition:connected}
There is an equivalence of additive, $\K$-linear categories
\begin{gather*}
\mainfunctor^{\prime}_{e{-}1}\colon
\tilt_{e{-}1}\xrightarrow{\cong}\zigzagmode,
\end{gather*}
sending indecomposable tilting modules to indecomposable projectives. 
Moreover, $\Hom_{\tilt}(\obstuff{X},\obstuff{Y})=0$ if 
$\obstuff{X}\in\tilt_{e{-}1}$ 
and $\obstuff{Y}\in\tilt_{e^{\prime}{-}1}$ for $e,e^{\prime}\in\eve,e\neq e^{\prime}$.
Finally, there is an isomorphism of algebras $\zigzag_{e{-}1}\cong\zigzag_{e^{\prime}{-}1}$ 
for all $e,e^{\prime}\in\eve$ with equal non-zero digits.
\end{proposition}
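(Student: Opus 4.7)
The plan is that \fullref{proposition:connected} is a formal consequence of \fullref{theorem:main}, \fullref{corollary:main}, and \fullref{proposition:fractal}, requiring no new representation-theoretic input. I would dispatch the three assertions in turn.

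First I would establish the vanishing of Hom spaces across blocks. Under $\mainfunctor$ of \fullref{theorem:main}, the space $\Hom_{\tilt}(\tmod(v-1),\tmod(w-1))$ is identified with $\idemy[w{-}1]\zigzag\idemy[v{-}1]$. The generating arrows $\Down{S}\pjw[v{-}1]$ and $\Up{S^{\prime}}\pjw[v{-}1]$ only connect $v-1$ to $v[S]-1$ or $v(S^{\prime})-1$, and since $v$, $v[S]$, and $v(S^{\prime})$ all share the same eve, every arrow stays inside one connected component $(e)_{\ppar}$. Hence by the basis description \fullref{theorem:main-tl-section}.(Basis), $\idemy[w{-}1]\zigzag\idemy[v{-}1]=0$ whenever $v-1$ and $w-1$ belong to different components. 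Applied to $v-1\in(e)_{\ppar}$ and $w-1\in(e^{\prime})_{\ppar}$ for $e\neq e^{\prime}$, this gives the claimed vanishing.

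Second, the equivalence $\mainfunctor^{\prime}_{e{-}1}$ is obtained by restricting the equivalence $\mainfunctor^{\prime}$ of \fullref{corollary:main}. Indeed, the block decomposition of $\tilt$ corresponds, via $\mainfunctor$, to the decomposition $\zigzag=\coprod_{e\in\eve}\zigzag_{e{-}1}$ from \fullref{subsec:fractal}; together with the previous paragraph this means that on objects, indecomposable tilting modules $\tmod(v-1)$ with $v-1\in(e)_{\ppar}$ are sent to indecomposable projectives over $\zigzag_{e-1}$, and Hom spaces match block-by-block. Since $\mainfunctor^{\prime}$ is already known to be an equivalence, so is each restriction.

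Third, for the algebra isomorphism, write $e=a\ppar^{j}$ and $e^{\prime}=a\ppar^{k}$ with the common non-zero digit $a$, and assume without loss of generality $k\geq j$. Set $w=[\underbrace{0,\dots,0}_{k-j}]\in M_{\ppar}$. Then $w\odot e=e^{\prime}$, and \fullref{proposition:fractal} asserts that $\phi_{w}$ restricts to an algebra isomorphism $\zigzag_{e{-}1}\xrightarrow{\cong}\zigzag_{e^{\prime}{-}1}$. The opposite case $k<j$ is handled symmetrically (or by using the inverse isomorphism just constructed).

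No genuine obstacle arises: all the substantive work has been absorbed into \fullref{theorem:main-tl-section} and \fullref{proposition:fractal}, and the present statement is essentially a bookkeeping consequence of the block structure of $\zigzag$.
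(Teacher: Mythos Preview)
Your proposal is correct and follows essentially the same route as the paper: the paper's proof is a one-line appeal to \fullref{theorem:main}, \fullref{corollary:main}, \fullref{theorem:main-tl-section}, and the preceding discussion (which includes \fullref{proposition:fractal}), and you have simply unpacked these references into explicit steps. The only minor difference is that the paper treats the block decomposition of $\tilt$ as well-known input, whereas you derive the Hom-vanishing from the quiver structure via \fullref{theorem:main}; both are valid.
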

\begin{proof}
Directly from \fullref{theorem:main} and \fullref{corollary:main}, combined with 
\fullref{theorem:main-tl-section} and the above.
\end{proof}

In fact $\zigzag_{e{-}1}\cong\zigzag_{e^{\prime}{-}1}$ 
for all $e,e^{\prime}\in\eve$, but such isomorphisms involve non-trivial rescalings in our presentation.

Another consequence we get are the tilting--dual Weyl multiplicities.

\begin{proposition}\label{proposition:tilting-weyl}
We have
\begin{gather*}
\big(\tmod(v-1):\dwmod(w-1)\big)
=
\begin{cases}
1
&\text{if }w\in\supp,
\\
0
&\text{else}.
\end{cases}
\end{gather*}
\end{proposition}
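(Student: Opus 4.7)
The plan is to reduce the statement to a matrix identity and resolve it via uniqueness of unit-triangular factorization. By the self-duality $\tmod(v-1)^{*}\cong\tmod(v-1)$ of $\SLtwo$-tilting modules, the Weyl and dual Weyl multiplicities coincide; define $m_{v,w}:=(\tmod(v-1):\wmod(w-1))=(\tmod(v-1):\dwmod(w-1))$. Standard tilting theory in highest weight categories yields $m_{v,v}=1$ and $m_{v,w}=0$ unless $w\leq v$ lies in the block of $v$, so within each block $(e)_{\ppar}$, ordered by the natural order on $\N$, the matrix $M=(m_{v,w})$ is unit lower triangular.

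The central input is the Hom-multiplicity identity
\[
\dim\Hom_{\tilt}\!\big(\tmod(v-1),\tmod(w-1)\big)={\textstyle\sum_{u}}\,m_{v,u}\,m_{w,u},
\]
which follows by choosing a Weyl filtration of $\tmod(v-1)$ and a dual Weyl filtration of $\tmod(w-1)$ and applying $\Hom(\wmod(u-1),\dwmod(u^{\prime}-1))=\delta_{u,u^{\prime}}\K$ together with $\mathrm{Ext}^{\geq 1}(\wmod(u-1),\dwmod(u^{\prime}-1))=0$. By \autoref{corollary:main} and the basis part of \autoref{theorem:main-tl-section}, the left-hand side equals $\dim\,\idemy[{w-1}]\zigzag\idemy[{v-1}]$, which counts basis paths indexed by pairs $(S,S^{\prime})$ with $S$ down-admissible for $v$, $S^{\prime}$ up-admissible for $v[S]$, and $v[S](S^{\prime})=w$. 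By \autoref{lemma:stretch-lemma} these pairs biject with common descendants $u=v[S]=w[S^{\prime}]\in\supp[v]\cap\supp[w]$. Setting $n_{v,w}:=1_{w\in\supp[v]}$, one similarly has ${\textstyle\sum_{u}}\,n_{v,u}n_{w,u}=|\supp[v]\cap\supp[w]|$, and $N=(n_{v,w})$ is likewise unit lower triangular because $\supp[v]\subseteq\{u\leq v\}$ with $v\in\supp[v]$.

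The conclusion is now a Cholesky-type argument: from $MM^{T}=NN^{T}$ with $M,N$ unit lower triangular integer matrices, define $Q:=N^{-1}M$; this is again unit lower triangular over $\Z$ and satisfies $QQ^{T}=I$. A row-by-row induction—the first row of $Q$ is $(1,0,\dots)$, and orthogonality with each subsequent row forces its off-diagonal entries to vanish—gives $Q=I$, hence $M=N$, which is precisely the claim.

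The principal obstacle is justifying the Hom-multiplicity identity rigorously. While classical for tilting modules in finite highest weight categories, in the infinite setting here one must invoke that each $\tmod(v-1)$ has finite-length Weyl and dual Weyl filtrations (a standard fact for $\SLtwo$, see e.g.~\cite{Do-tilting-alg-groups}), or alternatively derive it via BGG reciprocity for the quasi-hereditary algebra $\zigzag_{e{-}1}$ using the semi-infinite Ringel duality of \cite{BS18} underpinning \autoref{corollary:main}, which exchanges Weyl and dual Weyl filtrations under $\mainfunctor^{\prime}$.
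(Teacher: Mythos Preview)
Your argument is correct and takes a genuinely different route from the paper. The paper's proof is a one-line appeal to the cellular basis machinery of \cite{AnStTu-cellular-tilting}: the basis from \fullref{theorem:main-tl-section}.(Basis) is an instance of the cellular bases constructed there, and in that framework the cell modules are the Weyl modules, so the Weyl multiplicities are read off directly from the cell structure. Your approach instead extracts only the dimension count $\dim\Hom_{\tilt}(\tmod(v-1),\tmod(w-1))=|\supp[v]\cap\supp[w]|$ from the basis theorem, combines it with the standard identity $\dim\Hom=\sum_{u}m_{v,u}m_{w,u}$ for tilting modules, and then invokes uniqueness of the unit-triangular factorization $MM^{T}=NN^{T}\Rightarrow M=N$. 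This is more self-contained---it avoids importing the cellular theory---and the Cholesky step is pleasantly elementary. The paper's route, on the other hand, situates the result inside a general framework that applies well beyond $\SLtwo$, and makes the connection between the diagrammatic basis and Weyl filtrations structurally explicit rather than deducing it from a numerical coincidence. Your caveat about the infinite setting is overly cautious: each $\tmod(v-1)$ is finite-dimensional with finite Weyl and dual Weyl filtrations, so the Hom identity holds without any subtlety, and the matrix argument runs block by block on the totally ordered set $(e)_{\ppar}$ where row-finiteness makes all products and inverses well-defined.
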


\begin{proof}
Note that the basis \fullref{theorem:main-tl-section}.(Basis) is part of the
family of bases constructed in \cite{AnStTu-cellular-tilting}, and the
proposition follows from the construction of these bases in {\loccit}, and our
main statements \fullref{theorem:main} and \fullref{corollary:main}.
\end{proof}

Hence, we get the tilting characters 
$\chi_{v{-}1}^{\tmod}=\sum_{w\in\N}\big(\tmod(v-1):\dwmod(w-1)\big)\chi_{w{-}1}^{\dwmod}$,
where the characters $\chi_{w{-}1}^{\dwmod}=\chi_{w{-}1}^{\wmod}$ 
are the characteristic zero characters well-known \eg by Weyl's character formula. 
By reciprocity, \cf \cite[Proposition 1.14]{RiWi-tilting-p-canonical}, we also get the Weyl--simple multiplicities. 
To state these explicitly, let us recursively define a set $X(v)$ as follows. For
$v\leq\ppar-1$ let $X(v)=\{0\}$. For $v>\ppar-1$ let
\begin{gather*}
X(v)=
\begin{cases}
\ppar X\big((v-a_{0})/\ppar\big)\cup\bigg(a_{0}+1+\ppar X\big((v-a_{0}-\ppar)/\ppar\big)\bigg)
&\text{if }a_{0}\neq\ppar-1,
\\
\ppar X\big((v-a_{0})/\ppar\big)
&\text{if }a_{0}=\ppar-1,
\end{cases}
\end{gather*}
where we again meet {\losp}.
Then we get
\begin{gather*}
\big[\wmod(w-1):\lmod(v-1)\big]
=
\begin{cases}
1
&\text{if }v\in w-2X(w),
\\
0
&\text{else}.
\end{cases}
\end{gather*}
Thus, we get $\chi_{w{-}1}^{\wmod}
=\sum_{v\in\N}
\big[\wmod(w-1):\lmod(v-1)\big]\chi_{v{-}1}^{\lmod}
=\sum_{v\in w-X(w)}\chi_{v{-}1}^{\lmod}$,
which determines the simple characters by inverting the change of basis matrix.

\begin{example}\label{example:block-2}
For $\ppar=3$ we have
$\chi_{22}^{\tmod}
=\chi_{22}^{\dwmod}+\chi_{18}^{\dwmod}+\chi_{16}^{\dwmod}+\chi_{12}^{\dwmod}$, \cf \cite[Figure 1]{JeWi-p-canonical}.
Moreover, we also get $\chi_{22}^{\wmod}=\chi_{22}^{\lmod}+\chi_{18}^{\lmod}+\chi_{12}^{\lmod}+\chi_{10}^{\lmod}$.
\end{example}

The final consequence we would like 
to derive in this paper is the following. 

\begin{proposition}\label{proposition-final}
Let $\catstuff{I}_{v}=\{\tmod(w-1)\mid w\geq v\}$. 
For any thick $\hcirc$-ideal $\catstuff{I}\neq 0$ in $\tilt$ there exists 
$k\in\N[0]$ such that
\begin{gather*}
\catstuff{I}
=
\catstuff{I}_{\ppar^{k}}
\stackrel{\ppar{\neq}2}{=}
\left\{\tmod(v-1)\mid
\ord\big(\dim_{\F}\big(\tmod(v-1)\big)\big)
\leq k
\right\},
\end{gather*}
with the latter equality holding in case $\ppar\neq 2$.
\end{proposition}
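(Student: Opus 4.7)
The plan is to prove the proposition in three stages: (i) verify each $\catstuff{I}_{\ppar^{k}}$ is a thick $\hcirc$-ideal; (ii) show any non-zero thick $\hcirc$-ideal equals some $\catstuff{I}_{\ppar^{k}}$; and (iii) establish the dimension characterization for $\ppar \ne 2$.

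For (i), closure under direct summands is automatic. The substantive content is closure under tensor products: I would show that for $v \ge \ppar^{k}$ and any $w \in \N$, every indecomposable summand $\tmod(u-1)$ of $\tmod(v-1) \hcirc \tmod(w-1)$ satisfies $u \ge \ppar^{k}$. The key input is Donkin's tensor product theorem \cite[Proposition 2.1]{Do-tilting-alg-groups}: iterated applications factor $\tmod(v-1)$ for $v \ge \ppar^{k}$ into products involving the Steinberg tilting $\tmod(\ppar^{k}-1) = \lmod(\ppar^{k}-1)$, and Steinberg's tensor product theorem for simples controls the composition factors of further tensor products, forcing the leading $\ppar$-adic digit of each tilting summand index to stay at position $\ge k$.

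For (ii), let $\catstuff{I} \ne 0$ be a thick $\hcirc$-ideal and $v_{\min}$ be minimal with $\tmod(v_{\min}-1) \in \catstuff{I}$. First, I would show $v_{\min} = \ppar^{k}$ for some $k$: if $v_{\min} = \pbase{a_{j}, \dots, a_{0}}{\ppar}$ has a non-leading non-zero digit or leading digit $a_{j} > 1$, then $\supp[v_{\min}]$ contains some $w < v_{\min}$. Using the Weyl filtration of $\tmod(v_{\min}-1)$ (with factors $\dwmod(w-1)$ for $w \in \supp[v_{\min}]$) together with self-duality of tilting modules and a careful choice of tensor partner, I would exhibit $\tmod(w-1)$ as a summand of $\tmod(v_{\min}-1) \hcirc Y$ for some tilting $Y$, contradicting minimality. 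Second, for $\catstuff{I}_{\ppar^{k}} \subseteq \catstuff{I}$ once $v_{\min} = \ppar^{k}$, I would show each $\tmod(w-1)$ with $w \ge \ppar^{k}$ arises as a summand of $\tmod(\ppar^{k}-1) \hcirc Y$ for suitable tilting $Y$, again by iterated Donkin factorization; the monoid $M_{\ppar}$ action on $\zigzag$ from \fullref{proposition:fractal} provides a categorical organization of this reduction.

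For (iii), the Weyl filtration yields $\dim_{\F} \tmod(v-1) = \sum_{w \in \supp[v]} w$. Writing $v = \pbase{a_{j}, \dots, a_{0}}{\ppar}$ with $r+1$ non-zero digits and leading digit at position $j$, the $\pm$-pairing in the definition of $\supp[v]$ telescopes this sum to $2^{r} a_{j} \ppar^{j}$. For $\ppar \ne 2$ this yields $\ord(\dim_{\F} \tmod(v-1)) = j$, the position of the leading non-zero digit of $v$, and this $\ppar$-adic valuation condition therefore translates directly into the condition $v \ge \ppar^{k}$ defining $\catstuff{I}_{\ppar^{k}}$. The main obstacle throughout is step (i): rigorously bounding tilting summands in tensor products requires careful analysis via Donkin's and Steinberg's tensor product theorems, and the associated control over composition factors.
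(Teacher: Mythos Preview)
Your dimension computation in (iii) is correct and arguably cleaner than the paper's route via the partial trace formula (\fullref{proposition:p-properties-trace}): summing $\dim\dwmod(w-1)=w$ over $w\in\supp[v]$ telescopes to $2^{r}a_{j}\ppar^{j}$, giving $\ord(\dim\tmod(v-1))=j$ for $\ppar\neq 2$.

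However, step (ii) has a genuine gap. Your claim that $\supp[v_{\min}]$ contains some $w<v_{\min}$ whenever the leading digit satisfies $a_{j}>1$ is false: if $v_{\min}=a_{j}\ppar^{j}$ is an eve with $a_{j}\geq 2$, then $\supp[v_{\min}]=\{v_{\min}\}$ by definition. So your support argument does not rule out non-prime-power eves as minimal elements of an ideal. Moreover, even in the non-eve case, the step ``exhibit $\tmod(w-1)$ as a summand of $\tmod(v_{\min}-1)\hcirc Y$ via Weyl filtration and self-duality'' is underspecified: a $\dwmod$-filtration factor does not automatically yield a tilting direct summand of any tensor product. The paper sidesteps both issues by working entirely with iterated tensoring by the monoidal generator $\tmod(1)$: the decomposition of $\tmod(1)\hcirc\tmod(w-1)$ is determined by \fullref{proposition:tilting-weyl} together with the classical Clebsch--Gordan rule, and tracking how summands evolve under this operation shows simultaneously that the minimal $v$ in any ideal must be a prime power and that each $\catstuff{I}_{\ppar^{k}}$ is closed. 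For instance, with $\ppar=3$ and starting from the eve $v=6$, three iterations produce $\tmod(7)\hcirc\tmod(1)\cong\tmod(8)\oplus\tmod(6)\oplus\tmod(2)$, so $\tmod(2)$ (with $w=3<6$) appears and $v_{\min}=6$ is excluded. Your Donkin--Steinberg approach to (i) is a valid alternative for that step, but the $\tmod(1)$-tensoring analysis is precisely what is missing from your (ii).
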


\begin{proof}
We will use that $\tmod(1)$ is a $\hcirc$-generator of $\tilt$ and
Weyl and dual Weyl modules have classical characters.

Assume that $\tmod(v-1)\in\catstuff{I}$ for $v$ minimal. Then it is clear by 
\fullref{proposition:tilting-weyl} that $\catstuff{I}=\catstuff{I}_{v}$. Thus, it remains to determine the 
possible minimal $v$.
To this end, note that 
the decomposition of $\tmod(1)\hcirc\tmod(w-1)\cong\tmod(w-1)\hcirc\tmod(1)$ 
into its indecomposable summands is completely determined by \fullref{proposition:tilting-weyl} 
and the classical $\mathrm{SL}_{2}(\C)$ tensor product combinatorics.
Analyzing now how tensoring with $\tmod(1)$ affects the support, 
\fullref{proposition:tilting-weyl} then also implies that $v$ needs to be a prime 
power, and conversely, that a prime power works as a minimal $v$.

Finally, the last statement follows from \fullref{proposition:p-properties-trace} since 
$\pjw[e{-}1]=\pqjw[e{-}1]=\qjw[e{-}1]$ for $e\in\eve$.
\end{proof}

Thus, the thick $\hcirc$-ideals in $\tilt$ are
$\tilt=
\catstuff{I}_{\ppar^{0}}
\supset
\catstuff{I}_{\ppar^{1}}
\supset
\catstuff{I}_{\ppar^{2}}
\supset
\catstuff{I}_{\ppar^{3}}
\supset
\catstuff{I}_{\ppar^{4}}
\supset
\cdots$.

\begin{example}\label{example:final}
The elements of $\catstuff{I}_{\ppar^{1}}$ are the so-called \emph{negligible} modules.
\end{example}

Note that the above implies that $\tilt$ has no projective modules 
since these would form the minimal thick $\hcirc$-ideal.

\section*{Table of notation and central concepts}

In general we use a tilde, \eg $\tilde{f}$, to indicate that we work over $\Q$, an overline, \eg $\overline{f}$, to indicate that we have something that reduces mod $\ppar$ but we want to consider it over $\Q$, and no extra decoration if we work in $\F[p]$.

\begin{table}[htbp]
\begin{center}
\begin{tabular}{r|c|p{10cm}}
Name & Symbol & Description
\\
\toprule
Ringel dual of $\mathrm{SL}_{2}$ & $\zigzag$ & the path algebra of a quiver with relations
presented in \fullref{theorem:main-tl-section}.
\\
---\;\;\;\;& $\funcf$, $\funcg$, $\funch$ & functions $\F\to \F$ used in the presentation
of $\zigzag$, \fullref{subsec:main-TL}.
\\
\hline
JW projector & $\qjw$ & the Jones--Wenzl projectors, corresponding to
projections to $\wmod(v-1)$ in $\tmod(1)^{\otimes(v{-}1)}$; defined over $\Q$,
\fullref{definition:JW}.
\\
$\ppar$JW projector & $\pjw$ & the $\ppar$Jones--Wenzl projectors, corresponding
to projections to $\tmod(v-1)$ in $\tmod(1)^{\otimes(v{-}1)}$; defined over
$\F[p]$, \fullref{definition:p-jw}.
\\
rational $\ppar$JW proj. & $\pqjw$ & the $\ppar$Jones--Wenzl projectors,
corresponding to projections to $\tmod(v-1)$ in $\tmod(1)^{\otimes(v{-}1)}$, but
considered over $\Q$, \fullref{definition:p-jw-q}.
\\
---\;\;\;\;& $\lambda_{v,S}$ & scalars in the definition of projectors \eqref{eq:the-scalar}.
\\
\hline
integral morphisms &  $\down{S} \idtl$ & down or up morphisms given by cups or
caps; these work integrally, \fullref{definition:cup-cap-operators}.
\\
standard morphisms &  $\downo{S} \idtl$ & down or up morphisms given by cups or
caps together with JW projectors; these over $\Q$,
\fullref{definition:jw-cupscaps2}.
\\
$\ppar$ morphisms &  $\Down{S}\pjw[v{-}1]$& down or up morphisms given by cups or
caps together with $\ppar$JW projectors; these over $\F[p]$,
\fullref{definition:updown}.
\\
standard loops & $\trap{S}{v{-}1}$ & compositions of down and up morphisms; form
a basis of endomorphism spaces over $\Q$, \fullref{definition:jw-cupscaps2}.
\\
$\ppar$loops & $\loopdown{S}{v{-}1}$ & compositions of down and up morphisms;
form a basis of endomorphism spaces over $\F[p]$,
\fullref{definition:updown}.
\\
\hline
eve & $e$ & a number with a single non-zero $\ppar$-adic digit,
\fullref{definition:ancestry}.
\\
mother of $v$ & $\mother$ & defined (unless $v$ is an eve) by setting the last
non-zero $\ppar$-adic digit of $v$ to zero, \fullref{definition:ancestry}.
\\
ancestors of $v$ & $\mother,\motherr{v}{2}\cdots$ & positive numbers obtained by
setting last $\ppar$-adic digits of $v$ to zero,
\fullref{definition:ancestry}.
\\
generation of $v$ & $\generation$ & the number of ancestors of $v$,
\fullref{definition:ancestry}.
\\
stretches & --- & sets of consecutive digits in the $\ppar$-adic expansion of a
number, \fullref{definition:adm}.
\\
admissibility & --- & whether a set of digits of a $\ppar$-adic expansion is
suitable for reflecting up or down, \fullref{definition:adm}.
\\
admissible hull & $\hull$ & an admissible set containing $S$,
\fullref{definition:adm}.
\\
\bottomrule
\end{tabular}
\end{center}
\label{table:notation}
\end{table}

\end{document}